\newtheorem{lemma}{{\sc Lemma}}[section]
\newtheorem{corollary}[lemma]{{\sc Corollary}}
\newtheorem{proposition}[lemma]{{\sc Proposition}}
\newtheorem{theorem}[lemma]{{\sc Theorem}}
\newtheorem{remark}[lemma]{{\sc Remark}}
\numberwithin{equation}{section}
\def\Ga{{\mathfrak{a}}}
\def\Gb{{\mathfrak{b}}}
\def\Gg{{\mathfrak{g}}}
\def\Gh{{\mathfrak{h}}}
\def\Gk{{\mathfrak{k}}}
\def\Gm{{\mathfrak{m}}}
\def\Gn{{\mathfrak{n}}}
\def\Gp{{\mathfrak{p}}}
\def\Gs{{\mathfrak{s}}}
\def\BA{{\mathbb{A}}}
\def\BB{{\mathbb{B}}}
\def\BC{{\mathbb{C}}}
\def\BF{{\mathbb{F}}}
\def\BK{{\mathbb{K}}}
\def\BQ{{\mathbb{Q}}}
\def\BZ{{\mathbb{Z}}}
\def\CA{{\mathcal A}}
\def\CB{{\mathcal B}}
\def\CC{{\mathcal C}}
\def\DD{{\mathcal D}}
\def\CO{{\mathcal O}}
\def\CL{{\mathcal L}}
\def\CM{{\mathcal M}}
\def\CR{{\mathcal R}}
\def\CV{{\mathcal V}}
\def\CW{{\mathcal W}}
\def\CX{{\mathcal X}}
\def\CZ{{\mathcal Z}}
\def\Ad{\mathop{\rm Ad}\nolimits}
\def\Aut{{\rm Aut}}
\def\Coker{\mathop{\rm Coker}\nolimits}
\def\Comod{\mathop{\rm Comod}\nolimits}
\def\deru{\partial}
\def\End{\mathop{\rm{End}}\nolimits}
\def\Hom{\mathop{\rm Hom}\nolimits}
\def\Ht{\mathop{\rm ht}\nolimits}
\def\id{\mathop{\rm id}\nolimits}
\def\Id{\mathop{\rm Id}\nolimits}
\def\Ind{\mathop{\rm Ind}\nolimits}
\def\Image{\mathop{\rm Im}\nolimits}
\def\Ker{\mathop{\rm Ker\hskip.5pt}\nolimits}
\def\Mod{\mathop{\rm Mod}\nolimits}
\def\ord{{\mathop{\rm ord}\nolimits}}
\def\Proj{\mathop{\rm Proj}\nolimits}
\def\rad{\mathop{\rm rad}\nolimits}
\def\Spec{{\rm{Spec}}}
\def\Tor{{\rm{Tor}}}
\def\CEnd{\mathop{{\mathcal{E}}nd}\nolimits}
\def\wt{\mathop{\rm wt}\nolimits}
\begin{document}
\title[quantized flag manifolds]
{Differential operators on quantized flag manifolds at roots of unity}
\dedicatory{Dedicated to Masaki Kashiwara with admiration}
\author{Toshiyuki TANISAKI}
\subjclass[2000]{20G05, 17B37}
\thanks
{
Keywords: quantized enveloping algebra; differential operator; flag manifold,\\
\indent
The author was partially supported by the Grants-in-Aid for Scientific Research, Challenging Exploratory Research No.\ 21654005
from Japan Society for the Promotion of Science.
}
\address{
Department of Mathematics, Osaka City University, 3-3-138, Sugimoto, Sumiyoshi-ku, Osaka, 558-8585 Japan}
\email{tanisaki@sci.osaka-cu.ac.jp}

\begin{abstract}
The quantized flag manifold, which is a $q$-analogue of the ordinary flag manifold, 
is realized as a non-commutative scheme, and 
we can define the category of $D$-modules on it using the framework of non-commutative algebraic geometry; however,
when the parameter $q$ is a root of unity,  Lusztig's Frobenius morphism
allows us to handle $D$-modules on the quantized flag manifold through modules over a certain sheaf of rings on the ordinary flag manifold.
In this paper we will show that this sheaf of rings on the ordinary flag manifold is an Azumaya algebra over its center.
We also show that its restriction to certain subsets are split Azumaya algebras.
These are analogues of some results of Bezrukavnikov-Mirkovi\'{c}-Rumynin on $D$-modules on flag manifolds in positive characteristics.
\end{abstract}
\maketitle
\setcounter{section}{-1}
\section{Introduction}
\label{sec:Intoro}
\subsection{}
In \cite{LR} Lunts and Rosenberg constructed the quantized flag manifold for a quantized enveloping algebra as a non-commutative projective scheme.
They also defined a category of $D$-modules on it, and conjectured a Beilinson-Bernstein type equivalence of categories.
In \cite{T2} we proposed a modification of the definition of the ring of differential operators on the quantized flag manifold, and established a Beilinson-Bernstein type equivalence for the modified ring of differential operators
(see also Backelin-Kremnizer \cite{BK1}).

The above mentioned results are for a quantized enveloping algebra when the parameter $q$ is transcendental.
The aim of this paper is to investigate the ring of differential operators on the quantized flag manifold when the parameter is a root of unity.
It is a general phenomenon that quantized objects at roots of unity resemble ordinary objects in positive characteristics.
Hence it is natural to pursue analogue of the theory of  $D$-modules on flag manifolds in positive characteristics due to Bezrukavnikov-Mirkovi\'{c}-Rumynin \cite{BMR}.
In \cite{BMR} an analogue of the Beilinson-Bernstein equivalence was established on the level of derived categories.
Moreover, it was also shown there that the ring of differential operators satisfies certain Azumaya properties.
In this paper we will be concerned with the Azumaya properties in the quantized situation.

\subsection{}
Let $G$ be a connected simply-connected simple algebraic group over $\BC$, and let $\Gg$ be its Lie algebra.
We fix Borel subgroups $B^+$ and $B^-$ of $G$ such that $H=B^+\cap B^-$ is a maximal torus of $G$.
We denote by $N^\pm$ the unipotent radical of $B^\pm$.
We denote by $Q$ and $\Lambda$ the root lattice and the weight lattice respectively.
We also denote by $\Lambda^+$ the set of dominant weights.
Set $\BF=\BQ(q^{1/|\Lambda/Q|})$, where $q^{1/|\Lambda/Q|}$ is an indeterminate.
We denote by $U_\BF$ the quantized enveloping algebra of $\Gg$ over $\BF$.
It is a Hopf algebra over $\BF$, and is generated as an $\BF$-algebra by the elements 
$k_\lambda, e_i, f_i\,\,(\lambda\in\Lambda, i\in I)$, where $I$ is the index set for simple roots for $\Gg$.
We can define a $q$-analogue $C_\BF$ of the coordinate algebra of $G$ as a Hopf algebra dual of $U_\BF$.
More precisely, we define $C_\BF$ to be the subspace of $\Hom_\BF(U_\BF,\BF)$ spanned by the matrix coefficients of type 1 representations of $U_\BF$.
Then we have a $U_\BF$-bimodule structure of $C_\BF$ given by
\[
\langle u_1\cdot \varphi\cdot u_2,u\rangle
=\langle \varphi,u_2uu_1\rangle
\qquad
(u, u_1, u_2\in U_\BF, \varphi\in C_\BF).
\]
Set 
\[
A_\BF=\bigoplus_{\lambda\in\Lambda^+}A_\BF(\lambda)\subset C_\BF
\]
with
\[
A_\BF(\lambda)
=\{\varphi\in C_\BF\mid
\varphi\cdot v=\chi_\lambda(v)\varphi\quad(v\in U_\BF^{\leqq0})\},
\] 
where $U^{\leqq0}_\BF$ is the subalgebra of $U_\BF$ generated by $k_\lambda, f_i\,\,(\lambda\in\Lambda, i\in I)$ and $\chi_\lambda$ is the character of $U^{\leqq0}_\BF$ corresponding to $\lambda$.
Note that $A_\BF$ is a non-commutative $\Lambda$-graded $\BF$-algebra.
The quantized flag manifold $\CB_q$ is defined as a non-commutative projective scheme by 
\[
\CB_q=\Proj_\Lambda(A_\BF).
\]
This actually means that we are given an abelian category $\Mod(\CO_{\CB_q})$ of ``quasi-coherent sheaves on $\CB_q$'' defined by
\[
\Mod(\CO_{\CB_q})=\Mod_\Lambda(A_\BF)/\Tor_{\Lambda^+}(A_\BF),
\]
where $\Mod_\Lambda(A_\BF)$ is the category of $\Lambda$-graded left $A_\BF$-modules, and $\Tor_{\Lambda^+}(A_\BF)$ denotes its full subcategory consisting of $M\in\Mod_\Lambda(A_\BF)$ such that for each $m\in M$ there exists some $\lambda\in\Lambda^+$ such that $A_\BF(\lambda+\mu)m=0$ for any $\mu\in\Lambda^+$.

Define an $\BF$-subalgebra $D_\BF$ of $\End_\BF(A_\BF)$ by
\[
D_\BF=
\langle \ell_\varphi, r_\varphi,\deru_u, \sigma_\lambda\mid\varphi\in A_\BF, u\in U_\BF, \lambda\in\Lambda\rangle,
\]
where $\ell_\varphi$ and $r_\varphi$ are the left and the right multiplications of $\varphi$ respectively, $\deru_u$ denotes the natural left action of $u\in U_\BF$ on $A_\BF$ induced by that on $C_\BF$, and $\sigma_\lambda$ is the grading operator given by
$\sigma_\lambda(\varphi)=q^{(\lambda,\mu)}\varphi$ for $\varphi\in A_\BF(\mu)$ (see \eqref{eq:killing} below for the notation).
Then $D_\BF$ is a $\Lambda$-graded ring  by 
$D_\BF(\lambda)=\{\Phi\in D_\BF\mid\Phi(A_\BF(\mu))\subset A_\BF(\mu+\lambda)\,\,(\mu\in\Lambda)\}$ for $\lambda\in\Lambda$.
By the aid of the universal $R$-matrix we can show that  $r_\varphi$ can be expressed using other type of generators.
Hence we have 
\[
D_\BF=
\langle \ell_\varphi, \deru_u, \sigma_\lambda\mid\varphi\in A_\BF, u\in U_\BF, \lambda\in\Lambda\rangle.
\]
A $D_\BF$-module is regarded as an $A_\BF$-module by the algebra homomorphism $A_\BF\ni\varphi\mapsto\ell_\varphi\in D_\BF$ in the following.
We define an abelian category $\Mod(\DD_{\CB_q})$ of ``quasi-coherent $\DD_{\CB_q}$-modules'' by
\[
\Mod(\DD_{\CB_q})
=
\Mod_\Lambda(D_\BF)/\Mod_\Lambda(D_\BF)\cap\Tor_{\Lambda^+}(A_\BF).
\]
Denote by $H(\BF)$ the set of $\BF$-rational points of the maximal torus $H$ of $G$.
For $t\in H(\BF)$ we define an abelian category $\Mod(\DD_{\CB_q,t})$ of ``quasi-coherent $\DD_{\CB_q,t}$-modules'' by
\[
\Mod(\DD_{\CB_q,t})
=
\Mod_{\Lambda,t}(D_\BF)/\Mod_{\Lambda,t}(D_\BF)\cap\Tor_{\Lambda^+}(A_\BF),
\]
where $\Mod_{\Lambda,t}(D_\BF)$ is the full subcategory of $\Mod_{\Lambda}(D_\BF)$ consisting of $M\in\Mod_{\Lambda}(D_\BF)$ such that $\sigma_\mu|_{M(\lambda)}=
\mu(t)q^{(\lambda,\mu)}\id$ for any $\lambda\in\Lambda$.
Then $\DD_{\CB_q,1}$ is ``the sheaf of differential operators on $\CB_q$'', and other $\DD_{\CB_q,t}$'s are its twisted analogues (although they have only symbolical meanings). 

Let us consider the specialization of the parameter $q$ to roots of unity.
We take an odd integer $\ell>1$ which is prime to $|\Lambda/Q|$, and prime to 3 if $\Gg$ is of type $G_2$, $F_4$, $E_6$, $E_7$, $E_8$.
We fix a primitive $\ell$-th root of unity $\zeta'\in\BC$ and consider the specialization
\begin{equation}
\label{eq:specialization}
q^{1/|\Lambda/Q|}\mapsto \zeta',\qquad
q\mapsto \zeta=(\zeta')^{|\Lambda/Q|}.
\end{equation}
Note that $\zeta$ is also a primitive $\ell$-th root of unity by our assumption.
Set
\[
\BA=
\{
f(q^{1/|\Lambda/Q|})\in\BF\mid
f(x) \mbox{ is regular at } x=\zeta'
\},
\]
and 
\begin{align*}
U_\BA^L&=\langle
k_\lambda, e_i^{(n)}, f_i^{(n)}\mid
\lambda\in\Lambda, i\in I, n\in\BZ_{\geqq0}
\rangle_{\BA-alg}\subset U_\BF,
\\
U_\BA&=\langle
k_\lambda, e_i, f_i\mid
\lambda\in\Lambda, i\in I
\rangle_{\BA-alg}\subset U_\BF,
\end{align*}
where $e_i^{(n)}, f_i^{(n)}$ denote standard divided powers.
$U_\BA^L$ and $U_\BA$ are Hopf algebras over $\BA$ called the  Lusztig form and the De Concini-Kac form of $U_\BF$ respectively.
Taking the Hopf algebra dual of $U^L_\BA$ we obtain an $\BA$-form $C_\BA$ of $C_\BF$.
We set 
\begin{align*}
A_\BA&=A_\BF\cap C_\BA=
\bigoplus_{\lambda\in\Lambda^+}
A_\BA(\lambda),\\
D_\BA&=
\langle \ell_\varphi, r_\varphi,\deru_u, \sigma_\lambda\mid\varphi\in A_\BA, u\in U_\BA, \lambda\in\Lambda\rangle\\
&=
\langle \ell_\varphi, \deru_u, \sigma_\lambda\mid\varphi\in A_\BA, u\in U_\BA, \lambda\in\Lambda\rangle\subset\End_\BA(A_\BA)
\subset\End_\BF(A_\BF).
\end{align*}
Now we consider the specializations
\begin{align*}
A_\zeta&=\BC\otimes_\BA A_\BA,
\qquad
U_\zeta^L=\BC\otimes_\BA U_\BA^L,\qquad
U_\zeta=\BC\otimes_\BA U_\BA,\\
D_\zeta&=\BC\otimes_\BA D_\BA,
\end{align*}
where $\BA\to\BC$ is given by \eqref{eq:specialization}.
Note that the natural homomorphism $D_\zeta\to\End_\BC(A_\zeta)$ is not injective.
Similarly to $\CB_q$ we obtain a non-commutative projective scheme $\CB_\zeta=\Proj_\Lambda(A_\zeta)$, which actually means we are given an abelian category $\Mod(\CO_{\CB_\zeta})$ defined similarly to $\Mod(\CO_{\CB_q})$.
We also have abelian categories 
$\Mod(\DD_{\CB_\zeta})$, $\Mod(\DD_{\CB_\zeta,t})$ for $t\in H(\BC)$ defined similarly to $\Mod(\DD_{\CB_q})$ and $\Mod(\DD_{\CB_q,t})$ respectively.

Let $U_\zeta^L\to U(\Gg)$ be Lusztig's Frobenius morphism, where $U(\Gg)$ is the enveloping algebra of $\Gg$.
By taking the dual Hopf algebras we obtain a central embedding 
$\BC[G]\to C_\zeta$ of the coordinate algebra $\BC[G]$ of $G$ into $C_\zeta$.
Let $A_1$ be the subalgebra of $\BC[G]$ defined similarly to $A_\BF$.
Then $A_1$ is a commutative $\Lambda$-graded $\BC$-algebra such that $\Proj_\Lambda(A_1)$ is naturally isomorphic to the flag manifold $\CB=B^-\backslash G$ of $G$.
Under the identification $\BC[G]\subset C_\zeta$ we have $A_1\subset A_\zeta$ and $A_1(\lambda)\subset A_\zeta(\ell\lambda)$ for $\lambda\in\Lambda^+$.
We denote by $Fr_*\CO_{\CB_\zeta}$ the $\CO_\CB$-module corresponding to the $\Lambda$-graded $A_1$-module
$A_\zeta^{(\ell)}=\bigoplus_{\lambda\in\Lambda^+}A_\zeta(\ell\lambda)$.
The $A_1$-algebra structure of $A_\zeta^{(\ell)}$ endows $Fr_*\CO_{\CB_\zeta}$ with a canonical $\CO_\CB$-algebra structure.
Then we have an equivalence 
\[
\Mod(\CO_{\CB_\zeta})\cong\Mod(Fr_*\CO_{\CB_\zeta})
\]
of abelian categories, where $\Mod(Fr_*\CO_{\CB_\zeta})$ denotes the category of quasi-coherent $Fr_*\CO_{\CB_\zeta}$-modules.
Similarly, we have
\begin{align*}
&\Mod(\DD_{\CB_\zeta})\cong\Mod(Fr_*\DD_{\CB_\zeta}),\\&\Mod(\DD_{\CB_\zeta, t})\cong\Mod(Fr_*\DD_{\CB_\zeta,t}) \quad(t\in H(\BC)),
\end{align*}
where $Fr_*\DD_{\CB_\zeta}$ and $Fr_*\DD_{\CB_\zeta,t}$ are $\CO_\CB$-algebras corresponding to $A_1$-algebras
$D_\zeta^{(\ell)}$ and $D_\zeta^{(\ell)}\otimes_{\BC[\Lambda]}\BC$ respectively.
Here $D_\zeta^{(\ell)}=\bigoplus_{\lambda\in\Lambda^+}D_\zeta(\ell\lambda)$, and $\BC[\Lambda]=\bigoplus_{\lambda\in\Lambda}\BC e(\lambda)$ denotes the group algebra of $\Lambda$.
Moreover, $\BC[\Lambda]\to D_\zeta^{(\ell)}$ and $\BC[\Lambda]\to\BC$ are given by 
$e(\lambda)\mapsto\sigma_\lambda$ and $e(\lambda)\mapsto \lambda(t)$ respectively.

Denote by $ZD_\zeta^{(\ell)}$ the central subalgebra of $D_\zeta^{(\ell)}$ generated by elements $\ell_\varphi, \deru_u, \sigma_\lambda\,\,(\varphi\in A_1, u\in Z_{Fr}(U_\zeta), \lambda\in\Lambda)$,  where $Z_{Fr}(U_\zeta)$ denotes the Frobenius center of $U_\zeta$. 
Let $\CZ_\zeta$ be the central $\CO_\CB$-subalgebra of $Fr_*\DD_{\CB_\zeta}$ corresponding to $ZD_\zeta^{(\ell)}$.
By \cite{DP} $Z_{Fr}(U_\zeta)$ is a Hopf subalgebra of $U_\zeta$ isomorphic to the coordinate algebra $\BC[K]$ of the algebraic group
\[
K=
\{(hg_+,h^{-1}g_-)\mid
h\in H, g_\pm\in N^\pm\}\subset B^+\times B^-.
\]
We note also that the group algebra $\BC[\Lambda]$ is naturally isomorphic to the coordinate algebra $\BC[H]$ of $H$.
Hence we  have a natural surjective algebra homomorphism $A_1\otimes \BC[K]\otimes\BC[H]\to ZD_\zeta^{(\ell)}$.
Correspondingly, we have a natural surjective $\CO_\CB$-algebra homomorphism 
$p_{*}\CO_{\CB\times K\times H}\to\CZ_\zeta$, where $p:\CB\times K\times H\to\CB$ is the projection.
Define $\kappa:K\to G$ by 
$\kappa(k_1,k_2)=k_1k_2^{-1}$.
\begin{theorem}
\label{theorem01}
We have
$\CZ_\zeta\cong p_{*}\CO_\CV$, where
\[
\CV=\{(B^-g,k,t)\in\CB\times K\times H\mid
g\kappa(k)g^{-1}\in t^{2\ell}N^-\}.
\]
\end{theorem}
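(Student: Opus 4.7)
The plan is to identify the kernel of the natural surjective $\CO_\CB$-algebra morphism
\[
\Psi:p_{*}\CO_{\CB\times K\times H}\twoheadrightarrow\CZ_\zeta,
\]
or equivalently the kernel of the surjection $A_1\otimes\BC[K]\otimes\BC[H]\twoheadrightarrow ZD_\zeta^{(\ell)}$ at the level of algebras, and to verify that this kernel is generated by the functions vanishing on the closed subscheme $\CV$.  A first observation is that all structures in sight are $G$-equivariant, where $G$ acts on $\CB$ by right translation, on $K$ by conjugation through the diagonal embedding $G\hookrightarrow G\times G$, and trivially on $H$.  One checks directly that the defining condition $g\kappa(k)g^{-1}\in t^{2\ell}N^-$ is preserved by this action, so it suffices to prove the isomorphism over a single $G$-stable affine open piece of $\CB$ and extend by covering $\CB$ with $G$-translates.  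The natural choice is the big Bruhat cell $U_0=B^-N^+/B^-\cong N^+$.

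Over $U_0$ one has an explicit trivialization of $A_\zeta$, and combining this with the PBW-type structure of $D_\zeta$ and the decomposition $Z_{Fr}(U_\zeta)\cong\BC[N^+]\otimes\BC[H]\otimes\BC[N^-]\cong\BC[K]$ from \cite{DP} gives a presentation of $\CZ_\zeta|_{U_0}$ by generators coming from the three families $\ell_\varphi\ (\varphi\in A_1)$, $\deru_u\ (u\in Z_{Fr}(U_\zeta))$, and $\sigma_\lambda\ (\lambda\in\Lambda)$.  The task then becomes to extract all relations satisfied by these generators inside the commutative algebra $\CZ_\zeta$.  Since the generators are all central, the nontrivial content is not in commutators but in the fact that certain products coincide after rewriting via the Hopf-theoretic identity
\[
\deru_u\cdot\ell_\varphi=\sum_{(u)}\ell_{\deru_{u_{(1)}}\varphi}\cdot\deru_{u_{(2)}}
\qquad(u\in Z_{Fr}(U_\zeta),\ \varphi\in A_1),
\]
and its analogues for $\sigma_\lambda$.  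Because $\deru_u$ for $u$ in the Frobenius center acts on $A_\zeta$ through Lusztig's Frobenius, these formulas translate into polynomial identities in $A_1\otimes\BC[K]\otimes\BC[H]$.

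The geometric interpretation then proceeds by matching these polynomial identities to the vanishing equations of $\CV$.  Lusztig's Frobenius embedding $\BC[K]\cong Z_{Fr}(U_\zeta)\hookrightarrow U_\zeta$ induces, via $\deru_\bullet$, a geometric action on the flag manifold governed by $\ell$-fold conjugation by elements of $\kappa(K)\subset G$; combined with the $\sigma_\lambda$-grading (which contributes a character of $H$ scaled by $\ell$ through the $A_\zeta(\ell\mu)$-convention) and a standard $\rho$-shift, this yields exactly the defining equation $g\kappa(k)g^{-1}\in t^{2\ell}N^-$, the factor $2\ell$ decomposing as $\ell$ from the Frobenius and an additional $\ell$ from the grading shift.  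Covering $\CB$ by $G$-translates of $U_0$ and patching the local identifications completes the proof.

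The principal technical obstacle is the explicit commutation calculation and the verification that the ideal generated by the resulting polynomial identities coincides precisely with the ideal of $\CV$—neither larger nor smaller.  Because the natural map $D_\zeta\to\End_\BC(A_\zeta)$ is not injective, one cannot verify these relations merely by their action on $A_\zeta$; the calculations must be performed at the level of the abstract specialization $D_\zeta=\BC\otimes_\BA D_\BA$.  Careful bookkeeping of all constants and powers of $\zeta$ is required, as is the final check that the constructed ideal agrees on the nose with the ideal of the explicitly described subvariety $\CV$; the analogous positive-characteristic statement of \cite{BMR} should serve as a useful guidepost.
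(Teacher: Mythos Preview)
There are two genuine gaps in the proposal, one in each direction of the claimed equality of ideals.

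\textbf{The ``kernel contains the ideal of $\CV$'' direction.}  The commutation identity you display,
\[
\deru_u\cdot\ell_\varphi=\sum_{(u)}\ell_{u_{(0)}\cdot\varphi}\cdot\deru_{u_{(1)}},
\]
is already the \emph{defining} relation of the smash-product algebra $E_\zeta=A_\zeta\otimes U_\zeta\otimes\BC[\Lambda]$; it holds in $ZE_\zeta^{(\ell)}$ tautologically and hence contributes nothing to the kernel of $ZE_\zeta^{(\ell)}\to ZD_\zeta^{(\ell)}$.  The paper produces the relevant relations from a completely different source: for each $\varphi\in A_\zeta$, the right multiplication $r_\varphi$ has two distinct expressions in terms of $\ell$, $\deru$, $\sigma$, obtained from the two universal $R$-matrices $\CR$ and ${}^t\CR^{-1}$ (Lemma~4.1).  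The difference $\Omega(\varphi)=\Omega_1(\varphi)-\Omega_2(\varphi)$ thus lies in $\Ker(E_\zeta\to D_\zeta)$; for $\varphi\in A_1$ one checks $\Omega_i(\varphi)\in ZE_\zeta^{(\ell)}$ and then computes (Lemma~5.4) that $\Omega_1(\varphi)-\Omega_2(\varphi)=0$ is exactly a defining equation of $\tilde\CV$.  Your proposal does not touch this mechanism.

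\textbf{The ``kernel is no larger'' direction.}  Here you acknowledge the difficulty but offer no method.  The paper's argument is not a computation but a structural one via Poisson geometry: since $D_\zeta$ is a specialization of $D_\BA$, the center $ZE_\zeta^{(\ell)}$ carries a Poisson bracket and the kernel is a Poisson ideal, hence its radical cuts out a Poisson subvariety of $\tilde\CV$.  The paper then proves (Proposition~5.10) that $\tilde\CV$ is a connected symplectic leaf of $\tilde\CB\times K\times H$, so it has no nonempty proper Poisson subvariety; combined with $\tilde\CZ_\zeta\ne0$ (immediate from injectivity of $A_1\to D_\zeta$) this forces the support to be all of $\tilde\CV$, and reducedness of $\tilde\CV$ (Lemma~5.6) finishes.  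Nothing in your outline substitutes for this.

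Separately, your $G$-equivariance reduction is not available: conjugation by the diagonal $G\subset G\times G$ does not preserve $K\subset B^+\times B^-$, and equivalently $\Image(\kappa)=N^+HN^-$ is not stable under $G$-conjugation, so $\CV$ carries no such $G$-action and one cannot reduce to the big cell this way.
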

The proof of this theorem is accomplished as follows.
In order to show that the kernel of $p_{*}\CO_{\CB\times K\times H}\to\CZ_\zeta$
contains defining equations of $\CV$ one needs to establish certain relations among elements of $ZD_\zeta^{(\ell)}\left(\subset D_\zeta^{(\ell)}\right)$.
As mentioned earlier, $r_\varphi$ for $\varphi\in A_\zeta$ can be expressed using other generators by the aid of the universal $R$-matrix $\CR$.
In fact we have two universal $R$-matrices $\CR$ and ${}^t\CR^{-1}$ by which we obtain two different expressions of the same element $r_\varphi$.
This gives our desired relations.
Hence $\CZ_\zeta$ is a quotient of $p_{*}\CO_\CV$.
To show that $\CZ_\zeta$ is isomorphic to $p_{*}\CO_\CV$ we use Poisson geometry.
We have a natural Poisson structure of $Y=(N^-\backslash G)\times K\times H$, and the support of the pullback of $\CZ_\zeta$ to $Y$ is a Poisson subvariety of $Y$ by \cite{DP}.
On the other hand we can show that the pullback of $\CV$ to $Y$ is a connected symplectic leaf of the Poisson manifold $Y$.
Hence the assertion follows from the fact that $\CZ_\zeta\ne0$ which is easy to check.

We denote by $\tilde{\DD}$ the localization of $Fr_*\DD_{\CB_\zeta}$ on $\CV$.
\begin{theorem}
\label{theorem02}
$\tilde{\DD}$ is locally free over $\CO_\CV$ of finite rank.
Moreover, for any $v\in \CV$ the fiber $\tilde{\DD}(v)$ of $\tilde{\DD}$ at $v$ is isomorphic to the matrix algebra $M_{\ell^{N}}(\BC)$, where $N$ is the number of the positive roots.
In particular, $\tilde{\DD}$ is an Azumaya algebra.
\end{theorem}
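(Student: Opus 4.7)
The plan is to prove the theorem in three coordinated steps: establish coherence of $\tilde{\DD}$ over $\CO_\CV$, obtain a generic rank bound of $\ell^{2N}$, and perform a fiberwise computation producing an irreducible module of dimension exactly $\ell^{N}$ at every closed point of $\CV$. Taken together, these force the fiber $\tilde{\DD}(v)$ at each $v\in\CV$ to be a central simple $\BC$-algebra of dimension $\ell^{2N}$, hence isomorphic to $M_{\ell^N}(\BC)$; local freeness of rank $\ell^{2N}$ then follows from constancy of the fiber dimension over the irreducible variety $\CV$ (which, by the proof of Theorem~\ref{theorem01}, is a single symplectic leaf).

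For coherence, I would show that $D_\zeta^{(\ell)}$ is a finitely generated module over $ZD_\zeta^{(\ell)}$. This rests on three standard finiteness results at an $\ell$-th root of unity: $U_\zeta$ is module-finite over the Frobenius center $Z_{Fr}(U_\zeta)\cong\BC[K]$, the graded algebra $A_\zeta$ is module-finite over $A_1$ with each graded piece of rank $\ell^N$, and $\BC[\Lambda]$ is module-finite over the subalgebra generated by the $e(\ell\lambda)$. A triangular PBW-type decomposition of $D_\zeta^{(\ell)}$, reflecting the presentation $D_\BF=\langle\ell_\varphi,\deru_u,\sigma_\lambda\rangle$, then bounds the rank of $\tilde{\DD}$ over $\CO_\CV$ generically by $\ell^{2N}$. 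This generic value must be exactly $\ell^{2N}$ because the PI-degree of $D_\zeta$ is $\ell^{N}$, matching the number $N$ of positive roots.

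For the main step, I would construct, at each $v=(B^-g,k,t)\in\CV$, an irreducible module of dimension exactly $\ell^N$ for the fiber $\tilde{\DD}(v)$, by inducing from a character of the quantum negative Borel determined by the datum $(k,t)$. The defining condition $g\kappa(k)g^{-1}\in t^{2\ell}N^-$ is exactly what is needed: after translating by $g$ to reduce to the standard Borel, this condition encodes the compatibility between the Frobenius-center character corresponding to $k$ and the twist $t$ which ensures that the induced ``baby Verma'' module has length one. The resulting simple module furnishes a nonzero algebra homomorphism $\tilde{\DD}(v)\to M_{\ell^N}(\BC)$, which must be surjective because the target is simple, and hence, by the dimension bound, an isomorphism.

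The hard part will be verifying both the exact dimension $\ell^N$ and the irreducibility at \emph{every} point of $\CV$, not merely at generic ones. This amounts to showing that the central character at $v$ is regular in the quantum sense adapted to the roots-of-unity setting, which in turn requires translating the defining equation of $\CV$ into nondegeneracy of the induced highest-weight module. I expect to rely here on the De Concini--Kac--Procesi description of the symplectic leaves of $K$ and of the central characters of $U_\zeta$, together with the Poisson-geometric identification of $\CV$ as a symplectic leaf used in the proof of Theorem~\ref{theorem01}. Once the construction is carried out uniformly on all of $\CV$, the Azumaya conclusion follows automatically.
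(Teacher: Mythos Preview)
Your overall architecture (coherence $+$ rank bound $+$ fiberwise simplicity) is reasonable, but it diverges from the paper's argument in two essential places, and in one of them there is a genuine gap.

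\textbf{Local generation by $\ell^{2N}$ elements.} The paper does not obtain the rank bound from an abstract ``triangular PBW-type decomposition'' of $D_\zeta^{(\ell)}$. Instead it uses the braid group action $T\star(\bullet)$ on $D'_\zeta$ (developed in Section~4) to transport the localization at any $w\in W$ back to the open cell, and there applies the explicit relations of Propositions~\ref{prop:local-formula1} and~\ref{prop:local-formula2} to show that each $\jmath(y_pk_{\beta_p})$ and each $\jmath(k_\mu)$ lies in the subalgebra generated over $(\Theta_e^{-1}ZD'^{(\ell)}_\zeta)(0)$ by $(\Theta_e^{-1}A_\zeta)(0)$ and $U_\zeta^+$. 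These two factors are free of rank $\ell^{N}$ over their central counterparts (Proposition~\ref{prop:AzetaA1} and Lemma~\ref{lem:PBW-DK}), giving the bound $\ell^{2N}$. Your sketch does not explain how to handle the $U_\zeta^-$ generators or the grading operators; the paper's relations, which come from comparing the two $R$-matrix expressions of $r_\varphi$, are exactly what absorbs them.

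\textbf{Fiberwise simplicity.} This is where your plan has a real gap. You propose to build a baby Verma at every $v\in\CV$ and argue that the defining condition $g\kappa(k)g^{-1}\in t^{2\ell}N^-$ forces length one. That implication is not established anywhere in the paper and is not a formal consequence of the symplectic-leaf description of $\CV$; baby Vermas for $U_\zeta$ (and for $\tilde{\DD}(v)$) can be reducible, and the leaf condition only constrains the Frobenius-center character, not the full Harish-Chandra data. The paper deliberately avoids this difficulty: it invokes Brown--Gordon \cite{BG} to get the Azumaya property on the open subset $\CV_{ur}$ (where $t$ lies in an explicit ``unramified'' locus $H_{ur}$), using the algebra map $U_\zeta\otimes_{Z(U_\zeta)}\CO_\CV\to\tilde{\DD}_{\CB_\zeta}$ together with Lemma~\ref{lem:Azumaya-generality}. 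It then extends to all of $\CV$ by the translation automorphisms $\xi_\mu:(B^-g,k,t)\mapsto(B^-g,k,t_\mu t)$, showing $\xi_\mu^*\tilde{\DD}_{\CB_\zeta}\cong\tilde{\DD}_{\CB_\zeta}$ locally on $\CB$ (via conjugation by a highest-weight section $c\in A_\zeta(\mu)_{w^{-1}\mu}$), and observing that $\bigcup_\mu\xi_\mu(\CV_{ur})=\CV$.

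In short, your step~3 is the step the paper regards as hard enough to circumvent rather than prove directly; unless you can supply a concrete argument for irreducibility at \emph{every} point (not just an appeal to DCKP leaf theory), the proof is incomplete. The paper's route trades that difficulty for the much softer input of Brown--Gordon on an open set plus a twist covering argument.
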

This result follows from a result of 
Brown-Gordon \cite{BG} and the fact that $\tilde{\DD}$ is locally generated by $\ell^{2N}$ sections.
Using the action of the braid group on $\tilde{\DD}$ the proof of the latter fact is reduced to a calculation on a standard open subset of $\CB$.

Let $W$ denote the Weyl group.
Consider the fiber product $K\times _{H/W}H$, 
where $K\to H/W$ is the composite of $\kappa:K\to G$ and the map $G\to H/W$ associating $g\in G$ with  its semisimple part, and $H\to H/W$ is given by associating $t\in H$ with the $W$-orbit of $t^{2\ell}$.
We define $\delta:\CV\to K\times _{H/W}H$ by
$\delta(B^-g,k,t)=(k,t)$.
\begin{theorem}
\label{theorem03}
For any $(k,t)\in K\times _{H/W}H$ there exists a locally free $\CO_{\delta^{-1}(k,t)}$-module $\CM$ such that $\tilde{\DD}|_{\delta^{-1}(k,t)}\cong\CEnd_{\CO_{\delta^{-1}(k,t)}}(\CM)$.
Hence $\tilde{\DD}|_{\delta^{-1}(k,t)}$ is a split Azumaya algebra.
\end{theorem}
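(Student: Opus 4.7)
The strategy is to produce an explicit splitting module. Since by Theorem~\ref{theorem02} the algebra $\tilde\DD|_{\delta^{-1}(k,t)}$ is Azumaya of rank $\ell^{2N}$ over $\CO_{\delta^{-1}(k,t)}$, it will be split as soon as one exhibits a locally free $\CO_{\delta^{-1}(k,t)}$-module $\CM$ of rank $\ell^{N}$ carrying a non-trivial algebra action. Indeed, the induced map $\tilde\DD|_{\delta^{-1}(k,t)} \to \CEnd_{\CO_{\delta^{-1}(k,t)}}(\CM)$ is then a morphism of Azumaya algebras of the same rank $\ell^{2N}$, and any such morphism whose kernel vanishes at the generic point (where the Azumaya algebra trivialises and the fibre is a central simple algebra over a field) is an isomorphism, because the kernel is an ideal of an Azumaya algebra and therefore corresponds to an ideal of the centre.

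The natural candidate for $\CM$ is built from the Frobenius pushforward of a twisted line bundle on $\CB_\zeta$. For $\mu\in\Lambda$ the $A_1$-module $\bigoplus_{\nu\in\Lambda^+}A_\zeta(\ell\nu+\mu)$ defines a sheaf $Fr_*\CO_{\CB_\zeta}(\mu)$ on $\CB$ that is locally free of rank $\ell^{N}$ and carries a canonical $Fr_*\DD_{\CB_\zeta}$-action. The $\BC[\Lambda]$-central character and $\BC[K]$-central character of this action determine a distinguished point $(k_\mu,t_\mu)\in K\times_{H/W}H$: the $\sigma_\lambda$ act on the degree-$\nu$ component by the scalar $\zeta^{(\lambda,\mu)}$, while the Frobenius centre $Z_{Fr}(U_\zeta)\cong\BC[K]$ acts through evaluation at a $k_\mu$ coming from the trivial highest-weight-like character on $\CO_{\CB_\zeta}$ shifted by $\mu$. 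Pulling this sheaf back to $\delta^{-1}(k_\mu,t_\mu)$ and localizing along $\CV$ gives $\CM$ in the special case. For arbitrary $(k,t)\in K\times_{H/W}H$ one transports the splitting using the $G$-action on $\CB$, realised quantum-mechanically by the operators $\deru_u$ for $u\in U_\zeta$ together with the conjugation action on $Z_{Fr}(U_\zeta)\cong\BC[K]$; the fibre-product condition on $(k,t)$ is precisely the compatibility needed for the transport to land in $\delta^{-1}(k,t)$.

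The principal obstacle is the case where $t\in H(\BC)$ is not an $\ell$-torsion element: the map $\Lambda\to H(\BC)$, $\mu\mapsto(\lambda\mapsto\zeta^{(\lambda,\mu)})$, has image only the $\ell$-torsion subgroup, so one cannot realise an arbitrary $t$ as $t_\mu$ for an honest weight $\mu\in\Lambda$. Circumventing this appears to require either allowing formal monodromic twists of line bundles, or else reducing to an explicit local computation on the big open Schubert cell of $\CB$ by means of the braid-group action on $\tilde\DD$ used in the proof of Theorem~\ref{theorem02}; on such an affine open cell the algebra and its splitting become transparent by generators and relations, and the verification of non-vanishing of the natural map into $\CEnd_{\CO_{\delta^{-1}(k,t)}}(\CM)$ reduces to an explicit finite-dimensional check.
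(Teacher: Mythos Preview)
Your construction of $\CM$ via $Fr_*\CO_{\CB_\zeta}(\mu)$ does give a splitting bundle, but only at a very restricted locus. On $A_\zeta$ the Frobenius centre $Z_{Fr}(U_\zeta)$ acts through the counit (this is precisely the content of Lemma~\ref{lem:center}: $j(z)\cdot\varphi=\varepsilon(z)\varphi$ for $z\in Z_{Fr}(U_\zeta)$), so the $\BC[K]$-character of your module is the identity $1\in K$ regardless of $\mu$. Thus your ``special point'' is always of the form $(1,t_\mu)$ with $t_\mu$ an $\ell$-torsion element of $H$. The subsequent ``transport'' step is where the argument breaks down: the diagonal conjugation action of $G$ on $K\subset G\times G$ fixes the identity, and the operators $\partial_u$ do not furnish an action moving $1\in K$ to an arbitrary $k$. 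No amount of $G$-symmetry on $\CB$ will change the $K$-coordinate of the central character of a fixed $D_\zeta$-module. So as written, your proof covers only the fibres over $(1,t_\mu)$ and the reduction to this case is unjustified.

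The paper's argument bypasses this problem by appealing to a much stronger input than a single splitting module: Brown--Gordon \cite{BG} already establish that $U_\zeta\otimes_{Z(U_\zeta)}\CO_{\delta^{-1}(k,t)}$ is split Azumaya for \emph{every} $k\in K$ provided $t$ lies in a certain Zariski open dense subset $H_{ur}\subset H$. The remaining step is to pass from $t\in H_{ur}$ to arbitrary $t$, and this is exactly what the $\mu$-translation accomplishes (Proposition~\ref{prop:Azumaya-translation} together with Lemma~\ref{lem:Azumaya-generality2}): since $\bigcup_{\mu\in\Lambda}t_\mu H_{ur}=H$, one can always shift $t$ into $H_{ur}$ without touching $k$. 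In other words, the $\ell$-torsion translates in $t$ are used not to reach special points from the identity, but to reduce the general case to the unramified locus where the splitting is already known for all $k$. Your identification of the ``principal obstacle'' as the non-$\ell$-torsion $t$ is thus inverted: that case is handled by Brown--Gordon, and the $\mu$-shift deals with the complement.
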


The proof of Theorem \ref{theorem03} is similar to that for the corresponding fact in positive characteristics due to Bezrukavnikov-Mirkovi\'{c}-Rumynin \cite{BMR}.
By Brown-Gordon \cite{BG} the result is already known when $t\in H$ belongs to certain open dense subset $H_{ur}$ of $H$.
The proof for the general case is reduced to this special case by using certain isomorphisms of Azumaya algebras.

Let us give a comment on the Beilinson-Bernstein type derived equivalence in our context.
In \cite{T4} we have formulated a precise conjecture concerning it, and have shown that it follows from a statement about the derived global section of the ring of differential operators.
Unfortunately the conjecture itself is still open.
We hope to deal with the problem in the near future.

The content of this paper is as follows.
In Section 1 and Section 2 we  recall basic facts on a quantized enveloping algebra and its representations respectively.
In Section 3 the quantized flag manifold is introduced and some of its properties are investigated.
In Section 4 we define the ring of differential operators on the quantized flag manifold and establish some properties. 
In particular, we show that it acquires an action of the braid group.
Theorem \ref{theorem01} is proved in Section 5.
Theorem \ref{theorem02} and Theorem \ref{theorem03} are proved in Section 6.

We note that a closely related result is given in Backelin-Kremnizer \cite{BK2}.

\subsection{}
In this paper we shall use the following notation for a Hopf algebra $H$ over a field $\BK$.
The comultiplication, the counit, and the antipode of $H$ are denoted by
\begin{align}
&\Delta_H:H\to H\otimes_\BK H,\\
&\varepsilon_H:H\to\BK,\\
&S_H:H\to H
\end{align}
respectively.
The subscript $H$ will often be omitted.
For $n\in\BZ_{>0}$ we denote by
\[
\Delta_n:H\to H^{\otimes n+1}
\]
the algebra homomorphism given by 
\[
\Delta_1=\Delta,\qquad
\Delta_n=(\Delta\otimes \id_{H^{\otimes n-1}})\circ\Delta_{n-1},
\]
and write
\[
\Delta(h)=\sum_{(h)}h_{(0)}\otimes h_{(1)},
\qquad
\Delta_n(h)=\sum_{(h)_n}h_{(0)}\otimes\cdots\otimes h_{(n)}
\quad(n\geqq2).
\]

Moreover, for a $\BK$-algebra $A$ we denote by 
\[
m:A\otimes A\to A
\]
the $\BK$-linear map induced by the multiplication of $A$. 

\subsection{}
The author would like to thank the referee for the very careful reading and the  detailed comments.

\section{Quantized enveloping algebras}
\label{sec:QE}
\subsection{}
Let $G$ be a connected simply-connected simple algebraic group over the complex number field $\BC$.
We fix Borel subgroups $B^+$ and $B^-$ such that $H=B^+\cap B^-$ is a maximal torus of $G$.
Set $N^+=[B^+,B^+]$ and $N^-=[B^-,B^-]$.
We denote the Lie algebras of $G$, $B^+$, $B^-$, $H$, $N^+$, $N^-$ by $\Gg$, $\Gb^+$, $\Gb^-$, $\Gh$, $\Gn^+$, $\Gn^-$ respectively.
Let $\Delta\subset\Gh^*$ be the root system of $(\Gg,\Gh)$.
For $\alpha\in\Delta$ we denote by $\Gg_\alpha$ the corresponding root space.
We denote by $\Lambda\subset\Gh^*$ and $Q\subset\Gh^*$ the weight lattice and the root lattice respectively.
For $\lambda\in\Lambda$ we denote the corresponding character of $H$ by
$\theta_\lambda:H\to\BC^\times$.
We take a system of positive roots $\Delta^+$ such that $\Gb^+$ is the sum of weight spaces with weights in $\Delta^+\cup\{0\}$.
Let $\{\alpha_i\}_{i\in I}$ be the set of simple roots, and 
$\{\varpi_i\}_{i\in I}$ the corresponding set of fundamental weights.
We denote by 
$\Lambda^+$ the set of dominant integral weights.
We set $Q^+=\bigoplus_{i\in I}\BZ_{\geqq0}\alpha_i$.
Let $W\subset GL(\Gh^*)$ be the Weyl group.
For $i\in I$ we denote by $s_i\in W$ the corresponding simple reflection.
We take a $W$-invariant symmetric bilinear form
\begin{equation}
\label{eq:killing}
(\,,\,):\Gh^*\times\Gh^*\to\BC
\end{equation}
such that $(\alpha,\alpha)=2$ for short roots $\alpha$.
\begin{lemma}
We have
\[
(\Lambda,Q)\subset\BZ,\qquad
(\Lambda,\Lambda)\subset\frac1{|\Lambda/Q|}\BZ.
\]
\end{lemma}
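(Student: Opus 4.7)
The plan is to reduce both inclusions to the defining integrality property of $\Lambda$, which states that $2(\lambda,\alpha_i)/(\alpha_i,\alpha_i)\in\BZ$ for every $\lambda\in\Lambda$ and every simple root $\alpha_i$ (i.e.\ the pairing of $\lambda$ with the simple coroot is an integer).

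For the first inclusion I would argue as follows. The normalization $(\alpha,\alpha)=2$ for short roots $\alpha$ forces $(\alpha_i,\alpha_i)\in\{2,4,6\}$ for every simple root: indeed it equals $2r_i$, where $r_i\in\{1,2,3\}$ is the ratio of the squared length of $\alpha_i$ to that of a short root (the value $3$ occurring only in type $G_2$). Hence $(\alpha_i,\alpha_i)/2$ is a positive integer. Multiplying the integer $2(\lambda,\alpha_i)/(\alpha_i,\alpha_i)$ by $(\alpha_i,\alpha_i)/2$ gives $(\lambda,\alpha_i)\in\BZ$. Since $Q=\bigoplus_{i\in I}\BZ\alpha_i$, bilinearity yields $(\Lambda,Q)\subset\BZ$.

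For the second inclusion I would use the elementary fact that a finite abelian group is annihilated by its order, applied to $\Lambda/Q$: for any $\mu\in\Lambda$ one has $|\Lambda/Q|\,\mu\in Q$. Then for arbitrary $\lambda,\mu\in\Lambda$, the first inclusion applied to $\lambda$ and $|\Lambda/Q|\,\mu$ gives
\[
|\Lambda/Q|\cdot(\lambda,\mu) \;=\; \bigl(\lambda,\;|\Lambda/Q|\,\mu\bigr) \;\in\; \BZ,
\]
so $(\lambda,\mu)\in\tfrac{1}{|\Lambda/Q|}\BZ$. The only step that is not purely formal is the case-free verification that $(\alpha_i,\alpha_i)\in 2\BZ_{>0}$, and this is immediate from the normalization of the form; I do not expect any real obstacle.
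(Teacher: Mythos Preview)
Your argument is correct and is essentially the same as the paper's: the paper writes $(\lambda,\alpha)=\dfrac{2(\lambda,\alpha)}{(\alpha,\alpha)}\cdot\dfrac{(\alpha,\alpha)}{2}\in\BZ$ for any root $\alpha$, and then simply says ``the second formula follows from the first one.'' Your version spells out that missing step via $|\Lambda/Q|\,\mu\in Q$, which is exactly what is intended.
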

\begin{proof}
For $\alpha\in\Delta$ and $\lambda\in\Lambda$ we have
\[
(\lambda,\alpha)=\frac{2(\lambda,\alpha)}{(\alpha,\alpha)}
\frac{(\alpha,\alpha)}2\in\BZ.
\]
The second formula follows from the first one.
\end{proof}
For $\alpha\in\Delta$ we set $\alpha^\vee=2\alpha/(\alpha,\alpha)$.
For $i\in I$ we fix $\overline{e}_i\in\Gg_{\alpha_i}$, $\overline{f}_i\in\Gg_{-\alpha_i}$ such that $[\overline{e}_i,\overline{f}_i]=\alpha_i^\vee$ under the identification $\Gh=\Gh^*$ induced by $(\,\,,\,\,)$.

For $\lambda=\sum_{i\in I}c_i\alpha_i\in\Gh^*$ we set $\Ht(\lambda)=\sum_{i\in I}c_i$.

We define $\rho\in\Lambda$ by $(\rho,\alpha_i^\vee)=1$ for any $i\in I$.

\subsection{}
For $n\in\BZ_{\geqq0}$ we set
\[
[n]_t=\frac{t^n-t^{-n}}{t-t^{-1}}\in\BZ[t,t^{-1}],
\qquad
[n]_t!=[n]_t[n-1]_t\cdots[2]_t[1]_t\in\BZ[t,t^{-1}].
\]

We denote by
$U_\BF$
the quantized enveloping algebra over $\BF=\BQ(q^{1/{|\Lambda/Q|}})$ associated to $\Gg$.
Namely, $U_\BF$ is the associative algebra over $\BF$ generated by elements 
\[
k_\lambda\quad(\lambda\in\Lambda),\qquad
e_i, f_i\quad( i\in I)
\]
satisfying the relations
\begin{align}
&k_0=1,\quad 
k_\lambda k_\mu=k_{\lambda+\mu}
\qquad(\lambda,\mu\in\Lambda),
\label{eq:def1}\\
&k_\lambda e_ik_\lambda^{-1}=q^{(\lambda,\alpha_i)}e_i,\qquad(\lambda\in\Lambda, i\in I),
\label{eq:def2a}\\
&k_\lambda f_ik_\lambda^{-1}=q^{-(\lambda,\alpha_i)}f_i
\qquad(\lambda\in\Lambda, i\in I),
\label{eq:def2b}\\
&e_if_j-f_je_i=\delta_{ij}\frac{k_i-k_i^{-1}}{q_i-q_i^{-1}}
\qquad(i, j\in I),
\label{eq:def3}\\
&\sum_{n=0}^{1-a_{ij}}(-1)^ne_i^{(1-a_{ij}-n)}e_je_i^{(n)}=0
\qquad(i,j\in I,\,i\ne j),
\label{eq:def4}\\
&\sum_{n=0}^{1-a_{ij}}(-1)^nf_i^{(1-a_{ij}-n)}f_jf_i^{(n)}=0
\qquad(i,j\in I,\,i\ne j),
\label{eq:def5}
\end{align}
where $q_i=q^{(\alpha_i,\alpha_i)/2}, k_i=k_{\alpha_i}, a_{ij}=2(\alpha_i,\alpha_j)/(\alpha_i,\alpha_i)$ for $i, j\in I$, and
\[
e_i^{(n)}=
e_i^n/[n]_{q_i}!,
\qquad
f_i^{(n)}=
f_i^n/[n]_{q_i}!
\]
for $i\in I$ and $n\in\BZ_{\geqq0}$.
We will use the  Hopf algebra structure of $U_\BF$ given by
\begin{align}
&\Delta(k_\lambda)=k_\lambda\otimes k_\lambda,\\
&\Delta(e_i)=e_i\otimes 1+k_i\otimes e_i,\quad
\Delta(f_i)=f_i\otimes k_i^{-1}+1\otimes f_i,
\nonumber\\
&\varepsilon(k_\lambda)=1,\quad
\varepsilon(e_i)=\varepsilon(f_i)=0,\\
&S(k_\lambda)=k_\lambda^{-1},\quad
S(e_i)=-k_i^{-1}e_i, \quad S(f_i)=-f_ik_i.
\end{align}

Define subalgebras $U_\BF^{0}$, $U_\BF^{+}$, $U_\BF^{-}$, $U_\BF^{\geqq0}$, $U_\BF^{\leqq0}$ of $U_\BF$ by
\begin{align*}
&U_\BF^{0}=\langle k_\lambda\mid\lambda\in\Lambda\rangle,\qquad
U_\BF^+=\langle e_i\mid i\in I\rangle, \qquad
U_\BF^-=\langle f_i\mid i\in I\rangle, \\
&U_\BF^{\geqq0}=\langle k_\lambda, e_i\mid\lambda\in\Lambda, i\in I\rangle
,\qquad
U_\BF^{\leqq0}=\langle k_\lambda, f_i\mid\lambda\in\Lambda, i\in I\rangle.
\end{align*}
Then the multiplication of $U_\BF$ induces isomorphisms
\begin{align}
\label{eq:tri:F1}
&U_\BF\cong U_\BF^{-}\otimes U_\BF^{0}\otimes U_\BF^{+},\\
\label{eq:tri:F2}
&
U_\BF^{\geqq0}\cong U_\BF^{0}\otimes U_\BF^{+}
\cong U_\BF^{+}\otimes U_\BF^{0},\qquad
U_\BF^{\leqq0}\cong U_\BF^{0}\otimes U_\BF^{-}
\cong U_\BF^{-}\otimes U_\BF^{0}
\end{align}
of vector spaces.
Moreover, $\{k_\lambda\mid\lambda\in\Lambda\}$ is an $\BF$-basis of $U_\BF^0$.
We have 
\begin{align}
\label{eq:Upm-decomp1}
&U_\BF^{\pm}=\bigoplus_{\gamma\in Q^+}U_{\BF,\pm\gamma}^{\pm},\quad\\
\label{eq:Upm-decomp2}
&U_{\BF,\pm\gamma}^{\pm}=
\{u\in U_{\BF}^{\pm}\mid
k_\lambda u k_\lambda^{-1}=q^{\pm(\lambda,\gamma)}u\,\,(\lambda\in\Lambda)\}.
\end{align}

We denote by
$\BB$
the braid group corresponding to $W$.
Namely, $\BB$ is a group generated by elements $T_i\,\,(i\in I)$ satisfying relations 
\[
\underbrace{T_iT_j\cdots\cdots}_{\mbox{\footnotesize{$\ord(s_is_j)$-times}}}=
\underbrace{T_jT_i\cdots\cdots}_{\mbox{\footnotesize{$\ord(s_is_j)$-times}}}
\qquad(i, j\in I, i\ne j),
\]
where $\ord(s_is_j)$ denotes the order of $s_is_j\in W$.
For $w\in W$ we set $T_w=T_{i_1}\cdots T_{i_r}$ where $w=s_{i_1}\cdots s_{i_r}$ is a reduced expression of $w$.
It does not depend on the choice of a reduced expression.
We have a group homomorphism
\[
\BB\to
\Aut_{alg}(U_{\BF})
\]
given by
\begin{align*}
&T_i(k_\mu)=k_{s_i\mu}\qquad(\mu\in\Lambda),\\
&T_i(e_j)=
\begin{cases}
\sum_{k=0}^{-a_{ij}}(-1)^kq_i^{-k}e_i^{(-a_{ij}-k)}e_je_i^{(k)}\qquad
&(j\in I,\,\,j\ne i),\\
-f_ik_i\qquad
&(j=i),
\end{cases}\\
&T_i(f_j)=
\begin{cases}
\sum_{k=0}^{-a_{ij}}(-1)^kq_i^{k}f_i^{(k)}f_jf_i^{(-a_{ij}-k)}\qquad
&(j\in I,\,\,j\ne i),\\
-k_i^{-1}e_i\qquad
&(j=i)
\end{cases}
\end{align*}
(see Lusztig \cite{Lbook}).

Let $w_0$ be the longest element of $W$.
We fix a reduced expression 
\[
w_0=s_{i_1}\cdots s_{i_N}
\]
of $w_0$, where $N=|\Delta^+|$, and set
\[
\beta_k=s_{i_1}\cdots s_{i_{k-1}}(\alpha_{i_k})\qquad
(1\leqq k\leqq N).
\]
Then we have $\Delta^+=\{\beta_k\mid1\leqq k\leqq N\}$.
For $1\leqq k\leqq N$ set 
\begin{equation}
\label{eq:root vector}
e_{\beta_k}=T_{i_1}\cdots T_{i_{k-1}}(e_{i_k}),\quad
f_{\beta_k}=T_{i_1}\cdots T_{i_{k-1}}(f_{i_k}).
\end{equation}
Then $\{e_{\beta_{N}}^{m_N}\cdots e_{\beta_{1}}^{m_1}\mid
m_1,\dots, m_N\geqq0\}$ (resp. 
\newline
$\{f_{\beta_{N}}^{m_N}\cdots f_{\beta_{1}}^{m_1}\mid
m_1,\dots, m_N\geqq0\}$)
is an $\BF$-basis of $U_\BF^+$ (resp. $U_\BF^-$), called the PBW-basis (see Lusztig \cite{L2}).
We have $e_{\alpha_i}=e_i$ and $f_{\alpha_i}=f_i$ for any $i\in I$.
For $1\leqq k\leqq N,\, m\geqq0$ we also set 
\begin{equation}
e^{(m)}_{\beta_k}=e^{m}_{\beta_k}/[m]_{q_{\beta_k}}!,\quad
f^{(m)}_{\beta_k}=f^{m}_{\beta_k}/[m]_{q_{\beta_k}}!,
\end{equation}
where $q_\beta=q^{(\beta,\beta)/2}$ for $\beta\in\Delta^+$.

Denote by
\begin{equation}
\label{eq:Drinfeld-paring}
\tau: U_\BF^{\geqq0}\times U_\BF^{\leqq0}\to\BF
\end{equation}
the Drinfeld pairing.
It is characterized as a bilinear form satisfying
\begin{align}
&\tau(x,y_1y_2)=(\tau\otimes\tau)(\Delta(x),y_1\otimes y_2)
&(x\in U_\BF^{\geqq0},\,y_1,y_2\in U_\BF^{\leqq0}),\\
&\tau(x_1x_2,y)=(\tau\otimes\tau)(x_2\otimes x_1,\Delta(y))
&(x_1, x_2\in U_\BF^{\geqq0},\,y\in U_\BF^{\leqq0}),\\
&\tau(k_\lambda,k_\mu)=q^{-(\lambda,\mu)}
&(\lambda,\mu\in\Lambda),\\
&\tau(k_\lambda, f_i)=\tau(e_i,k_\lambda)=0
&(\lambda\in\Lambda,\,i\in I),\\
&\tau(e_i,f_j)=\delta_{ij}/(q_i^{-1}-q_i)
&(i,j\in I)
\end{align}
(see \cite{T1}, \cite{Lbook}).
It satisfies the following (see \cite{T1},  \cite{Lbook}).
\begin{lemma}
\label{lem:Drinfeld paring}
\begin{itemize}
\item[\rm(i)]
$\tau(S(x),S(y))=\tau(x,y)$ for $x\in U_\BF^{\geqq0}, y\in U_\BF^{\leqq0}$.
\item[\rm(ii)]
For $x\in U_\BF^{\geqq0}, y\in U_\BF^{\leqq0}$ we have
\begin{align*}
yx=\sum_{(x)_2,(y)_2}
\tau(x_{(0)},S(y_{(0)}))\tau(x_{(2)},y_{(2)})x_{(1)}y_{(1)},\\
xy=\sum_{(x)_2,(y)_2}
\tau(x_{(0)},y_{(0)})\tau(x_{(2)},S(y_{(2)}))y_{(1)}x_{(1)}.
\end{align*}
\item[\rm(iii)]
$\tau(xk_\lambda, yk_\mu)=q^{-(\lambda,\mu)}\tau(x,y)$ for $\lambda, \mu\in\Lambda, x\in U_\BF^+, y\in U_\BF^-$.
\item[\rm(iv)]
$\tau(U^+_{\BF,\beta}, U^-_{\BF,-\gamma})=\{0\}$ for $\beta, \gamma\in Q^+$ with $\beta\ne\gamma$.
\item[\rm(v)]
For any $\beta\in Q^+$ the restriction $\tau|_{U^+_{\BF,\beta}\times U^-_{\BF,-\beta}}$ is non-degenerate.
\end{itemize}
\end{lemma}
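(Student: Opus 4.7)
The plan is to treat the five items in order of increasing difficulty. Parts (iii) and (iv) would be proved together from the defining skew-pairing axioms by a weight-grading argument; part (i) would then follow from the explicit antipode formulas; part (ii) would be deduced from the same axioms by a coproduct manipulation; and the non-degeneracy (v), which carries the real content of the lemma, would be obtained by an explicit computation on PBW bases.

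For (iv), the recursive axioms $\tau(x_1x_2,y)=(\tau\otimes\tau)(x_2\otimes x_1,\Delta(y))$ and $\tau(x,y_1y_2)=(\tau\otimes\tau)(\Delta(x),y_1\otimes y_2)$, combined with the explicit coproducts of the generators, reduce by induction on $\Ht(\beta)+\Ht(\gamma)$ the value $\tau(x,y)$ for $x\in U^+_{\BF,\beta}$, $y\in U^-_{\BF,-\gamma}$ to products of pairings between generators; the normalizations $\tau(k_\lambda,f_i)=\tau(e_i,k_\lambda)=0$ and $\tau(e_i,f_j)=\delta_{ij}/(q_i^{-1}-q_i)$ force every surviving term to match $e_i$-weights against $f_i$-weights, yielding $\beta=\gamma$. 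The same induction, after splitting off $k_\lambda$ and $k_\mu$ via $\tau(k_\lambda,k_\mu)=q^{-(\lambda,\mu)}$, produces (iii). Given (iii), claim (i) is verified on the generators using $S(k_\lambda)=k_\lambda^{-1}$, $S(e_i)=-k_i^{-1}e_i$, $S(f_i)=-f_ik_i$, and extends by induction on height because $S$ is both an anti-algebra and anti-coalgebra map. For (ii), I would iterate the two skew-pairing axioms and invoke $S\circ S=\id$ together with (i) to match the two stated forms; the identities are equivalent to the statement that $\tau\circ(\id\otimes S)$ and $\tau\circ(S\otimes\id)$ are convolution inverses of $\tau$ in $\Hom_\BF(U_\BF^{\geqq0}\otimes U_\BF^{\leqq0},\BF)$.

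The main obstacle is (v). My plan is to use (iv) to reduce to a fixed weight $\beta\in Q^+$ and to compute the Gram matrix of $\tau|_{U^+_{\BF,\beta}\times U^-_{\BF,-\beta}}$ in the PBW bases arising from the root vectors \eqref{eq:root vector} for a chosen reduced expression $w_0=s_{i_1}\cdots s_{i_N}$. I would first establish the rank-one formula, showing by induction on $n$ that $\tau(e_i^n,f_i^n)$ equals an explicit nonzero scalar, and then prove by induction on $N$ that $\tau(e^{n_1}_{\beta_1}\cdots e^{n_N}_{\beta_N}, f^{m_N}_{\beta_N}\cdots f^{m_1}_{\beta_1})$ vanishes unless $n_k=m_k$ for all $k$, in which case it is a nonzero product of the rank-one contributions. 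The inductive step exploits that $\tau$ restricts to the Drinfeld pairings on Levi-type subalgebras and uses the braid group action on $U_\BF$ to peel off the extremal root $\beta_1=\alpha_{i_1}$, reducing a rank-$N$ calculation to rank $N-1$. The technical core is verifying the braid-group compatibility of $\tau$ at the level of the expansion coefficients; once the Gram matrix is diagonal with non-zero entries, the non-degeneracy of $\tau|_{U^+_{\BF,\beta}\times U^-_{\BF,-\beta}}$ is immediate.
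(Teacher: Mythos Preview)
The paper does not supply a proof of this lemma; it simply refers the reader to \cite{T1} and \cite{Lbook}. Your outline follows essentially the argument found in those references: the inductive reductions for (iii), (iv), and then (i) are the standard ones, and your plan for (v) --- showing that the Gram matrix of $\tau$ in the PBW bases is diagonal with explicit nonzero entries by peeling off one root at a time via the braid group action --- is precisely the computation in Lusztig's book (and in \cite[8.28]{Jan}, which the paper itself invokes for Lemma~\ref{lem:pm-duality}).

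One genuine slip: in your treatment of (ii) you invoke $S\circ S=\id$, but the antipode of $U_\BF$ is \emph{not} an involution; with the paper's conventions $S^2$ is conjugation by $k_{-2\rho}$. Fortunately you do not need $S^2=\id$. Each of the two identities in (ii) can be derived independently from the skew-pairing axioms, using only that $\tau(\,\cdot\,,S(\,\cdot\,))$ (respectively $\tau(S(\,\cdot\,),\,\cdot\,)$) is the convolution inverse of $\tau$ on $U_\BF^{\geqq0}\otimes U_\BF^{\leqq0}$; this last fact follows from the Hopf identity $m\circ(\id\otimes S)\circ\Delta=\varepsilon$ and requires no hypothesis on $S^2$. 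With that adjustment your sketch goes through.
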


Denote by $Z(U_{\BF})$ the center of $U_{\BF}$.
Let
\[
\BF[\Lambda]=\bigoplus_{\lambda\in\Lambda}\BF
e(\lambda)
\]
be the group algebra of $\Lambda$.
Define a linear map
\[
\iota:Z(U_\BF)\to\BF[\Lambda]
\]
as the composite of 
\[
Z(U_\BF)
\subset
U_\BF
\simeq
U^-_\BF\otimes U^0_\BF\otimes U^+_\BF
\xrightarrow{\varepsilon\otimes 1\otimes\varepsilon}
U_\BF^0
\cong
\BF[\Lambda],
\]
where $U_\BF^0\cong \BF[\Lambda]$ is given by $k_\lambda\leftrightarrow e(\lambda)$ for $\lambda\in\Lambda$.
Then $\iota$ is an injective algebra homomorphism and its image is described as follows.
Note that the Weyl group $W$ naturally acts on $\BF[\Lambda]$ by
\[
we(\lambda)=e(w\lambda)\qquad
(w\in W,\,\lambda\in\Lambda).
\]
We also consider a twisted action of $W$ on $\BF[\Lambda]$ given by
\[
w\circ e(\lambda)=q^{(w\lambda-\lambda,\rho)}e(w\lambda)\qquad
(w\in W,\,\lambda\in\Lambda).
\]
Then the image of $\iota$ coincides with 
\[
\BF[2\Lambda]^{W\circ}
=\{f\in\BF[2\Lambda]\mid 
w\circ f=f\quad(w\in W)\}
\]
(note that the twisted action of $W$ on $\BF[\Lambda]$ preserves $\BF[2\Lambda]$).
In particular, we have an isomorphism
\begin{equation}
\label{eq:HC-center-F}
Z(U_\BF)\simeq\BF[2\Lambda]^{W\circ}
\end{equation}
of $\BF$-algebras (see, e.g.\  \cite{T1}).
For $\lambda\in\Lambda^+$ we denote by $m(\lambda)$ the element of $Z(U_\BF)$ which corresponds to 
\[
\sum_{w\in W/W_\lambda}w\circ e(-2\lambda)\in \BF[2\Lambda]^{W\circ}
\]
under the identification \eqref{eq:HC-center-F}, where
$W_\lambda=\{w\in W\mid w\lambda=\lambda\}$.
Then we have 
\begin{equation}
\label{eq:mlambda:F}
Z(U_\BF)=\bigoplus_{\lambda\in\Lambda^+}\BF m(\lambda).
\end{equation}
\subsection{}
We fix an integer $\ell>1$ satisfying
\begin{itemize}
\item[(a)]
$\ell$ is odd,
\item[(b)]
$\ell$ is prime to 3 if $G$ is of type $G_2$, $F_4$, $E_6$, $E_7$, $E_8$,
\item[(c)]
$\ell$ is prime to $|\Lambda/Q|$,
\end{itemize}
and a primitive $\ell$-th root $\zeta'\in\BC$ of 1.
Define a subring $\BA$ of $\BF$ by
\[
\BA=\{f(q^{1/|\Lambda/Q|})\mid
f(x)\in\BQ(x),\,\mbox{$f$ is regular at $x=\zeta'$}\}.
\]
We set $\zeta=(\zeta')^{|\Lambda/Q|}$.
We note that $\zeta$ is also a primitive $\ell$-th root of 1 by the condition (c).

We denote by $U_\BA^L$, $U_\BA$ the $\BA$-forms of $U_\BF$ called the Lusztig form and  the De Concini-Kac form respectively.
Namely, we have
\begin{align*}
U_\BA^L
&=\langle
e_i^{(m)},\, f_i^{(m)},\,k_\lambda\mid
i\in I,\,m\in\BZ_{\geqq0},\,\lambda\in\Lambda
\rangle_{\BA-{\rm alg}}
\subset U_\BF
,\\
U_\BA
&=\langle
e_i,\, f_i,\,k_\lambda\mid
i\in I,\,\lambda\in\Lambda
\rangle_{\BA-{\rm alg}}
\subset U_\BF.
\end{align*}
We have obviously
$U_\BA\subset U^L_\BA$.
The Hopf algebra structure of $U_\BF$ induces Hopf algebra structures over $\BA$ of $U_\BA^{L}$ and $U_\BA$.
Setting
\[
U_\BA^{L,\flat}=U_\BA^{L}\cap U_\BF^{\flat},\qquad
U_\BA^{\flat}=U_\BA\cap U_\BF^{\flat}
\qquad(\flat=+, -, \geqq0, \leqq0),
\]
we have
\begin{align}
\label{eq:tri:L1}
&
U_\BA^{L}\cong U_\BA^{L,-}\otimes_\BA U_\BA^{L,0}\otimes_\BA U_\BA^{L,+},\\
\label{eq:tri:L2}
&
U_\BA^{L,\geqq0}\cong U_\BA^{L,0}\otimes_\BA U_\BA^{L,+}
\cong U_\BA^{L,+}\otimes_\BA U_\BA^{L,0},\\
\label{eq:tri:L3}
&
U_\BA^{L,\leqq0}\cong U_\BA^{L,0}\otimes_\BA U_\BA^{L,-}
\cong U_\BA^{L,-}\otimes_\BA U_\BA^{L,0},
\end{align}
and 
\begin{align}
\label{eq:tri:DK1}
&
U_\BA\cong U_\BA^{-}\otimes_\BA U_\BA^{0}\otimes_\BA U_\BA^{+},\\
\label{eq:tri:DK2}
&
U_\BA^{\geqq0}\cong U_\BA^{0}\otimes_\BA U_\BA^{+}
\cong U_\BA^{+}\otimes_\BA U_\BA^{0},\\
\label{eq:tri:DK3}
&
U_\BA^{\leqq0}\cong U_\BA^{0}\otimes_\BA U_\BA^{-}
\cong U_\BA^{-}\otimes_\BA U_\BA^{0}.
\end{align}
Set
\begin{align*}
\begin{bmatrix}{k_i;c}\\{m}\end{bmatrix}
&=\prod_{s=0}^{m-1}
\frac{q_i^{c-s}k_i-q_i^{-c+s}k_i^{-1}}
{q_i^{s+1}-q_i^{-s-1}}
\qquad(i\in I, m\in\BZ_{\geqq0}, c\in\BZ),\\
\begin{bmatrix}{k_i}\\{m}\end{bmatrix}
&=\begin{bmatrix}{k_i;0}\\{m}\end{bmatrix}
\qquad(i\in I, m\in\BZ_{\geqq0}).
\end{align*}
Then we have
\[
\begin{bmatrix}{k_i;c}\\{m}\end{bmatrix}
\in U_\BA^{L,0}
\qquad(i\in I, m\in\BZ_{\geqq0}, c\in\BZ)
\]
and
\begin{align*}
U_\BA^{L,0}
&=\bigoplus_{\lambda\in \Lambda', 
(\varepsilon_i)\in\{0,1\}^I, 
(n_i)\in\BZ_{\geqq0}^I}
\BA
k_\lambda\prod_{i\in I}
k_i^{\varepsilon_i}
\begin{bmatrix}{k_i}\\{n_i}
\end{bmatrix}
,\\
U_\BA^{0}
&
=\bigoplus_{\lambda\in\Lambda}\BA k_\lambda,
\end{align*}
where 
$\Lambda'\subset \Lambda$ is a representative of $\Lambda/Q$.
By Lusztig \cite{L2} we have the following.
\begin{lemma}
\label{lem:PBW-L}
\begin{itemize}
\item[\rm(i)]
$\{{e}_{\beta_{N}}^{(m_N)}\cdots {e}_{\beta_{1}}^{(m_1)}\mid
m_1,\dots, m_N\geqq0\}$ $($resp. 
\newline
$\{{f}_{\beta_{N}}^{(m_N)}\cdots {f}_{\beta_{1}}^{(m_1)}\mid
m_1,\dots, m_N\geqq0\}$$)$
is an ${\BA}$-basis of $U_{\BA}^{L,+}$ $($resp. $U_{\BA}^{L,-}$$)$.
\item[\rm(ii)]
$U^L_\BA$ is $\BB$-stable.
\end{itemize}
\end{lemma}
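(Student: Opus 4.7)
The plan is to prove (ii) first and then extract the containment part of (i) as a corollary, leaving the spanning statement as the principal technical point. For (ii), it suffices to check that each generator $T_i^{\pm 1}$ of $\BB$ sends the $\BA$-algebra generators $k_\mu, e_j^{(m)}, f_j^{(m)}$ of $U_\BA^L$ into $U_\BA^L$. This is a case-by-case verification from Lusztig's explicit formulas: $T_i(k_\mu)=k_{s_i\mu}\in U_\BA^{L,0}$; for $j\ne i$, the given formula for $T_i(e_j)$ is already an $\BA$-linear combination of products of divided powers, so it belongs to $U_\BA^L$; and for $j=i$ one computes $T_i(e_i^{(m)}) = (-f_ik_i)^m/[m]_{q_i}!$, which after moving the $k_i$'s past the $f_i$'s via $f_ik_i=q_i^2k_if_i$ becomes $(-1)^m q_i^{-m(m-1)}f_i^{(m)}k_i^m \in U_\BA^L$. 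Analogous arguments dispose of $f_j^{(m)}$, and the same kind of verification handles $T_i^{-1}$.

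The only delicate point within (ii) is that for $j\ne i$ one must promote $T_i(e_j)\in U_\BA^L$ to $T_i(e_j^{(m)})=T_i(e_j)^m/[m]_{q_j}!\in U_\BA^L$. For this I would use the $q$-commutation relations between $e_i^{(k)}$ and $e_j$ coming from the quantum Serre relation, which yield a clean $q$-binomial formula for the $m$-th power of $T_i(e_j)$ whose coefficients manifestly lie in $\BA$.

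With (ii) in hand, the containment direction of (i) is immediate: $e_{\beta_k}^{(m)} = T_{i_1}\cdots T_{i_{k-1}}(e_{i_k}^{(m)}) \in U_\BA^{L,+}$ because each $T_{i_j}$ preserves $U_\BA^L$ and preserves the weight grading, so the image lies in $U_\BA^L\cap U_\BF^+ = U_\BA^{L,+}$; the $f$-case is symmetric. The $\BA$-linear independence of the proposed basis is automatic from the $\BF$-linear independence of the PBW basis of $U_\BF^+$ already stated. The remaining task is to show that the ordered monomials $e_{\beta_N}^{(m_N)}\cdots e_{\beta_1}^{(m_1)}$ $\BA$-span $U_\BA^{L,+}$.

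For the spanning, my strategy would proceed in two layers. First, show that $U_\BA^{L,+}$ is generated as an $\BA$-module by all (unordered) monomials in the root-vector divided powers $e_{\beta_k}^{(m)}$; this reduces, via the braid action and induction on height, to the fact that $U_\BA^{L,+}$ is $\BA$-generated by $\{e_i^{(m)}\}$. Second, straighten arbitrary monomials into the fixed PBW order using an integral Levendorskii--Soibelman commutation formula: for $k<l$, $e_{\beta_l}^{(a)}e_{\beta_k}^{(b)}$ equals a $q$-power multiple of $e_{\beta_k}^{(b)}e_{\beta_l}^{(a)}$ plus an $\BA$-linear combination of ordered monomials $\prod_{k<j<l}e_{\beta_j}^{(c_j)}$ lying in strictly lower filtration degree. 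Induction on this filtration then produces the desired expression. The main obstacle is establishing the $\BA$-integrality of the coefficients in this straightening relation; once isolated, this is the technical heart of \cite{L2}, proved by reducing to rank-two subalgebras and verifying that the quantum integers appearing in the denominators cancel against factors in the numerators.
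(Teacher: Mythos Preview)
The paper does not prove this lemma; it simply attributes both statements to Lusztig \cite{L2} and moves on. So there is no ``paper's own proof'' to compare against---this is a cited background fact.

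Your outline is a reasonable sketch of how the proof in \cite{L2} (and, in expanded form, \cite{Lbook}) actually goes: establish $\BB$-stability of $U_\BA^L$ by checking the action of $T_i^{\pm1}$ on the algebra generators, then deduce that the divided-power root vectors lie in $U_\BA^{L,\pm}$, and finally prove spanning via an integral straightening argument. Two comments on accuracy. First, the step you flag as ``delicate''---that $T_i(e_j)^m/[m]_{q_j}! \in U_\BA^L$ for $j\ne i$---is indeed the crux of (ii), and Lusztig handles it not by a direct $q$-binomial expansion of the $m$-th power but via his higher-order quantum Serre relations (see \cite[Chapter~7]{Lbook}); your proposed route would work but is essentially re-deriving those relations. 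Second, your two-layer spanning argument for (i) is slightly misordered: the reduction ``$U_\BA^{L,+}$ is $\BA$-module generated by unordered monomials in the $e_{\beta_k}^{(m)}$'' is not obviously easier than the ordered statement, since the simple-root generators $e_i^{(m)}$ are not themselves among the $e_{\beta_k}^{(m)}$ for general $k$. Lusztig's actual argument is an induction on the length $N$ of the reduced expression, using that $T_{i_1}$ sets up a bijection between PBW monomials for $w_0$ and those for $s_{i_1}w_0$ (together with powers of $e_{i_1}$), reducing ultimately to rank-two verifications. The Levendorskii--Soibelman formulation you describe is equivalent but organizes the induction differently.
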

By De Concini-Kac \cite{DK} we have also the following.
\begin{lemma}
\label{lem:PBW-DK}
\begin{itemize}
\item[\rm(i)]
$\{{e}_{\beta_{N}}^{m_N}\cdots {e}_{\beta_{1}}^{m_1}\mid
m_1,\dots, m_N\geqq0\}$ $($resp. 
\newline
$\{{f}_{\beta_{N}}^{m_N}\cdots {f}_{\beta_{1}}^{m_1}\mid
m_1,\dots, m_N\geqq0\}$$)$
is an ${\BA}$-basis of $U_{\BA}^+$ $($resp. $U_{\BA}^-$$)$.
\item[\rm(ii)]
$U_\BA$ is $\BB$-stable.
\end{itemize}
\end{lemma}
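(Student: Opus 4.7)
The plan is to establish (ii) first and deduce (i) from (ii) together with a straightening calculation.

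For (ii), since $U_\BA$ is generated as an $\BA$-algebra by $k_\lambda,e_i,f_i$, it suffices to check that $T_i^{\pm1}$ sends each generator into $U_\BA$. The torus case is immediate from $T_i(k_\mu)=k_{s_i\mu}$, and $T_i(e_i)=-f_ik_i$, $T_i(f_i)=-k_i^{-1}e_i$ are manifestly in $U_\BA$. For $j\ne i$ the given formula for $T_i(e_j)$ rewrites as
\[
T_i(e_j)=\sum_{k=0}^{-a_{ij}}\frac{(-1)^kq_i^{-k}}{[-a_{ij}-k]_{q_i}!\,[k]_{q_i}!}\,e_i^{\,-a_{ij}-k}e_je_i^{\,k},
\]
and the only issue is whether the scalar coefficient lies in $\BA$. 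Since $-a_{ij}\leqq 3$, with $-a_{ij}=3$ possible only in type $G_2$, I need only verify $[m]_{q_i}\in\BA^\times$ for $1\leqq m\leqq 3$. Under our standing hypotheses on $\ell$ (odd, coprime to $|\Lambda/Q|$, coprime to $3$ in type $G_2$), $q_i|_{q=\zeta}$ is a primitive $\ell$-th root of unity whose $2m$-th power is nontrivial for each such $m$, so $[m]_{q_i}|_{q=\zeta}\ne 0$. The same analysis handles $T_i(f_j)$ and $T_i^{-1}$.

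For (i), part (ii) gives $e_{\beta_k}=T_{i_1}\cdots T_{i_{k-1}}(e_{i_k})\in U_\BA\cap U_\BF^+=U_\BA^+$, and likewise $f_{\beta_k}\in U_\BA^-$. Hence the $\BA$-submodule $V^+\subset U_\BA^+$ spanned by the ordered monomials $e_{\beta_N}^{m_N}\cdots e_{\beta_1}^{m_1}$ is well-defined, and their $\BA$-linear independence is inherited from the $\BF$-linear independence of the PBW basis in $U_\BF^+$. For the reverse inclusion $U_\BA^+\subset V^+$, I would invoke the Levendorskii--Soibelman straightening identity, which expresses $e_{\beta_k}e_{\beta_j}-q^{(\beta_k,\beta_j)}e_{\beta_j}e_{\beta_k}$ (for $k<j$) as a $\BZ[q,q^{-1}]$-linear combination of ordered PBW monomials in the intermediate generators $e_{\beta_{k+1}},\ldots,e_{\beta_{j-1}}$. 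Combined with the torus commutations $k_\lambda e_i k_\lambda^{-1}=q^{(\lambda,\alpha_i)}e_i$ and its analogue for $f_i$, the cross relation $e_if_j-f_je_i=\delta_{ij}(k_i-k_i^{-1})/(q_i-q_i^{-1})$, and the Serre relations, this lets one rewrite any word in $k_\lambda,e_i,f_i$ as an $\BA$-linear combination of triangular monomials of the form (ordered $f$-PBW)$\cdot k_\mu \cdot$(ordered $e$-PBW). These triangular monomials therefore $\BA$-span $U_\BA$, and intersecting with $U_\BF^+$ via the triangular decomposition of $U_\BF$ over $\BF$ yields $U_\BA^+=V^+$. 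The argument for $U_\BA^-$ is symmetric.

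The main obstacle is the Levendorskii--Soibelman identity with \emph{integral} $\BZ[q,q^{-1}]$-coefficients rather than merely $\BF$-coefficients. This is proved via rank-two calculations (in $\Gsl_3$, $B_2$, $G_2$) combined with induction on $\Ht(\beta_j-\beta_k)$, and is the technical core of \cite{DK}; granting it, the arguments above go through.
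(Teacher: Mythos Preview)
The paper does not actually give a proof of this lemma; it simply states it as a citation to De Concini--Kac \cite{DK}. Your sketch is essentially the De Concini--Kac argument, and it is correct as an outline: part (ii) follows from checking the generators, using that the scalars $[m]_{q_i}$ for $1\leqq m\leqq -a_{ij}$ are units in $\BA$ under the standing hypotheses on $\ell$; and part (i) then follows from the Levendorskii--Soibelman straightening relation together with the triangular decomposition over $\BF$.

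Two minor remarks. First, for the present paper's purposes you only need the Levendorskii--Soibelman coefficients to lie in $\BQ[q,q^{-1}]$, not $\BZ[q,q^{-1}]$, since $\BQ\subset\BA$; this is exactly what \cite{DK} proves. Second, your derivation of $U_\BA^+=V^+$ from the spanning of $U_\BA$ by full triangular PBW monomials is clean, but note that the cross relation $e_if_i-f_ie_i=(k_i-k_i^{-1})/(q_i-q_i^{-1})$ requires $q_i-q_i^{-1}\in\BA^\times$, which is again guaranteed by the hypotheses on $\ell$ (odd, coprime to $3$ in type $G_2$). So everything goes through, and your acknowledgement that the LS integrality is the genuine content is on target.
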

By  Lemma \ref{lem:PBW-DK} (i), Lemma \ref{lem:PBW-L} (i) and Jantzen \cite[8.28]{Jan} we have 
\begin{lemma}
\label{lem:pm-duality}
\[
U_\BA^+=\{u\in U_\BF^+\mid
\tau(u,U_\BA^{L,-})\subset\BA)\},
\quad
U_\BA^-=\{u\in U_\BF^-\mid
\tau(U_\BA^{L,+},u)\subset\BA)\}.
\]
\end{lemma}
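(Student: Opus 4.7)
The plan is to exploit a near-orthogonality between the (non-divided) PBW basis of $U_\BA^\pm$ from Lemma \ref{lem:PBW-DK} (i) and the divided-power PBW basis of $U_\BA^{L,\mp}$ from Lemma \ref{lem:PBW-L} (i) under the Drinfeld pairing $\tau$. First I would reduce to a single weight component using Lemma \ref{lem:Drinfeld paring} (iv): since $\tau(U^+_{\BF,\beta}, U^-_{\BF,-\gamma}) = 0$ for $\beta \neq \gamma$, the condition $\tau(u, U_\BA^{L,-}) \subset \BA$ may be tested weight by weight, and only against divided-power PBW monomials of weight equal to that of $u$.

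The key computational input, provided by Jantzen \cite[8.28]{Jan}, is an identity of the form
\[
\tau\bigl(e_{\beta_N}^{m_N}\cdots e_{\beta_1}^{m_1},\, f_{\beta_N}^{(n_N)}\cdots f_{\beta_1}^{(n_1)}\bigr) = \delta_{(m_\bullet),(n_\bullet)}\, c(m_1,\dots,m_N),
\]
with $c(m_1,\dots,m_N)$ a unit in $\BA$. Granting this, the lemma becomes essentially formal: if $u \in U^+_\BF$ is expanded in the PBW basis as $u = \sum c_{\mathbf{m}} e_{\beta_N}^{m_N}\cdots e_{\beta_1}^{m_1}$ with $c_{\mathbf{m}} \in \BF$, then pairing against $f_{\beta_N}^{(m_N)}\cdots f_{\beta_1}^{(m_1)} \in U_\BA^{L,-}$ extracts exactly $c_{\mathbf{m}}\, c(m_1,\dots,m_N)$. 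Hence $\tau(u, U_\BA^{L,-}) \subset \BA$ is equivalent to $c_{\mathbf{m}} \in \BA$ for every $\mathbf{m}$, which by Lemma \ref{lem:PBW-DK} (i) is the same as $u \in U_\BA^+$. The equality for $U_\BA^-$ follows by the symmetric argument with the roles of $\pm$ swapped.

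The orthogonality identity itself is proved by induction on the height of the weight, using the recursive formulas of Lemma \ref{lem:Drinfeld paring} (ii) together with the coproducts of PBW root vectors; the simple-root base case follows from the normalization $\tau(e_i, f_j) = \delta_{ij}/(q_i^{-1}-q_i)$. The main obstacle, and the reason we prefer to cite Jantzen rather than reproduce the computation, lies in handling the non-simple root vectors $e_\beta, f_\beta$: their coproducts are not primitive but only triangular with respect to a convex order on $\Delta^+$, so one must argue carefully that the off-diagonal contributions cancel and that the diagonal factor is indeed a unit in $\BA$. It is precisely here that the asymmetric choice of divided powers on the Lusztig side and plain powers on the De Concini-Kac side is crucial, since the factorials $[m]_{q_i}!$ (which are non-units at a root of unity) are absorbed into the divided powers on one side of the pairing.
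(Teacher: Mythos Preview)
Your proposal is correct and follows exactly the route the paper indicates: the paper's proof is nothing more than a citation to the two PBW bases (Lemmas~\ref{lem:PBW-L}(i) and~\ref{lem:PBW-DK}(i)) together with Jantzen \cite[8.28]{Jan}, and you have simply unpacked what that citation contains and how it is used. One minor quibble: the recursion underlying the orthogonality of PBW monomials comes from the \emph{multiplicativity} of $\tau$ (the defining identities $\tau(x,y_1y_2)=(\tau\otimes\tau)(\Delta x,y_1\otimes y_2)$ and its companion), not from the commutation formulas of Lemma~\ref{lem:Drinfeld paring}(ii); but since you are citing Jantzen for the result rather than reproving it, this does not affect your argument.
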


\subsection{}
Now we consider the specialization 
\[
\BA\to\BC\qquad
(q^{1/|\Lambda/Q|}\mapsto\zeta').
\]
Note that $q$ is mapped to $\zeta=(\zeta')^{|\Lambda/Q|}\in\BC$, which is also a primitive $\ell$-th root of 1.

We set
\begin{align*}
&U_\zeta^L=\BC\otimes_\BA U_\BA^L,\qquad
U_\zeta^{L,\flat}=\BC\otimes_\BA U_\BA^{L,\flat}\quad(\flat=+,-,\geqq0,\leqq0),\\
&U_\zeta=\BC\otimes_\BA U_\BA,\qquad
U_\zeta^{\flat}=\BC\otimes_\BA U_\BA^{\flat}\quad(\flat=+,-,\geqq0,\leqq0).
\end{align*}
Then $U^L_\zeta$ and $U_\zeta$ are Hopf algebras over $\BC$, and we have 
\begin{align*}
&
U_\zeta\cong U_\zeta^{-}\otimes_\BC U_\zeta^{0}\otimes_\BC U_\zeta^{+},\\
&
U_\zeta^{\geqq0}\cong U_\zeta^{0}\otimes_\BC U_\zeta^{+}
\cong U_\zeta^{+}\otimes_\BC U_\zeta^{0},\\
&
U_\zeta^{\leqq0}\cong U_\zeta^{0}\otimes_\BC U_\zeta^{-}
\cong U_\zeta^{-}\otimes_\BC U_\zeta^{0}.
\end{align*}
We denote by 
\[
^L\tau:U^{L,\geqq0}_\zeta\times U^{\leqq0}_\zeta\to\BC,\qquad
\tau^L:U^{\geqq0}_\zeta\times U^{L,\leqq0}_\zeta\to\BC
\]
the bilinear forms induced by the Drinfeld pairing $\tau$.

In general for a Lie algebra $\Gs$ we denote its enveloping algebra by $U(\Gs)$.

We denote by
\begin{equation}
\label{eq:LFrobenius}
\pi:U_\zeta^L\to U(\Gg)
\end{equation}
Lusztig's Frobenius homomorphism (\cite{L2}). 
Namely $\pi$ is the $\BC$-algebra homomorphism given by
\[
\pi(e_i^{(m)})=
\begin{cases}
\overline{e}_i^{(m/\ell)}\,\,&(\ell|m)\\
0&(\ell\not|m),
\end{cases}
\quad
\pi(f_i^{(m)})=
\begin{cases}
\overline{f}_i^{(m/\ell)}\,\,&(\ell|m)\\
0&(\ell\not|m),
\end{cases}
\quad
\pi(k_\lambda)=1
\]
for $i\in I$, $m\in\BZ_{\geqq0}$, $\lambda\in\Lambda$. 
Here, $\overline{e}_i^{(n)}=\overline{e}_i^n/n!$, $\overline{f}_i^{(n)}=\overline{f}_i^n/n!$ for $i\in I$ and $n\in\BZ_{\geqq0}$. 
Then $\pi$ is a homomorphism of Hopf algebras.

Let 
\begin{equation}
\label{eq:j}
j:U_\zeta\to U_\zeta^L
\end{equation}
be the Hopf algebra homomorphism induced by the embedding $U_\BA\subset U^L_\BA$.
Then we can easily check that 
\begin{equation}
\label{eq:pi-j}
(\pi\circ j)(u)=\varepsilon(u)1\qquad(u\in U_\zeta)
\end{equation}
using generators $k_\lambda, e_i, f_i\;(\lambda\in\Lambda, i\in I)$ of $U_\zeta$.

\subsection{}
We recall the description of the center $Z(U_\zeta)$ of the algebra $U_\zeta$ due to De Concini-Kac \cite{DK} and De Concini-Procesi \cite{DP}.

Denote by $Z(U_{\BA})$ the center of $U_{\BA}$.
Then by \cite{DK} we have
\begin{equation*}
Z(U_\BA)=\bigoplus_{\lambda\in\Lambda^+}\BA m(\lambda).
\end{equation*}
Define a subalgebra $Z_{Har}(U_\zeta)$ of $Z(U_\zeta)$ by
\[
Z_{Har}(U_\zeta)=\Image(Z(U_{\BA})\to U_\zeta).
\]
We define a twisted action of $W$ on the group algebra 
$\BC[\Lambda]=\bigoplus_{\lambda\in\Lambda}\BC
e(\lambda)$
of $\Lambda$ by
\[
w\circ e(\lambda)=\zeta^{(w\lambda-\lambda,\rho)}e(w\lambda)\qquad
(w\in W,\,\lambda\in\Lambda).
\]
By \cite{DK} \eqref{eq:HC-center-F} induces an isomorphism
\begin{equation}
\label{eq:HC-center}
Z_{Har}(U_\zeta)\simeq\BC[2\Lambda]^{W\circ}
\end{equation}
of $\BC$-algebras.
Namely, 
the linear map
\[
\iota:Z_{Har}(U_\zeta)\to\BC[\Lambda]
\]
defined as the composite of 
\[
Z_{Har}(U_\zeta)
\subset
U_\zeta
\simeq
U^-_\zeta\otimes U^0_\zeta\otimes U^+_\zeta
\xrightarrow{\varepsilon\otimes 1\otimes\varepsilon}
U_\zeta^0
\cong
\BC[\Lambda]
\]
is an injective algebra homomorphism whose image coincides with
$\BC[2\Lambda]^{W\circ}$.
Moreover, we have
\begin{equation}
\label{eq:mlambda:zeta}
Z_{Har}(U_\zeta)=\bigoplus_{\lambda\in\Lambda^+}\BC m(\lambda),
\end{equation}
where we also denote by $m(\lambda)$ its image in $U_\zeta$ by abuse of notation.

By \cite{DK} the elements
\[
{e_\beta}^\ell,\quad
{f_\beta}^\ell,\quad
k_{\ell\lambda}\qquad(\beta\in\Delta^+,\,\lambda\in\Lambda)
\]
are central  in $U_\zeta$.
Let $Z_{Fr}(U_\zeta)$ be the subalgebra of $U_\zeta$ generated by them.
$Z_{Fr}(U_\zeta)$ turns out to be a $\BB$-stable Hopf subalgebra of $U_\zeta$.
Define an algebraic subgroup $K$ of $B^+\times B^-$ by
\[
K=\{(gh, g'h^{-1})\mid
h\in H,\,g\in N^+,\,g'\in N^-\}.
\]
By \cite{DP} we have an isomorphism
\begin{equation}
\label{eq:Fr-center}
Z_{Fr}(U_\zeta)\cong \BC[K]
\end{equation}
of Hopf algebras.
The following description of the isomorphism \eqref{eq:Fr-center} is due to Gavarini \cite{Gav}.
Let us identify $\BC[K]$ with $\BC[N^+]\otimes\BC[N^-]\otimes\BC[H]$ via the isomorphism 
\[
N^+\times N^-\times H\cong
K\qquad
((g,g',h)\leftrightarrow(gh,g'h^{-1}))
\]
of algebraic varieties.
For $f\in\BC[N^-]$, $f'\in\BC[N^+]$, $\lambda\in\Lambda$ the element of $Z_{Fr}(U_\zeta)$ corresponding to $f'\otimes f\otimes \theta_\lambda$ is given by $uk_{\ell\lambda}(Su')$ where $u\in U_\zeta^+$, $u'\in U_\zeta^-$ are given by 
\begin{align*}
&\tau^L(u,y)=\langle f,\pi(y)\rangle\qquad(y\in U^{L,-}_\zeta),\\
&{}^L\tau(x,u')=\langle f',\pi(x)\rangle\qquad(x\in U^{L,+}_\zeta).
\end{align*}
Here we identify $\BC[N^\pm]$ with a subspace of $U(\Gn^\pm)^*$ via the canonical Hopf pairing.

Define 
\begin{equation}
\label{eq:kappa}
\kappa:K\to G
\end{equation}
by $\kappa(g_1, g_2)=g_1g_2^{-1}$.
Define $\eta:G\to H/W$ as follows.
For $g\in G$ let $g_s\in G$ be the semisimple part of $g$ with respect to the Jordan decomposition.
Then $\Ad(G)(g_s)\cap H$ coincides with a single $W$-orbit in $H$.
We define $\eta(g)\in H/W$ to be this $W$-orbit.
The morphism $\eta\circ\kappa:K\to H/W$ of algebraic varieties induces an injective algebra homomorphism $(\eta\circ\kappa)^*:\BC[H/W]\to\BC[K]$.
We identify $\BC[H/W]$ with 
\[
\BC[2\ell \Lambda]^W
=\{f\in\BC[2\ell\Lambda]\mid 
wf=f\quad(w\in W)\}
\]
using the identification 
\[
\BC[2\ell\Lambda]\cong\BC[H]
\qquad
(e(2\ell\lambda)\leftrightarrow\theta_\lambda).
\]
\begin{proposition}
[De Concini-Procesi \cite{DP}]
There exists an isomorphism
\begin{equation}
\label{eq:HCFr-center}
Z_{Har}(U_\zeta)\cap Z_{Fr}(U_\zeta)
\cong \BC[2\ell\Lambda]^W
\end{equation}
of algebras such that the diagram
\[
\begin{CD}
Z_{Har}(U_\zeta)
@<<<
Z_{Har}(U_\zeta)\cap Z_{Fr}(U_\zeta)
@>>>
Z_{Fr}(U_\zeta)
\\
@VVV @VVV @VVV
\\
\BC[2\Lambda]^{W\circ}
@<<<
\BC[2\ell \Lambda]^W
@>>>
\BC[K]
\end{CD}
\]
commutes.
Here the vertical arrows are the isomorphisms
\eqref{eq:HC-center}, \eqref{eq:HCFr-center}, \eqref{eq:Fr-center}, 
the upper horizontal arrows are the inclusions, and the lower horizontal arrows are the inclusion $\BC[2\ell \Lambda]^W
\subset\BC[2\Lambda]^{W\circ}$ and $(\eta\circ\kappa)^*$.
Moreover, we have an isomorphism 
\[
Z(U_\zeta)\cong
Z_{Har}(U_\zeta)\otimes_{Z_{Har}(U_\zeta)\cap Z_{Fr}(U_\zeta)}
Z_{Fr}(U_\zeta)
\qquad(z_1z_2\leftrightarrow z_1\otimes z_2)
\]
of algebras.
In particular, we have
\begin{equation}
\label{eq:full-center}
Z(U_\zeta)\cong
\BC[2\Lambda]^{W\circ}
\otimes_{\BC[2\ell \Lambda]^W}
\BC[K].
\end{equation}
\end{proposition}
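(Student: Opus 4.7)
The plan is to establish the three pieces of the proposition in order: first the intersection, then the commutativity of the diagram, and finally the tensor-product description of the full center.

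First I would compute $Z_{Har}(U_\zeta)\cap Z_{Fr}(U_\zeta)$ by working inside the Cartan piece $U_\zeta^0\cong\BC[\Lambda]$. Under the Harish-Chandra map $\iota$ the subalgebra $Z_{Har}(U_\zeta)$ is identified with $\BC[2\Lambda]^{W\circ}$, while the $U_\zeta^0$-part of $Z_{Fr}(U_\zeta)$ is the polynomial subalgebra generated by the $k_{\ell\lambda}$, which equals $\BC[2\ell\Lambda]$ under the relevant identification. So the intersection sits inside $\BC[2\ell\Lambda]\cap\BC[2\Lambda]^{W\circ}$. The key observation is that on $\BC[2\ell\Lambda]$ the twisted $W\circ$-action agrees with the ordinary $W$-action: for $\lambda\in\ell\Lambda$ and $w\in W$ one has $w\lambda-\lambda\in\ell Q$, so $(w\lambda-\lambda,\rho)\in\ell\BZ$ and $\zeta^{(w\lambda-\lambda,\rho)}=1$. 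This yields the isomorphism onto $\BC[2\ell\Lambda]^W$, and surjectivity onto this invariant ring is seen by realizing each $W$-orbit sum in $\BC[2\ell\Lambda]$ as the Harish-Chandra image of $m(\ell\mu)$ for an appropriate $\mu\in\Lambda^+$.

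Next I would verify commutativity of the diagram. The left square reduces to the definition of $\iota$ restricted to the subring $\BC[2\ell\Lambda]^W$. For the right square one must track the image in $\BC[K]$ of $m(\ell\lambda)$ for $\lambda\in\Lambda^+$ under both routes. Using the Gavarini description of the Frobenius-center isomorphism \eqref{eq:Fr-center} (given in the paragraph after its statement), the image of $m(\ell\lambda)$ is expressed via the Drinfeld pairings ${}^L\tau$ and $\tau^L$; since $m(\ell\lambda)$ has triangular decomposition supported on its Cartan part (up to terms that vanish on applying Gavarini's formula through $\varepsilon$ on the $\Gn^{\pm}$-factors), this reduces to the easy computation $\tau(k_\mu,k_\nu)=\zeta^{-(\mu,\nu)}$. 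The resulting element of $\BC[K]$ is readily recognized as the pull-back under $(\eta\circ\kappa)^*$ of the $W$-orbit sum $\sum_{w\in W/W_\lambda}\theta_{w\lambda}$, as required.

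Finally, for the tensor-product isomorphism, the multiplication map
\[
Z_{Har}(U_\zeta)\otimes_{Z_{Har}(U_\zeta)\cap Z_{Fr}(U_\zeta)} Z_{Fr}(U_\zeta)\longrightarrow Z(U_\zeta)
\]
is a well-defined algebra homomorphism because the intersection lies centrally in both tensor factors. Surjectivity amounts to showing that $Z_{Har}(U_\zeta)$ and $Z_{Fr}(U_\zeta)$ together generate the entire center; this I would extract from the De Concini-Kac description of $Z(U_\zeta)$ via the triangular decomposition and the PBW-bases of Lemma \ref{lem:PBW-DK}. For injectivity I would pass to spectra: $\Spec Z_{Fr}(U_\zeta)=K$ is finite and flat of degree $|W|$ over $H/W=\Spec\BC[2\ell\Lambda]^W$ via $\eta\circ\kappa$ generically, and $\Spec Z_{Har}(U_\zeta)$ is finite and flat of the same degree $|W|$ over $H/W$ via $\BC[2\ell\Lambda]^W\subset\BC[2\Lambda]^{W\circ}$; the generic ranks match, so the surjective homomorphism must be an isomorphism. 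I expect the main obstacle to be the clean surjectivity argument, which requires combining the two ``classical'' descriptions of the two centers in a compatible way and ultimately rests on a global rank count in the integral form $U_\BA$.
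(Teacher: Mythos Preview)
The paper does not supply its own proof of this proposition: it is stated with the attribution ``[De Concini--Procesi \cite{DP}]'' and is immediately followed by Corollary~\ref{cor:center} with no intervening proof environment. So there is nothing in the paper to compare your attempt against; the result is simply quoted from \cite{DP}.

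As for the sketch itself, the overall strategy is the standard one and is essentially what De Concini--Procesi do, but a couple of steps would need tightening before they constitute a proof. First, in computing $Z_{Har}(U_\zeta)\cap Z_{Fr}(U_\zeta)$ you cannot ``work inside the Cartan piece $U_\zeta^0$'' directly: neither subalgebra lies in $U_\zeta^0$, so one must argue via the triangular decomposition that a central element in the intersection has its nilpotent PBW components forced to lie in the Frobenius center in a way compatible with the Harish-Chandra projection; only then does the problem reduce to the $\BC[2\Lambda]^{W\circ}\cap\BC[\ell\Lambda]$ computation you describe. Second, your injectivity argument for the tensor product (``generic ranks match, so the surjective homomorphism must be an isomorphism'') is incomplete as stated: you need that both sides are Cohen--Macaulay (or at least reduced of the same dimension) over the same base so that a surjection between finitely generated modules of equal generic rank is an isomorphism; De Concini--Procesi in fact prove a stronger freeness/normality statement to close this gap.
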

\begin{corollary}
\label{cor:center} 
We have
\[
\Spec\, Z(U_\zeta)
\cong K\times_{H/W}H/{W\circ},
\]
where $H/{W\circ}\to{H/W}$ is given by $[t]\mapsto[t^\ell]$.
\end{corollary}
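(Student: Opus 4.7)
The plan is to obtain the corollary as a purely formal consequence of the preceding proposition by taking $\Spec$ of the isomorphism \eqref{eq:full-center} and translating each tensor factor into its geometric counterpart. Applying $\Spec$ reverses the tensor product over $\BC[2\ell\Lambda]^W$ into a fiber product over $\Spec\BC[2\ell\Lambda]^W$, so all that is needed is a careful bookkeeping of the three algebras and the two structure maps.

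First I identify the Spec of each factor. By definition $\Spec\BC[K]=K$. For the Harish-Chandra side, use the identification $\BC[H]\cong\BC[\Lambda]$, $\theta_\lambda\leftrightarrow e(\lambda)$. The subalgebra $\BC[2\ell\Lambda]$ is isomorphic to $\BC[H]$ via $e(2\ell\lambda)\leftrightarrow\theta_\lambda$, and on $\BC[2\ell\Lambda]$ the twisted $W$-action of \eqref{eq:HC-center} collapses to the ordinary one, because
\[
w\circ e(2\ell\lambda)=\zeta^{(w(2\ell\lambda)-2\ell\lambda,\rho)}e(2\ell w\lambda)=\zeta^{2\ell(w\lambda-\lambda,\rho)}e(2\ell w\lambda)=e(2\ell w\lambda)
\]
since $\zeta^\ell=1$ and $(w\lambda-\lambda,\rho)\in\BZ$. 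Hence $\Spec\BC[2\ell\Lambda]^W\cong H/W$ in the usual sense. Similarly $\BC[2\Lambda]\cong\BC[H]$ via $e(2\mu)\leftrightarrow\theta_\mu$ transports the twisted $W$-action on $\BC[2\Lambda]$ to a twisted action of $W$ on $H$, and by definition $H/{W\circ}$ denotes the corresponding quotient variety, so $\Spec\BC[2\Lambda]^{W\circ}\cong H/{W\circ}$.

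Next I identify the two structure maps. The inclusion $\BC[2\ell\Lambda]\hookrightarrow\BC[2\Lambda]$ sends $e(2\ell\lambda)$, which represents $\theta_\lambda\in\BC[H]$ under the first identification, to $e(2\cdot\ell\lambda)$, which represents $\theta_{\ell\lambda}$ under the second; the corresponding morphism of varieties is therefore the $\ell$-th power map $H\to H$, $t\mapsto t^\ell$. Since this map is $W$-equivariant between the twisted action on the source and the ordinary action on the target (by the computation above), it descends to a well-defined morphism
\[
H/{W\circ}\longrightarrow H/W,\qquad [t]\mapsto[t^\ell],
\]
which is exactly the map appearing in the statement. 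The other structure map $\Spec\BC[K]\to\Spec\BC[2\ell\Lambda]^W$ is, by the commutative diagram in the Proposition, the composite $\eta\circ\kappa:K\to H/W$.

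Assembling these three identifications in the fiber product yields
\[
\Spec Z(U_\zeta)\;\cong\;\Spec\BC[2\Lambda]^{W\circ}\times_{\Spec\BC[2\ell\Lambda]^W}\Spec\BC[K]\;\cong\;K\times_{H/W}H/{W\circ},
\]
as required. There is no genuine obstacle here beyond administrative care: the main potential pitfall is keeping straight the two distinct identifications of $\BC[H]$ with $\BC[2\Lambda]$ and with $\BC[2\ell\Lambda]$, since it is the discrepancy between them that produces the $\ell$-th power map in the fiber product description. All the real work has already been done in the Proposition.
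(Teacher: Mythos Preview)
Your proposal is correct and is exactly the argument implied by the paper, which states the corollary without proof as an immediate consequence of the preceding proposition \eqref{eq:full-center}. Your bookkeeping of the two different identifications of $\BC[H]$ with $\BC[2\Lambda]$ and $\BC[2\ell\Lambda]$, and your verification that the twisted and ordinary $W$-actions agree on $\BC[2\ell\Lambda]$, fill in precisely the details the paper leaves to the reader.
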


\section{Representation}
\label{sec:rep}
\subsection{}
If $R$ is a ring, we denote by $\Mod(R)$ the category of left $R$-modules.
\begin{remark}
{\rm
We will also use the notation like $\Mod(\CR)$ even when $\CR$ is not a ring (e.g.\ $\Mod(\CO_{\CB_\zeta})$).
The meaning of this type of notation will be explained separately when they appear.
}
\end{remark}

For $M_1, M_2\in\Mod(U_\BF)$ the tensor product $M_1\otimes M_2$ has a natural $U_\BF$-module structure by
\[
u\cdot(m_1\otimes m_2)=\Delta(u)(m_1\otimes m_2)
\qquad
(u\in U_\BF,\,m_1\in M_1,\, m_2\in M_2).
\]

For $\lambda\in\Lambda$ we define an algebra homomorphism $\chi_\lambda:U_\BF^0\to\BF$ by $\chi_\lambda(k_\mu)=q^{(\lambda,\mu)}\,(\mu\in\Lambda)$.
For $M\in\Mod(U_\BF)$ and $\lambda\in\Lambda$ we set
\[
M_\lambda=\{m\in M\mid
hm=\chi_\lambda(h)m\quad(h\in U_\BF^0)\}.
\]
We denote by $\Mod_f(U_\BF)$ the category of finite dimensional $U_\BF$-modules $M$ such that $M=\bigoplus_{\lambda\in\Lambda}M_\lambda$.
We also denote by $\Mod_{int}(U_\BF)$ the category of $U_\BF$-modules $M$ which is a sum of modules in $\Mod_f(U_\BF)$.
It is well-known that a $U_\BF$-module $M$ belongs to $\Mod_{int}(U_\BF)$  if and only if $M=\bigoplus_{\lambda\in\Lambda}M_\lambda$ and
for any $m\in M$ there exists $r\in\BZ_{>0}$ such that 
$e_i^{(r)}m=f_i^{(r)}m=0$ for any $i\in I$.
For $M_1, M_2\in\Mod_{int}(U_\BF)$ we have $M_1\otimes M_2\in\Mod_{int}(U_\BF)$.

For $\lambda\in\Lambda$ we define
$M_{+,\BF}(\lambda), M_{-,\BF}(\lambda)\in\Mod(U_\BF)$
by
\begin{align*}
M_{+,\BF}(\lambda)
=&U_\BF/
\sum_{y\in U_\BF^-}U_\BF(y-\varepsilon(y))+
\sum_{h\in U_\BF^0}U_\BF(h-\chi_\lambda(h)),\\
M_{-,\BF}(\lambda)
=&U_\BF/
\sum_{x\in U_\BF^+}U_\BF(x-\varepsilon(x))+
\sum_{h\in U_\BF^0}U_\BF(h-\chi_\lambda(h)).
\end{align*}
$M_{+,\BF}(\lambda)$ is a lowest weight module with lowest weight $\lambda$, and $M_{-,\BF}(\lambda)$ is a highest weight module with highest weight $\lambda$.
By \eqref{eq:tri:F1} we have isomorphisms 
\[
M_{+,\BF}(\lambda)
\cong
U_\BF^{+}\quad
(\overline{u}\leftrightarrow u),\qquad
M_{-,\BF}(\lambda)
\cong
U_\BF^{-}\quad
(\overline{u}\leftrightarrow u)
\]
of $\BF$-modules.
Moreover, we have weight space decompositions
\[
M_{+,\BF}(\lambda)
=\bigoplus_{\mu\in\lambda+Q^+}M_{+,\BF}(\lambda)_\mu,\qquad
M_{-,\BF}(\lambda)
=\bigoplus_{\mu\in\lambda-Q^+}M_{-,\BF}(\lambda)_\mu.
\]

For $\lambda\in\Lambda^+$ we define 
$L_{+,\BF}(-\lambda), L_{-,\BF}(\lambda)\in\Mod_f(U_\BF)$
by
\begin{align*}
&L_{+,\BF}(-\lambda)\\
=&U_\BF/
\sum_{y\in U_\BF^-}U_\BF(y-\varepsilon(y))+
\sum_{h\in U_\BF^0}U_\BF(h-\chi_{-\lambda}(h))
+\sum_{i\in I}U_\BF e_i^{((\lambda,\alpha_i^\vee)+1)},\\
&L_{-,\BF}(\lambda)\\
=&U_\BF/
\sum_{x\in U_\BF^+}U_\BF(x-\varepsilon(x))+
\sum_{h\in U_\BF^0}U_\BF(h-\chi_\lambda(h))
+\sum_{i\in I}U_\BF f_i^{((\lambda,\alpha_i^\vee)+1)}.
\end{align*}
$L_{+,\BF}(-\lambda)$ is a finite-dimensional irreducible lowest weight module with lowest weight $-\lambda$, and $L_{-,\BF}(\lambda)$ is a finite-dimensional irreducible highest weight module with highest weight $\lambda$.
We have weight space decompositions
\[
L_{+,\BF}(-\lambda)
=\bigoplus_{\mu\in-\lambda+Q^+}L_{+,\BF}(-\lambda)_\mu,\qquad
L_{-,\BF}(\lambda)
=\bigoplus_{\mu\in\lambda-Q^+}L_{-,\BF}(\lambda)_\mu.
\]
We have also $L_{-,\BF}(\lambda)\cong L_{+,\BF}(w_0\lambda)$.
Moreover, the category $\Mod_f(U_\BF)$ is semisimple, and its simple objects are $L_{-,\BF}(\lambda)$ for $\lambda\in\Lambda^+$ (see Lusztig \cite{L1}).

Let $M$ be a $U_\BF$-module with weight space decomposition
$M=\bigoplus_{\mu\in\Lambda}M_\mu$ such that $\dim M_\mu<\infty$ for any $\mu\in\Lambda$.
We define a  $U_\BF$-module $M^\bigstar$ by
\[
M^\bigstar=\bigoplus_{\mu\in\Lambda}
M_\mu^*\subset M^*=\Hom_\BF(M,\BF),
\]
where the action of $U_\BF$ is given by
\[
\langle um^*,m\rangle=\langle m^*,(Su)m\rangle
\qquad(u\in U_\BF, m^*\in M^\bigstar, m\in M).
\]
Here $\langle\,,\,\rangle:M^\bigstar\times M\to\BF$ is the natural pairing.

We set
\begin{align*}
M^*_{\pm,\BF}(\lambda)&=(M_{\mp,\BF}(-\lambda))^\bigstar\qquad
(\lambda\in\Lambda),\\
L^*_{\pm,\BF}(\mp\lambda)&=(L_{\mp,\BF}(\pm\lambda))^\bigstar\qquad
(\lambda\in\Lambda^+).
\end{align*}
Since $L_{\mp,\BF}(\pm\lambda)$ is irreducible, we have
\[
L^*_{\pm,\BF}(\mp\lambda)\cong L_{\pm,\BF}(\mp\lambda)\qquad
(\lambda\in\Lambda^+).
\]
Note that we have an injective $U_\BF$-homomorphism
\begin{equation}
\label{eq:LASTARtoMASTAR:F}
L^*_{\pm,\BF}(\mp\lambda)\to M^*_{\pm,\BF}(\mp\lambda)
\qquad(\lambda\in\Lambda^+).
\end{equation}
induced by the natural homomorphism
$M_{\mp,\BF}(\pm\lambda)\to L_{\mp,\BF}(\pm\lambda)$.

For $M\in\Mod_{int}(U_\BF)$ we have a group homomorphism
\[
\BB\to \End(M)^\times
\]
given by
\begin{align}
\label{eq:Ti}
T_i
&=
\exp_{q_i^{-1}}(q_ik_if_i)
\exp_{q_i^{-1}}(-e_i)
\exp_{q_i^{-1}}(q_i^{-1}k_i^{-1}f_i)H_i\\
\nonumber
&=
\exp_{q_i^{-1}}(-q_ik_i^{-1}e_i)
\exp_{q_i^{-1}}(f_i)
\exp_{q_i^{-1}}(-q_i^{-1}k_ie_i)H_i,
\end{align}
where
\[
\exp_t(x)=\sum_{n=0}^\infty\frac{t^{n(n-1)/2}}{[n]_t!}x^n
\in\BQ(t)[[x]],
\]
and $H_i$ is the operator on $M$ which acts by $q^{(\lambda,\alpha_i)((\lambda,\alpha_i^\vee)+1)/2}\id$ on $M_\lambda$ for each $\lambda\in\Lambda$.
This operator $T_i$ coincides with Lusztig's operator $T''_{i,1}$ in \cite[5.2]{Lbook}.
We have
\[
T_w(M_\lambda)=M_{w\lambda}\qquad
(M\in\Mod_{int}(U_\BF),\,\,\lambda\in\Lambda),
\]
and
\begin{equation*}
T_w(um)=T_w(u)T_w(m)
\qquad(w\in W, u\in U_\BF, m\in M\in\Mod_{int}(U_\BF)).
\end{equation*}
For $M_1, M_2\in\Mod_{int}(U_\BF)$ 
we sometimes write the action of $T\in\BB$ on $M_1\otimes M_2\in \Mod_{int}(U_\BF)$ by
$\Delta T:M_1\otimes M_2\to M_1\otimes M_2$.
We will need the following (see Lusztig \cite[5.3]{Lbook}).
\begin{lemma}
\label{lem:DeltaT}
For $M_1, M_2\in \Mod_{int}(U_\BF)$ and $i\in I$ we have 
\begin{align*}
\Delta T_i&=
\exp_{q_i}(q_i^{-2}(q_i-q_i^{-1})e_ik_i^{-1}\otimes f_ik_i)(T_i\otimes T_i)\\
&=(T_i\otimes T_i)\exp_{q_i}((q_i-q_i^{-1})f_i\otimes e_i)
\end{align*}
in $\End_\BF(M_1\otimes M_2)^\times$.
\end{lemma}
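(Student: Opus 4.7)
The plan is to reduce the identity to a calculation inside the rank-one subalgebra $U_{q_i}(\Gsl_2) \subset U_\BF$ generated by $e_i, f_i, k_i^{\pm 1}$, since all three exponential factors in \eqref{eq:Ti} together with the scalar operator $H_i$ only involve these generators, and the coproducts $\Delta(e_i), \Delta(f_i), \Delta(k_i)$ stay inside $U_{q_i}(\Gsl_2)^{\otimes 2}$. The one point where the full weight lattice intervenes is the twist $H_i$: on a pure weight vector $m_1 \otimes m_2 \in M_{1,\mu_1} \otimes M_{2,\mu_2}$ the ratio $\Delta H_i / (H_i \otimes H_i)$ is the scalar $q^{(\mu_1,\alpha_i)(\mu_2,\alpha_i^\vee)}$, and I would absorb this diagonal correction into the $q$-exponential factor on the right-hand side.

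For the remaining rank-one identity I would apply $\Delta$ to the three-factor product \eqref{eq:Ti} and use the fundamental $q$-exponential factorization
\[
\exp_{q^{-1}}(X+Y) \;=\; \exp_{q^{-1}}(Y)\exp_{q^{-1}}(X) \qquad \text{whenever } YX = q^{2} XY.
\]
In each of the three resulting exponentials the two summands coming from $\Delta(e_i) = e_i\otimes 1 + k_i \otimes e_i$ or $\Delta(f_i) = f_i\otimes k_i^{-1} + 1 \otimes f_i$ do $q_i$-commute with the correct constant; for instance, $(k_i \otimes e_i)(e_i \otimes 1) = q_i^{2}(e_i \otimes 1)(k_i \otimes e_i)$. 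After splitting, the six resulting factors must be reordered into the shape $\Xi \cdot (T_i \otimes T_i)$ by repeated use of the commutator identities $\exp_{q^{-1}}(X)\cdot Y = Y' \cdot \exp_{q^{-1}}(X)$ valid whenever $X$ and $Y$ $q$-commute. Tracking the cumulative $q$-powers produced in this reordering then yields, up to the diagonal correction noted above, exactly the factor $\Xi = \exp_{q_i}\!\bigl(q_i^{-2}(q_i - q_i^{-1})\, e_i k_i^{-1} \otimes f_i k_i\bigr)$, giving the first equality.

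The second equality I would deduce from the first by conjugating the exponential factor by $(T_i \otimes T_i)^{-1}$ from the left and using the explicit formulas $T_i(e_i) = -f_ik_i$, $T_i(f_i) = -k_i^{-1}e_i$, $T_i(k_i) = k_i^{-1}$ to compute $T_i^{-1}(e_ik_i^{-1})$ and $T_i^{-1}(f_ik_i)$ in $U_\BF$; after simplification of the resulting powers of $q_i$, the new exponent collapses to $(q_i - q_i^{-1})\, f_i \otimes e_i$.

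The main obstacle is the bookkeeping of powers of $q_i$ in the reordering step, where each $q$-commutator contributes a scalar that must be tracked through the whole product; this is the technical heart of Lusztig's argument in \cite[\S5]{Lbook}. Convergence is not an issue: on any $M \in \Mod_{int}(U_\BF)$ the operators $e_i k_i^{-1}\otimes f_i k_i$ and $f_i \otimes e_i$ are locally nilpotent, so the infinite series reduce to finite sums term by term.
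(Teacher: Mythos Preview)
The paper does not prove this lemma at all: it simply states the result and refers the reader to Lusztig \cite[5.3]{Lbook}. Your sketch is essentially the argument Lusztig gives there --- reduce to the rank-one subalgebra, split each $\exp_{q_i^{-1}}(\Delta(\cdots))$ via the $q$-exponential factorization for $q$-commuting summands, reorder, and absorb the diagonal discrepancy coming from $H_i$; the second equality is then obtained from the first by conjugating with $T_i\otimes T_i$ and using $T_i(e_i)=-f_ik_i$, $T_i(f_i)=-k_i^{-1}e_i$, $T_i(k_i)=k_i^{-1}$, exactly as you describe. Your computation of the ratio $\Delta H_i/(H_i\otimes H_i)$ as $q^{(\mu_1,\alpha_i)(\mu_2,\alpha_i^\vee)}=q_i^{(\mu_1,\alpha_i^\vee)(\mu_2,\alpha_i^\vee)}$ is correct, and you are right that the only real work is the $q$-bookkeeping in the reordering, which you correctly defer to Lusztig.
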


\subsection{}
For $M\in\Mod(U^L_\BA)$ and $\lambda\in\Lambda$ we set
\[
M_\lambda=\{m\in M\mid
hm=\chi_\lambda(h)m\quad(h\in U^{L,0}_\BA)\}.
\]
\begin{lemma}
\label{lemma:chi-indep0}
Let $\lambda, \mu\in\Lambda$ such that $\lambda\ne\mu$.
Then there exists $h\in U^{L,0}_\BA$ such that
$\chi_\lambda(h)=1$ and $\chi_\mu(h)=0$.
In particular, we have
$\chi_\lambda\ne\chi_\mu$
in $\Hom_\BA(U_\BA^{L,0},\BA)$.
\end{lemma}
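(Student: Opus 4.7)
\medskip

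\noindent\textbf{Proof plan.} The strategy is to reduce to finding some $h_1\in U_\BA^{L,0}$ for which $a:=\chi_\lambda(h_1)-\chi_\mu(h_1)$ is a \emph{unit} in $\BA$ (i.e.\ regular and nonvanishing at $q^{1/|\Lambda/Q|}=\zeta'$). Given such an $h_1$, I would set
\[
h=a^{-1}\bigl(h_1-\chi_\mu(h_1)\cdot 1\bigr)\in U_\BA^{L,0};
\]
this makes sense because $\chi_\mu(h_1),a^{-1}\in\BA$, and by construction $\chi_\lambda(h)=1$, $\chi_\mu(h)=0$. The ``in particular'' clause then follows immediately since $\chi_\lambda$ and $\chi_\mu$ take different values at $h$.

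To produce $h_1$, I would split into two cases. \emph{Case A}: suppose there exists $\nu\in\Lambda$ with $\zeta^{(\lambda-\mu,\nu)}\ne1$ (interpreted via $q^{(\lambda-\mu,\nu)}=(q^{1/|\Lambda/Q|})^{|\Lambda/Q|(\lambda-\mu,\nu)}\in\BA$). Then $h_1:=k_\nu$ works, since
\[
\chi_\lambda(k_\nu)-\chi_\mu(k_\nu)=q^{(\mu,\nu)}\bigl(q^{(\lambda-\mu,\nu)}-1\bigr)
\]
specializes to a nonzero value at $\zeta'$, hence is a unit of $\BA$. \emph{Case B}: otherwise $\zeta^{(\lambda-\mu,\nu)}=1$ for every $\nu\in\Lambda$. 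Setting $\nu=\alpha_i$ gives $\ell\mid|\Lambda/Q|(\lambda-\mu,\alpha_i)$; combined with $\gcd(\ell,|\Lambda/Q|)=1$ and with the fact that $(\alpha_i,\alpha_i)/2\in\{1,2,3\}$ is coprime to $\ell$ under our standing hypotheses on $\ell$, this forces $\ell\mid(\lambda-\mu,\alpha_i^\vee)$ for every $i\in I$. Since $\lambda\ne\mu$, some $(\lambda-\mu,\alpha_i^\vee)$ is then a nonzero multiple of $\ell$; for such an $i$ take $h_1:=\begin{bmatrix}{k_i}\\{\ell}\end{bmatrix}\in U_\BA^{L,0}$. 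One checks that $\chi_\lambda(h_1)=\begin{bmatrix}{(\lambda,\alpha_i^\vee)}\\{\ell}\end{bmatrix}_{q_i}\in\BA$, and by the Lucas-type evaluation of Gaussian binomials at a primitive $\ell$-th root of unity (note that $\zeta_i=\zeta^{(\alpha_i,\alpha_i)/2}$ is again a primitive $\ell$-th root of unity by the same coprimalities) this specializes to $\lfloor(\lambda,\alpha_i^\vee)/\ell\rfloor$; the analogous statement holds for $\mu$. The two floor values differ, so $a\in\BA^\times$.

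The main obstacle is Case B, whose two delicate points are: (i) correctly extracting the divisibility $\ell\mid(\lambda-\mu,\alpha_i^\vee)$ from the hypothesis, which requires invoking each of the three standing conditions on $\ell$ (odd, coprime to $|\Lambda/Q|$, and coprime to $3$ in type $G_2$); and (ii) invoking the standard Lucas-type identity for $q$-binomial coefficients at a primitive $\ell$-th root of unity to conclude that the specialized difference is a nonzero integer, and hence a unit in $\BA$.
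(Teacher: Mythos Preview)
Your argument is correct, but the paper's proof is considerably shorter and more direct. The paper simply picks $i\in I$ with $(\lambda,\alpha_i^\vee)\ne(\mu,\alpha_i^\vee)$, assumes without loss of generality that $(\lambda,\alpha_i^\vee)>(\mu,\alpha_i^\vee)$, and takes
\[
h=\begin{bmatrix}{k_i;-(\mu,\alpha_i^\vee)}\\{(\lambda-\mu,\alpha_i^\vee)}\end{bmatrix}\in U_\BA^{L,0}.
\]
Since $\chi_\nu(k_i)=q_i^{(\nu,\alpha_i^\vee)}$, one has $\chi_\nu(h)=\begin{bmatrix}{(\nu-\mu,\alpha_i^\vee)}\\{(\lambda-\mu,\alpha_i^\vee)}\end{bmatrix}_{q_i}$; for $\nu=\lambda$ this is $\begin{bmatrix}{m}\\{m}\end{bmatrix}_{q_i}=1$, and for $\nu=\mu$ it is $\begin{bmatrix}{0}\\{m}\end{bmatrix}_{q_i}=0$ (here $m=(\lambda-\mu,\alpha_i^\vee)>0$). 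No case split, no specialization to $\zeta'$, and no Lucas-type identity are needed.

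The contrast is worth noting: your approach detects the separation of $\chi_\lambda$ and $\chi_\mu$ \emph{after} specializing to the root of unity, and then lifts back via the unit group $\BA^\times$; this is why you are forced to invoke all three standing hypotheses on $\ell$ in Case~B. The paper's approach instead exploits that the divided-power binomial $\begin{bmatrix}{k_i;c}\\{m}\end{bmatrix}$ already separates integral weights as an identity in $\BA[q_i^{\pm1}]$, independent of any specialization. In particular the paper's proof is valid for \emph{any} $\ell$ (indeed over $\BZ[q,q^{-1}]$), whereas yours genuinely uses the constraints on $\ell$. Your method would become necessary, for instance, if one only had access to $U_\BA^0$ rather than $U_\BA^{L,0}$.
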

\begin{proof}
Take $i\in I$ such that $(\lambda,\alpha_i^\vee)\ne(\mu,\alpha_i^\vee)$.
We may assume $(\lambda,\alpha_i^\vee)>(\mu,\alpha_i^\vee)$.
Then the assertion holds for
$h=\begin{bmatrix}
{k_i;-(\mu,\alpha_i^\vee)}\\
{(\lambda-\mu,\alpha_i^\vee)}
\end{bmatrix}
$.
\end{proof}

For $\lambda\in\Lambda$ we define 
$M_{+,\BA}(\lambda), M_{-,\BA}(\lambda)\in\Mod(U^L_\BA)$
by
\begin{align*}
M_{+,\BA}(\lambda)
=&U^L_\BA/
\sum_{y\in U_\BA^{L,-}}U^L_\BA(y-\varepsilon(y))+
\sum_{h\in U_\BA^{L,0}}U^L_\BA(h-\chi_\lambda(h)),\\
M_{-,\BA}(\lambda)
=&U^L_\BA/
\sum_{x\in U_\BA^{L,+}}U^L_\BA(x-\varepsilon(x))+
\sum_{h\in U_\BA^{L,0}}U^L_\BA(h-\chi_\lambda(h)).
\end{align*}
By \eqref{eq:tri:L1} we have isomorphisms
\[
M_{+,\BA}(\lambda)
\cong
U_\BA^{L,+}\quad
(\overline{u}\leftrightarrow u),\qquad
M_{-,\BA}(\lambda)
\cong
U_\BA^{L,-}\quad
(\overline{u}\leftrightarrow u)
\]
of $\BA$-modules.
In particular, $M_{\pm,\BA}(\lambda)$ is a free $\BA$-module and we have $\BF\otimes_\BA M_{\pm,\BA}(\lambda)\cong M_{\pm,\BF}(\lambda)$.
Moreover, we have weight space decompositions
\[
M_{+,\BA}(\lambda)
=\bigoplus_{\mu\in\lambda+Q^+}M_{+,\BA}(\lambda)_\mu,\qquad
M_{-,\BA}(\lambda)
=\bigoplus_{\mu\in\lambda-Q^+}M_{-,\BA}(\lambda)_\mu.
\]

For $\lambda\in\Lambda^+$ we define 
$L_{+,\BA}(-\lambda)\in\Mod(U^L_\BA)$ (resp. $L_{-,\BA}(\lambda)\in\Mod(U^L_\BA)$)
to be the $U^L_\BA$-submodule of
$L_{+,\BF}(-\lambda)$ (resp. $L_{-,\BF}(\lambda)$) 
generated by $\overline{1}\in L_{+,\BF}(-\lambda)$ (resp. $\overline{1}\in L_{-,\BF}(\lambda)$).
By definition $L_{\pm,\BA}(\mp\lambda)$ is a free $\BA$-module and we have $\BF\otimes_\BA L_{\pm,\BA}(\mp\lambda)\cong L_{\pm,\BF}(\mp\lambda)$.
Moreover, we have weight space decompositions
\[
L_{+,\BA}(-\lambda)
=\bigoplus_{\mu\in-\lambda+Q^+}L_{+,\BA}(-\lambda)_\mu,\qquad
L_{-,\BA}(\lambda)
=\bigoplus_{\mu\in\lambda-Q^+}L_{-,\BA}(\lambda)_\mu.
\]
The canonical surjective $U_\BF$-homomorphism $M_{\pm,\BF}(\mp\lambda)\to L_{\pm,\BF}(\mp\lambda)$ induces a surjective $U_\BA^L$-homomorphism 
\begin{equation}
\label{eq:MAtoLA}
M_{\pm,\BA}(\mp\lambda)\to L_{\pm,\BA}(\mp\lambda)
\qquad(\lambda\in\Lambda^+).
\end{equation}
Note that \eqref{eq:MAtoLA} is a split epimorphism of $\BA$-modules since $\BA$ is PID and $L_{\pm,\BA}(\mp\lambda)_\mu$ is a torsion free finitely generated $\BA$-module for any $\mu\in\mp\lambda\pm Q^+$.

Let $M$ be a $U^L_\BA$-module with weight space decomposition
$M=\bigoplus_{\mu\in\Lambda}M_\mu$ such that $M_\mu$ is a free $\BA$-module of finite rank for any $\mu\in\Lambda$.
We define a  $U^L_\BA$-module $M^\bigstar$ by
\[
M^\bigstar=\bigoplus_{\mu\in\Lambda}
\Hom_\BA(M_\mu,\BA)\subset \Hom_\BA(M,\BA),
\]
where the action of $U^L_\BA$ is given by
\[
\langle um^*,m\rangle=\langle m^*,(Su)m\rangle
\qquad(u\in U^L_\BA, m^*\in M^\bigstar, m\in M).
\]
Here $\langle\,,\,\rangle:M^\bigstar\times M\to\BA$ is the natural pairing.

We set
\begin{align*}
M^*_{\pm,\BA}(\lambda)&=(M_{\mp,\BA}(-\lambda))^\bigstar\qquad
(\lambda\in\Lambda),\\
L^*_{\pm,\BA}(\mp\lambda)&=(L_{\mp,\BA}(\pm\lambda))^\bigstar\qquad
(\lambda\in\Lambda^+).
\end{align*}
Then $M^*_{\pm,\BA}(\lambda)$ for $\lambda\in\Lambda$ and $L^*_{\pm,\BA}(\mp\lambda)$ for $\lambda\in\Lambda^+$ are free $\BA$-modules satisfying 
\[
\BF\otimes_\BA M^*_{\pm,\BA}(\lambda)
\cong
M^*_{\pm,\BF}(\lambda),\qquad
\BF\otimes_\BA L^*_{\pm,\BA}(\mp\lambda)
\cong
L^*_{\pm,\BF}(\mp\lambda).
\]
Moreover, we can identify $M^*_{\pm,\BA}(\lambda)$ and $L^*_{\pm,\BA}(\mp\lambda)$ with $\BA$-submodules of 
$M^*_{\pm,\BF}(\lambda)$ and $L^*_{\pm,\BF}(\mp\lambda)$ respectively.
Under this identification we have 
\begin{equation}
\label{eq:MLAF}
L^*_{\pm,\BA}(\mp\lambda)=
L^*_{\pm,\BF}(\mp\lambda)\cap M^*_{\pm,\BA}(\mp\lambda)\qquad(\lambda\in\Lambda^+).
\end{equation}
In particular, the $U_\BA^L$-homomorphism
\begin{equation}
\label{eq:LASTARtoMASTAR}
L^*_{\pm,\BA}(\mp\lambda)\to M^*_{\pm,\BA}(\mp\lambda)
\qquad(\lambda\in\Lambda^+)
\end{equation}
is a split monomorphism of $\BA$-modules.

\subsection{}
Let $\lambda\in\Lambda$.
By abuse of notation 
we also denote by $\chi_\lambda:U^{L,0}_\zeta\to\BC$ the $\BC$-algebra homomorphism induced by $\chi_\lambda:U^{L,0}_\BA\to\BA$.
\begin{lemma}
\label{lemma:chi-indep}
\begin{itemize}
\item[\rm(i)]
Let $\lambda, \mu\in\Lambda$.
If we have $\chi_\lambda=\chi_\mu$ in $\Hom_\BC(U^{L,0}_\zeta,\BC)$, then we have $\lambda=\mu$.
\item[\rm(ii)]
$\{\chi_\lambda\}_{\lambda\in\Lambda}$ is a linearly independent subset of  $\Hom_\BC(U^{L,0}_\zeta,\BC)$.
\end{itemize}
\end{lemma}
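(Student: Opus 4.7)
The plan is to prove (i) by evaluating $\chi_\lambda$ on two natural families of generators of $U^{L,0}_\zeta$, and then to deduce (ii) as an immediate consequence of the classical linear independence of distinct characters.

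For (i), set $\delta = \lambda - \mu$ and assume $\chi_\lambda = \chi_\mu$. First I would evaluate on the grouplikes $k_\nu$ for $\nu \in \Lambda$: the identity $\zeta'^{|\Lambda/Q|(\delta,\nu)} = 1$ forces $\ell$ to divide $|\Lambda/Q|(\delta,\nu)$ for every $\nu \in \Lambda$. Taking $\nu = \alpha_j$ and using $\gcd(\ell,|\Lambda/Q|) = 1$ from hypothesis (c) gives $(\delta,\alpha_j) \in \ell\BZ$; since $(\delta,\alpha_j^\vee) \in \BZ$ and $\gcd(\ell,(\alpha_j,\alpha_j)/2) = 1$ by hypotheses (a) and (b), one further deduces $(\delta,\alpha_j^\vee) \in \ell\BZ$ for all $j \in I$, so $\delta \in \ell\Lambda$. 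Write $\delta = \ell\delta_1$.

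Next I would evaluate on $\begin{bmatrix}k_i\\\ell\end{bmatrix} \in U^{L,0}_\BA$, which detects the ``infinitesimal'' direction after specialization. One computes
\[
\chi_\lambda\!\left(\begin{bmatrix}k_i\\\ell\end{bmatrix}\right) = \begin{bmatrix}(\lambda,\alpha_i^\vee)\\\ell\end{bmatrix}_{q_i}\Big|_{q\mapsto\zeta},
\]
and since $\zeta_i = \zeta^{(\alpha_i,\alpha_i)/2}$ is itself a primitive $\ell$-th root of unity under our hypotheses, the quantum binomial Lucas identity at roots of unity (see Lusztig \cite{Lbook}) yields
\[
\chi_\lambda\!\left(\begin{bmatrix}k_i\\\ell\end{bmatrix}\right) - \chi_\mu\!\left(\begin{bmatrix}k_i\\\ell\end{bmatrix}\right) = (\delta_1,\alpha_i^\vee),
\]
using $(\lambda,\alpha_i^\vee) - (\mu,\alpha_i^\vee) = \ell(\delta_1,\alpha_i^\vee)$. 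Vanishing for every $i \in I$ then forces $\delta_1 = 0$, hence $\lambda = \mu$.

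For (ii), once (i) is established the $\chi_\lambda$ are pairwise distinct $\BC$-algebra homomorphisms $U^{L,0}_\zeta \to \BC$, and distinct such homomorphisms into a field are linearly independent (Dedekind's classical argument).

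The main obstacle I anticipate is the root-of-unity binomial computation in the second step: while the Lucas-type formula is well known, one must track the specialization of $\begin{bmatrix}k_i\\\ell\end{bmatrix}$ carefully and confirm that the ``integer-part'' shift $(\delta_1,\alpha_i^\vee)$ emerges precisely, which in turn relies on $\zeta_i$ remaining a primitive $\ell$-th root of unity---the exact content of hypotheses (a), (b) together with (c).
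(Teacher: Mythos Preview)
Your argument is correct, but it takes a longer route than the paper. The paper simply invokes the preceding Lemma~\ref{lemma:chi-indep0}: for $\lambda\ne\mu$ there is an element
\[
h=\begin{bmatrix} k_i;-(\mu,\alpha_i^\vee)\\(\lambda-\mu,\alpha_i^\vee)\end{bmatrix}\in U^{L,0}_\BA
\]
with $\chi_\lambda(h)=1$ and $\chi_\mu(h)=0$ already in $\BA$. Since $1$ and $0$ specialize to $1\ne 0$ under $\BA\to\BC$, the induced characters on $U^{L,0}_\zeta$ differ on the image of $h$. Part~(ii) then follows from (i) by Dedekind's argument, exactly as you say.

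Your two-stage approach---first use the $k_\nu$ to force $\delta\in\ell\Lambda$, then use $\begin{bmatrix} k_i\\\ell\end{bmatrix}$ together with the Lucas identity to kill $\delta_1$---also works, and has the merit of making explicit which hypotheses (a), (b), (c) on $\ell$ enter where. The computation you flag as the main obstacle does go through: writing $(\lambda,\alpha_i^\vee)=a_0\ell+a_1$ with $0\le a_1<\ell$, the balanced Lucas formula at a primitive $\ell$-th root gives $\begin{bmatrix}(\lambda,\alpha_i^\vee)\\\ell\end{bmatrix}_{\zeta_i}=a_0$, and the difference is indeed $(\delta_1,\alpha_i^\vee)$. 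The paper's choice of $h$ simply sidesteps this entirely, because the shifted binomial is engineered so that its values $1$ and $0$ are integers in $\BA$ from the start, hence insensitive to specialization.
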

\begin{proof}
(i) is a consequence of Lemma \ref{lemma:chi-indep0}, and 
(ii) follows from (i) easily.
\end{proof}
For $M\in\Mod(U^L_\zeta)$ and $\lambda\in\Lambda$ we set
\[
M_\lambda=\{m\in M\mid hm=\chi_\lambda(h)m\,\,(h\in U^{L,0}_\zeta)\}.
\]
We denote by $\Mod_f(U^L_\zeta)$ the category of finite dimensional $U_\zeta^L$-modules $M$ such that $M=\bigoplus_{\lambda\in\Lambda}M_\lambda$.
We also denote by $\Mod_{int}(U^L_\zeta)$ the category of $U^L_\zeta$-modules $M$ which is a sum of modules in $\Mod_f(U^L_\zeta)$.
It is known that a $U^L_\zeta$-module $M$ belongs to $\Mod_{int}(U^L_\zeta)$  if and only if $M=\bigoplus_{\lambda\in\Lambda}M_\lambda$ and
for any $m\in M$ there exists $r\in\BZ_{>0}$ such that 
$e_i^{(n)}m=f_i^{(n)}m=0$ for any $i\in I$ and $n\geqq r$ (see, for example, Andersen-Polo-Wen \cite{APW1}).

For $M\in\Mod_f(U^L_\zeta)$ we have a group homomorphism
\[
\BB\to GL(M)
\]
given by the formula similar to \eqref{eq:Ti} ($q$ is replaced by $\zeta$).
\begin{lemma}
\label{lem:braid-Fr}
Let $M$ be a finite-dimensional $U(\Gg)$-module.
If we regard $M$ as a $U_\zeta^L$-module via $\pi:U_\zeta^L\to U(\Gg)$ $($see \eqref{eq:LFrobenius}$)$,
then the action of $T_i$ on the $U_\zeta^L$-module $M$ is given by 
\[
T_i=\exp(\overline{f}_i)\exp(-\overline{e}_i)\exp(\overline{f}_i).
\]
\end{lemma}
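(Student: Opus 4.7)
\emph{Proof plan.} My plan is to compute, on $M$ viewed as a $U_\zeta^L$-module via $\pi$, the action of each of the four factors in the first expression for $T_i$ in \eqref{eq:Ti} separately, and observe that all $\zeta_i$-dependent phases collapse to $1$. First I rewrite each $q$-exponential in terms of divided powers. Using the identity $(k_if_i)^n=q_i^{n(n-1)}k_i^nf_i^n$ (derived by induction from $k_if_i=q_i^{-2}f_ik_i$), its analogue $(k_i^{-1}f_i)^n=q_i^{-n(n-1)}k_i^{-n}f_i^n$, and $f_i^n=[n]_{q_i}!\,f_i^{(n)}$, one obtains, after specialization $q\mapsto\zeta$,
\[
\exp_{q_i^{-1}}(q_ik_if_i)=\sum_{n\geq0}q_i^{n(n+1)/2}k_i^nf_i^{(n)},\quad
\exp_{q_i^{-1}}(-e_i)=\sum_{n\geq0}(-1)^nq_i^{-n(n-1)/2}e_i^{(n)},
\]
and an analogous expression $\sum_n q_i^{-n(3n-1)/2}k_i^{-n}f_i^{(n)}$ for $\exp_{q_i^{-1}}(q_i^{-1}k_i^{-1}f_i)$.

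Next I apply $\pi$. Since $\pi(k_\lambda)=1$ for all $\lambda$, and $\pi(e_i^{(n)})=\pi(f_i^{(n)})=0$ unless $\ell\mid n$, only the terms with $n=\ell k$ survive. For each such term, one checks that the remaining scalar $\zeta_i^{E}$ equals $1$: the exponent $E$ is of the form $\pm\ell k(\ell k\pm1)/2$, and since exactly one of $k$, $\ell k\pm1$ is even (using $\ell$ odd), the quantity $k(\ell k\pm1)/2$ is an integer, so $E\in\ell\BZ$ and $\zeta_i^E=1$ because $\zeta_i^\ell=1$. The sign $(-1)^{\ell k}$ in the $e_i$-factor collapses to $(-1)^k$, once again by $\ell$ odd. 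Collecting $\sum_k(-1)^k\overline{e}_i^{(k)}=\exp(-\overline{e}_i)$ and $\sum_k\overline{f}_i^{(k)}=\exp(\overline{f}_i)$, the three $q$-exponentials act on $M$ as $\exp(\overline{f}_i)$, $\exp(-\overline{e}_i)$, $\exp(\overline{f}_i)$ respectively.

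It remains to handle $H_i$. I first verify that, under the Frobenius pull-back, the $U(\Gg)$-weight subspace $M_\nu$ coincides with the $U_\zeta^L$-weight subspace of weight $\ell\nu$: on the one hand $\chi_{\ell\nu}(k_\lambda)=\zeta^{\ell(\nu,\lambda)}=1=\pi(k_\lambda)$, and on the other the action of the divided-power Cartan generators matches via the quantum Lucas congruence for $\zeta_i$-binomial coefficients, which is precisely what makes $\pi$ well-defined on $U_\zeta^{L,0}$. By the definition of $H_i$, on this weight space it acts as multiplication by $\zeta^{\ell(\nu,\alpha_i)(\ell(\nu,\alpha_i^\vee)+1)/2}$. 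Setting $b=(\nu,\alpha_i^\vee)\in\BZ$ and using $(\nu,\alpha_i)=b(\alpha_i,\alpha_i)/2$, the integer exponent factors as $\ell\cdot(\alpha_i,\alpha_i)\,b(\ell b+1)/4$; since $b(\ell b+1)$ is always even (one of $b$, $\ell b+1$ is) and $(\alpha_i,\alpha_i)\in\{2,4,6\}$, this lies in $\ell\BZ$, so $H_i=\id$ on $M$. Multiplying the four contributions gives the claimed identity $T_i=\exp(\overline{f}_i)\exp(-\overline{e}_i)\exp(\overline{f}_i)$.

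The main obstacle is the careful bookkeeping of the $\zeta_i$-phases arising from (a) the noncommutativity of $k_i^{\pm1}$ with $f_i$ inside the exponentials, (b) the $q$-exponential prefactors $q_i^{\pm n(n-1)/2}$, and (c) the weight-dependent coefficient in $H_i$. Each of these must reduce to a power of $\zeta_i$ lying in $\zeta_i^{\ell\BZ}=\{1\}$ once the index is restricted to multiples of $\ell$, and the parity/divisibility checks that make this work rely precisely on the standing hypotheses that $\ell$ is odd, coprime to $|\Lambda/Q|$, and coprime to $3$ in type $G_2$ (so that $\zeta_i$ is a primitive $\ell$-th root of unity with $\zeta_i^\ell=1$).
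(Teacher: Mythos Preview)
Your proof is correct and follows essentially the same route as the paper's, which is far terser: the paper simply observes that a $U(\Gg)$-weight space $M_\nu$ becomes the $U_\zeta^L$-weight space of weight $\ell\nu$, so that $k_i$ and $H_i$ act trivially on $M$, and then declares that the rest ``follows easily.'' Your explicit divided-power rewriting of the three $q$-exponentials and the phase checks are precisely what that ``easily'' unpacks to.

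Two minor remarks. First, your summary sentence that the surviving exponent $E$ is ``of the form $\pm\ell k(\ell k\pm1)/2$'' does not literally cover the third factor, whose exponent you correctly computed as $-\ell k(3\ell k-1)/2$; the parity argument you give still applies verbatim, so this is only a slip in the summary. Second, your closing paragraph overstates which hypotheses are used: the only condition on $\ell$ that enters this argument is that $\ell$ is odd (for the parity checks and for $(-1)^{\ell k}=(-1)^k$). The coprimality of $\ell$ to $3$ in type $G_2$ (and to $|\Lambda/Q|$) guarantees that $\zeta_i$ is a \emph{primitive} $\ell$-th root of unity, but here you only need $\zeta_i^\ell=1$, which holds automatically since $\zeta^\ell=1$.
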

\begin{proof}
Note that for $\lambda\in\Lambda$ and $m\in M$ satisfying $hm=\lambda(h)m$ for any $h\in\Gh$ we have $tm=\chi_{\ell\lambda}(t)m$ for any $t\in U^{L,0}_\zeta$.
In particular, $k_i$ and $H_i$ in \eqref{eq:Ti} act trivially on $M$. From this we see easily that the assertion holds.
\end{proof}

For $\lambda\in\Lambda$ we set
\[
M_{\pm,\zeta}(\lambda)=\BC\otimes_\BA M_{\pm,\BA}(\lambda),
\qquad
M^*_{\pm,\zeta}(\lambda)=\BC\otimes_\BA M^*_{\pm,\BA}(\lambda).
\]
For $\lambda\in\Lambda^+$ we set
\[
L_{\pm,\zeta}(\mp\lambda)=\BC\otimes_\BA L_{\pm,\BA}(\mp\lambda),
\qquad
L^*_{\pm,\zeta}(\mp\lambda)=\BC\otimes_\BA L^*_{\pm,\BA}(\mp\lambda).
\]
We have canonical $U^L_\zeta$-homomorphisms
\begin{align}
\label{eq:MtoL}
M_{\pm,\zeta}(\mp\lambda)\to L_{\pm,\zeta}(\mp\lambda)
\qquad(\lambda\in\Lambda^+),
\\
\label{eq:LSTARtoMSTAR}
L^*_{\pm,\zeta}(\mp\lambda)\to M^*_{\pm,\zeta}(\mp\lambda)
\qquad(\lambda\in\Lambda^+).
\end{align}
Note that \eqref{eq:MtoL} is surjective, and \eqref{eq:LSTARtoMSTAR} is injective.

\section{Quantized flag manifold}
\label{qflag}
\subsection{}
We denote by $C_\BF$ the subspace of $U_\BF^*=\Hom_\BF(U_\BF,\BF)$ spanned by the matrix coefficients of $U_\BF$-modules belonging to $\Mod_f(U_\BF)$, and denote by
\begin{equation}
\label{eq:Hopf-paring}
\langle\,,\,\rangle:C_\BF\times U_\BF\to\BF
\end{equation}
the canonical pairing.
Then $C_\BF$ is endowed with a Hopf algebra structure dual to $U_\BF$ via \eqref{eq:Hopf-paring}.
Moreover, the bilinear pairing \eqref{eq:Hopf-paring} is a Hopf pairing in the sense that we have
\begin{itemize}
\item[(a)]
$u\in U_\BF,\,\langle C_\BF, u\rangle=0
\,\Longrightarrow
u=0$,
\item[(b)]
$\varphi\in C_\BF,\,\langle \varphi, U_\BF\rangle=0
\,\Longrightarrow
\varphi=0$,
\item[(c)]
$\langle \varphi\psi,u\rangle=\langle \varphi\otimes\psi,\Delta u\rangle
\qquad(\varphi, \psi\in C_\BF, u\in U_\BF)$,
\item[(d)]
$\langle \varphi,uv\rangle=\langle \Delta\varphi,u\otimes v\rangle
\qquad(\varphi\in C_\BF, u, v\in U_\BF)$,
\item[(e)]
$\langle 1,u\rangle=\varepsilon(u)
\qquad(u\in U_\BF)$,
\item[(f)]
$\langle \varphi,1\rangle=\varepsilon(\varphi)
\qquad(\varphi\in C_\BF)$,
\item[(g)]
$\langle S\varphi,Su\rangle=\langle \varphi,u\rangle
\qquad(\varphi\in C_\BF, u\in U_\BF)$
\end{itemize}
(see, for example, \cite[5.11]{Jan} for (a) and  \cite{T1} for (b),\dots, (g)).
Note also that we have a $U_\BF$-bimodule structure of $C_\BF$ by
\[
\langle u_1\cdot\varphi\cdot u_2,u\rangle
=\langle \varphi,u_2uu_1\rangle
\qquad(\varphi\in C_\BF, u, u_1, u_2\in U_\BF).
\]
Set
\begin{align*}
A_\BF&=\{\varphi\in C_\BF\mid\varphi\cdot f_i=0\quad(i\in I)\},\\
A_\BF(\lambda)&=
\{\varphi\in A_\BF\mid \varphi\cdot k_\mu=q^{(\mu,\lambda)}\varphi\quad(\mu\in\Lambda)\}
\qquad(\lambda\in\Lambda^+).
\end{align*}
Then we have 
\[
A_\BF=\bigoplus_{\lambda\in\Lambda^+}A_\BF(\lambda),
\]
and $A_\BF$ turns out to be a $\Lambda$-graded ring.
Note also that $A_\BF$ is a left $U_\BF$-submodule of $C_\BF$.
Moreover, for $\lambda\in\Lambda^+$ we have $A_\BF(\lambda)\cong L_\BF(\lambda)$ as a $U_\BF$-module
(see \cite{T2}).

We set
\[
(U_\BF^\pm)^\bigstar=\bigoplus_{\beta\in Q^+}\Hom_\BF(U_{\BF,\pm\beta}^\pm,\BF)\subset
\Hom_\BF(U_{\BF}^\pm,\BF)=(U_\BF^\pm)^*.
\]
We identify $(U^-_{\BF})^*\otimes (U^0_{\BF})^*\otimes (U^+_{\BF})^*$ with a subspace of $U_\BF^*$ by the embedding
\begin{align*}
&(U^-_{\BF})^*\otimes (U^0_{\BF})^*\otimes (U^+_{\BF})^*\to U_\BF^*\\
&\quad
(f\otimes\chi\otimes g\mapsto[uhv\mapsto
f(u)\chi(h)g(v)] \mbox{ for }u\in 
 U^-_{\BF}, h\in U^0_{\BF}, v\in U^+_{\BF}).
\end{align*}
Then  we have
\begin{align}
\label{eq:Cdecomp}
C_\BF&\subset
(U^-_{\BF})^\bigstar\otimes (\bigoplus_{\lambda\in \Lambda}\BF\chi_\lambda)\otimes (U^+_{\BF})^\bigstar,\\
\label{eq:Adecomp1}
A_\BF&=
(\varepsilon\otimes (\bigoplus_{\lambda\in \Lambda}\BF\chi_\lambda)\otimes (U^+_{\BF})^\bigstar)
\cap\, C_\BF,\\
\label{eq:Adecomp2}
A_\BF(\lambda)&=
(\varepsilon\otimes \chi_\lambda\otimes (U^+_{\BF})^\bigstar)
\cap\, C_\BF.
\end{align}

\subsection{}
We define $\BA$-forms $C_\BA$, $A_\BA$, $A_\BA(\lambda)$ ($\lambda\in\Lambda^+$) of $C_\BF$, $A_\BF$,  $A_\BF(\lambda)$ respectively by
\begin{align*}
&C_\BA=\{\varphi\in C_\BF\mid
\langle \varphi,U_\BA^L\rangle\subset\BA\},\quad
A_\BA=A_\BF\cap C_\BA,\quad
A_\BA(\lambda)=A_\BF(\lambda)\cap C_\BA.
\end{align*}
Then $C_\BA$ is an $\BA$-algebra and $A_\BA$ is its subalgebra.
Moreover, $C_\BA$ is a $U^L_\BA$-bimodule and $A_\BA$ is its left $U^L_\BA$-submodule.

We set
\[
(U_\BA^{L,\pm})^\bigstar=\bigoplus_{\beta\in Q^+}\Hom_\BA(U_{\BA,\pm\beta}^{L,\pm},\BA)\subset
\Hom_\BA(U_{\BA}^{L,\pm},\BA).
\]
By Lemma \ref{lemma:chi-indep0} we can easily show
\[
(\bigoplus_{\lambda\in \Lambda}\BF\chi_\lambda)
\cap \Hom_\BA(U^{L,0}_\BA,\BA)=
\bigoplus_{\lambda\in \Lambda}\BA\chi_\lambda.
\]
Hence by \eqref{eq:Cdecomp}, \eqref{eq:Adecomp1}, \eqref{eq:Adecomp2}
we have
\begin{align}
\label{eq:CdecompA}
C_\BA&=
(
(U^{L,-}_{\BA})^\bigstar\otimes (\bigoplus_{\lambda\in \Lambda}\BA\chi_\lambda)\otimes (U^{L,+}_{\BA})^\bigstar)
\cap C_\BF,\\
\label{eq:Adecomp1A}
A_\BA&=
(\varepsilon\otimes (\bigoplus_{\lambda\in \Lambda}\BA\chi_\lambda)\otimes (U^{L,+}_{\BA})^\bigstar)
\cap\, C_\BF,\\
\label{eq:Adecomp2A}
A_\BA(\lambda)&=
(\varepsilon\otimes \chi_\lambda\otimes (U^{L,+}_{\BA})^\bigstar)
\cap\, C_\BF.
\end{align}
In particular, we have
\begin{equation}
A_\BA=\bigoplus_{\lambda\in\Lambda^+}
A_\BA(\lambda).
\end{equation}
By \eqref{eq:CdecompA} we can easily show that $C_\BA$ is naturally a Hopf algebra over $\BA$.
\begin{lemma}
\label{lem:AA}
We have an isomorphism
\[
A_\BA(\lambda)\cong L^*_{-,\BA}(\lambda)
\]
of $U_\BA^L$-modules.
\end{lemma}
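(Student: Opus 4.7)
The plan is to realize $A_\BA(\lambda)$ and $L^*_{-,\BA}(\lambda)$ as the same $\BA$-sublattice of the common $\BF$-vector space $M^*_{-,\BF}(\lambda)=(M_{+,\BF}(-\lambda))^\bigstar$, matched via a natural $\BF$-linear embedding and then intersected with the $\BA$-form $M^*_{-,\BA}(\lambda)$.

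First I would construct an $\BF$-linear injection $\Phi_\BF\colon A_\BF(\lambda)\hookrightarrow M^*_{-,\BF}(\lambda)$. The defining conditions $\varphi\cdot f_i=0$ and $\varphi\cdot k_\mu=\chi_\lambda(k_\mu)\varphi$ together with the triangular decomposition \eqref{eq:tri:F1} force $\varphi(yhx)=\varepsilon(y)\chi_\lambda(h)\varphi(x)$ for $y\in U_\BF^-$, $h\in U_\BF^0$, $x\in U_\BF^+$, so each $\varphi$ is determined by $\varphi|_{U_\BF^+}$ (this is the content of \eqref{eq:Adecomp2}). Using the $\BF$-linear identification $M_{+,\BF}(-\lambda)\cong U_\BF^+$ given by $xv_{-\lambda}\leftrightarrow x$, this restriction gives $\Phi_\BF$, and one checks that the highest weight vector $\varphi_\lambda=\varepsilon\otimes\chi_\lambda\otimes\varepsilon$ is sent to the dual $v_{-\lambda}^\vee$ of the lowest weight vector of $M_{+,\BF}(-\lambda)$.

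The first key step is to show that $\Phi_\BF(A_\BF(\lambda))$ coincides, as an $\BF$-subspace of $M^*_{-,\BF}(\lambda)$, with $L^*_{-,\BF}(\lambda)$. Both are finite-dimensional of dimension $\dim L_{-,\BF}(\lambda)$ (the source by the cited $A_\BF(\lambda)\cong L_{-,\BF}(\lambda)$, the target because $L^*_{-,\BF}(\lambda)\cong L_{-,\BF}(\lambda)$), both contain $v_{-\lambda}^\vee$ as a highest weight vector of weight $\lambda$, and $L^*_{-,\BF}(\lambda)$ is the unique simple submodule (the socle) of the graded-dual Verma module $M^*_{-,\BF}(\lambda)$. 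A character comparison together with the matrix coefficient description of $A_\BF(\lambda)$ as the span of $\mathrm{cf}_{v_\lambda^\vee,v}$ for $v\in L_{-,\BF}(\lambda)$ then yields the equality of subspaces. The descent to $\BA$-forms follows formally: by \eqref{eq:Adecomp2A} and \eqref{eq:tri:L1}, upon identifying $M^*_{-,\BA}(\lambda)$ with graded $\BA$-valued functionals on $U_\BA^{L,+}$ via $M_{+,\BA}(-\lambda)\cong U_\BA^{L,+}$, one has
\[
\Phi_\BF(A_\BA(\lambda))=\Phi_\BF(A_\BF(\lambda))\cap M^*_{-,\BA}(\lambda),
\]
while \eqref{eq:MLAF} with $\pm=-$ gives
\[
L^*_{-,\BA}(\lambda)=L^*_{-,\BF}(\lambda)\cap M^*_{-,\BA}(\lambda).
\]
Combining these yields the sought bijection $A_\BA(\lambda)\xrightarrow{\sim}L^*_{-,\BA}(\lambda)$.

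The main obstacle is the image identification above. Since the left $U_\BF$-action on $C_\BF$ is defined without the antipode whereas the action on $M^*_{-,\BF}(\lambda)$ uses $S$, the map $\Phi_\BF$ itself is not a $U_\BF$-module homomorphism in the obvious way; a direct weight-space computation with the Hopf algebra relations shows that it intertwines the two actions only up to a weight-dependent scalar which is always a unit in $\BA$ (a sign times a power of $q$). The genuine $U_\BF$-module isomorphism is $\Phi_\BF$ composed with such an invertible weight-dependent rescaling, so the two $\BA$-sublattices $\Phi_\BF(A_\BA(\lambda))$ and $L^*_{-,\BA}(\lambda)$ of $M^*_{-,\BF}(\lambda)$ genuinely agree and the resulting bijection is $U_\BA^L$-equivariant.
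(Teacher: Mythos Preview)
Your overall strategy---realize both sides as $\BA$-lattices in a common $\BF$-space and compare---is exactly what the paper does. The difference, and the source of a genuine gap in your argument, is in the construction of the comparison map.

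The paper builds the $\BF$-isomorphism with the antipode already inserted: $g_\lambda:L^*_{-,\BF}(\lambda)\to A_\BF(\lambda)$ is defined by $\langle g_\lambda(\ell^*),u\rangle=\langle\ell^*,\overline{Su}\rangle$, using the canonical pairing $L^*_{-,\BF}(\lambda)\times L_{+,\BF}(-\lambda)\to\BF$. With $S$ in place this is a bona fide $U_\BF$-module map from the start, and since $S(U_\BA^L)=U_\BA^L$ the $\BA$-lattice identification $g_\lambda^{-1}(A_\BA(\lambda))=L^*_{-,\BA}(\lambda)$ is immediate.

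Your map $\Phi_\BF$ omits the antipode and then attempts to repair equivariance by a ``weight-dependent scalar which is always a unit in $\BA$''. This claim is not correct. The discrepancy between $\Phi_\BF$ and a genuine $U_\BF$-module isomorphism into $M^*_{-,\BF}(\lambda)$ is governed by the effect of $S$ on $U_{\BF,\gamma}^{+}$ (after stripping the $k_{-\gamma}$ factor), and this is a nontrivial linear automorphism of $U_{\BF,\gamma}^{+}$, not a scalar, as soon as $\dim U_{\BF,\gamma}^{+}>1$. For instance in type $A_2$ with $\gamma=\alpha_1+\alpha_2$ one has, up to a power of $q$, $S(e_1e_2)\in\BF^\times k_{-\gamma}\,e_2e_1$ and $S(e_2e_1)\in\BF^\times k_{-\gamma}\,e_1e_2$, so the induced map on the two-dimensional space $U_{\BF,\gamma}^{+}$ swaps the two monomials rather than scaling them. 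Consequently you cannot conclude that $\Phi_\BF(A_\BF(\lambda))=L^*_{-,\BF}(\lambda)$ as subspaces, nor that your lattice bijection is $U_\BA^L$-equivariant. The fix is simply to build the antipode into the map as the paper does; then both the image identification and the equivariance come for free, and your lattice comparison via \eqref{eq:Adecomp2A} and \eqref{eq:MLAF} becomes the one-line computation $\langle g_\lambda(\ell^*),U_\BA^L\rangle=\langle\ell^*,L_{+,\BA}(-\lambda)\rangle$.
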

\begin{proof}
Note that we have an isomorphism 
\[
g_\lambda:L^*_{-,\BF}(\lambda)\to A_\BF(\lambda)
\]
of $U_\BF$-modules given by
\[
\langle g_\lambda(\ell^*),u\rangle
=\langle \ell^*, \overline{Su}\rangle
\qquad(\ell^*\in L^*_{-,\BF}(\lambda), u\in U_\BF).
\]
Here, the pairing in the right side is the canonical one 
$L^*_{-,\BF}(\lambda)\times
L_{+,\BF}(-\lambda)\to\BF$.
We have $g_\lambda(\ell^*)\in A_\BA(\lambda)$ if and only if $\langle g_\lambda(\ell^*),U^{L}_\BA\rangle\subset\BA$.
By
$\langle g_\lambda(\ell^*),U^{L}_\BA\rangle
=\langle \ell^*, \overline{U^L_\BA}\rangle
=\langle \ell^*, L_{+,\BA}(-\lambda)\rangle
$
we obtain
$g_\lambda^{-1}(A_\BA(\lambda))=L^*_{-,\BA}(\lambda)$.
\end{proof}
By setting 
\[
A_\BA(\lambda)_\xi
=A_\BF(\lambda)_\xi\cap A_\BA
\qquad(\lambda\in\Lambda^+, \xi\in\Lambda)
\]
we have
\begin{equation}
A_\BA(\lambda)=\bigoplus_{\gamma\in Q^+}
A_\BA(\lambda)_{\lambda-\gamma}.
\end{equation}

\subsection{}
We set 
\[
C_\zeta=\BC\otimes_\BA C_\BA,\quad
A_\zeta=\BC\otimes_\BA A_\BA,\quad
A_\zeta(\lambda)=\BC\otimes_\BA A_\BA(\lambda)\qquad
(\lambda\in\Lambda^+).
\]
Then $C_\zeta$ is a Hopf algebra over $\BC$.
Moreover, the $U_\BF$-bimodule structure of $C_\BF$ induces a $U_\zeta^L$-bimodule structure of
$C_\zeta$.
Let
\begin{equation}
\label{eq:Hopf-paring2}
\langle\,,\,\rangle:C_\zeta\times U^L_\zeta\to\BC.
\end{equation}
be the pairing induced by \eqref{eq:Hopf-paring}.

We set
\[
(U_\zeta^{L,\pm})^\bigstar=\bigoplus_{\beta\in Q^+}(U_{\zeta,\pm\beta}^{L,\pm})^*\subset
(U_{\zeta}^{L,\pm})^*.
\]
By Lemma \ref{lemma:chi-indep} (ii) we have
\[
(U^{L,-}_{\zeta})^\bigstar\otimes (\bigoplus_{\lambda\in \Lambda}\BC\chi_\lambda)\otimes (U^{L,+}_{\zeta})^\bigstar
\subset
(U^{L}_{\zeta})^*.
\]
Moreover, by \eqref{eq:CdecompA}, \eqref{eq:Adecomp1A}, \eqref{eq:Adecomp2A}
we have
\begin{align}
\label{eq:CdecompZ}
C_\zeta&\subset
(U^{L,-}_{\zeta})^\bigstar\otimes (\bigoplus_{\lambda\in \Lambda}\BC\chi_\lambda)\otimes (U^{L,+}_{\zeta})^\bigstar,\\
\label{eq:Adecomp1Z}
A_\zeta&\subset
\varepsilon\otimes (\bigoplus_{\lambda\in \Lambda}\BC\chi_\lambda)\otimes (U^{L,+}_{\zeta})^\bigstar,\\
\label{eq:Adecomp2Z}
A_\zeta(\lambda)&\subset
\varepsilon\otimes \chi_\lambda\otimes (U^{L,+}_{\zeta})^\bigstar.
\end{align}
In particular, we have
\begin{equation}
A_\zeta=\bigoplus_{\lambda\in\Lambda^+}
A_\zeta(\lambda)
\subset C_\zeta\subset(U_\zeta^L)^*.
\end{equation}
Hence we have
\begin{align*}
A_\zeta
&=\{\varphi\in C_\zeta\mid
\varphi\cdot u=\varepsilon(u)\varphi\quad(u\in U_\zeta^{L,-})\},\\
A_\zeta(\lambda)
&=\{\varphi\in A_\zeta\mid
\varphi\cdot h=\chi_\lambda(h)\varphi\quad(h\in U_\zeta^{L,0})\}
\qquad(
\lambda\in\Lambda^+).
\end{align*}
By Andersen-Wen \cite[4.2(2)]{AW} 
(see also Andersen-Polo-Wen \cite[1.31 Theorem(iii)]{APW1}) we have the following.
\begin{proposition}
\label{prop:AW}
$C_\zeta$ coincides with the subspace of $(U^L_\zeta)^*$ spanned by the matrix coefficients of $U^L_\zeta$-modules belonging to $\Mod_f(U^L_\zeta)$.
\end{proposition}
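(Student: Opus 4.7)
The proof proceeds by verifying the two inclusions
\[
C_\zeta \;\subseteq\; \CC_\zeta, \qquad C_\zeta \;\supseteq\; \CC_\zeta,
\]
where $\CC_\zeta \subset (U_\zeta^L)^*$ denotes the span of matrix coefficients of modules in $\Mod_f(U_\zeta^L)$. Both inclusions are obtained by passing through $U_\BA^L$-stable lattices and specializing along $\BA \to \BC$; the cited result of Andersen--Wen / Andersen--Polo--Wen is used as a black box in one direction.

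\emph{For $C_\zeta \subseteq \CC_\zeta$.} Any element of $C_\zeta$ is the image of some $\varphi \in C_\BA \subset C_\BF$, and by definition $\varphi$ is a matrix coefficient of some $M \in \Mod_f(U_\BF)$. Using the semisimplicity of $\Mod_f(U_\BF)$ together with Lemma \ref{lem:AA}, I may reduce to the case where $M$ is a finite direct sum of $L_{-,\BF}(\lambda_i)$, which contain the $U_\BA^L$-lattices $L_{-,\BA}(\lambda_i)$. The integrality condition $\langle \varphi, U_\BA^L \rangle \subset \BA$ combined with the nondegeneracy of the pairing between $L_{-,\BA}(\lambda_i)$ and its dual on each weight space lets me write $\varphi(u) = \langle m^*, um\rangle$ with $m \in M_\BA := \bigoplus_i L_{-,\BA}(\lambda_i)$ and $m^* \in M_\BA^\bigstar$. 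Specialization then exhibits $\varphi \otimes 1$ as a matrix coefficient of $\BC \otimes_\BA M_\BA \in \Mod_f(U_\zeta^L)$.

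\emph{For $C_\zeta \supseteq \CC_\zeta$.} Given $M \in \Mod_f(U_\zeta^L)$, the cited results of Andersen--Wen and Andersen--Polo--Wen supply a $U_\BA^L$-stable $\BA$-lattice $M_\BA \subset M$ which is free of finite rank over $\BA$, respects the weight decomposition $M_\BA = \bigoplus_\lambda M_{\BA,\lambda}$, and satisfies $\BC \otimes_\BA M_\BA \cong M$ in $\Mod(U_\zeta^L)$. The base change $M_\BF := \BF \otimes_\BA M_\BA$ then belongs to $\Mod_f(U_\BF)$, since the divided powers $e_i^{(n)}, f_i^{(n)}$ act integrally on $M_\BA$ and hence the operators $e_i, f_i$ are locally nilpotent on $M_\BF$. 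For $m \in M_\BA$ and $m^* \in M_\BA^\bigstar$ the matrix coefficient $u \mapsto \langle m^*, um\rangle$ lies in $C_\BF$ and takes $\BA$-values on $U_\BA^L$, so it belongs to $C_\BA$; its specialization realizes an arbitrary matrix coefficient of $M$ inside $C_\zeta$.

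The main obstacle is the lifting step in the second inclusion: producing, for every $M \in \Mod_f(U_\zeta^L)$, a $U_\BA^L$-stable weight-graded free lattice $M_\BA$ whose specialization recovers $M$. This is the substantive input supplied by the cited theorems, and once it is granted both inclusions reduce to formal manipulations with the Hopf pairing \eqref{eq:Hopf-paring} and its compatibility with the base changes $\BF \leftarrow \BA \to \BC$.
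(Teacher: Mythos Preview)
The paper gives no argument of its own for this proposition; it simply invokes \cite[4.2(2)]{AW} and \cite[1.31 Theorem(iii)]{APW1}. So there is nothing to compare against directly, and I assess your proposal on its own terms.

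Your first inclusion is correct in spirit but heavier than necessary. Since $C_\zeta$ is a Hopf algebra over $\BC$, any $\psi\in C_\zeta$ generates under the left $U_\zeta^L$-action a finite-dimensional submodule (spanned by the left tensor factors of $\Delta\psi$); this lies in $\Mod_f(U_\zeta^L)$ by the weight decomposition \eqref{eq:CdecompZ}, and $\psi$ is its matrix coefficient with covector the restriction of the counit. No lattice bookkeeping is needed for this direction.

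The second inclusion, however, has a genuine gap. The statement you take as a black box --- that every $M\in\Mod_f(U_\zeta^L)$ admits a $U_\BA^L$-stable free $\BA$-lattice $M_\BA$ with $\BC\otimes_\BA M_\BA\cong M$ --- is \emph{false}, and it is not what the cited papers prove. If such a lattice existed then $M_\BF=\BF\otimes_\BA M_\BA$ would lie in the semisimple category $\Mod_f(U_\BF)$ and have the same formal character as $M$, forcing $\ch M$ to be a nonnegative integer combination of Weyl characters $\ch L_{-,\zeta}(\lambda)$. But whenever the Weyl module $L_{-,\zeta}(\lambda)$ is reducible (which happens for many $\lambda$ once $\ell>1$), its simple head has $\lambda$-coefficient equal to $1$ in the Weyl-character basis yet strictly smaller dimension, so some other coefficient must be negative; hence that simple module admits no lattice. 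What \cite{APW1} and \cite{AW} actually establish is that every $M\in\Mod_f(U_\zeta^L)$ is already a $C_\zeta$-comodule, via the fact that $M$ embeds into a finite direct sum of induced modules $A_\zeta(\lambda)\cong L^*_{-,\zeta}(\lambda)$, each of which \emph{does} specialize from an $\BA$-form. Since matrix coefficients of a submodule are automatically matrix coefficients of the ambient module, this yields the inclusion at once. The repair to your argument is therefore: replace ``$M$ lifts to $\BA$'' by ``$M$ embeds into a module that lifts to $\BA$''.
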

By Lemma \ref{lem:AA} we have the following.
\begin{lemma}
\label{lem:Azeta}
For any $\lambda\in\Lambda^+$ we have an isomorphism
\[
A_\zeta(\lambda)\cong L^*_{-,\zeta}(\lambda)
\]
of $U_\zeta^L$-modules.
\end{lemma}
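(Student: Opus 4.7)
The plan is to deduce the isomorphism by extension of scalars from the $\BA$-form version already established in Lemma \ref{lem:AA}. By definition we have $A_\zeta(\lambda) = \BC \otimes_\BA A_\BA(\lambda)$ and $L^*_{-,\zeta}(\lambda) = \BC \otimes_\BA L^*_{-,\BA}(\lambda)$, and the $U_\zeta^L$-module structure on either side is, by construction, the one induced from the $U_\BA^L$-module structure on the corresponding $\BA$-form through $U_\zeta^L = \BC \otimes_\BA U_\BA^L$. Consequently, any $U_\BA^L$-linear isomorphism $A_\BA(\lambda) \cong L^*_{-,\BA}(\lambda)$ will specialize automatically to a $U_\zeta^L$-linear isomorphism $A_\zeta(\lambda) \cong L^*_{-,\zeta}(\lambda)$.

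Concretely, I would take the explicit isomorphism $g_\lambda\colon L^*_{-,\BA}(\lambda) \to A_\BA(\lambda)$ constructed in the proof of Lemma \ref{lem:AA} by $\langle g_\lambda(\ell^*), u\rangle = \langle \ell^*, \overline{Su}\rangle$ and apply the functor $\BC \otimes_\BA (-)$ to obtain $1 \otimes g_\lambda\colon L^*_{-,\zeta}(\lambda) \to A_\zeta(\lambda)$. This is a $\BC$-linear isomorphism by exactness of base change, valid because both $L^*_{-,\BA}(\lambda)$ and $A_\BA(\lambda)$ are free $\BA$-modules (as was noted earlier in the excerpt). Moreover, it is $U_\zeta^L$-linear by functoriality of extension of scalars applied to the Hopf algebra $U_\BA^L$: writing $u \in U_\zeta^L$ as $\sum_i c_i \otimes u_i$ with $c_i \in \BC$, $u_i \in U_\BA^L$, one has $u \cdot (c \otimes m) = \sum_i c c_i \otimes u_i m$, and $U_\BA^L$-linearity of $g_\lambda$ immediately yields $U_\zeta^L$-linearity of $1 \otimes g_\lambda$.

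I do not expect any genuine obstacle here since the argument is a routine extension of scalars. The only minor verifications are that (i) the antipode $S$ and the weight space decomposition used in the definition of $M^\bigstar$ for the $\BA$-form are compatible with base change to $\BC$, both of which are immediate from the fact that $U_\BA^L$ is a Hopf algebra over $\BA$ with $\BC \otimes_\BA U_\BA^L = U_\zeta^L$, and (ii) that the target $A_\zeta(\lambda)$, a priori defined as $\BC \otimes_\BA A_\BA(\lambda)$, is indeed sitting inside $C_\zeta$ in a manner compatible with its $U_\zeta^L$-action; this follows from \eqref{eq:Adecomp2Z} together with the freeness of $A_\BA(\lambda)$ over $\BA$.
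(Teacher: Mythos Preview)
Your proposal is correct and matches the paper's own approach: the paper simply states that the lemma follows from Lemma~\ref{lem:AA}, i.e., by base-changing the $\BA$-form isomorphism $A_\BA(\lambda)\cong L^*_{-,\BA}(\lambda)$ along $\BA\to\BC$. Your additional remarks about freeness and functoriality just spell out the routine details the paper leaves implicit.
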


\begin{lemma}
\label{lem:AAtoA}
For $\lambda,\mu\in\Lambda^+$ the canonical map 
\begin{equation}
\label{eq:AAtoA}
A_\zeta(\lambda)\otimes A_\zeta(\mu)\to A_\zeta(\lambda+\mu)
\end{equation}
induced by the multiplication of $A_\zeta$ is surjective.
\end{lemma}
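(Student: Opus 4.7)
The plan is to establish surjectivity first over the generic field $\BF$, and then transfer the result to the specialization at $\zeta$ through the $\BA$-integral form.

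Over $\BF$ the statement is essentially immediate. From the proof of Lemma \ref{lem:AA}, $A_\BF(\nu)\cong L^*_{-,\BF}(\nu)\cong L_{-,\BF}(\nu)$ is an irreducible $U_\BF$-module with one-dimensional highest-weight space (of weight $\nu$), spanned by a vector corresponding to $\varepsilon\otimes\chi_\nu\otimes\varepsilon^+$ under \eqref{eq:Adecomp2}. A short Hopf-algebraic calculation, using that the coproduct on $U_\BF^0$ is diagonal and that the comultiplication of any $e_i$ always carries one tensor factor into the augmentation ideal of $U_\BF^+$, shows that in $C_\BF$ the product of the highest-weight vectors of $A_\BF(\lambda)$ and $A_\BF(\mu)$ equals $\varepsilon\otimes\chi_{\lambda+\mu}\otimes\varepsilon^+$, i.e.\ the highest-weight vector of $A_\BF(\lambda+\mu)$. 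Since the image of the multiplication is a $U_\BF$-submodule of the irreducible $A_\BF(\lambda+\mu)$ containing this nonzero highest-weight vector, it must coincide with the whole module.

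Next I would pass to the $\BA$-form. By Lemma \ref{lem:AA}, each $A_\BA(\nu)$ is a free $\BA$-module of finite rank, and the multiplication restricts to an $\BA$-linear map
\[
m_\BA: A_\BA(\lambda)\otimes_\BA A_\BA(\mu)\to A_\BA(\lambda+\mu)
\]
with $m_\BA\otimes_\BA\BF$ equal to the surjection established above. Hence the image of $m_\BA$ is an $\BA$-free submodule of full rank in $A_\BA(\lambda+\mu)$, and the quotient is finitely generated $\BA$-torsion. The lemma is therefore equivalent to the vanishing of this torsion at the prime $(q^{1/|\Lambda/Q|}-\zeta')\subset\BA$, that is, to the image of $m_\BA$ being an $\BA$-pure submodule of $A_\BA(\lambda+\mu)$ at that prime.

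The principal obstacle is precisely this purity. The cleanest way I see to dispose of it is to dualize: via Lemma \ref{lem:AA} the $\BA$-linear dual of $m_\BA$ is identified with the unique $U_\BA^L$-homomorphism
\[
L_{-,\BA}(\lambda+\mu)\longrightarrow L_{-,\BA}(\lambda)\otimes_\BA L_{-,\BA}(\mu)
\]
sending the highest-weight vector of the Weyl module $L_{-,\BA}(\lambda+\mu)$ to the tensor of highest-weight vectors (existence and uniqueness by the universal property of $L_{-,\BA}(\lambda+\mu)$). The desired purity of the image of $m_\BA$ is then equivalent to this map being a split $\BA$-monomorphism, which in turn follows from the quantum analogue of Paradowski's theorem: a tensor product of two Weyl modules over $\BA$ admits a Weyl-module filtration in which the Cartan summand $L_{-,\BA}(\lambda+\mu)$ sits on top. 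The cokernel is then Weyl-filtered, hence $\BA$-torsion-free, and specialization along $\BA\to\BC$ preserves the surjectivity of $m_\BA$, proving the claim.
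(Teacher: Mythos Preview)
Your proof is correct and follows essentially the same route as the paper's: both dualize the multiplication map via Lemma~\ref{lem:AA} to the canonical $U_\BA^L$-homomorphism $L_{-,\BA}(\lambda+\mu)\to L_{-,\BA}(\lambda)\otimes_\BA L_{-,\BA}(\mu)$ and reduce the question to this map being a split $\BA$-monomorphism. The only difference is the reference invoked for that last step---you appeal to a quantum good-filtration result \`a la Paradowski, whereas the paper cites \cite[Chapter 27]{Lbook}, which provides the splitting directly from the based-module (canonical basis) structure on the tensor product.
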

\begin{proof}
Note that \eqref{eq:AAtoA} is a homomorphism of $U^L_\zeta$-modules.
Hence by Lemma \ref{lem:Azeta} we have only to show that the unique (up to scalar) homomorphism $L_{-,\zeta}(\lambda+\mu)\to
L_{-,\zeta}(\lambda)\otimes
L_{-,\zeta}(\mu)$ of $U^L_\zeta$-modules is injective.
For that it is sufficient to show that any non-trivial homomorphism
$L_{-,\BA}(\lambda+\mu)\to
L_{-,\BA}(\lambda)\otimes
L_{-,\BA}(\mu)$
of 
$U_\BA^L$-modules which maps $L_{-,\BA}(\lambda+\mu)_{\lambda+\mu}$ onto $L_{-,\BA}(\lambda)_\lambda\otimes
L_{-,\BA}(\mu)_\mu$
is a split monomorphism of $\BA$-modules.
This follows from \cite[Chapter 27]{Lbook}.
\end{proof}
\begin{lemma}
\eqref{eq:Hopf-paring2} is a Hopf pairing.
\end{lemma}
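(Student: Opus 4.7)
The plan is to verify in turn the seven defining properties (a)--(g) of a Hopf paring, listed just after \eqref{eq:Hopf-paring}. The properties (c)--(g) are polynomial identities in the comultiplications, counits, and antipodes, and they hold for the paring \eqref{eq:Hopf-paring} over $\BF$ by construction of $C_\BF$. Since $C_\BA\subset C_\BF$ and $U^L_\BA\subset U_\BF$ are Hopf $\BA$-subalgebras on which the paring takes values in $\BA$, each of (c)--(g) restricts to an identity over $\BA$. Applying $\BC\otimes_\BA(-)$ and using that the Hopf structures on $C_\zeta$ and $U^L_\zeta$ are the ones obtained by base change, the identities descend to \eqref{eq:Hopf-paring2}. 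The non-degeneracy in the first variable (b) is immediate from the inclusion $C_\zeta\subset(U^L_\zeta)^*$ already established in \eqref{eq:CdecompZ}: any $\varphi\in C_\zeta$ with $\langle\varphi,U^L_\zeta\rangle=0$ is zero as a functional on $U^L_\zeta$ and hence zero in $C_\zeta$.

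The essential content is property (a), non-degeneracy in the second variable. Suppose $u\in U^L_\zeta$ satisfies $\langle\varphi,u\rangle=0$ for every $\varphi\in C_\zeta$. By the proposition preceding Lemma~\ref{lem:Azeta} (Andersen--Wen), $C_\zeta$ is spanned by matrix coefficients of modules in $\Mod_f(U^L_\zeta)$, so the hypothesis is equivalent to $u$ acting as zero on every module in $\Mod_f(U^L_\zeta)$. To conclude $u=0$, the plan is to produce a separating family of finite-dimensional $U^L_\zeta$-modules. The natural candidate is the collection of tensor products of the Weyl-type modules $L_{-,\zeta}(\lambda)$ and $L_{+,\zeta}(-\mu)$ for $\lambda,\mu\in\Lambda^+$, all of which lie in $\Mod_f(U^L_\zeta)$. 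I would write $u$ in triangular/PBW form using the decomposition \eqref{eq:tri:L1} and Lemma~\ref{lem:PBW-L}; for $\lambda$ large relative to the $Q^+$-support of $u$, the action of the leading PBW terms on a highest weight vector of $L_{-,\zeta}(\lambda)$ and symmetrically on the dual would recover the $U^{L,\pm}_\zeta$-components of $u$ weight-by-weight, while Lemma~\ref{lemma:chi-indep} separates the $U^{L,0}_\zeta$-character contributions.

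The main obstacle is step (a). Verifications of (c)--(g) are mechanical passages through the specialization, and (b) follows tautologically from \eqref{eq:CdecompZ}. But (a) is the non-formal step: it requires a collectively faithful family of finite-dimensional representations of $U^L_\zeta$, which is genuinely subtle because at a root of unity the Lusztig form contains divided powers $e_i^{(n)},f_i^{(n)}$ of arbitrarily high order, and no single Weyl module $L_{-,\zeta}(\lambda)$ detects all of them. The key care required is letting the highest weight $\lambda$ grow with the element $u$ and exploiting tensor products with duals to separate every nonzero element of $U^L_\zeta$.
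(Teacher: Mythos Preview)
Your proposal is correct and follows essentially the same route as the paper: the paper also reduces everything to property (a), i.e.\ injectivity of $U^L_\zeta\to(C_\zeta)^*$, and proves this by showing that $u\in U^L_\zeta$ acting as zero on every $L_{-,\zeta}(\lambda)\otimes L_{+,\zeta}(-\mu)$ for $\lambda,\mu\in\Lambda^+$ forces $u=0$, citing the argument of \cite[5.11]{Jan} (which is precisely the PBW/highest-weight-vector computation you outline). Your treatment of (b)--(g) is more explicit than the paper's, which simply asserts that only (a) needs checking.
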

\begin{proof}
It is sufficient to show that the canonical map
$U_\zeta^L\to(C_\zeta)^*$ is injective.
By Proposition \ref{prop:AW} we have only to show that if $u\in U^L_\zeta$
satisfies 
$u\mapsto0$ under $U^L_\zeta\to 
\End_\BC(L_{-,\zeta}(\lambda)\otimes L_{+,\zeta}(-\mu))$ for any $\lambda, \mu\in\Lambda^+$, then $u=0$.
This can be proved as in \cite[5.11]{Jan}. 
Details are omitted.
\end{proof}

\subsection{}
Assume that we are given a homomorphism $\iota:A\to B$ of $\Lambda$-graded rings satisfying
\begin{equation}
\label{eq:cond-AK}
\iota(A(\lambda))B(\mu)=B(\mu)\iota(A(\lambda))\qquad
(\lambda, \mu\in\Lambda).
\end{equation}
For $M\in\Mod_{\Lambda}(B)$ let $\Tor(M)$ be the subset of $M$ consisting of $m\in M$ 
such that there exists $\lambda\in\Lambda^+$ satisfying $\iota(A(\lambda+\mu))m=\{0\}$ for any $\mu\in\Lambda^+$.
Then $\Tor(M)$ is a subobject of $M$ in $\Mod_{\Lambda}(B)$ by \eqref{eq:cond-AK}.
We denote by $\Tor_{\Lambda^+}(A,B)$ the full subcategory of $\Mod_{\Lambda}(B)$ consisting of $M\in\Mod_{\Lambda}(B)$ such that $\Tor(M)=M$.
Note that $\Tor_{\Lambda^+}(A,B)$ is closed under taking subquotients and extensions in $\Mod_{\Lambda}(B)$.
Let $\Sigma(A,B)$ denote the collection of morphisms $f$ of $\Mod_{\Lambda}(B)$ such that its kernel $\Ker(f)$ and its cokernel $\Coker(f)$ belong to $\Tor_{\Lambda^+}(A,B)$.
Then we define an abelian category $\CC(A,B)=
\Mod_\Lambda(B)/\Tor_{\Lambda^+}(A,B)$ as
the localization 
\[
\CC(A,B)
=\Sigma(A,B)^{-1}\Mod_{\Lambda}(B)
\]
of $\Mod_{\Lambda}(B)$ with respect to the multiplicative system $\Sigma(A,B)$
(see \cite{GZ}, \cite{Popescu} for the notion of localization of categories).
The abelian category $\CC(A,B)$ has enough injectives (see \cite[Ch4, Corollary 6.2]{Popescu}).
We denote by 
\begin{equation}
\label{eq:omega1}
\omega(A,B)^*:\Mod_{\Lambda}(B)\to
\CC(A,B)
\end{equation}
the canonical exact functor.
It admits a right adjoint 
\begin{equation}
\label{eq:omega2}
\omega(A,B)_*:\CC(A,B)
\to
\Mod_{\Lambda}(B),
\end{equation}
which is left exact (see \cite[Ch4, Proposition 5.2]{Popescu}).
Moreover, we have $\omega(A,B)^*\circ\omega(A,B)_*\cong \Id$
(see \cite[Ch4, Proposition 4.3]{Popescu}).

We apply the above arguments to the case $A=B=A_\zeta$.
By Lemma \ref{lem:AAtoA}
$\Tor(M)$ for $M\in\Mod_{\Lambda}(A_\zeta)$ consists of 
$m\in M$ such that there exists $\lambda\in\Lambda^+$ satisfying $A_\zeta(\lambda)m=\{0\}$.
We set 
\begin{equation}
\Mod(\CO_{\CB_\zeta})=
\CC(A_\zeta,A_\zeta).
\end{equation}
In this case the natural functors \eqref{eq:omega1}, \eqref{eq:omega2}
are simply denoted as
\begin{align}
\label{eq:omega1A}
\omega^*&:\Mod_{\Lambda}(A_\zeta)\to
\Mod(\CO_{\CB_\zeta}),\\
\omega_*&:\Mod(\CO_{\CB_\zeta})
\to
\Mod_{\Lambda}(A_\zeta).
\end{align}
\begin{remark}
{\rm
In the terminology of non-commutative algebraic geometry 
$\Mod(\CO_{\CB_\zeta})$ is the category of ``quasi-coherent sheaves'' on the quantized flag manifold $\CB_\zeta$, which is a ``non-commutative projective scheme''.
The notations ${\CB_\zeta}$, $\CO_{\CB_\zeta}$ have only symbolical meaning.
}
\end{remark}

\subsection{}
Using Lusztig's Frobenius homomorphism \eqref{eq:LFrobenius}  we will relate the quantized flag manifold $\CB_\zeta$ with the ordinary flag manifold $\CB=B^-\backslash G$.
Taking the dual Hopf algebras in \eqref{eq:LFrobenius} we obtain an injective homomorphism
$\BC[G]\to C_\zeta$
of Hopf algebras.
Moreover, its image is contained in the center of $C_\zeta$ (see Lusztig \cite{L2}).
We will regard $\BC[G]$ as a central Hopf subalgebra of $C_\zeta$ in the following.
Setting
\begin{align*}
A_1=&\{\varphi\in\BC[G]\mid
\varphi(ng)=\varphi(g)\,\,(n\in N^-,\,g\in G)\},\\
A_1(\lambda)
=&\{\varphi\in A_1\mid
\varphi(tg)=\theta_\lambda(t)\varphi(g)\,\,(t\in H,\,g\in G)\}
\qquad(\lambda\in\Lambda^+)
\end{align*}
we have a $\Lambda$-graded algebra
\[
A_1=\bigoplus_{\lambda\in\Lambda^+}A_1(\lambda).
\]
We have a left $G$-module structure of $A_1$ given by 
\[
(x\varphi)(g)=\varphi(gx)
\qquad(\varphi\in A_1, x, g\in G).
\]
Note that $A_1$ is a central subalgebra of $A_\zeta$ since $\BC[G]$ is central in $C_\zeta$.
In particular, $A_1$ is a $U(\Gg)$-module.
Moreover, for each $\lambda\in\Lambda^+$, $A_1(\lambda)$ is a $U(\Gg)$-submodule of $A_1$ which is an irreducible highest weight module with highest weight $\lambda$.
For $\lambda\in\Lambda^+$ and  $\xi\in\Lambda$ we set
\[
A_1(\lambda)_\xi
=
\{\varphi\in A_1(\lambda)\mid h\varphi=\xi(h)\varphi\,\,(h\in\Gh)\}.
\]

For $\lambda,\mu\in\Lambda^+$ the canonical map 
\[
A_1(\lambda)\otimes A_1(\mu)\to A_1(\lambda+\mu)
\]
induced by the multiplication of $A_1$ is surjective
since it is a non-trivial homomorphism of $U(\Gg)$-modules into an irreducible module.

Regarding $\BC[G]$ as a subalgebra of $C_\zeta$ we have 
\begin{align}
\label{eq:A1Azeta1}
A_1&=A_\zeta\cap\BC[G],\\
\label{eq:A1Azeta2}
A_1(\lambda)_\xi&=A_\zeta(\ell\lambda)_{\ell\xi}\cap\BC[G]
\quad(\lambda\in\Lambda^+, \xi\in\Lambda).
\end{align}
\begin{proposition}
\label{prop:AzetaoverA1}
$A_\zeta$ is finitely generated as an $A_1$-module.
\end{proposition}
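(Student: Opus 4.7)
The plan is to realize $A_\zeta$ as a finitely generated $\BC$-algebra which is integral over $A_1$, and then deduce finite generation as a module.

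First, by iterating the surjectivity of Lemma \ref{lem:AAtoA}, the algebra $A_\zeta$ is generated over $\BC$ by the finite-dimensional subspace $V := \bigoplus_{i \in I} A_\zeta(\varpi_i)$. I would fix a basis $v_1, \ldots, v_m$ of $V$ consisting of weight vectors, with each $v_k \in A_\zeta(\varpi_{j(k)})$; thus $A_\zeta$ is a finitely generated $\BC$-algebra.

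The key technical step is the claim that $\varphi^\ell \in A_1(\varpi_i) \subset A_1$ for every $\varphi \in A_\zeta(\varpi_i)$. To prove this I would construct a weight basis $\{b_w\}_{w \in W/W_{\varpi_i}}$ of $A_\zeta(\varpi_i) \cong L^*_{-,\zeta}(\varpi_i)$ by taking $b_w = T_w b_0$ for a fixed lowest weight vector $b_0$. Two properties are needed: (a) distinct $b_w$ satisfy $\zeta$-commutation relations in $A_\zeta$, following from the standard analysis of products of extremal weight vectors in the quantum coordinate ring at a root of unity; and (b) each $b_w^\ell$ equals, up to a nonzero scalar, the classical extremal weight vector of weight $\ell w \varpi_i$ inside the embedding $A_1(\varpi_i) \hookrightarrow A_\zeta(\ell \varpi_i)$, an identification supplied by Lusztig's Frobenius homomorphism combined with the braid compatibility of Lemma \ref{lem:braid-Fr}. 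Granting these, for an arbitrary $\varphi = \sum_w c_w b_w$ the quantum multinomial theorem at $\zeta$, whose non-trivial multinomial coefficients $\binom{\ell}{k_1, \ldots, k_r}_\zeta$ all vanish because $[\ell]_\zeta = 0$, yields $\varphi^\ell = \sum_w c_w^\ell b_w^\ell \in A_1(\varpi_i)$.

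Finally, given the integral relations $v_k^\ell \in A_1$ together with the $\zeta$-commutation relations among the $v_k$, every monomial in $v_1, \ldots, v_m$ can be brought into a PBW-type normal form $v_1^{e_1} \cdots v_m^{e_m}$ with $0 \le e_k < \ell$ modulo $A_1$. Thus $A_\zeta$ is spanned over $A_1$ by the at most $\ell^m$ such monomials, which gives the required finite generation. The principal obstacle is step (b) above: verifying that the $\ell$-th power of a braid-extremal vector actually lands inside the Frobenius image $A_1(\varpi_i)$, rather than only in the ambient weight space of $A_\zeta(\ell \varpi_i)$. This amounts to matching the quantum braid operators on the fundamental module $A_\zeta(\varpi_i)$ with the classical braid operators on $A_1(\varpi_i)$ via Lemma \ref{lem:braid-Fr}, and is the computational heart of the argument.
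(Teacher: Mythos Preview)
Your strategy has a genuine gap at the first structural step. The vectors $\{T_w b_0\}_{w\in W/W_{\varpi_i}}$ form a basis of $A_\zeta(\varpi_i)\cong L^*_{-,\zeta}(\varpi_i)$ only when $\varpi_i$ is \emph{minuscule}. For every other fundamental weight the Weyl module has non-extremal weight vectors (already the $5$-dimensional module in type $B_2$, the $7$-dimensional module in type $G_2$, etc.), so your proposed basis is too small. Once non-extremal vectors are present, two things fail simultaneously: the pairwise ``$\zeta$-commutation'' you invoke no longer holds (the $R$-matrix relation $\psi\varphi=\sum_p(y_p\cdot\varphi)(x_pk_{-\xi}\cdot\psi)\cdots$ produces genuine lower-order correction terms when $\varphi$ is not extremal), and the quantum multinomial argument for $\varphi^\ell\in A_1(\varpi_i)$ has no footing. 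In fact the assertion ``$\varphi^\ell\in A_1(\varpi_i)$ for every $\varphi\in A_\zeta(\varpi_i)$'' is not expected to be true in the non-minuscule case, so the whole integrality route via $\ell$-th powers does not get off the ground outside of type $A$. A secondary issue is your final PBW step: even if each $v_k^\ell$ lay in $A_1$, the generators coming from different $A_\zeta(\varpi_i)$ do not satisfy scalar $q$-commutation relations in general, so reordering an arbitrary monomial into normal form ``modulo $A_1$'' is not justified.

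The paper's proof is of an entirely different nature. It reduces to showing, for each $i$, that $A_1(\varpi_i)A_\zeta(\ell m\varpi_i)=A_\zeta(\ell(m+1)\varpi_i)$ for some $m>0$, and proves this cohomologically: one realises the multiplication map via an exact sequence of induced comodules, so that the cokernel is controlled by $R^1\Ind$ of a suitable $\BC[P_\sharp]$-comodule; the vanishing of this $R^1\Ind$ is then obtained from the Frobenius splitting of Kumar--Littelmann together with a Kempf-type vanishing theorem. No integrality or $\ell$-th power computation enters at all.
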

\begin{proof}
For $i\in I$ set
\[
A_{\zeta, i}
=\bigoplus_{n\geqq0}A_\zeta(n\varpi_i)\subset
A_\zeta,\qquad
A_{1, i}
=\bigoplus_{n\geqq0}A_1(n\varpi_i)\subset A_1.
\]
Then the natural maps 
\[
\bigotimes_{i\in I}A_{\zeta, i}\to A_\zeta,
\qquad
\bigotimes_{i\in I}A_{1, i}\to A_1
\]
induced by the multiplications of $A_\zeta$ and $A_1$ respectively are surjective (see Lemma \ref{lem:AAtoA}).
Here the tensor product is defined with respect to some  fixed ordering of $I$.
Since $A_1$ is a central subalgebra of $A_\zeta$, it is sufficient to show that $A_{\zeta,i}$ is a finitely generated $A_{1,i}$-module for any $i\in I$.
Set $A_{\zeta,i}^{(\ell)}=
\bigoplus_{n\geqq0}A_\zeta(\ell n\varpi_i)$.
By Lemma \ref{lem:AAtoA} $A_{\zeta,i}$ is generated by 
$\bigoplus_{n<\ell}A_\zeta(n\varpi_i)$
as an $A_{\zeta,i}^{(\ell)}$-module, and hence
it is sufficient to show that $A_{\zeta,i}^{(\ell)}$ is a finitely generated $A_{1,i}$-module.
Assume we could show
\begin{equation}
\label{eq:formula-i}
A_1(\varpi_i)A_\zeta(\ell m\varpi_i)=A_\zeta(\ell(m+1)\varpi_i)
\end{equation}
for some $m>0$.
Then for any $n\geqq m$ we have
\begin{align*}
&A_1(\varpi_i)A_\zeta(\ell n\varpi_i)
=A_1(\varpi_i)A_\zeta(\ell m\varpi_i)A_\zeta(\ell (n-m)\varpi_i)\\
=&A_\zeta(\ell(m+1)\varpi_i)A_\zeta(\ell (n-m)\varpi_i)
=A_\zeta(\ell(n+1)\varpi_i),
\end{align*}
and hence
\[
A_1(k\varpi_i)A_\zeta(m\ell\varpi_i)
=A_\zeta(\ell(m+k)\varpi_i).
\]
This implies that $A_{\zeta,i}^{(\ell)}$ is generated by
$\bigoplus_{n=0}^mA_\zeta(\ell n\varpi_i)$.
Hence it is sufficient to show \eqref{eq:formula-i}.

Set $\Delta^+_\sharp=\Delta^+\cap \sum_{j\ne i}\BZ\alpha_j$ and 
$\Gp_\sharp=\Gb^-\oplus\bigoplus_{\alpha\in\Delta^+_\sharp}\Gg_\alpha$.
We denote by $P_\sharp$ the parabolic subgroup of $G$ with Lie algebra $\Gp_\sharp$.
We also denote by $U_\zeta^{L,\sharp}$ the subalgebra of $U_\zeta^L$ generated by $U_\zeta^{L,\leqq0}$ and $e_j^{(n)}$ for $j\ne i, n\geqq0$.
We define a Hopf algebra $C_\zeta^\sharp$ as the image of the composite of $C_\zeta\to  \Hom_\BC(U_\zeta^{L},\BC)\to\Hom_\BC(U_\zeta^{L,\sharp},\BC)$.
For a Hopf algebra $H$ we denote by $\Comod(H)$ the category of right $H$-comodules.
We have functors 
\begin{align*}
&r:\Comod(C_\zeta^\sharp)\to\Comod(C_\zeta^{\leqq0}),\\
&r':\Comod(\BC[P_\sharp])\to\Comod(C_\zeta^{\leqq0})
\end{align*}
such that $r(N)=N$ and $r'(N')=N'$ as vector spaces and the left 
$U_\zeta^{L,\leqq0}$-actions on $r(N)$ and $r'(N')$ are given by the algebra homomorphisms 
\[
U_\zeta^{L,\leqq0}
\to U_\zeta^{L,\sharp},\qquad
U_\zeta^{L,\leqq0}
\to U_\zeta^{L,\sharp}\to U(\Gp_\sharp)
\]
respectively.

Let $m>0$.
Denote by $M$ the kernel of the homomorphism
$A_1(\varpi_i)\to\BC_{\varpi_i}$ of $U(\Gp_\sharp)$-modules.
Then we have an exact sequence
\[
0\to r'(M\otimes\BC_{m\varpi_i})
\to r'(A_1(\varpi_i))\otimes\BC_{\ell m\varpi_i}
\to\BC_{\ell(m+1)\varpi_i}\to0
\]
of right $C_\zeta^{\leqq0}$-comodules.
Applying the induction functor 
\[
\Ind:\Comod(C_\zeta^{\leqq0})\to\Comod(C_\zeta)
\]
(see \cite{APW1}) to this exact sequence 
we obtain an exact sequence
\[
A_1(\varpi_i)\otimes A_\zeta(\ell m\varpi_i)
\to
A_\zeta(\ell(m+1)\varpi_i)
\to
R^1\Ind(r'(M\otimes\BC_{m\varpi_i}))
\]
of right $C_\zeta$-comodules.
By 
\[
\dim\Hom_{U^L_\zeta}(A_1(\varpi_i)\otimes A_\zeta(\ell m\varpi_i),
A_\zeta(\ell(m+1)\varpi_i))=1
\]
the map $A_1(\varpi_i)\otimes A_\zeta(\ell m\varpi_i)
\to
A_\zeta(\ell(m+1)\varpi_i)$
in the above exact sequence coincides with the one given by the multiplication in $A_\zeta$ up to a non-zero constant multiple.
Hence it is sufficient to show that for any finite-dimensional right $\BC[P_\sharp]$-comodule $N$ there exists some $m>0$ such that $R^1\Ind(r'(N\otimes\BC_{m\varpi_i}))=0$.
We may assume that $N$ is irreducible.
Hence it is sufficient to show $R^1\Ind(r'(N_1))=0$ for the irreducible $U(\Gp_\sharp)$-module  $N_1$ with highest weight $\lambda\in\Lambda^+$. 
Note that we have natural induction functors
\[
\Ind_1:\Comod(C_\zeta^{\leqq0})\to\Comod(C_\zeta^\sharp),\quad
\Ind_2:\Comod(C_\zeta^{\sharp})\to\Comod(C_\zeta)
\]
such that $\Ind=\Ind_2\circ\Ind_1$ (see \cite{APW1}).
By the Frobenius splitting theorem of Kumar-Littelmann \cite[Theorem 3.8, Corollary 3.9]{KR2}
$r'(N_1)$ is a direct summand of $r(\Ind_1(\BC_{\ell\lambda}))$.
Hence it is sufficient to show 
$R^1\Ind(r(\Ind_1(\BC_{\ell\lambda}))=0$.
By a standard fact on induction functors 
we have $\Ind_1(r(\Ind_1(\BC_{\ell\lambda})))=\Ind_1(\BC_{\ell\lambda})$ and $R^i\Ind_1(r(\Ind_1(\BC_{\ell\lambda})))=0$ for $i>0$.
Moreover, $R^i\Ind_1(\BC_{\ell\lambda})=0$ for $i>0$ by (a relative version of) the Kempf type vanishing theorem 
(\cite[Theorem 5.5]{R-H}, \cite[(7.5) Theorem]{W}, see also \cite{APW1}, \cite{AW}, \cite{Ka1}, \cite{Ka2}).
Hence we obtain
\[
R^1\Ind(r(\Ind_1(\BC_{\ell\lambda})))
=R^1\Ind_2(\Ind_1(\BC_{\ell\lambda}))
=R^1\Ind(\BC_{\ell\lambda})=0
\]
again by the Kempf type vanishing theorem.
\end{proof}
Since $A_1$ is a noetherian ring, we obtain
from Proposition \ref{prop:AzetaoverA1} the following.
\begin{proposition}
\label{prop:A-noether}
$A_\zeta$ is a left and right noetherian ring.
\end{proposition}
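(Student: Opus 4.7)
The plan is to deduce noetherianity of $A_\zeta$ from the noetherianity of the commutative subring $A_1$ together with Proposition \ref{prop:AzetaoverA1}, via the standard principle that a ring which is module-finite over a central noetherian subring inherits the ascending chain condition on both left and right ideals.

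First I would check that $A_1$ is a noetherian commutative $\BC$-algebra. Since $A_1 = \bigoplus_{\lambda \in \Lambda^+} A_1(\lambda)$ is the homogeneous coordinate ring of the flag manifold $\CB = B^-\backslash G$, and each $A_1(\varpi_i)$ is finite-dimensional, the surjectivity $\bigotimes_{i\in I} A_{1,i}\to A_1$ used in the proof of Proposition \ref{prop:AzetaoverA1} shows that $A_1$ is finitely generated as a $\BC$-algebra by $\bigoplus_{i\in I}A_1(\varpi_i)$. Hilbert's basis theorem then gives that $A_1$ is noetherian.

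Next I would recall that $A_1$ sits inside $A_\zeta$ as a \emph{central} subring. Indeed, the Hopf dual of Lusztig's Frobenius $\pi\colon U^L_\zeta\to U(\Gg)$ embeds $\BC[G]$ into $C_\zeta$ with central image, and $A_1 = A_\zeta\cap\BC[G]$ therefore lies in the center of $A_\zeta$. Combined with Proposition \ref{prop:AzetaoverA1} this yields finitely many elements $\varphi_1,\dots,\varphi_n\in A_\zeta$ with
\[
A_\zeta \;=\; \sum_{i=1}^n A_1\varphi_i \;=\; \sum_{i=1}^n \varphi_i A_1,
\]
the second equality being a consequence of centrality.

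Finally, given any ascending chain $I_1\subset I_2\subset\cdots$ of left ideals of $A_\zeta$, I would consider the $A_1$-submodules
\[
J_k \;=\; \{(a_1,\dots,a_n)\in A_1^n \mid \textstyle\sum_i a_i\varphi_i\in I_k\}
\]
of the noetherian $A_1$-module $A_1^n$; centrality of $A_1$ is what makes each $J_k$ an $A_1$-submodule. The chain $\{J_k\}$ stabilizes, and since the map $(a_i)\mapsto\sum a_i\varphi_i$ sends $J_k$ onto $I_k$, the chain $\{I_k\}$ stabilizes as well. The argument for right ideals is identical, again by centrality of $A_1$. I do not expect any genuine obstacle here: all the substance has already been absorbed into Proposition \ref{prop:AzetaoverA1} and the identification of $A_1$ as a central subring; the only thing to keep explicit is the role of that centrality in passing between left and right noetherianity.
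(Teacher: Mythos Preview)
Your proposal is correct and follows exactly the approach the paper takes: the paper's proof of this proposition is the single sentence ``Since $A_1$ is a noetherian ring, we obtain from Proposition \ref{prop:AzetaoverA1} the following,'' and you have simply unpacked the implicit reasoning (noetherianity of $A_1$, centrality of $A_1$ in $A_\zeta$, and the module-finiteness from Proposition \ref{prop:AzetaoverA1}).
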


Note that the $\Lambda$-graded algebra $A_1$ is the
homogeneous coordinate algebra of the projective variety $\CB=B^-\backslash G$.
Hence we have an identification 
\begin{equation}
\label{eq:A1A1}
\Mod(\CO_\CB)=\CC(A_1,A_1)
\end{equation}
of abelian categories, where $\Mod(\CO_\CB)$ denotes the category of quasi-coherent $\CO_\CB$-modules on the ordinary flag manifold $\CB$.
We set 
\begin{equation}
\omega_{\CB*}=\omega(A_1,A_1)_*:\Mod(\CO_\CB)\to\Mod_\Lambda(A_1).
\end{equation}
For $\lambda\in\Lambda$ we denote by $\CO_\CB(\lambda)\in \Mod(\CO_\CB)$ the invertible $G$-equivariant $\CO_\CB$-module corresponding to $\lambda$.
Then under the identification \eqref{eq:A1A1} we have
\[
\omega_{\CB*}M=
\bigoplus_{\lambda\in\Lambda}\Gamma(\CB,M\otimes_{\CO_\CB}\CO_\CB(\lambda))
\qquad(M\in\Mod(\CO_\CB)),
\]
where $\Gamma(\CB,\,\,):\Mod(\CO_{\CB})\to\BC$ is the global section functor for the algebraic variety $\CB$.
In particular, the functor $\Gamma_{(A_1,A_1)}:\Mod(\CO_\CB)\to\Mod(\BC)$ is identified with 
$\Gamma(\CB,\,\,)$.

For $w\in W$
we set
\[
\Theta_{w}=\bigcup_{\lambda\in\Lambda^+}
(A_1(\lambda)_{w^{-1}\lambda}\setminus\{0\})
\subset A_1.
\]
Then $\Theta_{w}$ is a multiplicative subset of the commutative ring $A_1$, and
the localization $\Theta_{w}^{-1}A_1$ turns out to  be a $\Lambda$-graded $\BC$-algebra.
Moreover, 
the $\BC$-algebra $(\Theta_{w}^{-1}A_1)(0)$ is naturally regarded as the coordinate algebra of the affine open subset $\CB_w:=B^-\backslash B^-N^+w$ of $\CB$.
We denote by $\Mod(\CO_{\CB_w})$ the category of quasi-coherent $\CO_{\CB_w}$-modules.
We have $\Mod(\CO_{\CB_w})=\Mod((\Theta_{w}^{-1}A_1)(0))$.
The functor 
\[
j_{w}^*:\Mod(\CO_\CB)\to\Mod(\CO_{\CB_w})
\]
induced by 
\[
\Mod_\Lambda(A_1)\to\Mod((\Theta_{w}^{-1}A_1)(0))
\qquad
(M\mapsto (\Theta_{w}^{-1}A_1\otimes_{A_1}M)(0))
\]
is nothing but the inverse image functor with respect to the embedding $j_{w}:\CB_w\to\CB$.

\subsection{}
For a $\Lambda$-graded $\BC$-algebra $B$ we define a new $\Lambda$-graded  $\BC$-algebra $B^{(\ell)}$ by
\[
B^{(\ell)}(\lambda)=B(\ell\lambda)
\qquad(\lambda\in\Lambda).
\]
Let
\begin{equation}
\label{eq:(ell)}
(\,\,)^{(\ell)}:\Mod_\Lambda(B)\to\Mod_\Lambda(B^{(\ell)})
\end{equation}
be the exact functor given by 
\[
M^{(\ell)}(\lambda)=M(\ell\lambda)\qquad
(\lambda\in\Lambda)
\]
for $M\in\Mod_\Lambda(B)$.

\begin{lemma}
\label{lem:Fr1}
Let $B$ be a $\Lambda$-graded $\BC$-algebra.
Assume that we are given a homomorphism $\iota:A_\zeta\to B$ of $\Lambda$-graded $\BC$-algebras.
We denote by $\iota':A_1\to B^{(\ell)}$ the induced homomorphism of $\Lambda$-graded $\BC$-algebras.
Assume 
\begin{align*}
\iota(A_\zeta(\lambda)) B(\mu)&=B(\mu)\iota(A_\zeta(\lambda))
\qquad(\lambda,\mu\in\Lambda),\\
\iota'(A_1(\lambda))B^{(\ell)}(\mu)
&=B^{(\ell)}(\mu)\iota'(A_1(\lambda))
\qquad(\lambda,\mu\in\Lambda).
\end{align*}
Then the exact functor 
\begin{equation*}
(\,\,)^{(\ell)}:\Mod_\Lambda(B)\to\Mod_\Lambda(B^{(\ell)})
\end{equation*} induces an equivalence
\begin{equation}
Fr_*:\CC(A_\zeta,B)
\to
\CC(A_1,B^{(\ell)})
\end{equation}
of abelian categories.
Moreover, we have
\begin{equation}
\label{eq:Frobenius}
\omega(A_1,B^{(\ell)})_{*}\circ Fr_*=(\,\,)^{(\ell)}\circ\omega(A_\zeta,B)_{*}.
\end{equation}
\end{lemma}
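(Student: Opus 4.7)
The plan is to construct an explicit module-level left adjoint $Fr^\sharp$ to $(\,\,)^{(\ell)}$, verify that both functors preserve the relevant torsion subcategories (so they descend to the quotient categories), check that the resulting adjoint pair on the quotients has invertible unit and counit, and finally derive the compatibility \eqref{eq:Frobenius} by Yoneda.

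Write $B^{[\ell]} = \bigoplus_{\lambda \in \Lambda} B(\ell\lambda)$ for the $\Lambda$-graded subring of $B$ supported on $\ell\Lambda$, and for each $N \in \Mod_\Lambda(B^{(\ell)})$ introduce the $\Lambda$-graded $B^{[\ell]}$-module $\tilde N$ with $\tilde N(\ell\lambda) = N(\lambda)$ and $\tilde N(\xi) = 0$ for $\xi \notin \ell\Lambda$. Define $Fr^\sharp(N) = B \otimes_{B^{[\ell]}} \tilde N$. Using the decomposition of $B$ as a right $B^{[\ell]}$-module into its $\Lambda/\ell\Lambda$-coset summands, $Fr^\sharp$ is exact and left adjoint to $(\,\,)^{(\ell)}$. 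For torsion preservation, $(\,\,)^{(\ell)}$ sends $\Tor_{\Lambda^+}(A_\zeta, B)$ into $\Tor_{\Lambda^+}(A_1, B^{(\ell)})$: for $m \in M^{(\ell)}$ killed by $\iota(A_\zeta(\lambda+\mu))$ for every $\mu \in \Lambda^+$, pick $\lambda' \in \Lambda^+$ with $\ell\lambda' - \lambda \in \Lambda^+$, so that $\iota'(A_1(\lambda' + \nu)) \subseteq \iota(A_\zeta(\ell(\lambda'+\nu)))$ kills $m$. For the opposite direction, I use that $A_\zeta^{[\ell]} := \bigoplus_\lambda A_\zeta(\ell\lambda)$ is finite over $A_1$ (from Proposition \ref{prop:AzetaoverA1} together with the centrality of $A_1$ in $A_\zeta$), so $A_1$- and $A_\zeta^{[\ell]}$-torsion coincide on objects of $\Mod_\Lambda(B^{(\ell)})$; then for $b \otimes t \in Fr^\sharp(T)$ with $T$ torsion, I factor any element of $A_\zeta(\lambda+\mu)$ (large $\lambda$) through a factor in $A_\zeta(\beta_0)$ with $\beta_0 \in \ell\Lambda^+$ large via Lemma \ref{lem:AAtoA}, use the commutation hypothesis to move the $A_\zeta(\beta_0)$-factor across $b$ (it lands in $B^{[\ell]}$) and across the tensor onto $t$, where it vanishes by $A_\zeta^{[\ell]}$-torsion. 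Both functors thus descend to an adjoint pair $\overline{Fr^\sharp} \dashv Fr_*$ on the quotient categories.

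The unit $N \to (Fr^\sharp N)^{(\ell)}$ is already an isomorphism of modules, because the $\ell\lambda$-component of $Fr^\sharp N$ is forced into the coset $\bar 0 = B^{[\ell]}$ summand and equals $\tilde N(\ell\lambda) = N(\lambda)$. The counit $Fr^\sharp(M^{(\ell)}) = B \otimes_{B^{[\ell]}} M|_{\ell\Lambda} \to M$ is the multiplication map, tautologically an isomorphism on $\ell\Lambda$-parts. To show its kernel and cokernel lie in $\Tor_{\Lambda^+}(A_\zeta, B)$: for an element of degree $\xi$, choose $\beta_0 \in \Lambda^+$ with $\xi + \beta_0 \in \ell\Lambda$ (possible since $\Lambda^+$ surjects onto $\Lambda/\ell\Lambda$), and factor the action of $A_\zeta(\lambda+\mu)$ with $\lambda \geq \beta_0$ through $A_\zeta(\beta_0)$ via Lemma \ref{lem:AAtoA}; this pushes the element into the $\ell\Lambda$-degree range, where the counit is invertible, so the image vanishes. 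Hence $Fr_*$ is an equivalence with quasi-inverse $\overline{Fr^\sharp}$.

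Finally, \eqref{eq:Frobenius} follows by Yoneda. For $X \in \CC(A_\zeta,B)$ and $N \in \Mod_\Lambda(B^{(\ell)})$, chaining the adjunctions $\omega^* \dashv \omega_*$, $\overline{Fr^\sharp} \dashv Fr_*$, the module-level commutativity $\omega^* \circ Fr^\sharp = \overline{Fr^\sharp} \circ \omega^*$, $\omega^* \dashv \omega_*$, and $Fr^\sharp \dashv (\,\,)^{(\ell)}$ yields
\begin{align*}
\Hom(N, \omega(A_1, B^{(\ell)})_* Fr_* X)
&\cong \Hom(\omega(A_1, B^{(\ell)})^* N, Fr_* X)\\
&\cong \Hom(\overline{Fr^\sharp}\,\omega(A_1, B^{(\ell)})^* N, X)\\
&\cong \Hom(\omega(A_\zeta, B)^* Fr^\sharp N, X)\\
&\cong \Hom(Fr^\sharp N, \omega(A_\zeta, B)_* X)\\
&\cong \Hom(N, (\omega(A_\zeta, B)_* X)^{(\ell)}),
\end{align*}
whence the natural isomorphism $\omega(A_1, B^{(\ell)})_* \circ Fr_* \cong (\,\,)^{(\ell)} \circ \omega(A_\zeta, B)_*$. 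The main obstacle is the torsion preservation of $Fr^\sharp$ and the torsion of the counit's kernel; both rely crucially on the commutation hypothesis on $\iota$, the finite generation of $A_\zeta$ over $A_1$ (Proposition \ref{prop:AzetaoverA1}), and the surjective multiplication in $A_\zeta$ (Lemma \ref{lem:AAtoA}).
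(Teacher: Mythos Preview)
Your approach is essentially the paper's: both use the functor $N\mapsto B\otimes_{B^{(\ell)}}N$ (your $Fr^\sharp$), verify that the unit $N\to (Fr^\sharp N)^{(\ell)}$ is an isomorphism and that the counit $Fr^\sharp(M^{(\ell)})\to M$ lies in $\Sigma(A_\zeta,B)$, and deduce the equivalence. The packaging differs: you organize everything through the adjunction $Fr^\sharp\dashv(\,\,)^{(\ell)}$ and extract \eqref{eq:Frobenius} by Yoneda at the end, whereas the paper proves \eqref{eq:Frobenius} directly (in the form $(\omega_{1*}\omega_1^*N)^{(\ell)}\cong\omega_{2*}\omega_2^*(N^{(\ell)})$) and reads full faithfulness off from it.

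There is, however, one step that does not go through as written. You claim that the coset decomposition $B=\bigoplus_{\bar c\in\Lambda/\ell\Lambda}B_{\bar c}$ makes $Fr^\sharp$ exact, and you need this so that $Fr^\sharp$ descends to the quotient (for the identity $\overline{Fr^\sharp}\circ\omega_2^*=\omega_1^*\circ Fr^\sharp$ used in your Yoneda chain). But the coset decomposition is only a direct-sum splitting of $B$ as a right $B^{[\ell]}$-module; it says nothing about flatness of the summands $B_{\bar c}$, so exactness of $B\otimes_{B^{[\ell]}}(-)$ is unjustified for a general $B$ as in the hypothesis. What is actually required is that $Fr^\sharp$ carry $\Sigma(A_1,B^{(\ell)})$ into $\Sigma(A_\zeta,B)$. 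Right exactness together with your torsion-preservation already handles the cokernel; for the kernel, apply the exact functor $(\,\,)^{(\ell)}$ and use your unit isomorphism to get $(\Ker Fr^\sharp f)^{(\ell)}\cong\Ker f\in\Tor_{\Lambda^+}(A_1,B^{(\ell)})$, and then invoke the \emph{converse} torsion comparison $K^{(\ell)}\in\Tor_{\Lambda^+}(A_1,B^{(\ell)})\Rightarrow K\in\Tor_{\Lambda^+}(A_\zeta,B)$. That converse follows from Proposition~\ref{prop:AzetaoverA1} and Lemma~\ref{lem:AAtoA} by the same mechanism you used in the forward direction (and is exactly the paper's opening observation $(\Tor M)^{(\ell)}=\Tor(M^{(\ell)})$). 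With this patch your argument is complete.
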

\begin{proof}
For simplicity we write 
$\omega(A_\zeta,B)^{*}$, 
$\omega(A_1,B^{(\ell)})^{*}$,
$\omega(A_\zeta,B)_{*}$, 
$\omega(A_1,B^{(\ell)})_{*}$
as
$\omega_1^{*}$, 
$\omega_2^{*}$,
$\omega_{1*}$, 
$\omega_{2*}$
respectively.

By Proposition \ref{prop:AzetaoverA1} we see easily that  
for any $\lambda\in\Lambda^+$ there exists some $\mu\in\Lambda^+$ such that $A_\zeta(\nu)A_1(\lambda)=A_\zeta(\ell\lambda+\nu)$ for $\nu\in\mu+\Lambda^+$.
From this we obtain
\begin{equation}
\label{eq:Fr1}
(\Tor(M))^{(\ell)}=\Tor(M^{(\ell)})\qquad(M\in\Mod_\Lambda(B)).
\end{equation}
Hence $M\in \Tor_{\Lambda^+}(A_\zeta,B)$ implies $M^{(\ell)}\in\Tor_{\Lambda^+}(A_1,B^{(\ell)})$.
It follows that we have a well-defined functor 
$Fr_*:\CC(A_\zeta,B)
\to
\CC(A_1,B^{(\ell)})$
satisfying $Fr_*\circ\omega_1^*=\omega_2^*\circ (\,\,)^{(\ell)}$.
We see easily that
\begin{equation*}
\label{eq:Fr2}
(B\otimes_{B^{(\ell)}}N)^{(\ell)}\cong N
\qquad(N\in\Mod_\Lambda(B^{(\ell)})).
\end{equation*}
Hence we have
\[
(Fr_*\circ\omega_1^*)(B\otimes_{B^{(\ell)}}N)
=
\omega_2^*((B\otimes_{B^{(\ell)}}N)^{(\ell)})
=
\omega_2^*(N)
\]
for any $N\in\Mod_\Lambda(B^{(\ell)})$.
It follows that
$Fr_*$ is a dense functor.
Let us show that 
\[
\Hom(\omega_1^*M,\omega_1^*N)
\to
\Hom(\omega_2^*(M^{(\ell)}), \omega_2^*(N^{(\ell)}))
\]
is bijective for any $M, N\in\Mod_\Lambda(B)$.
By $(B\otimes_{B^{(\ell)}}M^{(\ell)})^{(\ell)}\cong M^{(\ell)}$
we see easily that  the canonical morphism 
$B\otimes_{B^{(\ell)}}M^{(\ell)}\to M$ 
belongs to $\Sigma(A_\zeta,B)$, that is, 
$\omega_1^*(B\otimes_{B^{(\ell)}}M^{(\ell)})\cong\omega_1^* M$.
Hence we have
\begin{align*}
&\Hom(\omega_1^*M,\omega_1^*N)\cong
\Hom(\omega_1^*(B\otimes_{B^{(\ell)}}M^{(\ell)}),\omega_1^*N)\\
\cong&
\Hom(B\otimes_{B^{(\ell)}}M^{(\ell)},\omega_{1*}\omega_1^*N)
\cong
\Hom(M^{(\ell)},(\omega_{1*}\omega_1^*N)^{(\ell)}).
\end{align*}
On the other hand we have
\begin{align*}
&\Hom(\omega_2^*(M^{(\ell)}), \omega_2^*(N^{(\ell)}))
\cong
\Hom(M^{(\ell)}, \omega_{2*}\omega_2^*(N^{(\ell)})).
\end{align*}
Therefore, it is sufficient to show 
\begin{equation}
\label{eq:lem:Frr}
(\omega_{1*}\omega_1^*N)^{(\ell)}
\cong
\omega_{2*}\omega_2^*(N^{(\ell)})
\end{equation}
(note that \eqref{eq:lem:Frr} is equivalent to \eqref{eq:Frobenius}).
We may assume that $N=B\otimes_{B^{(\ell)}}P$ for some $P\in\Mod_\Lambda(B^{(\ell)})$.
We may further assume that $\omega_{2*}\omega_2^*P\cong P$.
Hence it is sufficient to show for $P\in\Mod_\Lambda(B^{(\ell)})$ satisfying $\omega_{2*}\omega_2^*P\cong P$ that 
$P\cong (\omega_{1*}\omega_1^*(B\otimes_{B^{(\ell)}}P))^{(\ell)}$.
Since the canonical morphism
$B\otimes_{B^{(\ell)}}P\to
\omega_{1*}\omega_1^*(B\otimes_{B^{(\ell)}}P)$
belongs to $\Sigma(A_\zeta,B)$,
the corresponding morphism
$f:P\to
(\omega_{1*}\omega_1^*(B\otimes_{B^{(\ell)}}P))^{(\ell)}$
belongs to $\Sigma(A_1,B^{(\ell)})$.
By $\omega_{2*}\omega_2^*P\cong P$ we see that $f$ is injective and its cokernel is isomorphic to a submodule of
$(\omega_{1*}\omega_1^*(B\otimes_{B^{(\ell)}}P))^{(\ell)}$.
By 
\[
\Tor((\omega_{1*}\omega_1^*(B\otimes_{B^{(\ell)}}P))^{(\ell)})
=(\Tor(\omega_{1*}\omega_1^*(B\otimes_{B^{(\ell)}}P)))^{(\ell)}=0
\]
we obtain $\Coker(f)=0$. 
It follows that $f$ is an isomorphism.
\end{proof}
The following fact is concerned with ordinary (commutative) projective algebraic geometry and its proof is straightforward. Details are omitted.
\begin{lemma}
\label{lem:Fr2}
Let $F$ be a $\Lambda$-graded $\BC$-algebra, and let $A_1\to F$ be a homomorphism of $\Lambda$-graded $\BC$-algebras.
Assume that $\Image(A_1\to F)$ is central in $F$.
Regard $F$ as an object of $\Mod_\Lambda(A_1)$ and consider 
$\omega_\CB^*F\in\Mod(\CO_{\CB})$.
Then the multiplication of $F$ induces an $\CO_\CB$-algebra structure of $\omega_\CB^*F$, and we have an identification
\begin{equation}
\label{eq:lem:Fr2}
\CC(A_1,F)
=
\Mod(\omega_\CB^*F),
\end{equation}
of abelian categories,
where $\Mod(\omega_\CB^*F)$ denotes the category of quasi-coherent $\omega_\CB^*F$-modules.
\end{lemma}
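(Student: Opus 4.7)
The plan is to reduce everything to the standard sheafification of graded modules, adapted to accommodate the $F$-module structure. Since $\iota(A_1)$ is central in $F$, the multiplication $m_F\colon F\otimes_{A_1}F\to F$ is a morphism in $\Mod_\Lambda(A_1)$, and all constructions below go through by functoriality of the exact localization $\omega_\CB^*=\omega(A_1,A_1)^*$.

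First I would construct the $\CO_\CB$-algebra structure on $\omega_\CB^* F$. Applying $\omega_\CB^*$ to $m_F$ yields a morphism $\omega_\CB^*F\otimes_{\CO_\CB}\omega_\CB^*F\to\omega_\CB^*F$, using that $\omega_\CB^*$ is compatible with $\otimes_{A_1}$ up to the identifications modulo $\Tor_{\Lambda^+}(A_1)$ (this is the same mechanism by which $A_1\otimes A_1\to A_1$ induces the $\CO_\CB$-algebra structure of $\CO_\CB$ itself). Associativity and unitality descend from those of $F$.

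Next I would build mutually quasi-inverse functors between $\CC(A_1,F)$ and $\Mod(\omega_\CB^*F)$. In one direction, given $M\in\Mod_\Lambda(F)$, regard $M$ as an $A_1$-module via $\iota$, form $\omega_\CB^*M\in\Mod(\CO_\CB)$, and use the $F$-action $F\otimes_{A_1}M\to M$ to induce an $\omega_\CB^*F$-module structure on $\omega_\CB^*M$; this functor sends $\Tor_{\Lambda^+}(A_1,F)$ to zero and thus factors through $\CC(A_1,F)$. In the reverse direction, given $\CN\in\Mod(\omega_\CB^*F)$, its underlying $\CO_\CB$-module corresponds via \eqref{eq:A1A1} to a $\Lambda$-graded $A_1$-module $\omega_{\CB*}\CN$, and the unit $F\to\omega_{\CB*}\omega_\CB^*F$ (which is a morphism of $A_1$-algebras) turns $\omega_{\CB*}\CN$ into a $\Lambda$-graded $F$-module. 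To see these are quasi-inverse, use $\omega_\CB^*\circ\omega_{\CB*}\cong\Id$ (recalled just before \eqref{eq:omega1A}) in one direction, and in the other the fact that the adjunction counit $\omega_{\CB*}\omega_\CB^*M\to M$ has kernel and cokernel in $\Tor_{\Lambda^+}(A_1)\subset\Tor_{\Lambda^+}(A_1,F)$, so becomes an isomorphism in $\CC(A_1,F)$; compatibility with the $F$-action is automatic since everything is built from the adjunction.

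For the global-sections statement, under the equivalence $\Gamma_{(A_1,F)}(M)$ is the degree-zero part of $\omega(A_1,F)_*(M)$, which corresponds to the degree-zero part of $\omega_{\CB*}\CN$ for the associated $\CN\in\Mod(\omega_\CB^*F)$; but the degree-zero part of $\omega_{\CB*}$ is precisely $\Gamma(\CB,-)$ by the formula for $\omega_{\CB*}$ recorded earlier. The only real obstacle is bookkeeping: one has to verify that $\omega_\CB^*$ interacts with $\otimes_{A_1}$ well enough to preserve the associativity and unit axioms for module structures after localization. Because $\omega_\CB^*$ is exact, the relevant tensor products are right exact, and $\Tor_{\Lambda^+}(A_1)$ is closed under $\otimes_{A_1}$ with any object of $\Mod_\Lambda(A_1)$, these compatibilities reduce to routine diagram chases, which is presumably why the authors label the proof straightforward and omit the details.
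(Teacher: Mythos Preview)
Your approach is essentially the standard one the authors presumably had in mind; the paper omits the proof entirely, declaring it ``straightforward'' commutative projective geometry, so there is nothing to compare against beyond that. One small slip: the natural map in the adjunction $\omega_\CB^*\dashv\omega_{\CB*}$ goes $M\to\omega_{\CB*}\omega_\CB^*M$ (the unit), not $\omega_{\CB*}\omega_\CB^*M\to M$; it is this unit whose kernel and cokernel lie in $\Tor_{\Lambda^+}(A_1)$, making it an isomorphism in $\CC(A_1,F)$. With that correction your sketch is sound.
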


We define an $\CO_\CB$-algebra $Fr_*\CO_{\CB_\zeta}$ by
\[
Fr_*\CO_{\CB_\zeta}=\omega_\CB^*(A_\zeta^{(\ell)}).
\]
We denote by $\Mod(Fr_*\CO_{\CB_\zeta})$ the category of quasi-coherent $Fr_*\CO_{\CB_\zeta}$-modules.
By Lemma \ref{lem:Fr1} and Lemma \ref{lem:Fr2} we have the following.
\begin{lemma}
We have an equivalence 
\[
Fr_*:\Mod(\CO_{\CB_\zeta})\to\Mod(Fr_*\CO_{\CB_\zeta})
\]
of abelian categories.
Moreover, 
for $M\in\Mod(\CO_{\CB_\zeta})$ we have
\[
R^i\Gamma(M)\simeq R^i\Gamma(\CB,Fr_*(M)),
\]
where $\Gamma(\CB,\,\,):\Mod(\CO_{\CB})\to\Mod(\BC)$ in the right side is the global section functor for $\CB$.
\end{lemma}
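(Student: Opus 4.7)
The lemma is asserted to follow from Lemmas \ref{lem:Fr1} and \ref{lem:Fr2}, so the plan is to spell out how to chain those together and then upgrade to the derived version.

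First I would apply Lemma \ref{lem:Fr1} with $B=A_\zeta$ and $\iota\colon A_\zeta\to A_\zeta$ the identity. The bimodule hypothesis $\iota(A_\zeta(\lambda))B(\mu)=B(\mu)\iota(A_\zeta(\lambda))$ is automatic. For the second hypothesis I need $A_1(\lambda)$ to commute with $A_\zeta^{(\ell)}(\mu)=A_\zeta(\ell\mu)$, which follows from the fact that the Hopf algebra embedding $\BC[G]\hookrightarrow C_\zeta$ obtained from Lusztig's Frobenius morphism lands in the center of $C_\zeta$, so in particular $A_1\subset\BC[G]$ is central in $A_\zeta$. Lemma \ref{lem:Fr1} then yields an equivalence
\[
Fr_*:\Mod(\CO_{\CB_\zeta})=\CC(A_\zeta,A_\zeta)\;\xrightarrow{\sim}\;\CC(A_1,A_\zeta^{(\ell)})
\]
together with the compatibility
\[
\omega(A_1,A_\zeta^{(\ell)})_*\circ Fr_*=(\,\,)^{(\ell)}\circ\omega_*.
\]

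Next I would apply Lemma \ref{lem:Fr2} with $F=A_\zeta^{(\ell)}$, whose centrality hypothesis over $A_1$ was just verified. This identifies $\CC(A_1,A_\zeta^{(\ell)})$ with $\Mod(\omega_\CB^* A_\zeta^{(\ell)})=\Mod(Fr_*\CO_{\CB_\zeta})$, giving the desired equivalence, and moreover identifies $\Gamma_{(A_1,A_\zeta^{(\ell)})}(N)$ with $\Gamma(\CB,N)$ for $N$ viewed as a quasi-coherent $Fr_*\CO_{\CB_\zeta}$-module. Taking degree zero in the displayed equality above then gives the statement $\Gamma(M)\cong\Gamma(\CB,Fr_*M)$ at the level of underived global sections.

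To pass to $R^i\Gamma$, I would observe that $Fr_*$ is an equivalence of abelian categories, so it sends any injective resolution $M\to I^\bullet$ in $\Mod(\CO_{\CB_\zeta})$ to an injective resolution $Fr_*M\to Fr_*(I^\bullet)$ in $\Mod(Fr_*\CO_{\CB_\zeta})$. Applying $\Gamma$ on the source and $\Gamma(\CB,\,\cdot\,)$ on the target and using the underived compatibility of the previous paragraph termwise yields isomorphic complexes, hence isomorphic cohomology $R^i\Gamma(M)\cong R^i\Gamma(\CB,Fr_*M)$.

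The one point that is not completely formal is this last step: one needs that $\Gamma(\CB,\,\cdot\,)$ on quasi-coherent $Fr_*\CO_{\CB_\zeta}$-modules is computed correctly by such an injective resolution, which I expect is the main subtlety, though a mild one. The standard way to handle it is to note that $Fr_*\CO_{\CB_\zeta}$ is an $\CO_\CB$-algebra which is coherent as an $\CO_\CB$-module (this uses Proposition \ref{prop:AzetaoverA1}), so the forgetful functor $\Mod(Fr_*\CO_{\CB_\zeta})\to\Mod(\CO_\CB)$ is exact and sends injectives to flasque, hence $\Gamma(\CB,\,\cdot\,)$-acyclic, sheaves; alternatively one invokes the general fact for sheaves of rings that $R^i\Gamma(\CB,\,\cdot\,)$ agrees whether computed in $\Mod(Fr_*\CO_{\CB_\zeta})$ or in $\Mod(\CO_\CB)$. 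Either way the derived statement follows without further work.
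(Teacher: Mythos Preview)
Your proposal is correct and follows exactly the route the paper indicates: the paper simply asserts the lemma as an immediate consequence of Lemma~\ref{lem:Fr1} (applied with $B=A_\zeta$) and Lemma~\ref{lem:Fr2} (applied with $F=A_\zeta^{(\ell)}$), and you have carefully spelled out how the hypotheses are checked and how the derived statement follows from the underived one via the equivalence.
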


\begin{proposition}
\label{prop:AzetaA1}
$(\Theta_e^{-1}A_\zeta)(0)$ is a free $(\Theta_e^{-1}A_1)(0)$-module of rank $\ell^{|\Delta^+|}$. Here $e$ is the identity element of $W$.
Hence the restriction $j_e^*Fr_*\CO_{\CB_\zeta}$ of $Fr_*\CO_{\CB_\zeta}$ to $\CB_e=B^-\backslash B^-N^+$ is a free $\CO_{\CB_e}$-module of rank $\ell^{|\Delta^+|}$.
\end{proposition}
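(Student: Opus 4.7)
The plan is to reduce to the module statement and then exhibit an explicit PBW-type basis. Since $\CB_e$ is affine with coordinate ring $R_1:=(\Theta_e^{-1}A_1)(0)\cong\BC[N^+]$, the sheaf assertion is formally equivalent to the statement that $R_\zeta:=(\Theta_e^{-1}A_\zeta)(0)$ is a free $R_1$-module of rank $\ell^N$, where $N=|\Delta^+|$.

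First I would describe $R_\zeta$ concretely. For each $\lambda\in\Lambda^+$ the space $A_\zeta(\lambda)_\lambda$ is one-dimensional by Lemma \ref{lem:Azeta}; fix a nonzero highest weight vector $\psi_\lambda$. Then $\psi_\lambda\psi_\mu=c(\lambda,\mu)\psi_{\lambda+\mu}$ with $c(\lambda,\mu)\neq 0$, since for $\lambda=\ell\lambda'$, $\psi_{\ell\lambda'}$ coincides up to scalar with the classical highest weight vector $u_{\lambda'}\in A_1(\lambda')\subset A_\zeta(\ell\lambda')$, and $A_1$ is a domain; the general case is reduced to this using that $A_1$ is central in $A_\zeta$. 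Hence $\psi_\lambda^\ell\propto u_\lambda\in\Theta_e$, so each $\psi_\lambda$ becomes invertible in $\Theta_e^{-1}A_\zeta$; a standard Ore argument then gives
\[
R_\zeta=\varinjlim_{\lambda\in\Lambda^+}\psi_\lambda^{-1}A_\zeta(\lambda),
\]
with transition maps $\psi_\lambda^{-1}\varphi\mapsto c(\lambda,\mu)^{-1}\psi_{\lambda+\mu}^{-1}\psi_\mu\varphi$ (injectivity of left multiplication by $\psi_\mu$ follows from domain-hood of $A_\BF$).

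Next I would construct the basis. By Lemma \ref{lem:Azeta}, $A_\zeta(\lambda)=\partial_{U_\zeta^{L,-}}\psi_\lambda$; by Lemma \ref{lem:PBW-L}, $U_\zeta^{L,-}$ admits a divided-power PBW basis $\{f_{\beta_N}^{(n_N)}\cdots f_{\beta_1}^{(n_1)}\}$. Writing $n_k=\ell q_k+r_k$ with $0\le r_k<\ell$ and using the Lusztig divided-power identity together with $q$-Lucas at $\zeta$, one has $f_{\beta_k}^{(n_k)}=(f_{\beta_k}^{(\ell)})^{q_k}f_{\beta_k}^{(r_k)}/q_k!$. The crucial fact is that the inclusion $A_1\subset A_\zeta$ is dual to Lusztig's Frobenius $\pi:U_\zeta^L\to U(\Gg)$, so $\partial$ restricted to $A_1$ factors through $\pi$; in particular $\partial_{f_{\beta_k}^{(\ell)}}$ acts on $A_1$ as the classical $\overline{f}_{\beta_k}$. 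Specializing to $\lambda=\ell\mu$ with $\psi_{\ell\mu}\propto u_\mu\in A_1(\mu)$, the ``Frobenius part'' acts through $U(\Gn^-)$ producing elements of $A_1(\mu)$, while the ``small part'' ranges over the $\ell^N$ monomials indexed by $\underline{r}\in\{0,1,\ldots,\ell-1\}^N$. I would define
\[
\eta_{\underline{r}}:=\psi_{\ell\mu}^{-1}\,\partial_{f_{\beta_N}^{(r_N)}\cdots f_{\beta_1}^{(r_1)}}\psi_{\ell\mu}\in R_\zeta\qquad(\underline{r}\in\{0,\ldots,\ell-1\}^N),
\]
independent of $\mu$ under the direct-limit identifications.

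The basis check then proceeds as follows. Spanning: any $\psi_{\ell\mu}^{-1}\varphi$ with $\varphi$ a PBW monomial expands, modulo PBW-filtration lower-order corrections, as $\partial_{f_{\beta_N}^{(r_N)}\cdots f_{\beta_1}^{(r_1)}}(\overline{F}_{\underline{q}}\cdot u_\mu)$ with $\overline{F}_{\underline{q}}\in U(\Gn^-)$; since $A_1$ is central, the factor $\overline{F}_{\underline{q}}(u_\mu)\in A_1(\mu)$ commutes past $\psi_{\ell\mu}^{-1}$ to contribute an $R_1$-coefficient of $\eta_{\underline{r}}$, and the expansion finishes by induction on PBW degree. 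Linear independence: an $R_1$-relation $\sum f_{\underline{r}}\eta_{\underline{r}}=0$ clears (for suitable $\mu$) to $\sum\overline{F}_{\underline{r}}(u_\mu)\,\partial_{f_{\beta_N}^{(r_N)}\cdots f_{\beta_1}^{(r_1)}}\psi_{\ell\mu}=0$ in $A_\zeta$, and Lemma \ref{lem:PBW-L} forces each $\overline{F}_{\underline{r}}(u_\mu)=0$, yielding $f_{\underline{r}}=0$ upon varying $\mu$. The hard part will be controlling the PBW-filtration lower-order corrections arising when small and Frobenius divided powers of \emph{distinct} root vectors are re-ordered: the quantum root vectors $f_{\beta_k}$ do not commute, so the split $f_{\beta_k}^{(n_k)}=(f_{\beta_k}^{(\ell)})^{q_k}f_{\beta_k}^{(r_k)}/q_k!$ decouples cleanly only within a single root, and a careful induction on PBW degree, leveraging the compatibility of $\pi$ with the full Hopf structure, is needed to ensure the small and Frobenius parts separate compatibly over $R_1$.
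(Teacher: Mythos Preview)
There is a genuine gap, and it is fatal to the proposed basis. You yourself note that the action of $U_\zeta^L$ on $A_1$ factors through Lusztig's Frobenius $\pi$. But $\pi(f_{\beta_k}^{(r_k)})=0$ whenever $0<r_k<\ell$ (for instance by weight: $\pi$ sends $U_{\zeta,-\gamma}^{L,-}$ to $U(\Gn^-)_{-\gamma/\ell}$, which vanishes unless $\gamma\in\ell Q$). Since you take $\psi_{\ell\mu}\propto u_\mu\in A_1(\mu)$, this gives
\[
\partial_{f_{\beta_N}^{(r_N)}\cdots f_{\beta_1}^{(r_1)}}\psi_{\ell\mu}
=\pi\bigl(f_{\beta_N}^{(r_N)}\bigr)\cdots\pi\bigl(f_{\beta_1}^{(r_1)}\bigr)\cdot u_\mu=0
\]
as soon as some $r_k>0$. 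Hence $\eta_{\underline r}=0$ for every $\underline r\ne 0$, and your ``basis'' collapses to the single element $1$. The same computation shows that your cyclicity claim $A_\zeta(\ell\mu)=\partial_{U_\zeta^{L,-}}\psi_{\ell\mu}$ is false: the right-hand side is $U(\Gn^-)\cdot u_\mu=A_1(\mu)\subsetneq A_\zeta(\ell\mu)$. Lemma~\ref{lem:Azeta} only identifies $A_\zeta(\lambda)$ with the \emph{dual} Weyl module $L^*_{-,\zeta}(\lambda)$, which is not generated by its highest weight vector under $U_\zeta^{L,-}$ at a root of unity.

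The paper's argument avoids this by never trying to produce $R_\zeta$ from $A_1$ via the $U_\zeta^{L,-}$-action. Instead it restricts elements of $A_\zeta$ to $U_\zeta^{L,+}$ (using the Hopf pairing), obtaining an algebra isomorphism $(\Theta_e^{-1}A_\zeta)(0)\cong(U_\zeta^{L,+})^\bigstar$; the Drinfeld pairing then identifies $(U_\zeta^{L,+})^\bigstar$ with $S(U_\zeta^-)$, i.e.\ with the \emph{De Concini--Kac} form, not the Lusztig form. Under this identification $(\Theta_e^{-1}A_1)(0)$ becomes precisely the central subalgebra generated by the $S(f_\beta^\ell)$, and freeness of rank $\ell^{|\Delta^+|}$ is immediate from the PBW basis of $U_\zeta^-$ (Lemma~\ref{lem:PBW-DK}). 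The moral is that the correct ``small quantum'' coordinates on the big cell come from the non-divided powers $f_\beta$ via duality, not from divided powers acting on classical functions.
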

\begin{proof}
Denote by
\[
g:A_\zeta\to(U_\zeta^{L,\geqq0})^*
\]
the composite of 
\[
A_\zeta\subset C_\zeta\subset (U_\zeta^L)^*\to (U_\zeta^{L,\geqq0})^*.
\]
Then $g$ is an algebra homomorphism with respect to the multiplication of $(U_\zeta^{L,\geqq0})^*$ induced by the comultiplication of $U_\zeta^{L,\geqq0}$.
Set 
\[
(U_\zeta^{L,+})^\bigstar=\bigoplus_{\gamma\in Q^+}
(U_{\zeta,\gamma}^{L,+})^*\subset 
(U_{\zeta}^{L,+})^*,
\]
and identify $(U_\zeta^{L,+})^\bigstar$ with a subspace of  $(U_\zeta^{L,\geqq0})^*$ by the embedding
$(U_\zeta^{L,+})^\bigstar\ni\varphi\mapsto\tilde{\varphi}\in(U_\zeta^{L,\geqq0})^*$ 
given by 
\[
\tilde{\varphi}(hx)=\varepsilon(h)\varphi(x)\qquad
(h\in U^{L,0}_\zeta, x\in U^{L,+}_\zeta).
\]
For $\lambda\in\Lambda$ define the algebra homomorphism $\chi_\lambda:U^{L,\geqq0}_\zeta\to\BC$ by
\[
\chi_\lambda(hx)=\chi_\lambda(h)\varepsilon(x)\qquad
(h\in U^{L,0}_\zeta, x\in U^{L,+}_\zeta).
\]
Then for $\varphi\in(U_\zeta^{L,+})^\bigstar$ and $\lambda\in\Lambda$ we have
\[
(\tilde{\varphi}\chi_\lambda)(hx)=\chi_\lambda(h)\tilde{\varphi}(x)\qquad
(h\in U^{L,0}_\zeta, x\in U^{L,+}_\zeta).
\]
Moreover, $(U_\zeta^{L,+})^\bigstar$ is a subalgebra of 
$(U_\zeta^{L,\geqq0})^*$, and
\begin{align*}
&\chi_\lambda\chi_\mu=\chi_{\lambda+\mu}\qquad
(\lambda, \mu\in\Lambda),\\
&\chi_\lambda\tilde{\varphi}=\zeta^{(\lambda,\gamma)}\tilde{\varphi}\chi_\lambda
\qquad
(\lambda\in\Lambda, \varphi\in (U_{\zeta,\gamma}^{L,+})^*)
\end{align*}
in the algebra $(U_\zeta^{L,\geqq0})^*$.
In particular, 
\[
(U_\zeta^{L,\geqq0})^\bigstar
:=\bigoplus_{\lambda\in\Lambda}(U_\zeta^{L,+})^\bigstar
\chi_\lambda
\]
is a subalgebra of $(U_\zeta^{L,\geqq0})^*$.
By \eqref{eq:Adecomp1Z} $g$ induces an injective algebra homomorphism
\[
g':A_\zeta\to(U_\zeta^{L,\geqq0})^\bigstar.
\]
For $\varphi\in A_\zeta(\lambda)_\lambda\setminus\{0\}$ we have $g'(\varphi)\in\BC\chi_\lambda\setminus\{0\}$, and hence $g'$ induces an injective algebra homomorphism
\[
g'':\Theta_e^{-1}A_\zeta\to(U_\zeta^{L,\geqq0})^\bigstar.
\]
Let us show that $g''$ is surjective.
It is sufficient to show that for any $\gamma\in Q^+$ there exists $\lambda\in\Lambda^+$ such that
$g'(A_\zeta(\lambda)_{\lambda-\gamma})=
(U_{\zeta,\gamma}^{L,+})^*\chi_\lambda$.
In view of Lemma \ref{lem:Azeta} the surjectivity of 
$A_\zeta(\lambda)_{\lambda-\gamma}\to
(U_{\zeta,\gamma}^{L,+})^*\chi_\lambda$ is equivalent to the injectivity of 
$U_{\zeta,\gamma}^{L,+}\ni u\mapsto \overline{u}\in L_{+,\zeta}(-\lambda)_{-\lambda+\gamma}$.
This is known to be true for sufficiently large $\lambda$ (see for example \cite[Lemma 2.1]{T2}).
Hence $g''$ is an isomorphism.

Similarly to the above argument the natural algebra homomorphism
\[
g_1:A_1\to(U(\Gb^+))^*
\]
induces an algebra isomorphism
\[
g_1'':\Theta_e^{-1}A_1\to (U(\Gb^+))^\bigstar,
\]
where 
\begin{align*}
&(U(\Gb^+))^\bigstar=\bigoplus_{\lambda\in\Lambda}
(U(\Gn^+))^\bigstar\chi_{1,\lambda},\\
&(U(\Gn^+))^\bigstar
=\bigoplus_{\gamma\in Q^+}(U(\Gn^+)_\gamma)^*,\\
&\chi_{1,\lambda}(hx)=\langle\lambda, h\rangle\varepsilon(x)\qquad
(\lambda\in\Lambda, h\in U(\Gh), x\in U(\Gn^+)).
\end{align*}
Moreover, we have the following commutative diagram
\[
\begin{CD}
\Theta_e^{-1}A_1@>{g_1''}>> (U(\Gb^+))^\bigstar\\
@VVV @VVV\\
\Theta_e^{-1}A_\zeta @>>{g''}> (U_\zeta^{L,\geqq0})^\bigstar,
\end{CD}
\]
where 
$\Theta_e^{-1}A_1\to\Theta_e^{-1}A_\zeta$ is the embedding induced from the inclusion $A_1\subset A_\zeta$, and $(U(\Gb^+))^\bigstar\to(U_\zeta^{L,\geqq0})^\bigstar$ is the injective algebra homomorphism induced by the restriction of  \eqref{eq:LFrobenius}.
Restricting to the degree zero part we obtain algebra isomorphisms 
\[
(\Theta_e^{-1}A_\zeta)(0) \to(U_\zeta^{L,+})^\bigstar,\qquad
(\Theta_e^{-1}A_1)(0)\to (U(\Gn^+))^\bigstar
\]
and the commutative diagram
\[
\begin{CD}
(\Theta_e^{-1}A_1)(0)@>{g_1''}>> (U(\Gn^+))^\bigstar\\
@VVV @VVV\\
(\Theta_e^{-1}A_\zeta)(0) @>>{g''}> (U_\zeta^{L,+})^\bigstar.
\end{CD}
\]

Define a linear map $F:S({U}^-_\zeta)\to(U_\zeta^{L,\geqq0})^\bigstar$ by $(F(y))(z)={}^L\tau(z,y)$ for 
$y\in S({U}^-_\zeta)$ and $z\in U_\zeta^{L,\geqq0}$.
Then we see easily that $F$ is an injective algebra homomorphism whose image is $(U_\zeta^{L,+})^\bigstar$.
Hence we can identify the algebra $(U_\zeta^{L,+})^\bigstar$ with $S({U}^-_\zeta)$.
Under this identification the image of  $(U(\Gn^+))^\bigstar\to(U_\zeta^{L,+})^\bigstar$ coincides with the subalgebra of $S({U}^-_\zeta)$ generated by the central elements $S(f_{\beta_j}^\ell)\,\,(j=1,\dots, N)$.
Hence our assertion is clear from Lemma \ref{lem:PBW-DK}.
\end{proof}

\section{Ring of differential operators}
\subsection{}
We define a subalgebra $D_\BF$ of $\End_\BF(A_\BF)$ by
\[
D_\BF=
\langle
\ell_\varphi, r_\varphi, \deru_u, \sigma_\lambda\mid
\varphi\in A_\BF, u\in U_\BF, \lambda\in\Lambda\rangle,
\]
where 
\[
\ell_\varphi(\psi)=\varphi\psi,\quad
r_\varphi(\psi)=\psi\varphi,\quad
\deru_u(\psi)=u\cdot\psi,\quad
\sigma_\lambda(\psi)=q^{(\lambda,\mu)}\psi
\]
for $\psi\in A_\BF(\mu)$.
We have a natural grading
\begin{align*}
&D_\BF=\bigoplus_{\lambda\in\Lambda^+}D_\BF(\lambda),\\
&D_\BF(\lambda)=\{\Phi\in D_\BF\mid
\Phi(A_\BF(\mu))\subset A_\BF(\lambda+\mu)\quad(\mu\in\Lambda)\}\qquad(\lambda\in\Lambda)
\end{align*}
of $D_\BF$.
We have
\begin{align*}
\deru_u\ell_\varphi=&\sum_{(u)}\ell_{u_{(0)}\cdot\varphi}\deru_{u_{(1)}}
\qquad(u\in U_\BF, \varphi\in A_\BF),\\
\deru_u\sigma_\lambda=&\sigma_\lambda\deru_u
\qquad(u\in U_\BF, \lambda\in\Lambda),\\
\sigma_\lambda\ell_\varphi=&
q^{(\lambda,\mu)}\ell_\varphi\sigma_\lambda
\qquad(\lambda\in\Lambda, \varphi\in A_\BF(\mu)).
\end{align*}
We have also
\begin{equation}
\label{eq:Har-center-in-D0}
z\in Z(U_\BF),\,\,
\iota(z)=\sum_{\lambda\in\Lambda}a_\lambda e({2\lambda})\quad\Longrightarrow\quad
\deru_z
=\sum_{\lambda\in\Lambda}a_\lambda\sigma_{2\lambda}.
\end{equation}

We take bases $\{x_p\}_p$ and $\{y_p\}_p$ of $U_\BF^+$ and $U_\BF^-$ respectively and elements $\beta_p\in Q^+$ for each $p$ such that 
\begin{align}
\label{eq:xy1}
&x_p\in U_{\BF,\beta_p}^+,\qquad
y_p\in U_{\BF,-\beta_p}^-,\\
\label{eq:xy2}
&\tau(x_{p_1},y_{p_2})=\delta_{p_1,p_2}.
\end{align}
\begin{lemma}
\label{lem:rvarphi}
Let $\lambda\in\Lambda^+$ and $\xi\in \Lambda$.
For $\varphi\in A_\BF(\lambda)_\xi$ we have
\begin{equation}
\label{eq:rvarphi}
r_\varphi=
\sum_p\ell_{{y_p}\cdot\varphi}\deru_{x_pk_{-\xi}}\sigma_\lambda
=\sum_p\ell_{(Sx_p)\cdot\varphi}\deru_{y_pk_{\beta_p}k_\xi}\sigma_{-\lambda}.
\end{equation}
\end{lemma}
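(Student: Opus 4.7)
The plan is to verify the first identity by evaluating each side on an arbitrary $\psi\in A_\BF(\mu)_\eta$ and pairing against an arbitrary $v\in U_\BF$ via the Hopf pairing \eqref{eq:Hopf-paring}. Unwinding the definitions gives $\sigma_\lambda\psi=q^{(\lambda,\mu)}\psi$, $(\deru_{x_pk_{-\xi}}\psi)(v')=\psi(v'x_pk_{-\xi})$, and $\ell_{y_p\cdot\varphi}$ is left multiplication in $C_\BF$. Hence the right-hand side paired with $v$ becomes
\[
q^{(\lambda,\mu)}\sum_p\sum_{(v)}\varphi(v_{(0)}y_p)\,\psi(v_{(1)}x_pk_{-\xi}),
\]
whereas $\langle r_\varphi(\psi),v\rangle=\langle\psi\varphi,v\rangle=\sum_{(v)}\psi(v_{(0)})\varphi(v_{(1)})$.

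By linearity it suffices to test the equality on $v=yk_\nu x$ with $y\in U^-_{\BF,-\gamma_y}$, $\nu\in\Lambda$, and $x\in U^+_{\BF,\gamma_x}$, using the triangular decomposition \eqref{eq:tri:F1}. I would expand $\Delta(v)=\Delta(y)(k_\nu\otimes k_\nu)\Delta(x)$ via the partial coproduct formulas $\Delta(x)\in\sum_{\gamma'+\gamma''=\gamma_x}U^+_{\BF,\gamma'}k_{\gamma''}\otimes U^+_{\BF,\gamma''}$ and the analogous one for $\Delta(y)$, and use that $\varphi\in A_\BF(\lambda)_\xi$ satisfies $\varphi(y'h'x')=\varepsilon(y')\chi_\lambda(h')\varphi(x')$ with $\varphi|_{U_\BF^+}$ supported on the weight-$(\lambda-\xi)$ component (and analogously for $\psi$). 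Under these constraints each side collapses to a sum indexed only by the $U^+$-components of $\Delta(v)$. The matching is then carried out by applying Lemma \ref{lem:Drinfeld paring} (ii) to rewrite products of the form $v_{(1)}x_p$ appearing in the argument of $\psi$ in Drinfeld-paired form, after which the orthonormality $\tau(x_{p_1},y_{p_2})=\delta_{p_1p_2}$ contracts the sum over $p$, and the various torus conjugations conspire to produce the overall scalar $q^{(\lambda,\mu)}$ together with the shift by $k_{-\xi}$.

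The second identity is obtained by the same argument using the opposite universal $R$-matrix: concretely, one applies the antipode globally, invokes Lemma \ref{lem:Drinfeld paring} (i), which says $\tau(Sx,Sy)=\tau(x,y)$, to see that $\{Sx_p\}$ and $\{Sy_p\}$ again form a Drinfeld-dual pair, and absorbs the weight shifts from $S(e_i)=-k_i^{-1}e_i$ and $S(f_i)=-f_ik_i$ into the torus correction $k_{\beta_p}k_\xi$; the grading operator changes from $\sigma_\lambda$ to $\sigma_{-\lambda}$ accordingly. The main obstacle is the careful book-keeping of all torus corrections arising from commuting $k_\nu$ past $U^\pm$-components during the expansion of $\Delta(v)$, and verifying that every resulting power of $q$ and every $k$-factor consolidates precisely into the operators appearing on the right of \eqref{eq:rvarphi}.
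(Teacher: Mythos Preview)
Your approach is correct but differs from the paper's. The paper argues conceptually via the universal $R$-matrix: from $\langle\psi\varphi,u\rangle=\langle\varphi\otimes\psi,\tau(\Delta u)\rangle$ and the intertwining property $\tau(\Delta u)=\CR\,\Delta(u)\,\CR^{-1}$ one gets $r_\varphi(\psi)=m\bigl(\CR^{-1}\cdot(\varphi\otimes\psi)\cdot\CR\bigr)$, and substituting the explicit form of $\CR$ (built from the dual bases $\{x_p\},\{y_p\}$ and a Cartan part) yields the first identity directly; replacing $\CR$ by the other $R$-matrix ${}^t\CR^{-1}$ gives the second. Your direct verification via the triangular decomposition $v=yk_\nu x$ and Lemma~\ref{lem:Drinfeld paring}(ii) is essentially unpacking that same intertwining relation by hand: the commutation formula in Lemma~\ref{lem:Drinfeld paring}(ii) is exactly what underlies $\CR\,\Delta(u)=\Delta'(u)\,\CR$, and the orthonormality $\tau(x_{p_1},y_{p_2})=\delta_{p_1p_2}$ you invoke is what makes $\sum_p y_p\otimes x_p$ the quasi-$R$-matrix. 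Your route is more elementary and self-contained (no $R$-matrix formalism invoked as a black box), at the cost of the torus book-keeping you already anticipate; the paper's route is much shorter once the $R$-matrix package is accepted, and it makes transparent why there are \emph{two} formulas---one for each of the two universal $R$-matrices.
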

\begin{proof}
The first equality is shown in \cite[Lemma 5.1]{T2} by the following argument.
Let $\CR\in U_\BF\hat{\otimes}U_\BF$ be the universal $R$-matrix. 
Then we have
\begin{align*}
&\langle r_\varphi(\psi),u\rangle=
\langle \psi\varphi,u\rangle=
\langle \psi\otimes\varphi,\Delta(u)\rangle=
\langle \varphi\otimes\psi,\tau(\Delta(u))\rangle\\
=&
\langle \varphi\otimes\psi,\CR\Delta(u)\CR^{-1}\rangle=
\langle \CR^{-1}\cdot(\varphi\otimes\psi)\cdot\CR,\Delta(u)\rangle
\\
=&
\langle m(\CR^{-1}\cdot(\varphi\otimes\psi)\cdot\CR),u\rangle
\end{align*}
for $\varphi, \psi\in A_\BF$ and $u\in U_\BF$.
Here $\tau:U_\BF\otimes U_\BF\to U_\BF\otimes U_\BF$  is the linear map sending $a\otimes b$ to $b\otimes a$.
Hence we obtain $r_\varphi(\psi)=m(\CR^{-1}\cdot(\varphi\otimes\psi)\cdot\CR)$.
By rewriting it using an explicit form of $\CR$ we obtain the first equality in \eqref{eq:rvarphi}.
Applying the same argument to another $R$-matrix $\tau(\CR^{-1})$ we also obtain the second equality in \eqref{eq:rvarphi}.
Details are omitted.
\end{proof}
Set 
\begin{equation}
E_\BF=A_\BF\otimes U_\BF\otimes\BF[\Lambda],
\end{equation}
and regard $A_\BF$, $U_\BF$, $\BF[\Lambda]$ as subspaces of $E_\BF$ by the natural embeddings
$A_\BF\ni\varphi\mapsto\varphi\otimes1\otimes1\in E_\BF$ etc.
Then we have an $\BF$-algebra structure of $E_\BF$ such that
the natural embeddings $A_\BF\to E_\BF$, $U_\BF\to E_\BF$, $\BF[\Lambda]\to E_\BF$ are algebra homomorphisms, and
\begin{align*}
u\varphi=
&\sum_{(u)}
({u_{(0)}\cdot\varphi}){u_{(1)}}
\qquad(u\in U_\BF, \varphi\in A_\BF),\\
ue(\lambda)=&e(\lambda) u
\qquad(u\in U_\BF, \lambda\in\Lambda),\\
e(\lambda)\varphi=&
q^{(\lambda,\mu)}\varphi e(\lambda)
\qquad(\lambda\in\Lambda, \varphi\in A_\BF(\mu))
\end{align*}
in ${E_\BF}$.
Moreover, we have a surjective algebra homomorphism $E_\BF\to D_\BF$ given by $\varphi\mapsto\ell_\varphi\,(\varphi\in A_\BF)$, 
$u\mapsto\deru_u\,(u\in U_\BF)$, $e(\lambda)\mapsto\sigma_\lambda\,(\lambda\in\Lambda)$.

For $\varphi\in A_\BF(\lambda)_\xi$ with $\lambda\in\Lambda^+$, $\xi\in\Lambda$ we set
\begin{align}
\Omega_1(\varphi)&=
\sum_p({y_p}\cdot\varphi){x_pk_{-\xi}}e(\lambda)
\in E_\BF,\\
\Omega_2(\varphi)&=
\sum_p({(Sx_p)\cdot\varphi}){y_pk_{\beta_p}k_\xi}e({-\lambda})
\in E_\BF,\\
\Omega(\varphi)&=\Omega_1(\varphi)-\Omega_2(\varphi)
\in E_\BF.
\end{align}
We extend $\Omega$, $\Omega_1$, $\Omega_2$ to whole $A_\BF$ by linearity.
By Lemma \ref{lem:rvarphi} we have $\Omega(\varphi)\in\Ker(E_\BF\to D_\BF)$.
We set
\[
D'_\BF=E_\BF/
\sum_{\varphi\in A_\BF}E_\BF\Omega(\varphi)E_\BF.
\]
Then we have a sequence of surjective algebra homomorphisms
\begin{equation}
\label{EDD}
E_\BF\to{D}'_\BF\to{D}_\BF.
\end{equation}
\begin{lemma}
\label{lem:Omega}
For $\varphi\in A_\BF(\lambda)$ with $\lambda\in\Lambda^+$ and $i=1, 2$ we have
\begin{align}
\label{eq:lem:Omega1}
e(\mu)\Omega_i(\varphi)&=
q^{(\lambda,\mu)}\Omega_i(\varphi)e(\mu)
&(\mu\in\Lambda),\\
\label{eq:lem:Omega2}
\psi\Omega_i(\varphi)&=
\Omega_i(\varphi)\psi
&(\psi\in A_\BF),\\
\label{eq:lem:Omega3}
u\Omega_i(\varphi)&=
\sum_{(u)}\Omega_i(u_{(1)}\cdot\varphi)u_{(0)}
&(u\in U_\BF),\\
\label{eq:lem:Omega4}
\Omega_i(\varphi\psi)&=
\Omega_i(\psi)\Omega_i(\varphi)
&(\varphi, \psi\in A_\BF)
\end{align}
in $E_\BF$.
\end{lemma}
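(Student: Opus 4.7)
My plan is to verify each of the four identities by direct computation inside $E_\BF$, in the order \eqref{eq:lem:Omega1}, \eqref{eq:lem:Omega3}, \eqref{eq:lem:Omega2}, \eqref{eq:lem:Omega4}, so that each step can rely on the previous ones. Morally, these identities simply spell out, inside $E_\BF$, the fact that $\Omega_i(\varphi)$ is a concrete lift of the operator $r_\varphi$ computed in Lemma \ref{lem:rvarphi} and therefore should commute with $A_\BF$, twist $\BF[\Lambda]$ correctly, and be anti-multiplicative in $\varphi$.

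Identity \eqref{eq:lem:Omega1} is essentially free: every summand of $\Omega_i(\varphi)$ is of the form $\varphi'\cdot u\cdot e(\pm\lambda)$ with $\varphi'\in A_\BF(\lambda)$ (the left $U_\BF$-action preserves the $\Lambda$-grading) and $u\in U_\BF$, so pushing $e(\mu)$ past $\varphi'$ via $e(\mu)\varphi' = q^{(\mu,\lambda)}\varphi' e(\mu)$, together with the commutativity of $e(\mu)$ with $U_\BF$ and with $\BF[\Lambda]$, produces the scalar $q^{(\mu,\lambda)}$. For \eqref{eq:lem:Omega3}, I would expand $u\,\Omega_1(\varphi)$, apply the $E_\BF$-relation $u\,\varphi' = \sum_{(u)}(u_{(0)}\cdot\varphi')u_{(1)}$ to move $u$ past $(y_p\cdot\varphi)$, and then reindex the resulting sum using the quasi-triangularity of the canonical element $\sum_p x_p\otimes y_p$ implied by the multiplicativity axioms of $\tau$ (the two identities preceding Lemma \ref{lem:Drinfeld paring}); the case $\Omega_2$ uses additionally the antipode invariance of $\tau$ from Lemma \ref{lem:Drinfeld paring}(i).

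The main work is \eqref{eq:lem:Omega2}, and this is the step I expect to be the main obstacle. Reducing to $\psi\in A_\BF(\nu)_\eta$, I push $\psi$ through the three factors on the right of $\Omega_i(\varphi)\psi$: $e(\lambda)$ and $k_{-\xi}$ contribute scalars, while $x_p\psi$ is rewritten using the coproduct of $x_p$. The decisive step is then Lemma \ref{lem:Drinfeld paring}(ii), which states precisely how to re-braid the resulting sum $\sum_p (y_p\cdot\varphi)(x_p\cdot\psi)$ so that it matches the expansion of $\psi\,\Omega_i(\varphi)$ in the normal form $A_\BF\otimes U_\BF\otimes\BF[\Lambda]$. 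The obstacle here is entirely bookkeeping — two weight gradings on $A_\BF$, Sweedler indices on $\varphi$, $\psi$ and each $x_p$, and several $q$-powers — but once the expansion is arranged the appeal to Lemma \ref{lem:Drinfeld paring}(ii) is forced. Finally, \eqref{eq:lem:Omega4} follows formally: expanding $\Omega_i(\psi)\Omega_i(\varphi)$ in normal form and pushing the $A_\BF$-, $\BF[\Lambda]$- and $U_\BF$-parts of $\Omega_i(\psi)$ past $\Omega_i(\varphi)$ using \eqref{eq:lem:Omega2}, \eqref{eq:lem:Omega1} and \eqref{eq:lem:Omega3} respectively, then applying once more the multiplicativity of $\tau$ in its first argument, collapses the double sum into $\Omega_i(\varphi\psi)$.
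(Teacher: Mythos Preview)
Your overall plan is sound and mostly aligned with the paper, but the tool you name for \eqref{eq:lem:Omega2} is not the one that actually does the work. After pushing $\psi$ past $e(\lambda)$, $k_{-\xi}$ and using the coproduct formula $\sum_p\Delta x_p\otimes y_p=\sum_{p,r}x_pk_{\beta_r}\otimes x_r\otimes y_py_r$, the residual problem is to simplify, for each fixed $r$, the sum $\sum_p(y_py_r\cdot\varphi)(x_p\cdot\psi)$ inside $A_\BF$. Lemma~\ref{lem:Drinfeld paring}(ii) governs commutation of $U_\BF^{\geqq0}$ and $U_\BF^{\leqq0}$ inside $U_\BF$; it does not directly say anything about products of two elements of $A_\BF$, so ``re-braiding'' this sum is not a straightforward instance of that lemma. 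The paper's move is different and cleaner: it recognizes $\sum_p(y_py_r\cdot\varphi)(x_p\cdot\psi)$ as $\sum_p r_{x_p\cdot\psi}(y_py_r\cdot\varphi)$ and then rewrites each $r_{x_p\cdot\psi}$ via the \emph{second} formula of Lemma~\ref{lem:rvarphi}, after which the antipode identity $m\circ(S\otimes1)\circ\Delta=\varepsilon$ (applied to the canonical element, i.e.\ the formula $\sum_{\beta_p+\beta_r=\gamma}S(x_pk_{\beta_r})x_r\otimes y_py_r=\delta_{\gamma,0}\,1\otimes1$) collapses everything to $\psi(y_r\cdot\varphi)$ up to the correct scalar. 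So the decisive input is Lemma~\ref{lem:rvarphi}, not Lemma~\ref{lem:Drinfeld paring}(ii).

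A couple of smaller remarks. For \eqref{eq:lem:Omega3} the paper proceeds by checking $u\in U_\BF^0$, $u\in U_\BF^-$, $u\in U_\BF^+$ separately, each time using Lemma~\ref{lem:Drinfeld paring}(ii) together with iterated coproduct identities for the canonical element; this is consistent with what you sketched. For \eqref{eq:lem:Omega4} the paper does \emph{not} bootstrap from \eqref{eq:lem:Omega1}--\eqref{eq:lem:Omega3}; it simply expands both $\Omega_1(\varphi\psi)$ and $\Omega_1(\psi)\Omega_1(\varphi)$ directly (using only \eqref{eq:lem:Omega2} to commute $y_p\cdot\varphi$ past $\Omega_1(\psi)$) and matches terms via the coproduct formula for $y_p$. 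Your inductive route through \eqref{eq:lem:Omega3} would introduce extra Sweedler sums you would then have to undo.
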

\begin{proof}
We will only give a proof for the case $i=1$.
The proof for the case $i=2$ is similar.
The proof of \eqref{eq:lem:Omega1} is easy and omitted.

Let us show \eqref{eq:lem:Omega2}.
Let $\varphi\in A_\BF(\lambda)_\xi$, $\psi\in A_\BF(\mu)_\eta$.
By the formula
\begin{equation}
\label{eq:lem:Omega:pf1}
\sum_p \Delta x_p\otimes y_p=
\sum_{p,r}x_pk_{\beta_r}\otimes x_r\otimes y_py_r
\end{equation}
(see \cite[(4.3.16)]{T1}) we have
\begin{align*}
\Omega_1(\varphi)\psi&=
\sum_p({y_p}\cdot\varphi){x_pk_{-\xi}}e(\lambda)\psi\\
&=q^{(\lambda,\mu)-(\xi,\eta)}
\sum_p({y_p}\cdot\varphi)x_p\psi k_{-\xi}e(\lambda)\\
&=q^{(\lambda,\mu)-(\xi,\eta)}
\sum_{p, r}({y_py_r}\cdot\varphi)(x_pk_{\beta_r}\cdot\psi)
x_r k_{-\xi}e(\lambda)\\
&=q^{(\lambda,\mu)-(\xi,\eta)}
\sum_{r}
q^{(\beta_r,\eta)}
\left(
\sum_p({y_py_r}\cdot\varphi)(x_p\cdot\psi)
\right)
x_r k_{-\xi}e(\lambda).
\end{align*}
By Lemma \ref{lem:rvarphi} we have
\begin{align*}
&\sum_p({y_py_r}\cdot\varphi)(x_p\cdot\psi)\\
=&
\sum_p
r_{x_p\cdot\psi}({y_py_r}\cdot\varphi)\\
=&
\sum_{p,s}
\ell_{S(x_s)x_p\cdot\psi}
\deru_{y_sk_{\beta_s+\beta_p+\eta}}\sigma_{-\mu}
({y_py_r}\cdot\varphi)\\
=&\sum_{p,s}
q^{-(\lambda,\mu)+(\beta_s+\beta_p+\eta,\xi-\beta_p-\beta_r)}
(S(x_s)x_p\cdot\psi)
({y_sy_py_r}\cdot\varphi)\\
=&\sum_{p,s}
q^{-(\lambda,\mu)+(\beta_s+\beta_p+\eta,\xi-\beta_r)}
(S(x_sk_{\beta_p})x_p\cdot\psi)
({y_sy_py_r}\cdot\varphi).
\end{align*}
By the formula
\begin{equation}
\label{eq:lem:Omega:pf2}
\sum_{\beta_p+\beta_r=\gamma}
S(x_pk_{\beta_r})x_r\otimes y_py_r
=\begin{cases}
1\otimes1\quad&(\gamma=0)\\
0&(\gamma\ne0),
\end{cases}
\end{equation}
which is a consequence of 
\eqref{eq:lem:Omega:pf1} and $m\circ(S\otimes1)\circ\Delta=\varepsilon$, 
we obtain
\[
\sum_p({y_py_r}\cdot\varphi)(x_p\cdot\psi)
=
q^{-(\lambda,\mu)+(\eta,\xi-\beta_r)}
\psi
({y_r}\cdot\varphi).
\]
It follows that 
\[
\Omega_1(\varphi)\psi
=\psi
\sum_{r}
({y_r}\cdot\varphi)
x_r k_{-\xi}e(\lambda)
=\psi\Omega_1(\varphi).
\]
The formula \eqref{eq:lem:Omega2} is verified.

Let us show \eqref{eq:lem:Omega3}.
It is sufficient to consider the three cases; $u\in U_\BF^0$, $u\in U_\BF^-$, $u\in U_\BF^+$.
Let $\varphi\in A_\BF(\lambda)_\xi$.
For $u\in U_\BF$ we have
\[
u\Omega_1(\varphi)
=\sum_{p,(u)}(u_{(0)}y_p\cdot\varphi)u_{(1)}x_pk_{-\xi}e(\lambda)
\]
and
\begin{align*}
&\sum_{(u)}
\Omega_1(u_{(1)}\cdot\varphi)u_{(0)}\\
=&
\sum_{p,(u)}(y_pu_{(1)}\cdot\varphi)x_pk_{-\xi-\wt(u_{(1)})}e(\lambda)
u_{(0)}\\
=&
\sum_{p,(u)}
q^{-(\xi+\wt(u_{(1)}),\wt(u_{(0)}))}
(y_pu_{(1)}\cdot\varphi)x_pu_{(0)}k_{-\xi-\wt(u_{(1)})}e(\lambda).
\end{align*}
Hence our assertion is equivalent to
\begin{equation}
\label{eq:lem:Omega:pf3}
\begin{split}
&\sum_{p,(u)}(u_{(0)}y_p\cdot\varphi)u_{(1)}x_p\\
=&\sum_{p,(u)}
q^{-(\xi+\wt(u_{(1)}),\wt(u_{(0)}))}
(y_pu_{(1)}\cdot\varphi)x_pu_{(0)}k_{-\wt(u_{(1)})}.
\end{split}
\end{equation}
Here, we have used the following notation. 
For $u\in U_\BF$ such that $k_\nu uk_\nu^{-1}=q^{(\nu,\mu)}u$ for any $\nu\in\Lambda$ we write $\mu=\wt(u)$.
Moreover, in the expansion $\Delta u=\sum_{(u)}u_{(0)}\otimes u_{(1)}$ the elements $u_{(0)}$ and $u_{(1)}$ are taken to be weight vectors.
The proof of \eqref{eq:lem:Omega:pf3} in the case $u\in U_\BF^0$ is easy and omitted.
Let us consider the case $u\in U_\BF^-$.
By Lemma \ref{lem:Drinfeld paring} and the formula
\begin{equation}
\label{eq:lem:Omega:pf4}
\sum_p\Delta_2 x_p\otimes y_p
=\sum_{p,r,s}
x_pk_{\beta_r+\beta_s}\otimes
x_rk_{\beta_s}\otimes
x_s\otimes
y_py_ry_s,
\end{equation}
which is a consequence of \eqref{eq:lem:Omega:pf1}
we have
\begin{align*}
&\sum_{p,(u)}(u_{(0)}y_p\cdot\varphi)u_{(1)}x_p\\
=&
\sum_{p,(u)_3}(u_{(0)}y_p\cdot\varphi)
\left(
\sum_{(x_p)_2}
\tau(x_{p(0)},Su_{(1)})
\tau(x_{p(2)},u_{(3)})
x_{p(1)}u_{(2)}
\right)\\
=&
\sum_{p,r,s,(u)_3}
\tau(x_pk_{\beta_r+\beta_s},Su_{(1)})
\tau(x_s,u_{(3)})
(u_{(0)}y_py_ry_s\cdot\varphi)
x_rk_{\beta_s}u_{(2)}.
\end{align*}
By the definition of the coproduct $\Delta$ and the antipode $S$ we have
$u_{(1)}\in U_{\BF}^-k_{\wt(u_{(0)})}$, 
$Su_{(1)}\in U_{\BF}^-k_{-\wt(u_{(0)})-\wt(u_{(1)})}$.
Hence by Lemma \ref{lem:Drinfeld paring} and the definition of $x_p$, $y_p$ we have
\begin{align*}
&\sum_{p,(u)}(u_{(0)}y_p\cdot\varphi)u_{(1)}x_p\\
=&
\sum_{p,r,s,(u)_3}
q^{(\beta_s+\beta_r,\wt(u_{(0)})+\wt(u_{(1)}))}
\tau(x_p,(Su_{(1)})k_{\wt(u_{(0)})+\wt(u_{(1)})})\\
&\qquad\qquad\qquad\qquad\qquad\qquad
\tau(x_s,u_{(3)})(u_{(0)}y_py_ry_s\cdot\varphi)
x_rk_{\beta_s}u_{(2)}\\
=&
\sum_{r,s,(u)_3}
q^{(\beta_s+\beta_r,\wt(u_{(0)})+\wt(u_{(1)}))}
\tau(x_s,u_{(3)})\\
&\qquad\qquad\qquad\qquad\qquad
(u_{(0)}(Su_{(1)})k_{\wt(u_{(0)})+\wt(u_{(1)})}y_ry_s\cdot\varphi)
x_rk_{\beta_s}u_{(2)}.
\end{align*}
By 
\[
\sum_{(u)_3}
u_{(0)}(Su_{(1)})\otimes u_{(2)}\otimes  u_{(3)}
=
\sum_{(u)}
1\otimes u_{(0)}\otimes  u_{(1)}
\]
we have
\begin{align*}
\sum_{p,(u)}(u_{(0)}y_p\cdot\varphi)u_{(1)}x_p
=
\sum_{r,s,(u)}
\tau(x_s,u_{(1)})
(y_ry_s\cdot\varphi)
x_rk_{\beta_s}u_{(0)}.
\end{align*}
By $u_{(1)}\in U_\BF^-k_{\wt(u_{(0)})}$ 
we finally obtain
\begin{align*}
&\sum_{p,(u)}(u_{(0)}y_p\cdot\varphi)u_{(1)}x_p\\
=&
\sum_{r,s,(u)}
\tau(x_s,u_{(1)}k_{-\wt(u_{(0)})})
(y_ry_s\cdot\varphi)
x_rk_{\beta_s}u_{(0)}
\\
=&
\sum_{r,(u)}
(y_ru_{(1)}k_{-\wt(u_{(0)})}\cdot\varphi)
x_rk_{-\wt(u_{(1)})}u_{(0)}
\\
=&\sum_{r,(u)}
q^{-(\xi+\wt(u_{(1)}),\wt(u_{(0)}))}
(y_ru_{(1)}\cdot\varphi)x_ru_{(0)}k_{-\wt(u_{(1)})}.
\end{align*}
The formula \eqref{eq:lem:Omega:pf3} for $u\in U_\BF^-$ is shown.
Let us consider the case $u\in U_\BF^+$.
By
\begin{equation}
\label{eq:lem:Omega:pf1a}
\sum_p x_p\otimes \Delta y_p=
\sum_{p,r}x_rx_p\otimes y_p\otimes y_rk_{-\beta_p}
\end{equation}
(see \cite[(4.3.17)]{T1}) we have
\[
\sum_px_p\otimes \Delta_2 y_p
=\sum_{p,r,s}
x_sx_rx_p\otimes
y_p\otimes
y_rk_{-\beta_p}\otimes
y_sk_{-\beta_p-\beta_r}.
\]
Hence by Lemma \ref{lem:Drinfeld paring}
we obtain
\begin{align*}
&\sum_{p,(u)}(u_{(0)}y_p\cdot\varphi)u_{(1)}x_p\\
=&
\sum_{p,(u)_3}
\sum_{(y_p)_2}
\tau(u_{(0)},y_{p(0)})
\tau(u_{(2)},Sy_{p(2)})
(y_{p(1)}u_{(1)}\cdot\varphi)u_{(3)}x_p\\
=&
\sum_{p,r,s,(u)_3}
\tau(u_{(0)},y_{p})
\tau(u_{(2)},S(y_sk_{-\beta_p-\beta_r}))
(y_rk_{-\beta_p}u_{(1)}\cdot\varphi)u_{(3)}x_sx_rx_p\\
=&
\sum_{p,r,s,(u)_3}
q^{-(\beta_p+\beta_r,\wt(u_{(2)})+\wt(u_{(3)}))}
\tau(u_{(0)}k_{-\wt(u_{(1)})-\wt(u_{(2)})-\wt(u_{(3)})},y_{p})
\\
&\qquad\qquad
\tau((S^{-1}u_{(2)})k_{\wt(u_{(2)})+\wt(u_{(3)})},y_s)
(y_rk_{-\beta_p}u_{(1)}\cdot\varphi)u_{(3)}x_sx_rx_p\\
=&
\sum_{r,(u)_3}
q^{-(\wt(u_{(0)})+\beta_r,\wt(u_{(2)})+\wt(u_{(3)}))}
(y_rk_{-\wt(u_{(0)})}u_{(1)}\cdot\varphi)
\\
&\qquad\qquad
u_{(3)}(S^{-1}u_{(2)})k_{\wt(u_{(2)})+\wt(u_{(3)})}
x_ru_{(0)}k_{-\wt(u_{(1)})-\wt(u_{(2)})-\wt(u_{(3)})}\\
=&
\sum_{r,(u)}
(y_rk_{-\wt(u_{(0)})}u_{(1)}\cdot\varphi)
x_ru_{(0)}k_{-\wt(u_{(1)})}\\
=&
\sum_{r,(u)}
q^{-(\xi+\wt(u_{(1)}),\wt(u_{(0)}))}
(y_ru_{(1)}\cdot\varphi)
x_ru_{(0)}k_{-\wt(u_{(1)})}.
\end{align*}
The formula \eqref{eq:lem:Omega:pf3} for $u\in U_\BF^+$ is also shown.

Let us finally show \eqref{eq:lem:Omega4}.
We may assume $\varphi\in A_\BF(\lambda)_\xi$ and $\psi\in A_\BF(\mu)_\eta$ for $\lambda, \mu\in \Lambda^+, \xi,\eta\in\Lambda$.
Then we have
\begin{align*}
&\Omega_1(\varphi\psi)\\
=&
\sum_p
\sum_{(y_p)}
(y_{p(0)}\cdot\varphi)(y_{p(1)}\cdot\psi)x_pk_{-(\xi+\eta)}
e(\lambda+\mu)\\
=&
\sum_{p,r}
(y_{p}\cdot\varphi)(y_{r}k_{-\beta_p}\cdot\psi)x_rx_pk_{-(\xi+\eta)}
e(\lambda+\mu)\\
=&
\sum_{p,r}
q^{-(\beta_p,\eta)}
(y_{p}\cdot\varphi)(y_{r}\cdot\psi)x_rx_pk_{-(\xi+\eta)}
e(\lambda+\mu)
\end{align*}
by \eqref{eq:lem:Omega:pf1a}.
On the other hand we have
\begin{align*}
\Omega_1(\psi)\Omega_1(\varphi)
&=\Omega_1(\psi)
\sum_p
(y_{p}\cdot\varphi)x_pk_{-\xi}
e(\lambda)\\
&=
\sum_p
(y_{p}\cdot\varphi)
\Omega_1(\psi)
x_pk_{-\xi}
e(\lambda)\\
&=
\sum_{p,r}
(y_{p}\cdot\varphi)
(y_{r}\cdot\psi)x_rk_{-\eta}
e(\mu)
x_pk_{-\xi}
e(\lambda)\\
&=
\sum_{p,r}
q^{-(\beta_p,\eta)}
(y_{p}\cdot\varphi)(y_{r}\cdot\psi)x_rx_pk_{-(\xi+\eta)}
e(\lambda+\mu).
\end{align*}
Here, the second equality is a consequence of \eqref{eq:lem:Omega2}.
We are done.
\end{proof}
By Lemma \ref{lem:Omega} we have 
\[
D'_\BF=
E_\BF/
\sum_{\varphi\in A_\BF}
A_\BF\Omega(\varphi)U_\BF\BF[\Lambda].
\]
We have a $\Lambda$-graded $\BF$-algebra structure of $E_\BF$ given by $E_\BF(\lambda)=A_\BF(\lambda)U_\BF\BF[\Lambda]$ for $\lambda\in\Lambda$.
This also induces a $\Lambda$-graded $\BF$-algebra structure of 
${D}'_\BF$ by
${D}'_\BF(\lambda)=\Image(E_\BF(\lambda)\to{D}'_\BF)$.
Then \eqref{EDD} is a sequence of homomorphisms of $\Lambda$-graded algebras.

\subsection{}
Since $A_\BF$ belongs to $\Mod_{int}(U_\BF)$ as a $U_\BF$-module, we have a natural group homomorphism
\begin{equation}
\label{eq:braid-D0}
\BB\to \End_\BF(A_\BF)^\times.
\end{equation}
It induces a group homomorphism
\begin{equation}
\label{eq:braid-D}
\BB\to\Aut_{alg}(D_\BF)
\qquad(T\mapsto[\Phi\mapsto T\star \Phi:=T\circ \Phi\circ T^{-1}])
\end{equation}
(see \cite[Proposition 5.2]{T2}).
We will show that this naturally lifts to
group homomorphisms
\[
\BB\to\Aut_{alg}(E_\BF),\qquad
\BB\to\Aut_{alg}(D'_\BF).
\]
\begin{lemma}
\label{lem:braid-D}
\begin{itemize}
\item[\rm(i)]
For $T\in\BB$ we have
\begin{align*}
&T\star\deru_u=\deru_{T(u)}\qquad
(u\in U_\BF),\\
&T\star\sigma_\lambda=\sigma_\lambda\qquad
(\lambda\in\Lambda).
\end{align*}
\item[\rm(ii)]
For $i\in I$ write
\begin{align*}
\exp_{q_i}((q_i-q_i^{-1})k_i^{-1}e_i\otimes f_ik_i)
&=\sum_{n=0}^\infty a_{i,n}\otimes b_{i, n},\\
\exp_{q_i^{-1}}(-(q_i-q_i^{-1})f_i\otimes e_i)
&=\sum_{n=0}^\infty a'_{i,n}\otimes b'_{i, n}.
\end{align*}
Then for $\varphi\in A_\BF$ we have
\begin{align*}
T_i\star\ell_\varphi
&=\sum_{n=0}^\infty \ell_{a_{i,n}\cdot(T_i(\varphi))}\deru_{b_{i, n}},\\
T_i^{-1}\star\ell_\varphi
&=\sum_{n=0}^\infty 
\ell_{a'_{i,n}\cdot(T_i^{-1}(\varphi))}\deru_{b'_{i, n}}.
\end{align*}
\end{itemize}
\end{lemma}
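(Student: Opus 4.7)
The plan relies on two standard ingredients: (a) on any integrable $U_\BF$-module one has $T(u\cdot m)=T(u)\cdot T(m)$ for $T\in\BB$, $u\in U_\BF$, $m\in M$; and (b) the multiplication $m_A\colon A_\BF\otimes A_\BF\to A_\BF$ is a $U_\BF$-module homomorphism with $U_\BF$ acting on the tensor product via $\Delta$, so it is $\BB$-equivariant, namely $T_i\circ m_A=m_A\circ\Delta T_i$, where $\Delta T_i$ is the braid operator on $A_\BF\otimes A_\BF$ described in Lemma \ref{lem:DeltaT}.

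For part (i), the $\Lambda$-grading on $A_\BF$ is defined via the right $U_\BF$-action on $C_\BF$, which commutes with the left action from which $T$ is built; hence $T$ preserves each $A_\BF(\mu)$ and commutes with $\sigma_\lambda$, yielding $T\star\sigma_\lambda=\sigma_\lambda$. For the identity involving $\deru_u$, ingredient (a) gives
\[
(T\circ\deru_u\circ T^{-1})(\psi)=T(u\cdot T^{-1}(\psi))=T(u)\cdot\psi=\deru_{T(u)}(\psi).
\]

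For part (ii), apply (b) using the first expression of Lemma \ref{lem:DeltaT} to $\varphi\otimes T_i^{-1}(\psi)$:
\[
T_i(\varphi\cdot T_i^{-1}(\psi))=m_A\!\left(\exp_{q_i}\!\bigl(q_i^{-2}(q_i-q_i^{-1})e_ik_i^{-1}\otimes f_ik_i\bigr)\bigl(T_i(\varphi)\otimes\psi\bigr)\right),
\]
which after expanding the exponential as $\sum_na_{i,n}\otimes b_{i,n}$ becomes $\sum_n(a_{i,n}\cdot T_i(\varphi))(b_{i,n}\cdot\psi)$, giving the first formula. For the second formula, start from the second expression $\Delta T_i=(T_i\otimes T_i)\exp_{q_i}((q_i-q_i^{-1})f_i\otimes e_i)$ of Lemma \ref{lem:DeltaT} and invert: $\Delta T_i^{-1}=\exp_{q_i}((q_i-q_i^{-1})f_i\otimes e_i)^{-1}(T_i^{-1}\otimes T_i^{-1})$. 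The formal identity $\exp_t(x)^{-1}=\exp_{t^{-1}}(-x)$, meaningful here because $e_i$ and $f_i$ act locally nilpotently on integrable modules so the series are finite on each vector, yields
\[
\Delta T_i^{-1}=\exp_{q_i^{-1}}(-(q_i-q_i^{-1})f_i\otimes e_i)(T_i^{-1}\otimes T_i^{-1}).
\]
Applying $T_i^{-1}\circ m_A=m_A\circ\Delta T_i^{-1}$ to $\varphi\otimes T_i(\psi)$ then gives $T_i^{-1}(\varphi\cdot T_i(\psi))=\sum_n(a'_{i,n}\cdot T_i^{-1}(\varphi))(b'_{i,n}\cdot\psi)$, establishing the second formula.

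The whole argument is essentially a direct application of Lemma \ref{lem:DeltaT} combined with the module-map property of the multiplication in $A_\BF$; the only mildly subtle point is the invertibility identity for $\exp_t$, which is elementary formal algebra. No serious obstacle arises.
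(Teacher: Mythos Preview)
Your argument is correct and is essentially the same as the paper's: part (i) follows from the intertwining property $T(u\cdot m)=T(u)\cdot T(m)$ and the fact that $T$ preserves the $\Lambda$-grading, while part (ii) is obtained by writing $(T\star\ell_\varphi)(\psi)=m_A\bigl((\Delta T)(T^{-1}\otimes T^{-1})(T(\varphi)\otimes\psi)\bigr)$ and invoking Lemma~\ref{lem:DeltaT}. The only extra detail you supply beyond the paper is the explicit inversion $\exp_{q_i}(x)^{-1}=\exp_{q_i^{-1}}(-x)$, which is indeed the standard $q$-exponential identity and makes the $T_i^{-1}$ case transparent.
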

\begin{proof}
The proof of (i) is easy and omitted.
Let us show (ii).
In general for $T\in\BB$ and $\varphi, \psi\in A_\BF$ we have
\begin{align*}
&(T\star\ell_\varphi)(\psi)
=T(\varphi\cdot T^{-1}(\psi))
=Tm(\varphi\otimes T^{-1}(\psi))\\
=&m(\Delta T)(\varphi\otimes T^{-1}(\psi))
=m(\Delta T)(T^{-1}\otimes T^{-1})
(T(\varphi)\otimes \psi).
\end{align*}
Hence the assertion follows from Lemma \ref{lem:DeltaT}.
\end{proof}
In particular, the action of $\BB$ on $D_\BF$ preserves the subalgebra
\[
D^\dagger_\BF=\langle\deru_u,\ell_\varphi\mid u\in U_\BF, \varphi\in A_\BF\rangle\subset D_\BF.
\]
We first define an action of $\BB$ on the subalgebra $E^\dagger_\BF=A_\BF\otimes U_\BF$ of 
$E_\BF$.
 
For $\Phi=\sum_i\varphi_i\otimes u_i\in E^\dagger_\BF$ and $M\in\Mod_{int}(U_\BF)$ we define
\[
\Phi_M:M\to A_\BF\otimes M
\]
by $\Phi_M(m)=\sum_i\varphi_i\otimes u_im\,\,(m\in M)$.
By \cite[5.11]{Jan} we have the following.
\begin{lemma}
\label{lem:unique-PhiM}
Let $\Phi\in E^\dagger_\BF$. 
If $\Phi_M=0$ for any $M\in\Mod_{int}(U_\BF)$, then we have $\Phi=0$.
\end{lemma}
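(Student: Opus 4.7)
The plan is to reduce the statement to the well-known faithfulness of $U_\BF$ on the category $\Mod_{int}(U_\BF)$, which is precisely the content of \cite[5.11]{Jan} (and which is recalled in the excerpt as property (a) of the Hopf paring \eqref{eq:Hopf-paring}).

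First, I would write $\Phi = \sum_{i=1}^r \varphi_i \otimes u_i$ with the $\varphi_i \in A_\BF$ chosen to be $\BF$-linearly independent; this is possible by regrouping the tensors in $E^\dagger_\BF = A_\BF \otimes U_\BF$. Next, fix any $M \in \Mod_{int}(U_\BF)$ and any $m \in M$. By the hypothesis $\Phi_M = 0$ we have
\[
\sum_{i=1}^r \varphi_i \otimes u_i m = 0 \quad \text{in } A_\BF \otimes_\BF M.
\]
Since the $\varphi_i$ are $\BF$-linearly independent, the family $\{\varphi_i\}_i$ extends to an $\BF$-basis of $A_\BF$, and the tensor product decomposition then forces $u_i m = 0$ for every $i$.

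As $M \in \Mod_{int}(U_\BF)$ and $m \in M$ were arbitrary, each $u_i$ acts as zero on every object of $\Mod_{int}(U_\BF)$. Applying \cite[5.11]{Jan}, which asserts that the natural representation $U_\BF \to \prod_{M \in \Mod_f(U_\BF)} \End_\BF(M)$ is injective (equivalently, the Hopf paring $\langle\,,\,\rangle : C_\BF \times U_\BF \to \BF$ is non-degenerate on the right), we conclude $u_i = 0$ for all $i$, whence $\Phi = 0$.

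The only nontrivial ingredient is the faithfulness of the action of $U_\BF$ on integrable modules, but this is precisely the cited result and is already recorded in the excerpt (property (a) after \eqref{eq:Hopf-paring}); everything else is a formal separation-of-variables argument using linear independence, so there is no real obstacle.
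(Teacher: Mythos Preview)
Your proof is correct and is precisely the natural unpacking of the paper's one-line proof, which simply cites \cite[5.11]{Jan}. You make explicit the separation-of-variables step (using linear independence of the $\varphi_i$) that reduces the claim to the faithfulness of $U_\BF$ on $\Mod_{int}(U_\BF)$, which is exactly the cited result.
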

\begin{proposition}
\label{prop:braid-action-Edagger}
There exists a group homomorphism
\[
\BB\to\Aut_{alg}(E^\dagger_\BF)
\qquad(T\mapsto[\Phi\mapsto T\star \Phi])
\]
such that 
$(T\star \Phi)_M=(\Delta T)\Phi_M T^{-1}$ for $T\in\BB$, $\Phi\in E^\dagger_\BF$, $M\in\Mod_{int}(U_\BF)$.
Here, $\Delta T:A_\BF\otimes M\to A_\BF\otimes M$ is the action of $T\in\BB$ on $A_\BF\otimes M\in\Mod_{int}(U_\BF)$.
\end{proposition}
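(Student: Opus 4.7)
The plan is to exhibit the action on elementary tensors explicitly, check the characterising identity one module at a time, and then transfer the structural compatibilities from $\Mod_{int}(U_\BF)$ back to $E^\dagger_\BF$ by Lemma \ref{lem:unique-PhiM}.

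Using the notation of Lemma \ref{lem:braid-D}(ii), for $\Phi=\varphi\otimes u\in E^\dagger_\BF$ I would define
\[
T_i\star\Phi:=\sum_{n\geqq0}(a_{i,n}\cdot T_i\varphi)\otimes b_{i,n}T_i(u),\qquad
T_i^{-1}\star\Phi:=\sum_{n\geqq0}(a'_{i,n}\cdot T_i^{-1}\varphi)\otimes b'_{i,n}T_i^{-1}(u),
\]
and extend linearly. Both are finite sums in $E^\dagger_\BF=A_\BF\otimes U_\BF$ because $A_\BF\in\Mod_{int}(U_\BF)$, so the factor $e_i^n$ occurring in $a_{i,n}$ kills $T_i\varphi$ for large $n$. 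For $M\in\Mod_{int}(U_\BF)$ and $m\in M$, a direct expansion using the first expression of Lemma \ref{lem:DeltaT} and the identity $T_iuT_i^{-1}m=T_i(u)m$ yields $(T_i\star\Phi)_M=(\Delta T_i)\Phi_MT_i^{-1}$; the corresponding identity for $T_i^{-1}\star$ follows from the second expression of $\Delta T_i$.

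To see $T_i\star$ is an algebra homomorphism I use two observations: (a) from the defining relations of $E_\BF$ one obtains
\[
(\Phi_1\Phi_2)_M=(m_A\otimes 1_M)\circ(\Phi_1)_{A_\BF\otimes M}\circ(\Phi_2)_M,
\]
where $m_A:A_\BF\otimes A_\BF\to A_\BF$ is the multiplication; (b) $m_A$ is $U_\BF$-linear since $A_\BF$ is a $U_\BF$-module algebra, and hence $\BB$-equivariant because Lusztig's operators are built from the $U_\BF$-action on $\Mod_{int}(U_\BF)$. Combining (a), (b), and the characterising identity from the previous step gives $((T_i\star\Phi_1)(T_i\star\Phi_2))_M=(T_i\star(\Phi_1\Phi_2))_M$ on every $M$, and Lemma \ref{lem:unique-PhiM} promotes this to an equality in $E^\dagger_\BF$. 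The same uniqueness shows $T_i\star$ and $T_i^{-1}\star$ are mutually inverse. For the braid relations, the operators $T_i$ satisfy the braid relations of $\BB$ in $\End_\BF(N)^\times$ for every $N\in\Mod_{int}(U_\BF)$, so the two iterated compositions produce identical maps on every $\Phi_M$; Lemma \ref{lem:unique-PhiM} again gives the braid relations in $\Aut_{alg}(E^\dagger_\BF)$, yielding the required group homomorphism.

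The only technical point requiring attention is bookkeeping with the $q$-exponentials --- verifying that the series truncate to finite sums in $E^\dagger_\BF$ (this uses local finiteness of $A_\BF$) and that the two expressions for $\Delta T_i$ in Lemma \ref{lem:DeltaT} match the chosen formulas for $T_i^{\pm 1}\star$. No step is deep; the virtue of Lemma \ref{lem:unique-PhiM} is that all structural verifications can be performed inside $\End_\BF(A_\BF\otimes M)$ rather than by manipulating the defining relations of $E_\BF$ directly.
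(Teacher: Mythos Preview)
Your proposal is correct and follows essentially the same route as the paper: both define $T_i^{\pm1}\star$ on elementary tensors via Lemma~\ref{lem:DeltaT}, verify the characterising identity $(T\star\Phi)_M=(\Delta T)\Phi_MT^{-1}$, use the formula $(\Phi_1\Phi_2)_M=(m_A\otimes1)(\Phi_1)_{A_\BF\otimes M}(\Phi_2)_M$ together with the $U_\BF$-linearity of $m_A$ to obtain multiplicativity, and invoke Lemma~\ref{lem:unique-PhiM} to lift all identities from $\Mod_{int}(U_\BF)$ back to $E^\dagger_\BF$. The only difference is organisational: the paper first proves the abstract implications (existence $\Rightarrow$ algebra homomorphism, and $(TT')\star=T\star\circ T'\star$) and then establishes existence for $T_i^{\pm1}$, whereas you start from the explicit formulas and check everything directly; the content is the same.
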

\begin{proof}
We first note the following formula whose proof is easy and omitted;
\begin{equation}
(\Phi\Psi)_M=(m\otimes 1)\Phi_{A_\BF\otimes M}\Psi_M\qquad
(\Phi, \Psi\in E_\BF^\dagger, M\in\Mod_{int}(U_\BF)).
\end{equation}

Let $T\in\BB$.
For $\Phi\in E_\BF^\dagger$ there exists at most one $T\star\Phi\in E_\BF^\dagger$ satisfying $(T\star \Phi)_M=(\Delta T)\Phi_M T^{-1}$ for any $M\in\Mod_{int}(U_\BF)$ (see Lemma \ref{lem:unique-PhiM}).
We claim that if $T\star\Phi$ exists for any $\Phi\in E_\BF^\dagger$, then 
$E_\BF^\dagger\ni\Phi\mapsto T\star\Phi\in E_\BF^\dagger$ is an algebra homomorphism.
We have
\[
(T\star 1)_M(v)
=(\Delta T)1_M T^{-1}(v)
=(\Delta T)(1\otimes T^{-1}(v))=1\otimes v
\]
for any $v\in M\in\Mod_{int}(U_\BF)$.
Here the last equality is a consequence of $\BF\otimes M\cong M$ as a $U_\BF$-module.
By $1_M(v)=1\otimes v$ we obtain $T\star 1=1$ by Lemma \ref{lem:unique-PhiM}.
For $\Phi, \Psi\in E_\BF^\dagger$
we have
\begin{align*}
&((T\star\Phi)(T\star\Psi))_M
=(m\otimes 1)(T\star\Phi)_{A\otimes M}(T\star\Psi)_M\\
=&(m\otimes 1)(\Delta_2T)\Phi_{A\otimes M}
(\Delta T)^{-1}(\Delta T)\Psi_M T^{-1}\\
=&(\Delta T)(m\otimes 1)\Phi_{A\otimes M}\Psi_M T^{-1}
=(\Delta T)(\Phi\Psi)_M T^{-1}=(T\star(\Phi\Psi))_M
\end{align*}
for any $M\in\Mod_{int}(U_\BF)$.
Here, we used the fact that the multiplication $m:A_\BF\otimes A_\BF\to A_\BF$ is a homomorphism of $U_\BF$-modules.
Hence we have $(T\star\Phi)(T\star\Psi)=T\star(\Phi\Psi)$.
Our claim is verified.

Let $T, T'\in\BB$, and assume that
$T\star\Phi$, $T'\star\Phi$ exist for any $\Phi\in E_\BF^\dagger$.
Then we have
\[
(T\star(T'\star\Phi))_M
=(\Delta T)(\Delta T')\Phi_M(T')^{-1}T^{-1}
=\Delta(TT')\Phi_M(TT')^{-1}
\]
for $\Phi\in E_\BF^\dagger$, $M\in\Mod_{int}(U_\BF)$.
Hence 
 $(TT')\star\Phi$ exists  and we have $(TT')\star\Phi=T\star(T'\star\Phi)$ for any $\Phi\in E_\BF^\dagger$.
 
 It remains to show the existence of $T\star\Phi$ for $T=T_i^{\pm1}$, $\Phi\in E_\BF^\dagger$.
 For $\Phi=\varphi\otimes u\in E_\BF^\dagger$ \,\,($\varphi\in A_\BF$, $u\in U_\BF$) we have
 \begin{align*}
 &((\Delta T_i) \Phi_M T_i^{-1})(v)
 =(\Delta T_i) (\varphi\otimes uT_i^{-1}v)\\
 =&(\Delta T_i)(T_i^{-1}\otimes T_i^{-1})
 (T_i(\varphi)\otimes T_i(u)v)\\
 =&\sum_n
 a_{i,n}\cdot T_i(\varphi)\otimes b_{i,n}T_i(u)v\\
 =&
 \left(\sum_n
 a_{i,n}\cdot T_i(\varphi)\otimes b_{i,n}T_i(u)\right)_M(v)
 \end{align*}
 for $v\in M\in\Mod_{int}(U_\BF)$.
 Hence we obtain
 \[
 T_i\star(\varphi\otimes u)=\sum_n
 a_{i,n}\cdot T_i(\varphi)\otimes b_{i,n}T_i(u)
 \qquad(\varphi\in A_\BF, u\in U_\BF).
 \]
 Similarly, we have
  \[
 T_i^{-1}\star(\varphi\otimes u)=\sum_n
 a'_{i,n}\cdot T_i^{-1}(\varphi)\otimes b'_{i,n}T_i^{-1}(u)
 \qquad(\varphi\in A_\BF, u\in U_\BF).
 \]
 We are done.
\end{proof}
\begin{lemma}
\label{lem:TstarFormula}
\begin{itemize}
\item[\rm(i)]
$T\star u=T(u)$ 
 \qquad$(T\in \BB, \,\,u\in U_\BF)$.
\item[\rm(ii)]
$T_i\star\varphi=\sum_n
 a_{i,n}\cdot T_i(\varphi)\otimes b_{i,n}$ 
 \qquad$(i\in I,\,\,\varphi\in A_\BF)$.
\item[\rm(iii)]
$T_i^{-1}\star\varphi=\sum_n
 a'_{i,n}\cdot T_i^{-1}(\varphi)\otimes b'_{i,n}$
 \qquad$(i\in I,\,\,\varphi\in A_\BF)$.
\end{itemize}
\end{lemma}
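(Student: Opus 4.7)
The plan is to derive all three identities as straightforward specializations of the formulas for $T_i^{\pm1}\star(\varphi\otimes u)$ established at the end of the proof of Proposition \ref{prop:braid-action-Edagger}, namely
\[
T_i\star(\varphi\otimes u)=\sum_n(a_{i,n}\cdot T_i(\varphi))\otimes b_{i,n}T_i(u),\qquad
T_i^{-1}\star(\varphi\otimes u)=\sum_n(a'_{i,n}\cdot T_i^{-1}(\varphi))\otimes b'_{i,n}T_i^{-1}(u),
\]
where the identification of $E_\BF^\dagger=A_\BF\otimes U_\BF$ with its subalgebras is via $\varphi\leftrightarrow\varphi\otimes 1$ and $u\leftrightarrow 1\otimes u$.

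For (ii) and (iii), I will set $u=1$ in the above. Since $T_i^{\pm1}\in\Aut_{alg}(U_\BF)$ fixes the unit, $T_i^{\pm1}(1)=1$, so the right-hand sides reduce immediately to the claimed expressions. For (i), I will first reduce to the case $T=T_i^{\pm1}$ by invoking the group homomorphism properties of $\BB\to\Aut_{alg}(E_\BF^\dagger)$ and $\BB\to\Aut_{alg}(U_\BF)$. Then, specializing $\varphi=1$ gives
\[
T_i\star(1\otimes u)=\sum_n(a_{i,n}\cdot 1)\otimes b_{i,n}T_i(u).
\]
The key observation is that $1\in A_\BF$ satisfies $w\cdot 1=\varepsilon(w)\cdot 1$ for every $w\in U_\BF$, as one sees from $\langle w\cdot 1,v\rangle=\langle 1,vw\rangle=\varepsilon(v)\varepsilon(w)$. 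Inspecting the definitions in Lemma \ref{lem:braid-D}, each $a_{i,n}$ for $n\geqq 1$ is a scalar multiple of $(k_i^{-1}e_i)^n$, and each $a'_{i,n}$ for $n\geqq 1$ is a scalar multiple of $f_i^n$; both lie in $\Ker(\varepsilon)$, while $a_{i,0}=b_{i,0}=a'_{i,0}=b'_{i,0}=1$. Hence only the $n=0$ term contributes, yielding $T_i^{\pm1}\star(1\otimes u)=1\otimes T_i^{\pm1}(u)$, which is precisely (i) for the generators.

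There is no substantive obstacle: the real work has already been carried out in Proposition \ref{prop:braid-action-Edagger}, and the present lemma simply reads off the action of $\BB$ on the standard generators of $E_\BF^\dagger$. The only point requiring any care is the collapse of the exponential series to its constant term when it acts on the $U_\BF$-invariant element $1\in A_\BF$, which follows at once from the counit relation.
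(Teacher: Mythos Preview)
Your argument is correct. For (ii) and (iii) you do exactly what the paper does: refer back to the explicit formulas derived at the end of the proof of Proposition~\ref{prop:braid-action-Edagger}. For (i), however, the paper takes a different and slightly cleaner route: instead of reducing to the generators $T_i^{\pm1}$ and collapsing the exponential series via the counit, it computes $(T\star u)_M$ directly for arbitrary $T\in\BB$ using the defining relation $(T\star\Phi)_M=(\Delta T)\Phi_M T^{-1}$, observing that $u_M(v)=1\otimes uv$ and that $\Delta T(1\otimes w)=1\otimes T(w)$ because $\BF\otimes M\cong M$ as a $U_\BF$-module. This avoids both the reduction to generators and any inspection of the coefficients $a_{i,n}$, $a'_{i,n}$. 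Your approach has the minor advantage of recycling the already-established generator formulas uniformly, but you should note in passing that $T_i(1)=1$ in $A_\BF$ (which follows from $1\in A_\BF(0)_0$ and the fact that $e_i,f_i,H_i-1$ annihilate it), since your displayed specialization tacitly uses this.
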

\begin{proof}
For $T\in\BB$, $u\in U_\BF$, $v\in M\in\Mod_{int}(U_\BF)$ we have
\begin{align*}
&(T\star u)_M(v)
=(\Delta T)u_MT^{-1}(v)
=(\Delta T)(1\otimes uT^{-1}(v))\\
=&1\otimes TuT^{-1}v
=1\otimes T(u)v=(T(u))_M(v).
\end{align*}
Hence (i) holds.
The statements (ii) and (iii) are already shown in the proof of Proposition \ref{prop:braid-action-Edagger}.
\end{proof}
By Lemma \ref{lem:TstarFormula}
we have 
\[
T\star(A_\BF(\lambda)\otimes U_\BF)=A_\BF(\lambda)\otimes U_\BF
\qquad(\lambda\in\Lambda^+).
\]
Hence, for $T\in\BB$ the algebra automorphism $T\star(\cdot):E^\dagger_\BF\to E^\dagger_\BF$ is  naturally extended  to that of $E_\BF$ by setting 
\[
T\star e(\mu)=e(\mu)\qquad(\mu\in \Lambda).
\]
By this we obtain
a group homomorphism
\begin{equation}
\label{eq:braid-action-E}
\BB\to\Aut_{alg}(E_\BF)
\qquad(T\mapsto[\Phi\mapsto T\star \Phi]).
\end{equation}

\begin{proposition}
\label{prop:braid-action-Dprime}
For $T\in\BB$ we have
\[
T\star\Ker(E_\BF\to D_\BF')\subset
\Ker(E_\BF\to D_\BF').
\]
\end{proposition}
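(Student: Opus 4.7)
The plan is to exploit the fact that $T\star\cdot$ is an $\BF$-algebra automorphism of $E_\BF$ and reduce the statement to a direct computation on the generators of the kernel. Set $K:=\Ker(E_\BF\to D'_\BF)$. By Lemma \ref{lem:Omega} together with the definition of $D'_\BF$,
\[
K=\sum_{\varphi\in A_\BF}A_\BF\,\Omega(\varphi)\,U_\BF\,\BF[\Lambda],
\]
a two-sided ideal of $E_\BF$. Since $T\star\cdot$ is multiplicative, the inclusion $T\star K\subset K$ reduces to showing $T\star\Omega(\varphi)\in K$ for every $\varphi\in A_\BF$. Because $\BB$ is generated by $\{T_i^{\pm1}\}_{i\in I}$ and the case of $T_i^{-1}$ is strictly parallel, I treat only $T=T_i$.

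Fix $\varphi\in A_\BF(\lambda)_\xi$. The first step is to compute $T_i\star\Omega_1(\varphi)$ and $T_i\star\Omega_2(\varphi)$ separately, using $T_i\star u=T_i(u)$ for $u\in U_\BF$ and $T_i\star\psi=\sum_n(a_{i,n}\cdot T_i(\psi))\,b_{i,n}$ for $\psi\in A_\BF$ (Lemma \ref{lem:TstarFormula}), $T_i\star e(\mu)=e(\mu)$, together with the intertwining $T_i(u\cdot\psi)=T_i(u)\cdot T_i(\psi)$ valid on $A_\BF\in\Mod_{int}(U_\BF)$. After unfolding, the resulting expressions contain the tensors $(T_i\otimes T_i)\bigl(\sum_p x_p\otimes y_p\bigr)$ and $(T_i\otimes T_i)\bigl(\sum_p Sx_p\otimes y_pk_{\beta_p}\bigr)$ as inner sums; up to the Cartan factors these are precisely the tensor components of the two universal $R$-matrices $\CR$ and ${}^t\CR^{-1}$ underlying Lemma \ref{lem:rvarphi}.

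The decisive step is to rewrite these transformed tensors using Lemma \ref{lem:DeltaT}, which describes the discrepancy between $T_i\otimes T_i$ and $\Delta T_i$ by explicit exponential factors $\CS_i=\exp_{q_i}(q_i^{-2}(q_i-q_i^{-1})e_ik_i^{-1}\otimes f_ik_i)$ and $\CS'_i=\exp_{q_i^{-1}}(-(q_i-q_i^{-1})f_i\otimes e_i)$. Substituting the resulting identities converts $T_i\star\Omega_j(\varphi)$ ($j=1,2$) into sums of the form $\sum_\nu\alpha_\nu\,\Omega_j(\psi_\nu)\,\beta_\nu+r_j$ with $\alpha_\nu,\psi_\nu\in A_\BF$, $\beta_\nu\in U_\BF\BF[\Lambda]$, and a remainder $r_j$ coming from the coboundary factor $\CS_i$ (resp.\ $\CS'_i$).

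The main obstacle will be to verify that the remainders $r_1,r_2$ coincide, so that
\[
T_i\star\Omega(\varphi)=\sum_\nu\alpha_\nu\,\Omega(\psi_\nu)\,\beta_\nu\in K.
\]
This rests on the matched structural origin of $\Omega_1$ and $\Omega_2$ from the two $R$-matrices exhibited in Lemma \ref{lem:rvarphi}, together with the multiplicativity $\Omega_j(\varphi\psi)=\Omega_j(\psi)\Omega_j(\varphi)$ of Lemma \ref{lem:Omega}(iv), which propagates the cancellation across the $\CS_i$-factor. Once this bookkeeping is carried out, $T_i\star\Omega(\varphi)\in K$ follows and the proof is complete.
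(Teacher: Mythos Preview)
Your reduction to generators is correct, and the plan to handle $T_i^{\pm1}$ separately is sound. But the step you flag as ``the main obstacle'' --- showing that the remainders $r_1$ and $r_2$ coincide --- is the entire content of the proof, and the justification you offer (``matched structural origin'' plus multiplicativity of $\Omega_j$) does not supply a mechanism. The difficulty is real: when you apply $T_i\star$ termwise to $\Omega_j(\varphi)$, the transformed tensors $(T_i\otimes T_i)\bigl(\sum_p x_p\otimes y_p\bigr)$ no longer lie in $U_\BF^+\otimes U_\BF^-$ (e.g.\ $T_i(e_i)=-f_ik_i$), so re-expanding in the original PBW bases produces cross terms whose cancellation between $j=1$ and $j=2$ is not at all transparent. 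Lemma~\ref{lem:Omega}(iv) does not help here: multiplicativity is a statement about $\Omega_j$ of products in $A_\BF$, not about how $\Omega_j$ behaves under the $\BB$-action.

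The paper avoids this bookkeeping entirely. Writing $\Omega(\varphi)=\Omega'_1(\varphi)e(\lambda)-\Omega'_2(\varphi)e(-\lambda)$ with $\Omega'_j(\varphi)\in E_\BF^\dagger$, it observes that, as operators on integrable modules, $\Omega'_2(\varphi)_M(v)=\CR_{A_\BF,M}^{-1}(\varphi\otimes v)$ and similarly $\Omega'_1(\varphi)_M(v)=(\CR'_{A_\BF,M})^{-1}(\varphi\otimes v)$, where $\CR'=\tau^{-1}\CR\tau$. The key input is then the single clean identity
\[
(\Delta' T)\,\CR_{M,M'}=\CR_{M,M'}\,(\Delta T)
\]
(a consequence of the defining intertwining property of $\CR$), which holds for every $T\in\BB$. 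Combined with the characterization $(T\star\Phi)_M=(\Delta T)\Phi_M T^{-1}$ from Proposition~\ref{prop:braid-action-Edagger}, this yields the uniform formula
\[
T\star\Omega'_j(\varphi)=\sum_n\Omega'_j(d_n\cdot T(\varphi))\,c_n\qquad(j=1,2),
\]
with the \emph{same} coefficients $c_n,d_n$ coming from $(\Delta T)(T^{-1}\otimes T^{-1})=\sum_n c_n\otimes d_n$ (Lemma~\ref{lem:DeltaT}). There are no remainder terms at all; subtraction gives $T\star\Omega(\varphi)=\sum_n\Omega(d_n\cdot T(\varphi))c_n\in K$ directly. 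The point you are missing is this operator-level $R$-matrix identity, which replaces a delicate PBW cancellation by a one-line structural argument.
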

\begin{proof}
It is sufficient to show
\[
T\star\Omega(\varphi)\in\sum_{\psi\in A_\BF}
E_\BF\Omega(\psi)E_\BF
\]
for $T=T_i^{\pm1}$, $\varphi\in A_\BF$.

In general, for $\varphi\in A_\BF(\lambda)_\xi$, we write
\begin{align*}
&
\Omega(\varphi)=\Omega_1'(\varphi)e(\lambda)-\Omega_2'(\varphi)e(-\lambda),
\\
&
\Omega_1'(\varphi)=
\sum_p(y_p\cdot\varphi)x_pk_{-\xi}
\in E_\BF^\dagger,
\\
&
\Omega_2'(\varphi)=
\sum_p((Sx_p)\cdot\varphi)y_pk_{\beta_p}k_\xi\in E_\BF^\dagger.
\end{align*}

Let $M, M'\in\Mod_{int}(U_\BF)$.
We define a linear automorphism $\kappa_{M,M'}$ of $M\otimes M'$ by
$\kappa_{M,M'}|_{M_\lambda\otimes M'_\mu}=q^{(\lambda,\mu)}\id$ for $\lambda, \mu\in \Lambda$.
We also define a linear isomorphism $\tau_{M,M'}:M\otimes M'\to M'\otimes M$ by $\tau_{M,M'}(v\otimes v')=v'\otimes v$.
Set
\[
\CR_{M,M'}
=\kappa_{M,M'}^{-1}\circ
\left(
\sum_pq^{(\beta_p,\beta_p)}
k_{\beta_p}^{-1}x_p\otimes k_{\beta_p}y_p
\right)
\in\End_\BF(M\otimes M').
\]
Then $\CR_{M,M'}$ is invertible and we have
\[
\CR_{M,M'}^{-1}
=
\left(
\sum_pq^{(\beta_p,\beta_p)}
Sx_p\otimes k_{\beta_p}y_p
\right)
\circ \kappa_{M,M'}
\in\End_\BF(M\otimes M').
\]
Moreover, we have
\begin{equation}
\label{eq:univR}
(\Delta' u)\CR_{M,M'}=\CR_{M,M'}(\Delta u)\in\End_\BF(M\otimes M')
\qquad(u\in U_\BF),
\end{equation}
where $\Delta':U_\BF\to U_\BF\otimes U_\BF$ is the opposite comultiplication given by $\Delta'=\tau\circ\Delta$ with
 $\tau(a\otimes b)=b\otimes a$ 
(see, for example, \cite[2.2]{T2}).
By \eqref{eq:univR}
we have
\begin{equation}
\label{eq:univR-braid}
(\Delta' T)\CR_{M,M'}=\CR_{M,M'}(\Delta T)\in\End_\BF(M\otimes M')
\qquad(T\in\BB),
\end{equation}
where $\Delta' T=\tau_{M,M'}^{-1}\circ\Delta T\circ\tau_{M,M'}$.

Let $\varphi\in A_\BF(\lambda)_\xi$
and $v\in M\in\Mod_{int}(U_\BF)$.
By the definition we see easily that
\begin{equation}
\CR_{A_\BF,M}^{-1}
(\varphi\otimes v)=\Omega_2'(\varphi)_M(v).
\end{equation}
Hence we have
\begin{align*}
&(T\star\Omega_2'(\varphi))_M(v)
=(\Delta T)\Omega_2'(\varphi)_MT^{-1}(v)
=(\Delta T)
\CR_{A_\BF,M}^{-1}(\varphi\otimes T^{-1}v)\\
=&
\CR_{A_\BF,M}^{-1}(\Delta' T)(\varphi\otimes T^{-1}v)
=
\CR_{A_\BF,M}^{-1}(\Delta' T)(T^{-1}\otimes T^{-1})(T\varphi\otimes v)
\end{align*}
for any $T\in\BB$.
For $T=T_i^{\pm1}$ we can write 
$(\Delta T)(T^{-1}\otimes T^{-1})=\sum_nc_n\otimes d_n$ for $c_n, d_n\in U_\BF$ (see Lemma \ref{lem:DeltaT}),
and hence
\begin{align*}
&(T\star\Omega_2'(\varphi))_M(v)
=
\CR_{A_\BF,M}^{-1}(\sum_nd_n\cdot T(\varphi)\otimes c_n v)
\\
=&
\sum_n
\Omega_2'(d_n\cdot T(\varphi))_M(c_nv)
=
\left(
\sum_n
\Omega_2'(d_n\cdot T(\varphi))c_n
\right)_M
(v).
\end{align*}
It follows that 
\begin{equation}
\label{eq:TOmega1}
T\star\Omega_2'(\varphi)
=\sum_n\Omega_2'(d_n\cdot T(\varphi))c_n
\qquad
(T=T_i^{\pm1}).
\end{equation}

For $M, M'\in\Mod_{int}(U_\BF)$ define 
$\CR'_{M, M'}\in\End_\BF(M\otimes M')$
by $\CR'_{M, M'}=\tau_{M, M'}^{-1}\circ\CR_{M', M}\circ \tau_{M, M'}$.
Then by a similar argument as above using
\begin{align}
\label{eq:univR-braid2}
&(\Delta T)\CR'_{M,M'}=\CR'_{M,M'}(\Delta' T)\in\End_\BF(M\otimes M')
\qquad(T\in\BB),\\
&(\CR'_{A_\BF,M})^{-1}
(\varphi\otimes v)=\Omega_1'(\varphi)_M(v)
\quad
(\varphi\in A_\BF(\lambda)_\xi,\,\,
v\in M)
\end{align}
we obtain
\begin{equation}
\label{eq:TOmega2}
T\star\Omega_1'(\varphi)
=\sum_n\Omega_1'(d_n\cdot T(\varphi))c_n
\qquad
(T=T_i^{\pm1}).
\end{equation}
By \eqref{eq:TOmega1}, \eqref{eq:TOmega2} we finally obtain 
\begin{equation}
\label{eq:TOmega}
T\star\Omega(\varphi)
=\sum_n\Omega(d_n\cdot T(\varphi))c_n
\qquad
(T=T_i^{\pm1},\,\,\varphi\in A_\BF).
\end{equation}
We are done.
\end{proof}

We see from  Proposition \ref{prop:braid-action-Dprime} that \eqref{eq:braid-action-E}
induces a group homomorphism
\begin{equation}
\label{eq:braid-action-Dprime}
\BB\to\Aut_{alg}(D'_\BF)
\qquad(T\mapsto[\Phi\mapsto T\star \Phi]).
\end{equation}
Note that \eqref{eq:braid-action-E} and \eqref{eq:braid-action-Dprime} are natural lifts of \eqref{eq:braid-D} by Lemma \ref{lem:braid-D}.

\subsection{}
Set
\[
D_\BA=
\langle
\ell_\varphi, r_\varphi, \deru_u,\sigma_\lambda\mid
\varphi\in A_\BA, u\in U_\BA, \lambda\in\Lambda\rangle_{\BA-{\rm{alg}}}\subset D_\BF.
\]
We have a canonical embedding
\[
D_\BA\to\End_\BA(A_\BA).
\]
Recall that we have fixed $\BF$-bases $\{x_p\}_p$ and $\{y_p\}_p$ of $U_\BF^+$ and $U_\BF^-$ respectively and $\beta_p\in Q^+$ for each $p$ satisfying \eqref{eq:xy1}, \eqref{eq:xy2}.
By Lemma \ref{lem:pm-duality} we can renormalize them in the following two manners;
\begin{itemize}
\item[(a)]
$\{x_p\}_p$ and $\{y_p\}_p$ are $\BA$-bases of  $U_\BA^{L,+}$ and $U_\BA^-$ respectively,
\item[(b)]
$\{x_p\}_p$ and $\{y_p\}_p$ are $\BA$-bases of  $U_\BA^{+}$ and $U_\BA^{L,-}$ respectively.
\end{itemize}
In particular, we have 
\[
D_\BA=
\langle
\ell_\varphi, \deru_u,\sigma_\lambda\mid
\varphi\in A_\BA, u\in U_\BA, \lambda\in\Lambda\rangle_{\BA-{\rm{alg}}}\subset D_\BF
\]
by Lemma \ref{lem:rvarphi}.
In the case (a) (resp. (b)) we write 
$\{x_p\}_p$ and $\{y_p\}_p$ as above as 
$\{x^L_p\}_p$ and $\{y_p\}_p$ 
(resp. $\{x_p\}_p$ and $\{y^L_p\}_p$).

Define an $\BA$-subalgebra $E_\BA$ of $E_\BF$ by
\[
E_\BA=A_\BA U_\BA \BA[\Lambda]\,
(\cong A_\BA\otimes_\BA U_\BA\otimes_\BA \BA[\Lambda]),
\]
and set
\[
D'_\BA
=\Image(E_\BA\to D'_\BF)\subset D'_\BF.
\]
For $\varphi\in A_\BA(\lambda)_\xi$ with $\lambda\in\Lambda^+$, $\xi\in\Lambda$ we have
\begin{align*}
\Omega_1(\varphi)&=
\sum_p({y^L_p}\cdot\varphi){x_pk_{-\xi}}e(\lambda)
\in E_\BA,\\
\Omega_2(\varphi)&=
\sum_p({(Sx^L_p)\cdot\varphi}){y_pk_{\beta_p}k_\xi}e({-\lambda})
\in E_\BA,
\end{align*}
and hence
$\Omega(\varphi)\in E_\BA$.
It follows that
\begin{equation*}
D'_\BA=
E_\BA/
\sum_{\varphi\in A_\BA}E_\BA\Omega(\varphi)E_\BA
=
E_\BA/
\sum_{\varphi\in A_\BA}A_\BA\Omega(\varphi)U_\BA \BA[\Lambda].
\end{equation*}
Note that
$E_\BA$, $D'_\BA$, $D_\BA$ are $\Lambda$-graded $\BA$-algebras
by
\[
E_\BA(\lambda)=E_\BF(\lambda)\cap E_\BA,\quad
D'_\BA(\lambda)=D'_\BF(\lambda)\cap D'_\BA,\quad
D_\BA(\lambda)=D_\BF(\lambda)\cap D_\BA\quad
\]
for $\lambda\in\Lambda^+$.
We have a sequence
\[
E_\BA\to D'_\BA\to D_\BA
\]
of surjective homomorphisms of $\Lambda$-graded $\BA$-algebras.

The braid group actions on $E_\BF$, $D'_\BF$, $D_\BF$ induce those on 
$E_\BA$, $D'_\BA$, $D_\BA$.
Namely we have group homomorphisms
\begin{equation}
\label{eq:braidA}
\BB\to\Aut_{alg}(E_\BA),\qquad
\BB\to\Aut_{alg}(D'_\BA),\qquad
\BB\to\Aut_{alg}(D_\BA)
\end{equation}
denoted by $T\mapsto[\Phi\mapsto T\star \Phi]$.

\subsection{}
We set
\[
E_\zeta=\BC\otimes_\BA E_\BA,\qquad
D'_\zeta=\BC\otimes_\BA D'_\BA,\qquad
D_\zeta=\BC\otimes_\BA D_\BA.
\]
Then we have 
\begin{align}
\label{eq:Ezeta1}
&E_\zeta=A_\zeta\otimes U_\zeta\otimes \BC[\Lambda]\\
\label{eq:Ezeta2}
&D'_\zeta=
E_\zeta/
\sum_{\varphi\in A_\zeta}E_\zeta\Omega(\varphi)E_\zeta
=
E_\zeta/
\sum_{\varphi\in A_\zeta}A_\zeta\Omega(\varphi)U_\zeta \BC[\Lambda].
\end{align}
We have a sequence
\begin{equation}
\label{eq:ED'D:zeta}
E_\zeta\to D'_\zeta\to D_\zeta
\end{equation}
of surjective homomorphisms of $\Lambda$-graded $\BC$-algebras.
By \eqref{eq:HC-center-F}, \eqref{eq:HC-center}, \eqref{eq:Har-center-in-D0}
we have
\begin{equation}
\label{eq:Har-center-in-D}
z\in Z_{Har}(U_\zeta),\,\,
\iota(z)=\sum_{\lambda\in\Lambda}a_\lambda e({2\lambda})\quad\Longrightarrow\quad
\deru_z
=\sum_{\lambda\in\Lambda}a_\lambda\sigma_{2\lambda}
\end{equation}
in $D_\zeta$.
\begin{remark}
{\rm
The natural algebra homomorphism
$D_\zeta\to\End_\BC(A_\zeta)$
is not injective.
}
\end{remark}

The actions of $\BB$ on $E_\BA$, $D'_\BA$, $D_\BA$  induce the group homomorphisms
\begin{equation}
\label{eq:braid-zeta}
\BB\to\Aut_{alg}(E_\zeta),\qquad
\BB\to\Aut_{alg}(D'_\zeta),\qquad
\BB\to\Aut_{alg}(D_\zeta)
\end{equation}
denoted by $T\mapsto[\Phi\mapsto T\star \Phi]$.

We have natural algebra homomorphisms
\begin{equation}
\label{eq:A1toED}
A_1\to E_\zeta,\qquad
A_1\to D'_\zeta,\qquad
A_1\to D_\zeta
\end{equation}
induced by \eqref{eq:ED'D:zeta} and the embedding 
\[
A_1\subset A_\zeta=A_\zeta\otimes 1\otimes 1\subset A_\zeta\otimes U_\zeta\otimes\BC[\Lambda]
=E_\zeta.
\]
\begin{lemma}
\label{lem:A1-cent}
The images of \eqref{eq:A1toED} are contained in the center.
\end{lemma}
\begin{proof}
It is sufficient to show the statement for $A_1\to E_\zeta$.
Since $A_1$ is a central subalgebra of $A_\zeta$, we have only to show that $A_1$ commutes with $U_\zeta$ and $\BC[\Lambda]$.
The commutativity with $\BC[\Lambda]$ is a consequence of \eqref{eq:A1Azeta2}.
It remains to show that $A_1$ commutes with $U_\zeta$ in the algebra $E_\zeta$.
By definition $A_1$ is a $U_\zeta^L$-submodule of $A_\zeta$ and the action of $U_\zeta^L$ on $A_1$ is induced from the natural $U(\Gg)$-module structure of $A_1$ through $\pi:U^L_\zeta\to U(\Gg)$.
Hence for $\varphi\in A_1$ and $u\in U_\zeta$ we have
\[
u\varphi=\sum_{(u)}((\pi\circ j)(u_{(0)})\cdot\varphi)u_{(1)}
=\sum_{(u)}\varepsilon(u_{(0)})\varphi u_{(1)}=\varphi u
\]
by \eqref{eq:pi-j}.
\end{proof}
\begin{lemma}
\label{lem:braid-action-on-D}
Identify $\Theta_x$ for $x\in W$ as a subset of $E_\zeta$ via \eqref{eq:A1toED}.
Then we have
$T_i^{-1}\star\Theta_w=\Theta_{ws_i}$
for $w\in W$ and $i\in I$ such that $ws_i>w$ with respect to the standard partial order. 
\end{lemma}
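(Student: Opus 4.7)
The plan is to verify that the action $T_i^{-1}\star$ on $E_\zeta$ restricts, on the Frobenius-central subalgebra $A_1 \subset A_\zeta \subset E_\zeta$, to the classical Weyl-group lift acting on $A_1$ through Lusztig's Frobenius, and then to track weights. Concretely, I expect to show that for $\varphi\in A_1$ one has $T_i^{-1}\star\varphi = T_i^{-1}(\varphi)$ as elements of $A_1\subset E_\zeta$, after which the conclusion is immediate.

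First I would apply Lemma \ref{lem:TstarFormula}(iii), specialized from $E_\BF$ through $E_\BA$ to $E_\zeta$, which reads
\[
T_i^{-1}\star\varphi \;=\; \sum_{n\geqq 0}\bigl(a'_{i,n}\cdot T_i^{-1}(\varphi)\bigr)\,b'_{i,n},
\]
where $a'_{i,n}$ is a scalar multiple of $f_i^n\in U_\zeta$ and $b'_{i,n}$ is proportional to $e_i^n$. For $\varphi\in A_1$, Lemma \ref{lem:braid-Fr} identifies the braid operator $T_i^{-1}$ on $A_1$ with (the inverse of) $\exp(\overline{f}_i)\exp(-\overline{e}_i)\exp(\overline{f}_i)$, because the $U_\zeta^L$-action on $A_1$ factors through $\pi\colon U_\zeta^L\to U(\Gg)$; in particular $T_i^{-1}(\varphi)$ still lies in $A_1$. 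Because $\pi(f_i)=0$, the element $f_i\in U_\zeta$ acts as zero on every element of $A_1$, so every summand with $n\geqq 1$ in the display above vanishes. Only the $n=0$ term survives, giving
\[
T_i^{-1}\star\varphi \;=\; T_i^{-1}(\varphi)\qquad(\varphi\in A_1),
\]
viewed as an element of $A_1\subset E_\zeta$.

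Next, still by Lemma \ref{lem:braid-Fr}, the operator $T_i^{-1}$ on $A_1(\lambda)$ is (the inverse of) a fixed lift of the simple reflection $s_i$, and therefore permutes weight spaces by $A_1(\lambda)_\mu\xrightarrow{\sim}A_1(\lambda)_{s_i\mu}$. Hence for every $\lambda\in\Lambda^+$ it carries $A_1(\lambda)_{w^{-1}\lambda}\setminus\{0\}$ bijectively onto $A_1(\lambda)_{s_iw^{-1}\lambda}\setminus\{0\}=A_1(\lambda)_{(ws_i)^{-1}\lambda}\setminus\{0\}$. Taking the union over $\lambda\in\Lambda^+$ yields $T_i^{-1}\star\Theta_w=\Theta_{ws_i}$, as desired. (The hypothesis $ws_i>w$ is not logically required for this equality itself; it is the setting in which the lemma is invoked later.)

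The main obstacle is the first step: rigorously descending Lemma \ref{lem:TstarFormula}(iii) from $E_\BF$ through $E_\BA$ to $E_\zeta$, and then making sense of the resulting formula on elements of $A_1$ at the root of unity, where the scalars $1/[n]_{q_i^{-1}}!$ appearing in the exponential series are ill-defined for large $n$. The point is that the vanishing $\pi(f_i)=0$ truncates the series to its $n=0$ term before these scalars intervene, after which the remainder of the argument is formal.
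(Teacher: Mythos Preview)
Your argument has a genuine gap, precisely at the point you flagged as ``the main obstacle'' but then dismissed. The claim that $T_i^{-1}\star\varphi=T_i^{-1}(\varphi)$ for \emph{all} $\varphi\in A_1$ is false, and the hypothesis $ws_i>w$ is essential, contrary to your parenthetical remark.

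Here is the issue. Over $\BF$ one has $a'_{i,n}=c_nf_i^n$ with $c_n=(-1)^nq_i^{-n(n-1)/2}(q_i-q_i^{-1})^n/[n]_{q_i}!$. For $n\geqq\ell$ this scalar has a pole at $q=\zeta$, so the formula cannot be specialized term by term as written. The correct $\BA$-integral form is obtained by rewriting $c_nf_i^n=c'_nf_i^{(n)}$ with $c'_n=(-1)^nq_i^{-n(n-1)/2}(q_i-q_i^{-1})^n\in\BA$, which is nonzero at $\zeta$ for every $n$. After specialization the formula therefore reads
\[
T_i^{-1}\star\varphi=\sum_{n\geqq0}\overline{c'_n}\,\bigl(f_i^{(n)}\cdot T_i^{-1}(\varphi)\bigr)\otimes e_i^{\,n}
\]
in $E_\zeta$, with $f_i^{(n)}\in U_\zeta^L$ acting on $A_\zeta$. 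For $\varphi\in A_1$ and $n=m\ell$ with $m\geqq1$ one has $\pi(f_i^{(m\ell)})=\overline{f}_i^{(m)}$, which acts nontrivially on $A_1$ in general, and $e_i^{\,m\ell}=(e_i^\ell)^m\ne0$ in $U_\zeta$. Thus the $n=m\ell$ terms need not vanish; your reasoning ``$\pi(f_i)=0$ truncates the series before the bad scalars intervene'' is a $0\cdot\infty$ argument that does not survive the passage through the $\BA$-form. A concrete counterexample: for $G=SL_2$ and $\varphi\in A_1(2\varpi)_0$, one has $\overline{f}_i\cdot T_i^{-1}(\varphi)\ne0$, so the $n=\ell$ term contributes.

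The paper's proof avoids this entirely by not going through $A_1$ or the Frobenius map. It works directly in $A_\zeta(\lambda)$ and uses the hypothesis $ws_i>w$: it forces $(s_iw^{-1}\lambda,\alpha_i^\vee)\leqq0$, so the extremal weight $s_iw^{-1}\lambda$ lies at the bottom of its $\alpha_i$-string in $A_\zeta(\lambda)$ (which has the same weights as $L_{-,\BF}(\lambda)$). Hence $A_\zeta(\lambda)_{s_iw^{-1}\lambda-n\alpha_i}=0$ for all $n\geqq1$, and every $f_i^{(n)}\cdot T_i^{-1}(\varphi)$ vanishes for weight reasons. That is the mechanism by which only the $n=0$ term survives.
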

\begin{proof}
Note that  for any $\lambda\in\Lambda^+$ we have $A_1(\lambda)_{w^{-1}\lambda}=A_\zeta(\ell\lambda)_{\ell w^{-1}\lambda}$.
Hence it is sufficient to show $T_i^{-1}\star\varphi=T_i^{-1}(\varphi)$ for any $\lambda\in\Lambda^+$ and 
$\varphi\in A_\zeta(\ell\lambda)_{\ell w^{-1}\lambda}$.
By our assumption on $w$ and $i$ we have
$(s_iw^{-1}\lambda,\alpha_i^\vee)\leqq0$, and hence our assertion follows from Lemma \ref{lem:TstarFormula}.
\end{proof}

Let $w\in W$.
By Lemma \ref{lem:braid-action-on-D} 
we have $T_{w^{-1}}^{-1}\star\Theta_e=\Theta_w$.
Hence
the algebra automorphisms
\[
T_{w^{-1}}^{-1}\star(\bullet):E_\zeta\to E_\zeta,\qquad
T_{w^{-1}}^{-1}\star(\bullet):D'_\zeta\to D'_\zeta,\qquad
T_{w^{-1}}^{-1}\star(\bullet):D_\zeta\to D_\zeta
\]
induce isomorphisms
\[
\Theta_e^{-1}E_\zeta\to\Theta_w^{-1}E_\zeta,
\qquad
\Theta_e^{-1}D'_\zeta\to\Theta_w^{-1}D'_\zeta,
\qquad
\Theta_e^{-1}D_\zeta\to\Theta_w^{-1}D_\zeta
\]
of $\Lambda$-graded algebras.

For $w\in W$ set
\[
\tilde{\Theta}_w=\bigcup_{\lambda\in\Lambda^+}(A_\zeta(\lambda)_{w^{-1}\lambda}\setminus\{0\}).
\]
It is a multiplicative subset of $A_\zeta$.
Moreover, for any $s\in \tilde{\Theta}_w$ we have $s^{\ell}\in\Theta_w$.
Hence if we are given a ring homomorphism $A_\zeta\to R$ such that the image of $A_1$ is contained in the center of $R$, then the image of $\tilde{\Theta}_w$ in $R$ satisfies the left and right Ore conditions. 
Moreover, in this situation we have $\tilde{\Theta}_w^{-1}R\cong {\Theta}_w^{-1}R$.
In particular, we have
\[
\Theta_w^{-1}A_\zeta\cong\tilde{\Theta}_w^{-1}A_\zeta,
\qquad
\Theta_w^{-1}E_\zeta\cong\tilde{\Theta}_w^{-1}E_\zeta,
\qquad
\Theta_w^{-1}D'_\zeta\cong\tilde{\Theta}_w^{-1}D'_\zeta.
\]
\begin{proposition}
\label{prop:local-verma}
We have a natural $U_\zeta^L$-module structure of $\tilde{\Theta}_e^{-1}A_\zeta$ such that $A_\zeta\to\tilde{\Theta}_e^{-1}A_\zeta$ is a homomorphism of $U_\zeta^L$-modules and 
\[
u\cdot(\varphi\psi)=\sum_{(u)}(u_{(0)}\cdot\varphi)(u_{(1)}\cdot\psi)
\qquad
(u\in U_\zeta^L, \varphi, \psi\in \tilde{\Theta}_e^{-1}A_\zeta).
\]
Moreover, for any $\lambda\in\Lambda$ we have
\[
(\tilde{\Theta}_e^{-1}A_\zeta)(\lambda)\cong
M^*_{-,\zeta}(\lambda).
\]
as a $U^L_\zeta$-module.
\end{proposition}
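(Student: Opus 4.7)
My plan proceeds in two stages: first, construct a $U_\zeta^L$-module structure on $\tilde{\Theta}_e^{-1}A_\zeta$ extending that of $A_\zeta$ and satisfying the Leibniz rule; second, identify each graded component with the dual Verma $M^*_{-,\zeta}(\lambda)$ by a dimension comparison using Proposition~\ref{prop:AzetaA1}.

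For the first stage, observe that each nonzero $s\in A_\zeta(\lambda)_\lambda$ (with $\lambda\in\Lambda^+$) is a highest-weight vector: $h\cdot s=\chi_\lambda(h)s$ for $h\in U_\zeta^{L,0}$, and $e_i^{(n)}\cdot s=0$ for $n\geqq1$. Imposing the Leibniz formula $\varepsilon(u)=u\cdot(ss^{-1})=\sum_{(u)}(u_{(0)}\cdot s)(u_{(1)}\cdot s^{-1})$ and inducting on the $U_\zeta^{L,-}$-degree of $u$ uniquely forces $k_\mu\cdot s^{-1}=\chi_{-\lambda}(k_\mu)s^{-1}$, $e_i^{(n)}\cdot s^{-1}=0$ for $n\geqq1$, and a recursive determination of $f_i^{(n)}\cdot s^{-1}$ in terms of $(f_i^{(r)}\cdot s)\cdot s^{-1}$ for $r<n$. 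To prove existence and well-definedness, I would first carry out the construction over $\BF$, where $A_\BF(\lambda)\cong L_\BF(\lambda)$ is irreducible and finite-dimensional, using universal $R$-matrix identities (in the spirit of Lemma~\ref{lem:rvarphi}) to commute $s^{-1}$ past weight vectors; then Lemmas~\ref{lem:PBW-L}, \ref{lem:PBW-DK} and \ref{lem:pm-duality} give the $\BA$-integrality needed to see that this action preserves $\tilde{\Theta}_{e,\BA}^{-1}A_\BA$, and finally I would specialize at $q\mapsto\zeta$.

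For the second stage, fix $\mu,\nu\in\Lambda^+$ with $\lambda=\mu-\nu$ and nonzero $s_\mu\in A_\zeta(\mu)_\mu$, $s_\nu\in A_\zeta(\nu)_\nu$. The element $v_\lambda:=s_\mu s_\nu^{-1}\in(\tilde{\Theta}_e^{-1}A_\zeta)(\lambda)$ has weight $\lambda$ and is annihilated by each $e_i^{(n)}$ with $n\geqq1$, by the Leibniz computation above. Realizing $(\tilde{\Theta}_e^{-1}A_\zeta)(\lambda)$ as a subspace of $(U_\zeta^L)^*$ via the algebra embedding constructed in the proof of Proposition~\ref{prop:AzetaA1}, each element $\xi$ yields a linear form $\overline{u}\mapsto\langle\xi,u\rangle$ on $M_{+,\zeta}(-\lambda)=U_\zeta^L\otimes_{U_\zeta^{L,\leqq0}}\BC_{-\lambda}$; after checking compatibility with the defining ideal of $M_{+,\zeta}(-\lambda)$ and $U_\zeta^L$-equivariance, this defines a morphism
\[
\Phi_\lambda:(\tilde{\Theta}_e^{-1}A_\zeta)(\lambda)\longrightarrow M^*_{-,\zeta}(\lambda).
\]
Proposition~\ref{prop:AzetaA1} together with its proof identifies the weight-$\eta$ piece of $(\tilde{\Theta}_e^{-1}A_\zeta)(\lambda)$ with $(U_{\zeta,\lambda-\eta}^{L,+})^*\chi_\lambda$, matching $\dim M^*_{-,\zeta}(\lambda)_\eta=\dim U_{\zeta,\lambda-\eta}^{L,+}$ via the vector-space isomorphism $M_{+,\zeta}(-\lambda)\cong U_\zeta^{L,+}$. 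Since $\Phi_\lambda(v_\lambda)\ne0$, injectivity on the highest weight propagates, and $\Phi_\lambda$ is a weight-preserving map between spaces with equal finite weight multiplicities, hence an isomorphism.

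The main obstacle is the well-definedness of the Leibniz extension in the first stage: the recursive formulas for $u\cdot s^{-1}$ must respect both the defining relations of $U_\zeta^L$ and the non-commutative multiplication in $\tilde{\Theta}_e^{-1}A_\zeta$, and must be independent of the ambiguity in the factorization $1=s\cdot s^{-1}$. The $\BA$-integrality route circumvents this directly: over $\BF$, the irreducibility of $A_\BF(\lambda)$ makes the module-theoretic manipulations standard, and the $\BA$-lattice arguments then transport the resulting action to the root-of-unity setting without needing a brute-force verification of the relations.
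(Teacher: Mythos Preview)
Your proposal follows the same route the paper takes: the paper's proof simply says the assertion is ``not difficult to deduce'' from the corresponding fact over $\BF$ established in \cite[Propositions~4.3 and 4.6]{T2}, leaving the $\BA$-integrality and specialization implicit. Your two stages are a reasonable fleshing-out of exactly that strategy.

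One remark on Stage~2: the clause ``injectivity on the highest weight propagates'' is not by itself a valid argument, since you have not shown that $(\tilde{\Theta}_e^{-1}A_\zeta)(\lambda)$ is generated as a $U_\zeta^L$-module by $v_\lambda$, nor that $M^*_{-,\zeta}(\lambda)$ is cogenerated by a single weight vector in a way that would force injectivity. Fortunately this step is unnecessary: the map $\Phi_\lambda$ you construct is, at the level of graded vector spaces, precisely the isomorphism $g''$ exhibited in the proof of Proposition~\ref{prop:AzetaA1} (identifying $(\tilde{\Theta}_e^{-1}A_\zeta)(\lambda)$ with $(U_\zeta^{L,+})^\bigstar\chi_\lambda$ and $M^*_{-,\zeta}(\lambda)=(M_{+,\zeta}(-\lambda))^\bigstar$ with $(U_\zeta^{L,+})^\bigstar$). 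So bijectivity is automatic, and the only genuine content is the $U_\zeta^L$-equivariance of this identification, which you correctly flag as the point to verify.
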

\begin{proof} 
It is not difficult to deduce our assertion from
the corresponding fact over $\BF$, which is shown in
\cite[Proposition 4.3, Proposition 4.6]{T2}.
Details are omitted.
\end{proof}
Denote by
\[
\jmath:\tilde{\Theta}_e^{-1}E_\zeta\to\tilde{\Theta}_e^{-1}D'_\zeta
\]
the canonical algebra homomorphism.
\begin{proposition}
\label{prop:local-formula1}
Let $\lambda\in\Lambda^+$ and $\gamma\in Q^+$.
For $\varphi\in A_\zeta(\lambda)_{\lambda-\gamma}$  and $s\in A_\zeta(\lambda)_\lambda\setminus\{0\}$ we have
\[
\jmath(
\sum_{p}((Sx^L_p)\cdot(\varphi s^{-1}))
y_pk_{\beta_p}
)
=
\zeta^{(\lambda,\gamma)}
\jmath(
\sum_ps^{-1}(y^L_p\cdot\varphi)x_pk_{-2(\lambda-\gamma)}e(2\lambda)
)
\]
in $(\tilde{\Theta}_e^{-1}D'_\zeta)(0)$.
\end{proposition}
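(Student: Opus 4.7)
The plan is to derive the proposition from the defining relation $\jmath(\Omega_1(\varphi))=\jmath(\Omega_2(\varphi))$ in $D'_\zeta$ by localizing at $\tilde\Theta_e$ and bookkeeping the scalar factors that arise from commuting $s^{-1}$ past weight vectors and $U^L_\zeta$-elements.

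First I would transfer the basic identity into $(\tilde\Theta_e^{-1}D'_\zeta)(0)$: start from $\jmath(\Omega_1(\varphi))=\jmath(\Omega_2(\varphi))$ in $D'_\zeta(\lambda)$, left-multiply by $s^{-1}\in\tilde\Theta_e^{-1}A_\zeta$ (using Lemma~4.5(ii) to freely move $s^{-1}$ past $\Omega_i(\varphi)$), and right-multiply by $e(\lambda)k_{\gamma-\lambda}$. Collecting the $e(\mu)$- and $k$-factors via the $E_\zeta$-commutations and the definitions of $\Omega_1$, $\Omega_2$ for $\varphi\in A_\zeta(\lambda)_{\lambda-\gamma}$ yields
\[
\jmath\Bigl(\sum_p s^{-1}(y^L_p\cdot\varphi)\,x_p\,k_{-2(\lambda-\gamma)}\,e(2\lambda)\Bigr)
=\jmath\Bigl(\sum_p s^{-1}((Sx^L_p)\cdot\varphi)\,y_p\,k_{\beta_p}\Bigr),
\]
so the right side is $\zeta^{-(\lambda,\gamma)}$ times the RHS of the proposition.

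Next I would compute the basic commutation of $s^{-1}$ with a homogeneous element. Applying Lemma~4.2 to $r_\psi(s)$ for $\psi\in A_\zeta(\mu)_\xi$, only the $\beta_p=0$ summand survives (because $x_p\cdot s=0$ for $\beta_p>0$), giving $s\psi=\zeta^{(\mu-\xi,\lambda)}\psi s$ in $A_\zeta$, and hence
\[
\psi s^{-1}=\zeta^{(\mu-\xi,\lambda)}\,s^{-1}\psi
\qquad\text{in }\tilde\Theta_e^{-1}A_\zeta.
\]

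Then I would expand the LHS of the proposition using the $U^L_\zeta$-module structure of $\tilde\Theta_e^{-1}A_\zeta$ from Proposition~4.6. Noting that $s^{-1}$ is itself a highest-weight vector of weight $-\lambda$ (as $e_i\cdot s^{-1}=0$ follows from $e_i\cdot 1=e_i\cdot(ss^{-1})=0$ together with $e_i\cdot s=0$), any element of $U^{L,\geqq0}$ whose $U^{L,+}\cdot U^{L,0}$-expansion has non-trivial positive part annihilates $s^{-1}$. Combined with $\Delta\circ S=(S\otimes S)\circ\Delta'$ applied to $x^L_p\in U^{L,+}_{\beta_p}$, the only term of $\Delta(Sx^L_p)$ whose second tensor factor lies in $U^{L,0}$ is $Sx^L_p\otimes k_{-\beta_p}$, so the Leibniz expansion collapses to
\[
(Sx^L_p)\cdot(\varphi s^{-1})=\bigl((Sx^L_p)\cdot\varphi\bigr)\bigl(k_{-\beta_p}\cdot s^{-1}\bigr)=\zeta^{(\beta_p,\lambda)}\bigl((Sx^L_p)\cdot\varphi\bigr)\,s^{-1}.
\]
Since a direct check with $S(e_i)=-k_i^{-1}e_i$ shows $Sx^L_p$ has $U^0$-weight $+\beta_p$, the element $(Sx^L_p)\cdot\varphi$ lies in $A_\zeta(\lambda)_{\lambda-\gamma+\beta_p}$ and hence has defect $\mu-\xi=\gamma-\beta_p$. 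Applying the commutation above then yields
\[
(Sx^L_p)\cdot(\varphi s^{-1})
=\zeta^{(\beta_p,\lambda)+(\gamma-\beta_p,\lambda)}\,s^{-1}\bigl((Sx^L_p)\cdot\varphi\bigr)
=\zeta^{(\lambda,\gamma)}\,s^{-1}\bigl((Sx^L_p)\cdot\varphi\bigr).
\]
Summing against $y_p k_{\beta_p}$, applying $\jmath$, and combining with the boxed identity in $\tilde\Theta_e^{-1}D'_\zeta$ from the first step gives the proposition. The main obstacle is keeping track of the several $\zeta$-factors; the critical cancellation ``$(\beta_p,\lambda)+(\gamma-\beta_p,\lambda)=(\lambda,\gamma)$'' only works once the sign of the $U^0$-weight of $Sx^L_p$ is pinned down correctly, so care is required there.
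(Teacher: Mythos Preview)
Your proof is correct and follows essentially the same route as the paper: both arguments rest on the identity $\jmath(\Omega_1(\varphi))=\jmath(\Omega_2(\varphi))$ in the localization and on the key computation $(Sx^L_p)\cdot(\varphi s^{-1})=\zeta^{(\lambda,\beta_p)}((Sx^L_p)\cdot\varphi)\,s^{-1}$. The only difference is packaging: the paper organizes the manipulation via the algebra anti-homomorphisms $\overline{\Omega}_i:A_\zeta\to D'_\zeta$ and their extensions $\tilde\Theta_e^{-1}\overline{\Omega}_i$ (using $\overline{\Omega}_2(s)^{-1}$ to express $\jmath(s^{-1})$), whereas you multiply $\Omega_1(\varphi)=\Omega_2(\varphi)$ directly by $s^{-1}$ and $e(\lambda)k_{\gamma-\lambda}$ and then move $s^{-1}$ using the explicit commutation $\psi s^{-1}=\zeta^{(\mu-\xi,\lambda)}s^{-1}\psi$; the net bookkeeping is identical. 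One small remark: your derivation of $e_i\cdot s^{-1}=0$ does not by itself cover divided powers $e_i^{(n)}$ with $n\geqq\ell$, but since Proposition~4.6 identifies $(\tilde\Theta_e^{-1}A_\zeta)(-\lambda)$ with $M^*_{-,\zeta}(-\lambda)$, the weight argument (all weights are $\leqq-\lambda$) immediately gives $x\cdot s^{-1}=\varepsilon(x)s^{-1}$ for every $x\in U^{L,+}_\zeta$, exactly as the paper obtains it via $x\cdot 1=x\cdot(s^{-1}s)$.
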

\begin{proof}
By Lemma \ref{lem:Omega} we have algebra anti-homomorphisms
\begin{equation}
\label{eq:Omega-bar}
\overline{\Omega}_i:A_\zeta\to D'_\zeta
\qquad(i=1, 2)
\end{equation}
as the composite of 
\[
A_\zeta \xrightarrow{\Omega_i} E_\zeta\to D'_\zeta.
\]
By the definition of $D_\zeta'$ we have $\overline{\Omega}_1=\overline{\Omega}_2$.
For $\psi\in A_\zeta(\lambda)_\lambda$ with $\lambda\in\Lambda^+$ we have 
$\overline{\Omega}_2(\psi)=
\overline{k_{\lambda}e(-\lambda)\psi}$
by definition.
Hence \eqref{eq:Omega-bar} induces an anti-homomorphism
\begin{equation}
\label{eq:Omega-bar2}
\tilde{\Theta}_e^{-1}\overline{\Omega}_1=
\tilde{\Theta}_e^{-1}\overline{\Omega}_2:\tilde{\Theta}_e^{-1}A_\zeta\to \tilde{\Theta}_e^{-1}D'_\zeta
\end{equation}
of $\Lambda$-graded algebras.

For $x\in U_\zeta^{L,+}$ we have
\begin{align*}
&\varepsilon(x)1=x\cdot 1=x\cdot(s^{-1}s)
=\sum_{(x)}(x_{(0)}\cdot s^{-1})(x_{(1)}\cdot s)\\
=&
\sum_{(x)}\varepsilon(x_{(1)})(x_{(0)}\cdot s^{-1})s
=(x\cdot s^{-1})s,
\end{align*}
and hence $x\cdot s^{-1}=\varepsilon(x)s^{-1}$.
Therefore, by
\[
\sum_{(x^L_p)}
x^L_{p(0)}\otimes x^L_{p(1)}\in k_{\beta_p}\otimes x_p^L+
U_\zeta^{L,0}\Ker(\varepsilon:U_\zeta^{L,+}\to\BC)\otimes U_\zeta^{L,+}
\]
we have
\begin{align*}
&(Sx^L_p)\cdot(\varphi s^{-1})=
\sum_{(x^L_p)}(S(x^L_{p(1)})\cdot \varphi)
(S(x^L_{p(0)})\cdot s^{-1})\\
=&((Sx^L_{p})\cdot \varphi)
(k_{-\beta_p}\cdot s^{-1})
=\zeta^{(\lambda,\beta_p)}
((Sx^L_{p})\cdot \varphi)s^{-1}.
\end{align*}
By $\overline{\Omega}_2(s)=\jmath(k_\lambda e(-\lambda)s)$ we have
$\jmath(s^{-1})=(\overline{\Omega}_2(s))^{-1}\jmath(k_\lambda e(-\lambda))$, and hence
\begin{align*}
\jmath
&((Sx^L_p)\cdot(\varphi s^{-1}))
=\zeta^{(\lambda,\beta_p)}
\jmath((Sx^L_{p})\cdot \varphi)
(\overline{\Omega}_2(s))^{-1}\jmath(k_\lambda e(-\lambda))\\
=&
\zeta^{(\lambda,\beta_p)}
(\overline{\Omega}_2(s))^{-1}
\jmath(((Sx^L_{p})\cdot \varphi) k_\lambda e(-\lambda))
\end{align*}
by Lemma \ref{lem:Omega}.
Therefore, we have
\begin{align*}
&\jmath(
\sum_{p}((Sx^L_p)\cdot(\varphi s^{-1}))
y_pk_{\beta_p}
)\\
=&
(\overline{\Omega}_2(s))^{-1}
\jmath(
\sum_{p}
\zeta^{(\lambda,\beta_p)}
((Sx^L_{p})\cdot \varphi) k_\lambda e(-\lambda)
y_pk_{\beta_p})\\
=&
(\overline{\Omega}_2(s))^{-1}
\jmath(
\sum_{p}
((Sx^L_{p})\cdot \varphi)y_p k_{\lambda+\beta_p} e(-\lambda))\\
=&
(\overline{\Omega}_2(s))^{-1}
(\overline{\Omega}_2(\varphi))
\jmath(k_\gamma)
\\
=&(\tilde{\Theta}_e^{-1}\overline{\Omega}_2)(\varphi s^{-1})\jmath(k_\gamma).
\end{align*}

On the other hand
we have
\begin{align*}
&(\tilde{\Theta}_e^{-1}\overline{\Omega}_1)(\varphi s^{-1})
=
\overline{\Omega}_2(s)^{-1}
\overline{\Omega}_1(\varphi)
\\
=&
\jmath(
\sum_p
s^{-1}e(\lambda)k_{-\lambda}(y^L_p\cdot\varphi)x_pk_{-(\lambda-\gamma)}e(\lambda))
\\
=&
\zeta^{(\lambda,\gamma)}
\jmath(
\sum_p
s^{-1}(y^L_p\cdot\varphi)x_pk_{-(2\lambda-\gamma)}e(2\lambda)).
\end{align*}

We obtain the desired result by $\tilde{\Theta}_e^{-1}\overline{\Omega}_1=\tilde{\Theta}_e^{-1}\overline{\Omega}_2$
\end{proof}
Considering the case $\varphi=s\in A_\zeta(\lambda)_\lambda\setminus\{0\}$ in Proposition \ref{prop:local-formula1} we obtain the following.
\begin{proposition}
\label{prop:local-formula2}
Let $\lambda\in\Lambda^+$.
For $s\in A_\zeta(\lambda)_\lambda\setminus\{0\}$ we have
\[
\jmath(
k_{2\lambda})
=
\jmath(
\sum_ps^{-1}(y^L_p\cdot s)x_pe(2\lambda))
\]
in $(\Theta_e^{-1}D'_\zeta)(0)$.
\end{proposition}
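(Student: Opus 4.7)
The plan is to obtain Proposition~\ref{prop:local-formula2} as a direct specialization of Proposition~\ref{prop:local-formula1}, taking $\varphi = s$ (so $\gamma = 0$), followed by a right multiplication by $k_{2\lambda}$.

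Setting $\varphi = s$ in Proposition~\ref{prop:local-formula1} forces $\varphi s^{-1} = 1$ in $\tilde{\Theta}_e^{-1}A_\zeta$. The left-hand side then involves $(Sx^L_p)\cdot 1$, which equals $\varepsilon(Sx^L_p) = \varepsilon(x^L_p)$ by the general Hopf identity $u\cdot 1 = \varepsilon(u)$ together with $\varepsilon\circ S = \varepsilon$. Since $\{x^L_p\}$ is a PBW basis of $U_\zeta^{L,+}$ with $x^L_0 = 1$ (so that its $\tau$-dual element is $y_0 = 1$) and $\varepsilon(x^L_p) = 0$ whenever $\beta_p > 0$, only the $\beta_p = 0$ term contributes, and the entire left-hand side collapses to $\jmath(1) = 1$.

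Proposition~\ref{prop:local-formula1} thus specializes to the identity
\[
1 = \jmath\Bigl(\sum_p s^{-1}(y^L_p\cdot s)\,x_p\,k_{-2\lambda}\,e(2\lambda)\Bigr)
\]
in $(\tilde{\Theta}_e^{-1}D'_\zeta)(0)$. Multiplying both sides on the right by $\jmath(k_{2\lambda})$ and using the commutation $u\,e(\mu) = e(\mu)\,u$ in $E_\zeta$ (so that $k_{-2\lambda}\,e(2\lambda)\,k_{2\lambda} = e(2\lambda)$) yields the claimed identity, after the identification $\tilde{\Theta}_e^{-1}D'_\zeta \cong \Theta_e^{-1}D'_\zeta$ noted before Proposition~\ref{prop:local-verma}.

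No serious obstacle is anticipated: all of the substantive work has already been done in Proposition~\ref{prop:local-formula1}, and what remains is the counit-based simplification of the left-hand side and a routine rearrangement of $k$- and $e$-factors. The only point that requires a moment's care is to verify that these manipulations take place in degree zero, which is immediate since $k_{\pm 2\lambda}\in U_\zeta\subset E_\zeta(0)$ and $e(\mu)\in\BC[\Lambda]\subset E_\zeta(0)$.
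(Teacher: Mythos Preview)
Your proof is correct and follows exactly the approach indicated in the paper: specialize Proposition~\ref{prop:local-formula1} to $\varphi = s$ (so $\gamma = 0$), use the counit to collapse the left-hand side to $1$, and then multiply through by $\jmath(k_{2\lambda})$. The paper's proof is the single sentence ``Considering the case $\varphi = s$ in Proposition~\ref{prop:local-formula1}'', and you have simply unpacked that sentence carefully.
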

\subsection{}
The natural embedding $A_\zeta\to E_\zeta$ induces
homomorphisms
\[
A_\zeta\to D'_\zeta\quad
(\varphi\mapsto \overline{\varphi}),
\qquad
A_\zeta\to D_\zeta\quad
(\varphi\mapsto \ell_\varphi)
\]
of graded $\BC$-algebras.
We define abelian categories $\Mod(\DD'_{\CB_\zeta})$, $\Mod(\DD_{\CB_\zeta})$ by
\[
\Mod(\DD'_{\CB_\zeta})=
\CC(A_\zeta,D'_\zeta),\qquad
\Mod(\DD_{\CB_\zeta})=
\CC(A_\zeta,D_\zeta).
\]
We also define $\CO_\CB$-algebras $Fr_*\DD'_{\CB_\zeta}$, $Fr_*\DD_{\CB_\zeta}$ by
\[
Fr_*\DD'_{\CB_\zeta}=\omega_\CB^*{D'_\zeta}{}^{(\ell)},
\qquad 
Fr_*\DD_{\CB_\zeta}=\omega_\CB^*D_\zeta^{(\ell)}.
\]
By Lemma \ref{lem:Fr1} and Lemma \ref{lem:Fr2}
we have 
equivalences
\begin{align}
\label{eq:equiv-D1}
&Fr_*:\Mod(\DD'_{\CB_\zeta})\to
\Mod(Fr_*\DD'_{\CB_\zeta}),\\
\label{eq:equiv-D2}
&Fr_*:
\Mod(\DD_{\CB_\zeta})\to
\Mod(Fr_*\DD_{\CB_\zeta})
\end{align}
of abelian categories, 
where $\Mod(Fr_*\DD'_{\CB_\zeta})$ (resp.\
$\Mod(Fr_*\DD_{\CB_\zeta})$) denotes the category of quasi-coherent $Fr_*\DD'_{\CB_\zeta}$-modules (resp.\ quasi-coherent $Fr_*\DD_{\CB_\zeta}$-modules).

For $t\in H$ we define an abelian category $\Mod(\DD_{\CB_\zeta,t})$ by
\[
\Mod(\DD_{\CB_\zeta,t})=
\Mod_{\Lambda, t}(D_\zeta)/(\Mod_{\Lambda,t}(D_\zeta)\cap\Tor_{\Lambda^+}(A_\zeta,D_\zeta)),
\]
where $\Mod_{\Lambda,t}(D_\zeta)$ is the full subcategory of 
$\Mod_{\Lambda}(D_\zeta)$ consisting of $M\in\Mod_{\Lambda}(D_\zeta)$ so that $\sigma_\lambda|_{M(\mu)}=\theta_\lambda(t) \zeta^{(\lambda,\mu)}\id$ for any $\lambda, \mu\in\Lambda$.
Then we can regard $\Mod(\DD_{\CB_\zeta,t})$ as a full subcategory of $\Mod(\DD_{\CB_\zeta})$
(see \cite[Lemma 4.6]{T2}).
Set
\[
Fr_*\DD_{\CB_\zeta,t}=Fr_*\DD_{\CB_\zeta}\otimes_{\BC[\Lambda]}\BC_t,
\]
where $\BC_t$ denotes the one-dimensional $\BC[\Lambda]$-module given by $e(\lambda)\mapsto\theta_\lambda(t)$ for $\lambda\in\Lambda$.
The equivalence \eqref{eq:equiv-D2} induces the equivalence
\begin{equation}
\label{eq:equiv-D3}
Fr_*:
\Mod(\DD_{\CB_\zeta,t})\to
\Mod(Fr_*\DD_{\CB_\zeta,t}),
\end{equation}
where $\Mod(Fr_*\DD_{\CB_\zeta,t})$ denotes the category of quasi-coherent $Fr_*\DD_{\CB_\zeta,t}$-modules.

\section{Center}
\subsection{}
Note
\[
E_\zeta^{(\ell)}=A_\zeta^{(\ell)}\otimes_\BC U_\zeta\otimes_\BC \BC[\Lambda].
\]
We set 
\begin{equation}
ZE_\zeta^{(\ell)}=
A_1\otimes_\BC Z_{Fr}(U_\zeta)\otimes_\BC \BC[\Lambda]
\subset E_\zeta^{(\ell)}.
\end{equation}
\begin{lemma}
\label{lem:center}
$ZE_\zeta^{(\ell)}$ is a central subalgebra of $E_\zeta^{(\ell)}$.
\end{lemma}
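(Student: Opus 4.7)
The plan is to verify centrality by checking that each of the three generating subspaces $A_1$, $Z_{Fr}(U_\zeta)$, $\BC[\Lambda]$ of $ZE_\zeta^{(\ell)}$ commutes with each of the three generating subspaces $A_\zeta^{(\ell)}$, $U_\zeta$, $\BC[\Lambda]$ of $E_\zeta^{(\ell)}$. Several of the nine cross-pairings are immediate: $\BC[\Lambda]$ commutes with $U_\zeta$ and $Z_{Fr}(U_\zeta)$ commutes with $U_\zeta$ are defining relations (the latter because $Z_{Fr}(U_\zeta)\subset Z(U_\zeta)$), and $\BC[\Lambda]$ with $A_\zeta^{(\ell)}$ follows from the relation $e(\mu)\varphi=\zeta^{\ell(\mu,\lambda)}\varphi e(\mu)$ for $\varphi\in A_\zeta(\ell\lambda)$, where the scalar equals $1$ because $\zeta=(\zeta')^{|\Lambda/Q|}$ with $(\zeta')^\ell=1$ and $|\Lambda/Q|(\mu,\lambda)\in\BZ$.

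For $A_1$ against $A_\zeta^{(\ell)}$: $A_1\subset\BC[G]$ is central in $A_\zeta\subset C_\zeta$ since $\BC[G]\hookrightarrow C_\zeta$ is a central Hopf subalgebra by Lusztig. For $A_1$ against $U_\zeta$, the relation $u\varphi=\sum_{(u)}(u_{(0)}\cdot\varphi)u_{(1)}$ reduces the claim to showing $u\cdot\varphi=\varepsilon(u)\varphi$ for $u\in U_\zeta$ and $\varphi\in A_1$. Since the $U_\zeta^L$-action on $\BC[G]\subset C_\zeta$ is the pullback of the $U(\Gg)$-action along Lusztig's Frobenius $\pi:U_\zeta^L\to U(\Gg)$, and $\pi$ sends the generators $k_\lambda,e_i,f_i$ of $U_\zeta$ to $1,0,0$ respectively, the restriction $\pi|_{U_\zeta}$ coincides with $\varepsilon\cdot 1_{U(\Gg)}$. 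Hence $u\cdot\varphi=\varepsilon(u)\varphi$ on $A_1$, and $u\varphi=\sum_{(u)}\varepsilon(u_{(0)})\varphi u_{(1)}=\varphi u$ by the counit axiom.

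The remaining and main case is $Z_{Fr}(U_\zeta)$ against $A_\zeta^{(\ell)}$. I would prove the stronger statement that $u\cdot\varphi=\varepsilon(u)\varphi$ for all $u\in Z_{Fr}(U_\zeta)$ and $\varphi\in A_\zeta$. Both sides define algebra homomorphisms $Z_{Fr}(U_\zeta)\to\End(A_\zeta)$, so it suffices to check the identity on the generators $e_\beta^\ell$, $f_\beta^\ell$, $k_{\ell\mu}$. The key input is that in $U_\zeta^L$ the relations $e_\beta^\ell=[\ell]_{q_\beta}!\,e_\beta^{(\ell)}$ and $f_\beta^\ell=[\ell]_{q_\beta}!\,f_\beta^{(\ell)}$ force the images of $e_\beta^\ell$ and $f_\beta^\ell$ to vanish at $\zeta$, since $[\ell]_{q_\beta}!$ vanishes there; consequently they act as zero on $A_\zeta$, matching $\varepsilon(e_\beta^\ell)=\varepsilon(f_\beta^\ell)=0$. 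For $k_{\ell\mu}$ acting on a weight vector in $A_\zeta(\lambda)_\xi$, the scalar is $\zeta^{\ell(\mu,\xi)}=1$ by the same arithmetic as above, matching $\varepsilon(k_{\ell\mu})=1$. Since $Z_{Fr}(U_\zeta)$ is a Hopf subalgebra of $U_\zeta$, one can choose a Sweedler decomposition $\Delta(u)=\sum u_{(0)}\otimes u_{(1)}$ with all $u_{(0)},u_{(1)}\in Z_{Fr}(U_\zeta)$; applying the displayed identity termwise to $u\varphi=\sum(u_{(0)}\cdot\varphi)u_{(1)}$ yields $\sum\varepsilon(u_{(0)})\varphi u_{(1)}=\varphi u$. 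The main technical ingredient throughout is that at the root-of-unity specialization, the generators of $Z_{Fr}(U_\zeta)$ collapse under the Lusztig embedding — to zero for $e_\beta^\ell,f_\beta^\ell$ and to the identity for $k_{\ell\mu}$ — which in turn forces all module actions of $Z_{Fr}(U_\zeta)$ (and of $A_1$, via Frobenius) to be trivial in the appropriate sense, so that the bimodule relations of $E_\zeta$ degenerate to commutation.
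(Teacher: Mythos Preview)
Your proof is correct and follows essentially the same approach as the paper's: both reduce the two nontrivial commutations $[A_1,U_\zeta]=0$ and $[Z_{Fr}(U_\zeta),A_\zeta]=0$ to the identities $\pi(j(u))=\varepsilon(u)1$ for $u\in U_\zeta$ and $j(z)\cdot\varphi=\varepsilon(z)\varphi$ for $z\in Z_{Fr}(U_\zeta)$, and then verify these on generators. The only cosmetic difference is that the paper handles the vanishing of $j(e_\beta^\ell)$ by invoking $\BB$-equivariance of $j$ to reduce to simple roots, whereas you use the divided-power relation $e_\beta^\ell=[\ell]_{q_\beta}!\,e_\beta^{(\ell)}$ directly for all $\beta$; both are valid since $e_\beta^{(\ell)}\in U_\BA^L$ is already known.
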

\begin{proof}
It is easily seen that $\BC[\Lambda]$ is contained in the center of $E_\zeta^{(\ell)}$.
Moreover, we have already shown $[A_1,U_\zeta]=\{0\}$ in Lemma \ref{lem:A1-cent}.
Hence it is sufficient to show $[A_\zeta^{(\ell)},Z_{Fr}(U_\zeta)]=\{0\}$.
For that we have only to show $j(z)\cdot\varphi=\varepsilon(z)\varphi$ for $z\in Z_{Fr}(U_\zeta)$, $\varphi\in A_\zeta$, where 
$j:U_\zeta\to U_\zeta^L$ is the homomorphism induced by the inclusion $U_\BA\subset U^L_\BA$.
Since $j$ preserves the action of the braid group $\BB$, 
it is a consequence of the fact that 
$Z_{Fr}(U_\zeta)$ is generated by $e_i^\ell$, $f_i^\ell$, $k_{\ell\lambda}$ ($i\in I$, $\lambda\in\Lambda$)  and their $\BB$-conjugates.
\end{proof}
We denote by $ZD'_\zeta{}^{(\ell)}$, $ZD_\zeta^{(\ell)}$ the images of  $ZE_\zeta^{(\ell)}$ in $D'_\zeta{}^{(\ell)}$, $D_\zeta^{(\ell)}$ respectively.
We have 
\begin{align*}
\omega_\CB^*ZE_\zeta^{(\ell)}&=
\CO_\CB\otimes_\BC Z_{Fr}(U_\zeta)\otimes_\BC \BC[\Lambda]\\
&\cong
\CO_\CB\otimes_\BC \BC[K]\otimes_\BC \BC[H]\\
&\cong p_*\CO_{\CB\times K\times H},
\end{align*}
where $p:\CB\times K\times H\to\CB$ is the projection.
Note that we identify $Z_{Fr}(U_\zeta)$ and $\BC[\Lambda]$ with $\BC[K]$ and $\BC[H]$ respectively (see \eqref{eq:Fr-center}).
Set
\[
\CZ'_\zeta=\omega_\CB^*ZD'_\zeta{}^{(\ell)},
\qquad
\CZ_\zeta=\omega_\CB^*{ZD}_\zeta^{(\ell)}.
\]
Then $\CZ'_\zeta$ and $\CZ_\zeta$ are central $\CO_\CB$-subalgebras of $Fr_*\DD'_{\CB_\zeta}$ and $Fr_*\DD_{\CB_\zeta}$ respectively.
Moreover, 
we have a sequence 
\[
p_*\CO_{\CB\times K\times H}
\to
\CZ'_\zeta
\to
\CZ_\zeta
\]
of surjective $\CO_\CB$-algebra homomorphisms.

We define a subvariety $\CV$ of $\CB\times K\times H$ by
\[
\CV=
\{
(B^-g,k,t)\in\CB\times K\times H\mid
g\kappa(k)g^{-1}\in t^{2\ell}N^-\}.
\]
We denote by 
\[
p_\CV:\CV\to\CB
\]
the projection.
The aim of this section is to prove the following.
\begin{theorem}
\label{thm:center}
We have
\[
\CZ'_\zeta\cong
\CZ_\zeta
\cong
p_{\CV*}\CO_\CV.
\]
\end{theorem}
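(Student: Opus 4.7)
The plan is to prove the theorem in two phases: first, construct a surjection $p_{\CV*}\CO_\CV \twoheadrightarrow \CZ_\zeta$ by showing that the defining ideal of $\CV$ lies in the kernel of $p_*\CO_{\CB\times K\times H}\twoheadrightarrow \CZ_\zeta$; second, upgrade this surjection to an isomorphism using Poisson geometry. The analogous statement for $\CZ'_\zeta$ follows along the way, and the surjection $\CZ'_\zeta\twoheadrightarrow \CZ_\zeta$ is then forced to be an isomorphism between two realizations of $p_{\CV*}\CO_\CV$.

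For the first phase, I would work locally on each affine open $\CB_w$ of the Bruhat stratification. Using the braid-group action of Lemma \ref{lem:braid-action-on-D}, which permutes the multiplicative sets $\Theta_w$, the problem reduces to $w=e$, i.e.\ the big cell $\CB_e=B^-\backslash B^-N^+$. On this open, Proposition \ref{prop:AzetaA1} gives a concrete handle on $j_e^*Fr_*\CO_{\CB_\zeta}$, and $j_e^*\CZ_\zeta$ becomes the central subalgebra of $(\Theta_e^{-1}D_\zeta^{(\ell)})(0)$ generated by $A_1$, $Z_{Fr}(U_\zeta)$ and $\BC[\Lambda]$. The crucial relations in the kernel arise from the two expressions of $r_\varphi$ supplied by the two universal $R$-matrices $\CR$ and ${}^t\CR^{-1}$ in Lemma \ref{lem:rvarphi}; these are repackaged in Propositions \ref{prop:local-formula1}--\ref{prop:local-formula2}. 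Geometrically, Proposition \ref{prop:local-formula2} identifies, for each $s\in A_\zeta(\lambda)_\lambda\setminus\{0\}$ and $\lambda\in\Lambda^+$, the image of $k_{2\lambda}$ with an expression involving matrix-coefficient-type functions of $\kappa(k)$ conjugated by $g$; unwinding $Z_{Fr}(U_\zeta)\cong\BC[K]$ and $\BC[\Lambda]\cong\BC[H]$, this translates exactly into the condition $g\kappa(k)g^{-1}\equiv t^{2\ell}\pmod{N^-}$ cutting out $\CV\cap p^{-1}(\CB_e)$. Reassembling across the cells $\CB_w$ via the braid-group twists yields the global factorization $p_{\CV*}\CO_\CV\twoheadrightarrow \CZ_\zeta$, and the same computation performed upstream at the level of $D'_\zeta$ produces $p_{\CV*}\CO_\CV\twoheadrightarrow \CZ'_\zeta$.

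For the second phase, I would pass to $Y=(N^-\backslash G)\times K\times H$, which is an $H$-torsor over $\CB\times K\times H$ via the natural projection. The space $Y$ carries a canonical Poisson structure arising as the semiclassical limit of the quantum coordinate algebras on each factor. By \cite{DP}, the support of the pullback of $\CZ_\zeta$ to $Y$ is a Poisson subvariety. The core geometric input is then that the preimage of $\CV$ in $Y$ is a single connected symplectic leaf of this Poisson structure, to be verified by direct computation of the symplectic foliation. Granted this, $\CZ_\zeta\neq 0$ — which is immediate from the injection $A_1\hookrightarrow ZD_\zeta^{(\ell)}$ — forces the support to meet, and therefore by leaf-irreducibility to coincide with, the preimage of $\CV$. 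Descending to $\CB\times K\times H$ gives the isomorphism $\CZ_\zeta\cong p_{\CV*}\CO_\CV$, and then the factorizations produced in the first phase force $\CZ'_\zeta\cong p_{\CV*}\CO_\CV$ as well.

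The main obstacle is the symplectic-leaf identification: one must pin down the Poisson structure on $Y$ explicitly, inherited from the Semenov-Tian-Shansky bracket on $G$ and twisted by multiplication by $2\ell$ on the Cartan factor, and match it against the subvariety cut out by $g\kappa(k)g^{-1}\in t^{2\ell}N^-$, combining a dimension count with a connectivity argument. A secondary technical point is ensuring that the braid-group twisting in phase one glues the local kernel computations into a globally coherent sheaf-theoretic statement, rather than merely collecting them cell by cell.
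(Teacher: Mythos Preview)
Your two-phase strategy is exactly the paper's, and your phase~2 matches it closely: pass to $\tilde\CB=N^-\backslash G$, establish the Poisson structure on $\tilde\CB\times K\times H$ (Proposition~\ref{prop:Poisson2}), prove that $\tilde\CV$ is connected, smooth, and symplectic (Lemma~\ref{lem:tilde-V}, Proposition~\ref{prop:symplectic}), and conclude via the Poisson-ideal argument and $\tilde\CZ_\zeta\ne0$.

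Where you diverge is in phase~1. The paper does \emph{not} localize to the cells $\CB_w$ or invoke the braid group here. Instead it works globally on $\tilde\CB$ and observes (Lemma~\ref{lem:defining-V}) that for $\varphi\in A_1(\lambda)$ the elements $\Omega_1(\varphi)$ and $\Omega_2(\varphi)$ already lie in $ZE_\zeta^{(\ell)}$, and computes them explicitly as functions $(N^-g,k,t)\mapsto\varphi(N^-t^{\pm\ell}g\cdot(\text{factor of }k))$. It then shows (Lemma~\ref{lem:defining-V2}) that the equations $\Omega_1(\varphi)=\Omega_2(\varphi)$ for $\varphi\in A_1$ cut out $\tilde\CV$ with its \emph{reduced} scheme structure, by checking that the relevant morphism $(F_1,F_2):\tilde\CB\times K\times H\to\tilde\CB\times\tilde\CB$ is smooth. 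This is cleaner than your route: Propositions~\ref{prop:local-formula1}--\ref{prop:local-formula2} are relations in the full ring $(\tilde\Theta_e^{-1}D'_\zeta)(0)$, not a~priori in the center, and turning them into central relations requires precisely the observation of Lemma~\ref{lem:defining-V}; the braid-group gluing you propose is then unnecessary, and you avoid your own ``secondary technical point''. One item missing from your sketch is the reducedness check just mentioned---without it the Poisson argument only identifies the \emph{support} of $\tilde\CZ_\zeta$ with $\tilde\CV$, and you would not yet know that the surjection from $\CO_{\tilde\CV}$ is an isomorphism rather than a quotient by nilpotents.
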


\subsection{}
Set 
\begin{align*}
\tilde{\CB}=&N^-\backslash G,\\
\tilde\CV=&
\{
(N^-g,k,t)\in\tilde\CB\times K\times H\mid
g\kappa(k)g^{-1}\in t^{2\ell}N^-\}.
\end{align*}
\begin{lemma}
\label{lem:tilde-V}
$\tilde{\CV}$ is a connected smooth variety with $\dim\tilde{\CV}=2\dim\tilde{\CB}$.
\end{lemma}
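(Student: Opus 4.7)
The plan is to show smoothness and compute the dimension of $\tilde\CV$ via a transversality argument, and to derive connectedness by identifying a dense open subvariety with an étale Galois cover of an irreducible base. Throughout, write $r=\dim H$ and $N=|\Delta^+|$.

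First I would pull $\tilde\CV$ back to $G$: set
\[
\tilde\CV^{\#}=\{(g,k,t)\in G\times K\times H:g\kappa(k)g^{-1}\in t^{2\ell}N^-\},
\]
on which $N^-$ acts freely by left translation on $g$ (the defining condition is preserved since $H$ normalises $N^-$), so that $\tilde\CV=N^-\backslash\tilde\CV^{\#}$. Consider the morphism $\Phi\colon G\times K\times H\to G$, $(g,k,t)\mapsto t^{-2\ell}g\kappa(k)g^{-1}$. Its partial differential in the $k$-direction is an isomorphism $T_kK\xrightarrow{\sim}T_{\Phi(g,k,t)}G$, since $\kappa$ is étale and $\dim K=\dim G$ while conjugation and translation are isomorphisms of $G$. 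Hence $\Phi$ is a submersion, so $\tilde\CV^{\#}=\Phi^{-1}(N^-)$ is smooth of codimension $\dim G-\dim N^-=N+r$ and of dimension $3N+2r$, whence $\tilde\CV$ is smooth of dimension $2N+2r=2\dim\tilde\CB$.

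For connectedness I would restrict to the open subset $H^{\mathrm{reg}}\subset H$ on which $t^{2\ell}$ is regular semisimple. Since $\Ad(t^{-2\ell})-1$ is then invertible on $\Gn^-$, the map $n\mapsto nt^{2\ell}n^{-1}$ is an isomorphism $N^-\xrightarrow{\sim}t^{2\ell}N^-$; so for every $(N^-g,k,t)\in\tilde\CV^{\circ}:=\tilde\CV\cap(\tilde\CB\times K\times H^{\mathrm{reg}})$ there is a unique representative $g$ of the coset $N^-g$ satisfying $g\kappa(k)g^{-1}=t^{2\ell}$ on the nose. This identifies $\tilde\CV^{\circ}$ with the locally closed subvariety
\[
\{(g,k,t)\in G\times K\times H^{\mathrm{reg}}:g\kappa(k)g^{-1}=t^{2\ell}\}
\]
of $G\times K\times H^{\mathrm{reg}}$, and projecting to $(g,t)$ realises it as a $\mu_2^r$-étale Galois cover (with Galois group $\ker\kappa\cong H[2]\cong\mu_2^r$) of the irreducible open subvariety
\[
Y=\{(g,t)\in G\times H^{\mathrm{reg}}:g^{-1}t^{2\ell}g\in B^+N^-\}\subset G\times H^{\mathrm{reg}}.
\]

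The main obstacle will be to show that this $\mu_2^r$-cover is irreducible, equivalently that the induced monodromy representation $\pi_1(Y)\to\mu_2^r$ is surjective. Under the Bruhat parametrisations $B^+N^-\cong H\times N^+\times N^-$ and $K\cong H\times N^+\times N^-$, one checks directly that $\kappa$ corresponds to the squaring map $H\to H$, $h\mapsto h^2$, on the $H$-factor; hence $\tilde\CV^{\circ}\to Y$ is the pullback of the squaring cover of $H$ along $\phi\colon Y\to H$ defined by $(g,t)\mapsto$ (Bruhat $H$-part of $g^{-1}t^{2\ell}g$). To verify surjectivity of $\pi_1(Y)\to\mu_2^r$, I would argue that for each simple root $\alpha_i$, the meridian loop in $Y$ around the codimension-one component of the complement $(G\times H^{\mathrm{reg}})\setminus Y$ corresponding to the simple wall $B^-s_iB^-\cdot w_0$ maps under the character $\alpha_i^\vee\circ\phi$ to a once-winding loop in $\BC^\times$, hence to the nontrivial element of the $i$-th $\mu_2$-factor; this reduces to an explicit rank-one calculation in the $SL_2$-subgroup associated with $\alpha_i$, where $\alpha_i^\vee(\phi)$ acquires a simple zero along the wall. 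Once this monodromy computation is in hand, $\tilde\CV^{\circ}$ is irreducible; smoothness of $\tilde\CV$ together with the density of $\tilde\CV^{\circ}$ in $\tilde\CV$ then forces $\tilde\CV$ itself to be irreducible, and in particular connected.
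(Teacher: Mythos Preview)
Your smoothness and dimension argument is correct and close in spirit to the paper's: the paper instead introduces the auxiliary variety $\CW=\{(N^-g,x,t)\in\tilde\CB\times G\times H\mid gxg^{-1}\in t^{2\ell}N^-\}$, observes it is a fiber bundle over $\tilde\CB$ with fiber $H\times N^-$, and then realises $\tilde\CV$ as the fiber product $\CW^0\times_{N^+HN^-}K$ (where $\CW^0$ is the open subset with $x$ in the big cell). Both arguments use that $\kappa$ is \'etale; yours packages this as a submersion statement, the paper as a Galois covering.

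For connectedness your route is genuinely different, and there are two points that need more care.

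\emph{Density of $\tilde\CV^\circ$.} You assert that the regular locus $\tilde\CV^\circ$ is dense in $\tilde\CV$, but this is not automatic: a priori some connected component of $\tilde\CV$ could sit entirely over the non-regular part of $H$. One fixes this by noting that, since the partial differential of $\Phi$ in the $k$-variable is already surjective onto $T_\Phi G$, the projection $\tilde\CV\to H$ is itself a submersion, hence open; thus every component of $\tilde\CV$ surjects onto an open subset of $H$ and therefore meets $H^{\mathrm{reg}}$. The paper avoids this issue entirely by working with the cover $\tilde\CV\to\CW^0$ over the full base.

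\emph{The monodromy computation.} Your sketch has the right shape but several details are off or missing. The symbol $\alpha_i^\vee$ is a cocharacter, not a character; what you need are the generalized minors $\Delta_{\varpi_i}$, which satisfy $\Delta_{\varpi_i}(n_+hn_-)=\theta_{\varpi_i}(h)$ and vanish to order exactly one along the closure of $B^+s_iB^-$ while remaining nonzero on $B^+s_jB^-$ for $j\ne i$. Then the cover $\tilde\CV^\circ\to Y$ corresponds to extracting $\sqrt{\Delta_{\varpi_i}\circ\psi}$ for $\psi(g,t)=g^{-1}t^{2\ell}g$, and the meridian around the $i$-th boundary divisor sends $\sqrt{\Delta_{\varpi_i}\circ\psi}$ to its negative and fixes the others. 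To make this rigorous you must also check that $\psi\colon G\times H^{\mathrm{reg}}\to G$ is transverse to each divisor $\{\Delta_{\varpi_i}=0\}$ (so the pullback is reduced and the ``simple zero'' claim survives). In fact $\psi$ is a submersion: its differential has image $(\Ad(\psi)^{-1}-1)(\Gg)+\Ad(g^{-1})(\Gh)=\Gg$ since $\psi(g,t)$ is regular semisimple. Your proposed ``$SL_2$ reduction'' does not literally apply, because the boundary divisor is not contained in any $SL_2$-subgroup; the order-of-vanishing statement is a global fact about generalized minors.

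By contrast, the paper's connectedness argument stays with the cover $\tilde\CV\to\CW^0$: since the abelian group $\Gamma=H[2]$ acts transitively on the set of components, it suffices to show each generator $\gamma_i$ fixes \emph{some} component, i.e.\ to exhibit a single point $v\in\tilde\CV$ connected to $\gamma_i v$ by a path. This the paper does by restricting to the $i$-th $SL_2$-subgroup and computing directly. The paper's approach is shorter and avoids both the density issue and the need for facts about generalized minors; your monodromy approach is more explicit about \emph{why} the cover is connected but requires more infrastructure to make precise.
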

\begin{proof}
Set
\begin{align*}
\CW&=
\{
(N^-g,x,t)\in\tilde\CB\times G\times H\mid
gxg^{-1}\in t^{2\ell}N^-\},\\
\CW^0&=\{
(N^-g,x,t)\in\CW\mid
x\in N^+HN^-\}.
\end{align*}
Then $\CW$ is a fiber bundle over $\tilde{\CB}$ whose fiber at the origin $N^-\in\tilde{\CB}$ is isomorphic to $H\times N^-$.
Hence $\CW$ is a smooth connected variety with with $\dim{\CW}=2\dim\tilde{\CB}$.
It follows that its Zariski open subvariety $\CW^0$ is also a smooth connected variety of the same dimension.
Note that $\kappa:K\to G$ is a composite of the Galois covering $\overline{\kappa}:K\to N^+HN^-$ 
with Galois group $\Gamma=\{\gamma\in H\mid\gamma^2=1\}$ 
and the open embedding $N^+HN^-\to G$.
By $\tilde{\CV}\cong\CW^0\times_{N^+HN^-}K$ we see that $\tilde{\CV}$ is a Galois covering of $\CW^0$ with Galois group $\Gamma$.
Hence $\tilde{\CV}$ is a smooth variety.
It remains to show that $\tilde{\CV}$ is connected.
Since $\Gamma$ acts transitively on the set $\CX$ of connected components of $\tilde{\CV}$, it is sufficient to show that $\gamma(X)=X$ for any $\gamma\in\Gamma$ and $X\in\CX$.
Since $G$ was chosen to be simply-connected, the group $\Gamma$ is generated by elements $\gamma_i\in\Gamma\,\,(i\in I)$ with $\theta_{\varpi_j}(\gamma_i)=1$
 for $i\ne j$ and $\theta_{\varpi_i}(\gamma_i)=-1$.
Hence it is sufficient to show that $\gamma_i(X)=X$ for any $i\in I$ and $X\in\CX$.
By the commutativity of $\Gamma$ we have only to show that for any $i\in I$ there exists some $X\in\CX$ such that $\gamma_i(X)=X$.
Hence the proof is reduced to showing that for any $i\in I$ there exists some $v\in\tilde{\CV}$ such that $v$ and $\gamma_i v$ are contained in the same connected component of $\tilde{\CV}$.
We may assume that $G=SL_2(\BC)$.
Then we can check the assertion by a direct computation.
Details are omitted.
 \end{proof}

We regard $A_1$ as the ring of functions on the quasi-affine variety $\tilde{\CB}$.
We also regard 
$ZE_\zeta^{(\ell)}$ as the ring of functions on $\tilde{\CB}\times K\times H$.
We denote by $\tilde{\CZ}'_\zeta$, $\tilde{\CZ}_\zeta$ the sheaf of $\CO_{\tilde{\CB}\times K\times H}$-algebras corresponding to 
$ZD'_\zeta{}^{(\ell)}$, ${ZD}_\zeta^{(\ell)}$ respectively.
In order to prove Theorem \ref{thm:center} it is sufficient to show 
\[
\tilde{\CZ}'_\zeta\cong\tilde{\CZ}_\zeta\cong
\CO_{\tilde{\CV}},
\]
where $\tilde{\CV}$ is regarded as a reduced scheme.
\begin{lemma}
\label{lem:defining-V}
For $\varphi\in A_1(\lambda)\subset A_\zeta(\ell\lambda)$ with $\lambda\in\Lambda^+$ we have
\[
\Omega_i(\varphi)\in ZE_\zeta^{(\ell)}\qquad(i=1, 2),
\]
and we have
\begin{align}
\label{eq:lem:defining-V1}
\Omega_1(\varphi)(N^-g,(n_1h,n_2h^{-1}),t)&=
\varphi(N^-t^\ell gn_2h^{-1}),
\\
\label{eq:lem:defining-V2}
\Omega_2(\varphi)(N^-g,(n_1h,n_2h^{-1}),t)&=
\varphi(N^-t^{-\ell} gn_1h)
\end{align}
for $g\in G$, $n_1\in N^+$, $n_2\in N^-$, $t, h\in H$.
\end{lemma}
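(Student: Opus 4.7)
The plan is to verify both claims directly, separating the containment assertion from the explicit evaluation. For $\varphi\in A_1(\lambda)_\xi$, viewing $\varphi$ as an element of $A_\zeta(\ell\lambda)$, the argument at the end of the proof of Lemma~\ref{lem:braid-Fr} shows that its left $U_\zeta^{L,0}$-weight is $\ell\xi$, so
\[
\Omega_1(\varphi)=\sum_p(y_p^L\cdot\varphi)\,x_pk_{-\ell\xi}e(\ell\lambda),\qquad
\Omega_2(\varphi)=\sum_p\bigl((Sx_p^L)\cdot\varphi\bigr)\,y_pk_{\beta_p}k_{\ell\xi}e(-\ell\lambda).
\]

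To show $\Omega_i(\varphi)\in ZE_\zeta^{(\ell)}=A_1\otimes Z_{Fr}(U_\zeta)\otimes\BC[\Lambda]$, I use three ingredients: (a) $A_1\subset A_\zeta$ is stable under the $U_\zeta^L$-action, which on $A_1$ factors through $\pi:U_\zeta^L\to U(\Gg)$, so $y_p^L\cdot\varphi$ and $(Sx_p^L)\cdot\varphi$ lie in $A_1$; (b) $k_{\pm\ell\xi}\in Z_{Fr}(U_\zeta)$ since $\ell\xi\in\ell\Lambda$; and (c) the ``$\pi$-visible'' surviving $x_p$'s (resp.\ $y_p$'s for $\Omega_2$) are $\ell$-th power PBW monomials lying in $Z_{Fr}(U_\zeta)$. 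For (c), I choose $\{y_p^L\}$ to be Lusztig's divided-power PBW basis $\{f_{\beta_N}^{(m_N)}\cdots f_{\beta_1}^{(m_1)}\}$ of $U_\BA^{L,-}$; by construction of $\pi$, $\pi(y_p^L)=0$ unless every $m_k$ is divisible by $\ell$. By Lemma~\ref{lem:pm-duality} the dual basis $\{x_p\}$ of $U_\BA^+$ consists, up to known nonzero scalar factors from the Drinfeld pairing, of the De Concini--Kac PBW monomials $e_{\beta_N}^{m_N}\cdots e_{\beta_1}^{m_1}$; those dual to $\pi$-visible $y_p^L$ are thus multiples of $e_{\beta_N}^{\ell n_N}\cdots e_{\beta_1}^{\ell n_1}\in Z_{Fr}(U_\zeta)$. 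The symmetric argument, using the dual PBW bases of $U_\BA^{L,+}$ and $U_\BA^-$, handles $\Omega_2$.

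For the evaluation, I compute $\Omega_1(\varphi)$ at $(N^-g,(n_1h,n_2h^{-1}),t)$ factor by factor. The factor $e(\ell\lambda)\in\BC[H]$ contributes $\theta_\lambda(t^\ell)$. Under Gavarini's identification $Z_{Fr}(U_\zeta)\cong\BC[K]=\BC[N^-]\otimes\BC[N^+]\otimes\BC[H]$ from \eqref{eq:Fr-center}, the element $x_pk_{-\ell\xi}$ corresponds to the function $(n_1h',n_2{h'}^{-1})\mapsto f_p(n_2)\theta_{-\xi}(h')$, where $f_p\in\BC[N^+]$ is determined by ${}^L\tau(x_p,y)=\langle f_p,\pi(y)\rangle$ for $y\in U_\zeta^{L,-}$; the $\BC[N^-]$-factor is trivial since $x_p\in U_\zeta^+$. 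Because the $U(\Gg)$-action on $A_1\subset\BC[G]$ is the right-regular representation with $(u\cdot\varphi)(g)=\varphi(gu)$, one has $(y_p^L\cdot\varphi)(g)=\varphi(g\pi(y_p^L))$. Summing,
\[
\Omega_1(\varphi)(N^-g,(n_1h,n_2h^{-1}),t)
=\theta_\lambda(t^\ell)\theta_{-\xi}(h)\sum_p\varphi(g\pi(y_p^L))\,f_p(n_2).
\]
By the defining duality of the $f_p$'s, $\sum_p\pi(y_p^L)\otimes f_p$ (restricted to $\pi$-visible $p$) is the canonical element of $U(\Gn^-)\otimes\BC[N^+]$ for the Hopf pairing induced from $\tau^L$, so that $\sum_p\varphi(g\pi(y_p^L))f_p(n_2)=\varphi(gn_2)$. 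Combining with the left-$H$ and right-$H$ covariances $\varphi(tg)=\theta_\lambda(t)\varphi(g)$ and $\varphi(gh)=\theta_\xi(h)\varphi(g)$ of $\varphi\in A_1(\lambda)_\xi$, the product $\theta_\lambda(t^\ell)\theta_{-\xi}(h)\varphi(gn_2)$ rewrites as $\varphi(t^\ell gn_2h^{-1})$, which is \eqref{eq:lem:defining-V1}. A parallel computation, starting from the second expression in Lemma~\ref{lem:rvarphi} (the one coming from the opposite $R$-matrix) and with the roles of $N^+/N^-$ exchanged, proves \eqref{eq:lem:defining-V2}.

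The main technical obstacle is the precise matching of normalization constants: the scalars in the PBW duality of Lemma~\ref{lem:pm-duality}, the identification \eqref{eq:Fr-center} as spelled out by Gavarini, and the Frobenius Hopf pairing between $\BC[N^+]$ and $U(\Gn^-)$ must conspire so that $\sum_p\varphi(g\pi(y_p^L))f_p(n_2)$ equals $\varphi(gn_2)$ exactly, with no residual constants. Once this bookkeeping is done, both formulas follow cleanly.
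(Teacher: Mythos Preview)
Your argument is correct and matches the paper's approach: both kill the terms with $\pi(y_p^L)=0$, observe that the surviving dual basis elements $x_p$ lie in $U_\zeta^+\cap Z_{Fr}(U_\zeta)$, and then evaluate via Gavarini's description of \eqref{eq:Fr-center} together with the canonical-element formula for the unipotent action (the paper's Lemma~\ref{lem:unipotent}). The only cosmetic difference is that you work with the explicit PBW bases while the paper uses an abstract basis adapted to $\Ker(\pi|_{U_\zeta^{L,-}})$; note also that the functions $f_p$ determined by $\tau^L(x_p,y)=\langle f_p,\pi(y)\rangle$ naturally live in $\BC[N^-]$ (paired with $U(\Gn^-)$ via the canonical Hopf pairing), consistent with their evaluation at $n_2\in N^-$.
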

\begin{proof}
We may assume that $\varphi\in A_1(\lambda)_\xi\subset
A_\zeta(\ell\lambda)_{\ell\xi}$.

Take bases $\{f_r\}_r$, $\{v_r\}_r$ of $\BC[N^-]$ and $U(\Gn^-)$ respectively such that $\langle f_r,v_{r'}\rangle=\delta_{r, r'}$, where $\langle\,\,,\,\,\rangle:\BC[N^-]\times U(\Gn^-)\to\BC$ is the canonical Hopf pairing.
Define $\{x_r\}_r\subset U_\zeta^+\cap Z_{Fr}(U_\zeta)$
by $\tau^L(x_r,y)=\langle f_r,\pi(y)\rangle$ for any $y\in U_\zeta^{L,-}$.
We can take 
$\{y_r\}_r\subset U^{L,-}_\zeta$, 
$\{y'_s\}_s\subset \Ker(\pi|U^{L,-}_\zeta)$, 
$\{x'_s\}_s\subset U^{+}_\zeta$, 
such that $\pi(y_r)=v_r$ for any $r$,
$\{y_r\}_r\sqcup\{y'_s\}_s$ is a basis of  $U_\zeta^{L,-}$,
$\{x_r\}_r\sqcup\{x'_s\}_s$ is a basis of  $U_\zeta^{+}$,
$\tau^L(x'_s,y_r)=0$ for any $r, s$, and $\tau^L(x'_s,y'_{s'})=\delta_{s,s'}$.
Then we have
\begin{align*}
\Omega_1(\varphi)&=\sum_r(\pi(y_r)\cdot\varphi)x_rk_{-\ell\xi}e(\ell\lambda)
+
\sum_s(\pi(y'_s)\cdot\varphi)x'_sk_{-\ell\xi}e(\ell\lambda)\\
&=
\sum_r(v_r\cdot\varphi)x_rk_{-\ell\xi}e(\ell\lambda)\in ZE_\zeta^{(\ell)}.
\end{align*}
By Lemma \ref{lem:unipotent} below and the description of the isomorphism $\BC[K]\cong Z_{Fr}(U_\zeta)$ given in Section \ref{sec:QE} we obtain
\begin{align*}
&\Omega_1(\varphi)(N^-g,(n_1h,n_2h^{-1}),t)
=\sum_r
((v_r\cdot\varphi)(N^-g))
f_r(n_2)
\theta_{-\xi}(h)
\theta_{\ell\lambda}(t)\\
=&((n_2\cdot\varphi)(N^-g))\theta_{\xi}(h^{-1})
\theta_{\lambda}(t^\ell)
=(n_2h^{-1}\cdot\varphi\cdot t^{\ell})(N^-g)\\
=&\varphi(N^-t^{\ell}gn_2h^{-1}).
\end{align*}
\eqref{eq:lem:defining-V1} is proved.
The proof of \eqref{eq:lem:defining-V2} is similar and omitted.
\end{proof}
The proof of the following result is standard and left to the readers.
\begin{lemma}
\label{lem:unipotent}
Let $N$ be a unipotent algebraic group over $\BC$ with Lie algebra $\Gn$.
Denote by $\sum_rv_r\otimes f_r$ the canonical element of (a completion of) $\BC[N]\otimes U(\Gn)$ with respect to the canonical Hopf pairing $\BC[N]\times U(\Gn)\to\BC$.
Then for any finite dimensional rational $N$-module $M$ we have
\[
gm=\sum_rf_r(g)v_rm\qquad(g\in N, m\in M).
\]
\end{lemma}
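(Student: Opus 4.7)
The plan is to reduce the formula to a duality identity for the Hopf pairing $\langle\,,\,\rangle : \BC[N] \times U(\Gn) \to \BC$, exploiting the fact that the $N$-action on $M$ and the induced $U(\Gn)$-action (by derivation) are linked by this pairing. Concretely, I would fix a basis $\{m_j\}$ of $M$ and write the coaction $M \to \BC[N] \otimes M$ encoding the $N$-module structure as $m_j \mapsto \sum_k c_{jk} \otimes m_k$ for matrix coefficients $c_{jk} \in \BC[N]$, so that $g\, m_j = \sum_k c_{jk}(g)\, m_k$ and, after differentiation, $X\, m_j = \sum_k \langle c_{jk}, X \rangle\, m_k$ for $X \in U(\Gn)$.

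The crucial step will be proving the general identity
\[
c(g) = \sum_r f_r(g)\, \langle c, v_r \rangle \qquad (c \in \BC[N],\ g \in N),
\]
which expresses that $\{v_r\}$ and $\{f_r\}$ are dual bases in the appropriate completion sense. To justify it I would use that $N$ is unipotent, so that the exponential map $\exp:\Gn \to N$ is an isomorphism of varieties and $c \circ \exp$ is a polynomial on the vector space $\Gn$. Choosing $\{v_r\}$ to be a PBW basis of $U(\Gn)$ associated to a basis of $\Gn$, and comparing with the Taylor expansion of $c \circ \exp$ at the origin, the identity follows; in particular only finitely many $r$ contribute for each fixed $c$.

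Once this is in hand, substituting $c = c_{jk}$ yields
\[
\sum_r f_r(g)\, v_r\, m_j = \sum_k \Bigl( \sum_r f_r(g)\, \langle c_{jk}, v_r \rangle \Bigr) m_k = \sum_k c_{jk}(g)\, m_k = g\, m_j,
\]
as required. The outer sum is effectively finite because $\Gn$ acts locally nilpotently on the finite-dimensional module $M$, so each PBW monomial of sufficiently high degree annihilates $m_j$.

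The main obstacle is purely technical: giving rigorous meaning to the ``canonical element'' $\sum_r v_r \otimes f_r$ in the completion $U(\Gn)\,\widehat{\otimes}\,\BC[N]$ and checking that all the sums truncate once paired against a fixed $c \in \BC[N]$ or applied to a fixed $m \in M$. A clean way to handle this is to work with the standard ascending filtration $U_{\leq n}(\Gn)$ on $U(\Gn)$ and its orthogonal complement in $\BC[N]$, which at each finite level reduces the claim to an ordinary finite-dimensional duality assertion.
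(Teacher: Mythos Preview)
Your argument is correct and is precisely the standard verification one would expect here. In fact the paper does not prove this lemma at all: it states that ``the proof of the following result is standard and left to the readers.'' Your reduction via matrix coefficients $c_{jk}$ and the basis-expansion identity $c=\sum_r\langle c,v_r\rangle f_r$ (valid because $\{f_r\}$ and $\{v_r\}$ are dual bases of $\BC[N]$ and $U(\Gn)$ under a nondegenerate pairing) is exactly the intended routine computation; the unipotence of $N$ guarantees that both $\BC[N]$ and $U(\Gn)$ carry compatible gradings making all sums finite, as you note.
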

We define $F_i:\tilde{\CB}\times K\times H\to\tilde{\CB}\,\,(i=1,2)$ by
\begin{align*}
&F_1(N^-g, (n_1h, n_2h^{-1}),t)=N^-t^{\ell} gn_2h^{-1},
\\
&F_2(N^-g, (n_1h, n_2h^{-1}),t)=N^-t^{-\ell} gn_1h
\end{align*}
for $g\in G, n_1\in N^+, n_2\in N^-, h, t\in H$.
\begin{lemma}
\label{lem:defining-V2}
The equations
\[
\varphi\circ F_1(x)=\varphi\circ F_2(x)
\qquad(\varphi\in A_1)
\]
for $x\in \tilde{\CB}\times K\times H$ give defining equations of $\tilde{\CV}$, which is reduced at any point of $\tilde{\CV}$.
\end{lemma}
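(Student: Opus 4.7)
The plan is to reinterpret the equations as the pullback of the diagonal ideal: since $\tilde{\CB} = N^-\backslash G$ is quasi-affine with coordinate ring $A_1$, the ideal of the diagonal $\Delta_{\tilde{\CB}}\subset\tilde{\CB}\times\tilde{\CB}$ is generated by $\varphi\otimes 1-1\otimes\varphi$ for $\varphi\in A_1$. Hence the equations in the statement cut out, scheme-theoretically, the preimage $X:=(F_1,F_2)^{-1}(\Delta_{\tilde{\CB}})$ in $\tilde{\CB}\times K\times H$. The task is to prove $X=\tilde{\CV}$ as schemes.

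\smallskip\noindent\emph{Set-theoretic identity.} A direct group-theoretic manipulation at a point $v=(N^-g,(n_1h,n_2h^{-1}),t)$ gives
\[
N^-t^\ell g n_2 h^{-1}=N^-t^{-\ell}gn_1h \iff t^{-\ell}gn_1h^2n_2^{-1}g^{-1}t^{-\ell}\in N^- \iff g\kappa(k)g^{-1}\in t^{2\ell}N^-,
\]
using that $H$ normalizes $N^-$ and that $\kappa((n_1h,n_2h^{-1}))=n_1h^2n_2^{-1}$. Since $A_1$ separates points of $\tilde{\CB}$, this identifies $X_{\mathrm{red}}$ with $\tilde{\CV}$.

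\smallskip\noindent\emph{Reducedness via transversality.} To conclude $X=\tilde{\CV}$ as schemes, I will check that $(F_1,F_2)$ is transverse to $\Delta_{\tilde{\CB}}$ at every point of $\tilde{\CV}$; equivalently that the map
\[
dF_1-dF_2\colon T_v(\tilde{\CB}\times K\times H)\to T_{F_1(v)}\tilde{\CB}=\Gg/\Ad(y_0^{-1})\Gn^-
\]
is surjective at every $v=(N^-g_0,(n_1^0h_0,n_2^0h_0^{-1}),t_0)\in\tilde{\CV}$, where $y_0=t_0^\ell g_0n_2^0h_0^{-1}$. It suffices to verify surjectivity already on the sub-tangent space coming from $K$, parametrised by triples $(X,Y,H)\in\Gn^+\oplus\Gn^-\oplus\Gh$ via $n_1=n_1^0\exp(sX)$, $n_2=n_2^0\exp(sY)$, $h=h_0\exp(sH)$. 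Writing $F_i(s)=N^-y_0\exp(s\beta_i)+O(s^2)$ and carrying out the conjugation bookkeeping in $\Gg$, one finds
\[
\beta_1-\beta_2\big|_{\xi_g=0,\,\delta t=0}=\Ad(h_0)Y-\Ad(h_0^{-1})X-2H.
\]
Since $\Ad(h_0)$ preserves $\Gn^-$ and $\Ad(h_0^{-1})$ preserves $\Gn^+$, varying $Y$, $X$, $H$ independently yields all of $\Gn^-$, $\Gn^+$, $\Gh$ respectively, hence the full Lie algebra $\Gg$, which surjects onto $\Gg/\Ad(y_0^{-1})\Gn^-$. Therefore transversality holds at every $v\in\tilde{\CV}$, so $X$ is smooth at every such $v$, hence reduced there; combined with $X_{\mathrm{red}}=\tilde{\CV}$ and the dimension equality $\dim\tilde{\CV}=2\dim\tilde{\CB}$ supplied by Lemma \ref{lem:tilde-V}, we conclude $X=\tilde{\CV}$ as schemes.

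\smallskip\noindent\emph{Main obstacle.} The only delicate point is the differential computation producing the formula for $\beta_1-\beta_2$: one must correctly pull apart $F_1(s)$ and $F_2(s)$ into a product of exponentials around the common base point $N^-y_0$, and in particular note that although the two maps output $N^-y_0$ and $N^-y_0'$ with $y_0'\in N^-y_0$, the conjugation action of $N^-$ preserves $\Gn^-$, so the quotient $\Gg/\Ad(y_0^{-1})\Gn^-=\Gg/\Ad(y_0'^{-1})\Gn^-$ is unambiguous and the comparison makes sense. Once this is set up, the key simplification is that the $K$-directions alone already give surjectivity, so no further information about the $\tilde{\CB}$- and $H$-directions is needed.
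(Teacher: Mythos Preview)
Your argument is correct. Both you and the paper begin by recognising the equations as the pullback of the diagonal ideal of $\tilde{\CB}\times\tilde{\CB}$, so the scheme in question is the fibre product $(\tilde{\CB}\times K\times H)\times_{\tilde{\CB}\times\tilde{\CB}}\Delta_{\tilde{\CB}}$. From here the two routes diverge. The paper shows that $(F_1,F_2)$ is a \emph{smooth morphism everywhere} by factoring it as a composite $\gamma\circ\beta$ (after an isomorphism $\alpha$), where $\beta$ reduces to the smoothness of $(n_1,n_2,h)\mapsto n_2^{-1}h^2 n_1$ and $\gamma$ is an action map for $\tilde G=G\times H$ on $\tilde{\CB}$. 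You instead prove the weaker, but sufficient, statement that $(F_1,F_2)$ is \emph{transverse to the diagonal at points of $\tilde{\CV}$}, by computing the differential directly and noting that the $K$-directions already surject onto $T_p\tilde{\CB}$ via $(X,Y,H)\mapsto \Ad(h_0)Y-\Ad(h_0^{-1})X-2H$. Your route avoids the factorisation bookkeeping and gives a one-line surjectivity check; the paper's route yields a stronger conclusion (global smoothness of the map) and sidesteps the differential computation entirely. Both are clean; your remark that $K$-directions alone suffice is a pleasant simplification not made explicit in the paper.
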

\begin{proof}
Since $\tilde{\CB}$ is quasi-affine, the equations
\[
\varphi(x)=\varphi(y)
\qquad(\varphi\in A_1)
\]
for $(x,y)\in \tilde{\CB}\times \tilde{\CB}$ give defining equations of the diagonal subvariety
\[
(\tilde{\CB}\times \tilde{\CB})_{\rm diag}
=\{(x,x)\mid x\in\tilde{\CB}\}
\]
of $\tilde{\CB}\times \tilde{\CB}$.
Hence by the cartesian diagram
\[
\begin{CD}
\tilde{\CV}@>>>(\tilde{\CB}\times \tilde{\CB})_{\rm diag}\\
@VVV@VVV\\
\tilde{\CB}\times K\times H
@>>{(F_1,F_2)}>
\tilde{\CB}\times \tilde{\CB}
\end{CD}
\]
it is sufficient to show that $(F_1,F_2):
\tilde{\CB}\times K\times H
\to
\tilde{\CB}\times \tilde{\CB}$
is a smooth morphism.
Define 
$\alpha:\tilde{\CB}\times K\times H\to \tilde{\CB}\times K\times H$, 
$\beta:\tilde{\CB}\times K\times H\to \tilde{\CB}\times G\times H$ and 
$\gamma:\tilde{\CB}\times G\times H\to \tilde{\CB}\times \tilde{\CB}$ by
\begin{align*}
&\alpha(N^-g, (n_1h,n_2h^{-1}),t)
=(N^-t^{\ell}gn_2h^{-1},(n_1h,n_2h^{-1}),t),\\
&\beta(N^-g, (n_1h,n_2h^{-1}),t)
=(N^-g,hn_2^{-1}n_1h,t^2),\\
&\gamma(N^-g, x,t)
=(N^-g,N^-t^{-\ell}gx)
\end{align*}
for $g, x\in G$, $n_1\in N^+$, $n_2\in N^-$, $h, t\in H$.
Let us show that $\beta$ is smooth.
For that it is sufficient to show that $N^+\times N^-\times H
\ni(n_1,n_2,h)\mapsto hn_2^{-1}n_1h\in G$ is smooth.
This morphism is a composite of an isomorphism 
$N^+\times N^-\times H
\ni(n_1,n_2,h)
\mapsto
(h^{-1}n_1h,hn_2h^{-1},h)\in
N^+\times N^-\times H$ and
a smooth morphism
$N^+\times N^-\times H
\ni(n_1,n_2,h)
\mapsto
n_2^{-1}h^2n_1\in
G$.
Hence $\beta$ is smooth.
Then by the cartesian diagram
\[
\begin{CD}
\tilde{\CB}\times K\times H@>
{(F_1,F_2)}>>\tilde{\CB}\times \tilde{\CB}\\
@V{\alpha}VV@VV{\id}V\\
\tilde{\CB}\times K\times H
@>>{\gamma\circ\beta}>
\tilde{\CB}\times \tilde{\CB}
\end{CD}
\]
and the smoothness of $\beta$ it is sufficient to show that $\gamma$ is smooth.
Since the group $\tilde{G}=G\times H$ acts on $\tilde{\CB}$ from the right by $N^-g\cdot(x,t)=N^-t^{-\ell}gx$, we can identify $\tilde{\CB}$ with $\tilde{N}^-\backslash\tilde{G}$, where $\tilde{N}^-=\{(x,t)\in G\times H\mid t^{-\ell}x\in N^-\}$.
Under this identification $\gamma:(\tilde{N}^-\backslash\tilde{G})\times\tilde{G}\to
(\tilde{N}^-\backslash\tilde{G})\times(\tilde{N}^-\backslash\tilde{G})$ is given by
$\gamma(\tilde{N}^-\tilde{x},\tilde{g})=(\tilde{N}^-\tilde{x},\tilde{N}^-\tilde{x}\tilde{g})$, and hence the assertion is clear.
\end{proof}
By Lemma \ref{lem:defining-V}, Lemma \ref{lem:defining-V2} we have a sequence
\[
\CO_{\tilde\CV}\to\tilde{\CZ}'_\zeta\to\tilde{\CZ}_\zeta
\]
of surjective homomorphisms of $\CO_{\tilde{\CB}\times K\times H}$-algebras.
Hence in order to prove Theorem \ref{thm:center} it is sufficient to show that $\CO_{\tilde\CV}\to\tilde{\CZ}_\zeta$ is an isomorphism.

\subsection{}
By De Concini-Procesi \cite[11.7]{DP} the center $Z$ of $E_\zeta^{(\ell)}$ is endowed with a Poisson algebra structure by
\begin{equation}
\label{eq:Poisson:E}
\{\overline{a},\overline{b}\}
=\overline{[a,b]/\ell(q^{\ell}-q^{-\ell})}
\qquad(a, b\in E_\BA^{(\ell)}, \overline{a}, \overline{b}\in Z).
\end{equation}
Here 
\[
E_\BA^{(\ell)}=\bigoplus_{\lambda\in\Lambda^+}
A_\BA(\ell\lambda)\otimes U_\BA\otimes\BA[\Lambda]
\subset E_\BA.
\]
Note that 
$ZE_\zeta^{(\ell)}$ is a subalgebra of $Z$.
We will show that $ZE_\zeta^{(\ell)}$ is a Poisson subalgebra of $Z$.

We endow $C_\BA\otimes_\BA  U_\BA$ with an $\BA$-algebra structure such that $C_\BA\otimes 1$, $1\otimes U_\BA$ are subalgebras naturally isomorphic to $C_\BA$, $U_\BA$ respectively, and 
\[
(1\otimes u)(\varphi\otimes 1)=
\sum_{(u)}u_{(0)}\cdot\varphi\otimes u_{(1)}\qquad
(u\in U_\BA, \varphi\in C_\BA).
\]
Then the center $Z(C_\zeta\otimes U_\zeta)$ of $C_\zeta\otimes U_\zeta$ is endowed with a Poisson algebra structure by
\begin{equation}
\label{eq:Poisson:CU}
\{\overline{a},\overline{b}\}
=\overline{[a,b]/\ell(q^{\ell}-q^{-\ell})}
\quad(a, b\in C_\BA\otimes_\BA  U_\BA, \overline{a}, \overline{b}\in Z(C_\zeta\otimes U_\zeta)).
\end{equation}
Similarly to Lemma \ref{lem:center}
we see that $\BC[G]\otimes Z_{Fr}(U_\zeta)$ is a subalgebra of $Z(C_\zeta\otimes U_\zeta)$.
We first give a description of the Poisson bracket of $\BC[G]\otimes Z_{Fr}(U_\zeta)$.

Denote by $\Gk$  the Lie algebra of $K$.
We identify $K$ with a subgroup of $G\times G$ and regard $\Gk$ as a subalgebra of $\Gg\oplus \Gg$.
Namely,
\[
\Gk=\{(h+a,-h+b)\mid h\in\Gh, a\in\Gn^+, b\in\Gn^-\}.
\]
Denote by $S$ the diagonal subgroup $\{(g,g)\mid g\in G\}$ of $G\times G$.
Then its Lie algebra $\Gs$ is given by 
\[
\Gs=\{(x,x)\mid x\in \Gg\}
\subset\Gg\oplus\Gg.
\]
In particular, we have $\Gg\oplus\Gg=\Gk\oplus\Gs$.
We sometimes identify $S$ with $G$ by 
$(g,g)\leftrightarrow g$.
Define a symmetric bilinear form 
$\tilde{\epsilon}$ on  $\Gg\oplus\Gg$ by
\[
\tilde{\epsilon}((x_1,x_2),(y_1,y_2))
=\epsilon(x_1,y_1)-\epsilon(x_2,y_2)
\qquad(x_1, x_2, y_1, y_2\in\Gg),
\]
where $\epsilon:\Gg\times\Gg\to\BC$ is the invariant symmetric bilinear form on $\Gg$ whose restriction to $\Gh\times\Gh$ induces the bilinear form \eqref{eq:killing} on $\Gh^*$.
Then $\tilde{\epsilon}|\Gk\times\Gk$ and $\tilde{\epsilon}|\Gs\times\Gs$ are identically zero, and $\tilde{\epsilon}|\Gk\times\Gs$ is non-degenerate.
In other words $(\Gg\oplus\Gg,\Gk,\Gs)$ is a Manin triple with respect to $\tilde{\epsilon}$. 
In particular, $\BC[K]$ and $\BC[S]$ are Poisson Hopf algebras
(see Drinfeld \cite{D}, De Concini-Procesi \cite{DP}).
We will sometimes identify $\Gg^*$ and $\Gk^*$ with $\Gk$ and $\Gg$ respectively via the non-degenerate pairing $\tilde{\epsilon}|\Gk\times\Gs$ and the identification $\Gg\cong\Gs$.

In general let $A$ be an algebraic group over $\BC$ with Lie algebra $\Ga$.
For $a\in\Ga$ we define vector fields $L_a$, $R_a$ on $A$ by
\begin{align*}
&(L_af)(g)=\frac{d}{dt}f(g\exp(ta))|_{t=0}\qquad
(g\in A, f\in\CO_{A,g}),\\
&(R_af)(g)=\frac{d}{dt}f(\exp(-ta)g)|_{t=0}\qquad
(g\in A, f\in\CO_{A,g}).
\end{align*}
For $\xi\in\Ga^*$ we define 1-forms 
$L^*_\xi$, $R^*_\xi$ on $A$ by
\[
\langle L_a, L^*_\xi\rangle=
\langle R_a, R^*_\xi\rangle=\langle a,\xi\rangle\qquad
(a\in\Ga).
\]
By De Concini-Lyubashenko \cite{DL}, De Concini-Procesi \cite{DP},  Gavarini \cite{Gav}, and Tanisaki \cite{T3} we have the following.
\begin{proposition}
\label{prop:Poisson1}
$\BC[G]\otimes Z_{Fr}(U_\zeta)$ is closed under the Poisson bracket \eqref{eq:Poisson:CU}.
More precisely we have the following.
\begin{itemize}
\item[\rm(i)]
$\BC[G]$ is closed under the Poisson bracket \eqref{eq:Poisson:CU}.
Moreover, the isomorphism $\BC[G]\cong\BC[S]$ induced by the identification $G\cong S$ preserves the Poisson structures,
where $\BC[S]$ is regarded as a Poisson algebra via the Manin triple $(\Gg\oplus\Gg,\Gk,\Gs)$.
Namely, the Poisson tensor 
$\delta\in\bigwedge^2TG$ 
is given by
\begin{align*}
&\delta_g((L^*_\eta)_g,(L^*_{\eta'})_g)
=\tilde{\epsilon}(p_\Gs(\widetilde{\Ad}(g)(\eta)),\widetilde{\Ad}(g)(\eta')),\\
&\delta_g((R^*_\eta)_g,(R^*_{\eta'})_g)
=-\tilde{\epsilon}(p_\Gs(\widetilde{\Ad}(g^{-1})(\eta)),\widetilde{\Ad}(g^{-1})(\eta'))
\end{align*}
for $g\in G$, $\eta, \eta'\in\Gk\cong\Gg^*$.
Here, $\widetilde{\Ad}:G\to GL(\Gg\oplus\Gg)$ is the restriction of the adjoint action of $G\times G$ on $\Gg\oplus\Gg$ to $G\cong S$, and $p_\Gs:\Gg\oplus\Gg\to\Gs$ is the projection with respect to the direct sum decomposition $\Gg\oplus\Gg=\Gk\oplus\Gs$.
\item[\rm(ii)]
$Z_{Fr}(U_\zeta)$ is closed under the Poisson bracket \eqref{eq:Poisson:CU}.
Moreover, the isomorphism $Z_{Fr}(U_\zeta)\cong\BC[K]$ $($see \eqref{eq:Fr-center}$)$ preserves the Poisson structures,
where $\BC[K]$ is regarded as a Poisson algebra via the Manin triple $(\Gg\oplus\Gg,\Gk,\Gs)$.
Namely, the Poisson tensor 
$\delta\in\bigwedge^2TK$ 
is given by
\begin{align*}
&\delta_k((L^*_\xi)_k,(L^*_{\xi'})_k)
=\tilde{\epsilon}(p_\Gk(\widetilde{\Ad}(k)(\xi)),\widetilde{\Ad}(k)(\xi')),\\
&\delta_k((R^*_\xi)_k,(R^*_{\xi'})_k)
=-\tilde{\epsilon}(p_\Gk(\widetilde{\Ad}(k^{-1})(\xi)),\widetilde{\Ad}(k^{-1})(\xi'))
\end{align*}
for $k\in K$, $\xi, \xi'\in\Gg\cong\Gk^*$.
Here, $\widetilde{\Ad}:K\to GL(\Gg\oplus\Gg)$ is the restriction of the adjoint action of $G\times G$ on $\Gg\oplus\Gg$ to $K$, and $p_\Gk:\Gg\oplus\Gg\to\Gk$ is the projection with respect to the direct sum decomposition $\Gg\oplus\Gg=\Gk\oplus\Gs$. 
\item[\rm(iii)]
The restriction of the Poisson tensor $\delta\in\bigwedge^2T(G\times K)$ with respect to \eqref{eq:Poisson:CU} to $TG\otimes TK$ is given by 
\[
\delta_{(g,k)}((L^*_\eta)_g,(R^*_\xi)_k)=\tilde{\epsilon}(\xi,\eta)
\]
for
$g\in G, k\in K, \eta\in\Gk\cong\Gg^*, \xi\in\Gg\cong\Gk^*$.
\end{itemize}
\end{proposition}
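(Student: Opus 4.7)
The plan is to treat (i) and (ii) by invoking the cited references and to establish (iii) by a direct semiclassical computation of the mixed bracket. For (i), the Poisson bracket on $\BC[G]$ inherited from $C_\BA$ via \eqref{eq:Poisson:CU} is computed on matrix coefficients of finite-dimensional $U_\BA$-modules using the RTT-type relations derived from the universal $R$-matrix $\CR$. One checks that the commutator $[\tilde\varphi_1,\tilde\varphi_2]$ in $C_\BA$ of any two lifts already lies in $\ell(q^\ell-q^{-\ell})\cdot C_\BA$, and that after division and specialization the resulting bracket on $\BC[G]$ coincides with the Sklyanin structure on the Poisson-Lie group $S\cong G$ associated with the Manin triple $(\Gg\oplus\Gg,\Gk,\Gs)$; this is the content of \cite{DL} and \cite{DP}. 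Part (ii) is the dual assertion, proved in \cite{DP}: under the isomorphism $Z_{Fr}(U_\zeta)\cong\BC[K]$ of \cite{Gav}, the induced Poisson bracket transports to the Poisson-Lie structure on $K$ dual to $G$.

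For part (iii), we fix $\varphi\in\BC[G]\subset C_\zeta$ and $u\in Z_{Fr}(U_\zeta)\subset U_\zeta$, with $\BA$-lifts $\tilde\varphi\in C_\BA$ and $\tilde u\in U_\BA$. In $C_\BA\otimes_\BA U_\BA$ one has
\[
[1\otimes\tilde u,\,\tilde\varphi\otimes 1]
=\sum_{(\tilde u)}(\tilde u_{(0)}\cdot\tilde\varphi)\otimes\tilde u_{(1)}-\tilde\varphi\otimes\tilde u,
\]
and $\{1\otimes u,\,\varphi\otimes 1\}$ equals the image of this commutator divided by $\ell(q^\ell-q^{-\ell})$. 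Since the Poisson bracket is a biderivation in each entry, it suffices to verify the claim for $\tilde u$ running over the algebra generators $\{e_\beta^\ell,\,f_\beta^\ell,\,k_{\ell\lambda}\}$ of $Z_{Fr}(U_\zeta)$. For each such generator one expands $\Delta(\tilde u)$ in $U_\BA\otimes_\BA U_\BA$ using the PBW description of \cite{DK}: modulo higher-order terms the coproduct is grouplike (for instance $\Delta(e_\beta^\ell)\equiv e_\beta^\ell\otimes 1+k_\beta^\ell\otimes e_\beta^\ell$), while the subleading correction, of order $\ell(q^\ell-q^{-\ell})$, supplies the infinitesimal action of a Lie algebra element $\xi\in\Gg$ on $\varphi$ (via Lusztig's Frobenius $\pi$) tensored with a function on $K$ representing $R^*_\xi$ at $k$.

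The main obstacle is the identification of the pairing that emerges from this calculation with the bilinear form $\tilde\epsilon$ of the Manin triple. Concretely, we would match the combinatorial sum of Drinfeld pairings $\tau(x_p,y_p)$ appearing in the correction terms against $\tilde\epsilon$ under the identifications $\Gk=\Gg^*$ and $\Gg=\Gk^*$, and verify, for $\eta\in\Gk$ and $\xi\in\Gg$, that the mixed bracket evaluated on the $1$-forms $L^*_\eta$ on $G$ and $R^*_\xi$ on $K$ yields the scalar $\tilde\epsilon(\xi,\eta)$ independently of the basepoints $g$ and $k$. Once this identification is pinned down on generators, the derivation property extends (iii) to all of $\BC[G]\otimes Z_{Fr}(U_\zeta)$ and simultaneously certifies the closure statement of the proposition; \cite{T3} provides an independent cross-check in a closely related setting.
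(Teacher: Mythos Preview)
The paper does not prove this proposition: it is stated immediately after the sentence ``By De Concini--Lyubashenko \cite{DL}, De Concini--Procesi \cite{DP}, Gavarini \cite{Gav}, and Tanisaki \cite{T3} we have the following,'' with no proof environment, so all three parts are attributed to the literature. Your treatment of (i) and (ii) by citation is therefore exactly what the paper does.

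For (iii) you go further than the paper by sketching a semiclassical computation. The outline is sound in principle---the commutator formula you write is correct, and reducing to the generators $e_\beta^\ell,\,f_\beta^\ell,\,k_{\ell\lambda}$ via the biderivation property is legitimate---but as you candidly note, the sketch stops short at the key identification step. Two points to flag if you want to complete it: first, the coproduct formula $\Delta(e_\beta^\ell)\equiv e_\beta^\ell\otimes 1+k_\beta^\ell\otimes e_\beta^\ell$ is only literally true for simple roots; for general $\beta$ the coproduct of $e_\beta$ already has cross terms, and one must control how these behave after raising to the $\ell$-th power and specializing. Second, the matching of the Drinfeld pairing $\tau$ with $\tilde\epsilon$ requires tracking Gavarini's precise normalization of the isomorphism $Z_{Fr}(U_\zeta)\cong\BC[K]$ (recalled in the paper after \eqref{eq:Fr-center}), not just an abstract identification. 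These are genuine bookkeeping tasks rather than conceptual obstacles, and \cite{T3} indeed carries them out, so citing it as the paper does is the cleanest route.
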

\begin{lemma}
\label{lem:Poisson3}
The Poisson structure of $\BC[G]$ induces a Poisson structure of $\BC[N^-\backslash G]$.
\end{lemma}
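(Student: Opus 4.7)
The goal is to show that $\BC[N^-\backslash G]=\BC[G]^{N^-}$, the subalgebra of regular functions on $G$ invariant under left multiplication by $N^-$, is closed under the Poisson bracket on $\BC[G]$ from Proposition \ref{prop:Poisson1}(i). Conceptually, this is an instance of the standard Poisson--Lie quotient principle (due to Drinfeld, Lu, Weinstein): if $H\subset G$ is a closed subgroup of a Poisson--Lie group and $\mathfrak{h}^{\perp}\subset\Gg^{*}$ is a Lie subalgebra, then $H\backslash G$ inherits a Poisson structure making the projection $G\to H\backslash G$ a Poisson morphism, and the embedding $\BC[H\backslash G]\hookrightarrow\BC[G]$ becomes a homomorphism of Poisson algebras. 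So the plan reduces to verifying the corresponding infinitesimal condition for $H=N^-$.

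The key step is therefore to check that the annihilator $\Gn^{-\perp}\subset\Gg^{*}\cong\Gk$ is a Lie subalgebra. Using the identification $\Gg^{*}\cong\Gk$ supplied by the non-degenerate pairing $\tilde{\epsilon}|_{\Gk\times\Gs}$ together with $\Gs\cong\Gg$, an element $(h+a,\,-h+b)\in\Gk$ (with $h\in\Gh$, $a\in\Gn^{+}$, $b\in\Gn^{-}$) corresponds to the functional $x\mapsto\epsilon(2h+a-b,x)$ on $\Gg$. Since $\epsilon(\Gn^{-},\Gb^{+})=0$ and $\Gb^{+}=\Gh\oplus\Gn^{+}$, the requirement that this functional vanish on $\Gn^{-}$ forces $b=0$; hence
\[
\Gn^{-\perp}\;=\;\{(h+a,\,-h)\mid h\in\Gh,\;a\in\Gn^{+}\}\;\subset\;\Gk .
\]

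It remains to verify closure under the bracket of $\Gk\subset\Gg\oplus\Gg$. A direct calculation gives
\[
\bigl[(h_{1}+a_{1},-h_{1}),\,(h_{2}+a_{2},-h_{2})\bigr]
=\bigl([h_{1},a_{2}]+[a_{1},h_{2}]+[a_{1},a_{2}],\;0\bigr),
\]
whose first component lies in $\Gn^{+}$ and whose second component is $0$; so the result is again of the form $(0+a',-0)$ with $a'\in\Gn^{+}$, and $\Gn^{-\perp}$ is a Lie subalgebra of $\Gk$. The lemma follows.

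The main obstacle is invoking the correct version of the Poisson--Lie quotient theorem in the algebraic setting at hand (rather than the $C^{\infty}$ setting in which it is usually stated). For a self-contained alternative one would verify directly that $R_{X}\{f_{1},f_{2}\}=0$ for every $X\in\Gn^{-}$ and every pair $f_{1},f_{2}\in\BC[G]^{N^-}$, by expanding both sides with the explicit formula for $\delta_{g}$ on left-invariant one-forms from Proposition \ref{prop:Poisson1}(i); the nontrivial part of this computation reduces, after a short manipulation using the $\widetilde{\Ad}$-invariance of $\tilde{\epsilon}$, to exactly the Lie subalgebra property of $\Gn^{-\perp}$ established above.
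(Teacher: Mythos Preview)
Your approach via the Poisson--Lie quotient criterion is legitimate, but the computation of the annihilator $\Gn^{-\perp}\subset\Gk$ is wrong. You claim $\epsilon(\Gn^{-},\Gb^{+})=0$; this is false, since the invariant form pairs $\Gn^{-}$ nondegenerately with $\Gn^{+}\subset\Gb^{+}$ (it is $\epsilon(\Gn^{-},\Gb^{-})$ that vanishes). Redoing the calculation: for $(h+a,-h+b)\in\Gk$ the functional $x\mapsto\epsilon(2h+a-b,x)$ restricted to $x\in\Gn^{-}$ kills the $2h$ and $-b$ terms automatically (since $\epsilon(\Gh,\Gn^{-})=\epsilon(\Gn^{-},\Gn^{-})=0$) and leaves $\epsilon(a,\Gn^{-})$, so the condition forces $a=0$, not $b=0$. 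The correct annihilator is
\[
\Gn^{-\perp}=\{(h,\,-h+b)\mid h\in\Gh,\ b\in\Gn^{-}\}\subset\Gk,
\]
which is again a Lie subalgebra (the bracket of two such elements is $(0,b')$ with $b'\in\Gn^{-}$). So after this correction your argument goes through.

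It is worth comparing with the paper's own proof, which is entirely different and much shorter. The paper never appeals to the Manin triple or to any coisotropy criterion; instead it uses the quantum origin of the bracket directly. The Poisson bracket on $\BC[G]$ is by construction the specialization $\{\overline{a},\overline{b}\}=\overline{[a,b]/\ell(q^{\ell}-q^{-\ell})}$ of commutators in the $\BA$-form $C_{\BA}$. Given $\varphi,\psi\in A_{1}$ one may choose lifts $a,b$ lying in the subalgebra $A_{\BA}\subset C_{\BA}$; then $[a,b]\in A_{\BA}$ automatically, so $\{\varphi,\psi\}\in\BC[G]\cap A_{\zeta}=A_{1}$. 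Your route is a purely classical verification that would apply to the Poisson structure coming from this Manin triple regardless of any quantization; the paper's route uses nothing about the triple but rather the single structural fact that the quantum coordinate ring of $N^{-}\backslash G$ already sits inside $C_{\BA}$ as a subalgebra.
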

\begin{proof}
It is sufficient to show that $A_1$ is closed under the Poisson bracket of $\BC[G]$ given by
\[
\{\overline{a},\overline{b}\}
=\overline{[a,b]/\ell(q^{\ell}-q^{-\ell})}
\qquad(a, b\in C_\BA, \overline{a},\overline{b}\in\BC[G]).
\]
Let $\varphi, \psi\in A_1$.
Then we have 
$\{\varphi,\psi\}=\overline{[a,b]/\ell(q^{\ell}-q^{-\ell})}$
for $a, b\in C_\BA$ such that  $\overline{a}=\varphi$ and $\overline{b}=\psi$.
We may assume $a, b\in A_\BA$.
Then we have $\{\varphi,\psi\}\in\BC[G]\cap A_\zeta=A_1$.
\end{proof}
For $a\in\Gg$ we denote by $\overline{L}_a$ the vector field on $N^-\backslash G$ induced by $L_a$.
Namely, 
\[
(\overline{L}_af)(N^-g)
=
\frac{d}{dt}f(N^-g\exp(ta))|_{t=0}\qquad
(g\in G, f\in\CO_{N^-\backslash G,N^-g}).
\]
Then we have a surjective linear map
\[
\Gg\to T(N^-\backslash G)_{N^-g}
\qquad(a\mapsto(\overline{L}_a)_{N^-g})
\]
with kernel $\Ad(g^{-1})(\Gn^-)$.
Hence we have
\begin{align*}
&T^*(N^-\backslash G)_{N^-g}\\
\cong&\{\eta\in\Gg^*\mid\langle\Ad(g^{-1})(\Gn^-),\eta
\rangle=\{0\}\}\\
\cong&\{\eta\in\Gk\mid
\tilde{\epsilon}(
\Ad(g^{-1})(\Gn^-),\eta
)=\{0\}\}\\
=&\{(y_1,y_2)\in\Gk\mid
\epsilon(
\Ad(g^{-1})(\Gn^-),y_1-y_2
)=\{0\}\}\\
=&\{(y_1,y_2)\in\Gk\mid
y_1-y_2\in\Ad(g^{-1})(\Gb^-)
\}.
\end{align*}
For $N^-g\in N^-\backslash G$ we set 
\begin{align*}
\Gk_{N^-g}=&
\{\eta\in\Gg^*\mid\langle\Ad(g^{-1})(\Gn^-),\eta
\rangle=\{0\}\}\\
=&
\{(y_1,y_2)\in\Gk\mid
y_1-y_2\in\Ad(g^{-1})(\Gb^-)
\}.
\end{align*}
For $\eta\in\Gk_{N^-g}$ we define $\overline{L}^*_\eta\in T^*(N^-\backslash G)_{N^-g}$ by
\[
\langle \overline{L}_a, \overline{L}^*_\eta\rangle=\langle a,\eta\rangle\qquad
(a\in\Gg).
\]

We easily obtain the following from Proposition \ref{prop:Poisson1}.
\begin{proposition}
\label{prop:Poisson2}
$ZE_\zeta^{(\ell)}$ is closed under the Poisson bracket \eqref{eq:Poisson:E}.
Moreover, under the identification
$ZE_\zeta^{(\ell)}\cong\BC[N^-\backslash G]\otimes \BC[K]\otimes\BC[H]$
the corresponding Poisson tensor of the manifold
$(N^-\backslash G)\times K\times H$ 
is given by
\begin{align*}
&\delta_{(N^-g,k,t)}(
\overline{L}^*_\eta,\overline{L}^*_{\eta'})
=\tilde{\epsilon}(p_\Gs(\widetilde{\Ad}(g)(\eta)),\widetilde{\Ad}(g)(\eta'))
&(\eta, \eta'\in\Gk_{N^-g}),
\\
&\delta_{(N^-g,k,t)}(
R^*_\xi,R^*_{\xi'})
=-\tilde{\epsilon}(p_\Gk(\widetilde{\Ad}(k^{-1})(\xi)),\widetilde{\Ad}(k^{-1})(\xi'))
&(\xi, \xi'\in\Gg),
\\
&\delta_{(N^-g,k,t)}(
\overline{L}^*_\eta,R^*_\xi)
=\tilde{\epsilon}(\xi,\eta)
&(\eta\in\Gk_{N^-g}, \xi\in\Gg),\\
&\delta_{(N^-g,k,t)}(\overline{L}^*_\eta,
L^*_\lambda)=
-\tilde{\epsilon}(\Ad(g^{-1})(c_\lambda),\eta)/2\ell
&(\eta\in\Gk_{N^-g}, \lambda\in\Gh^*),
\\
&\delta_{(N^-g,k,t)}
((T^*H)_t,(T^*K)_{k}\oplus(T^*H)_t)=\{0\}.
\end{align*}
Here, $c_\lambda$ for $\lambda\in\Gh^*$ denotes the element of $\Gh$ such that $\mu(c_\lambda)=(\lambda,\mu)$ for any $\mu\in\Gh^*$.
\end{proposition}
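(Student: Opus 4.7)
The statement has two parts: closure of $ZE_\zeta^{(\ell)}$ under the bracket \eqref{eq:Poisson:E}, and the explicit Poisson tensor on $(N^-\backslash G)\times K\times H$. My strategy is to bootstrap from Proposition \ref{prop:Poisson1} and Lemma \ref{lem:Poisson3}, which already handle the $\BC[G]\otimes Z_{Fr}(U_\zeta)$ piece and the descent to $A_1$, and to treat the new factor $\BC[\Lambda]=\BC[H]$ by a short direct computation.

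For closure, note that $e(\lambda)$ commutes with every $u\in U_\zeta$ inside $E_\zeta$, so both $\{\BC[K],\BC[H]\}$ and $\{\BC[H],\BC[H]\}$ vanish. The brackets $\{A_1,A_1\}$ and $\{A_1,\BC[K]\}$ land in $A_1$ and $A_1\otimes\BC[K]$ respectively by Lemma \ref{lem:Poisson3} and (the descent of) Proposition \ref{prop:Poisson1}(iii). The only new ingredient is $\{A_1,\BC[H]\}$: for $\varphi\in A_1(\mu)\subset A_\zeta(\ell\mu)$ the commutation relation $e(\lambda)\varphi=q^{\ell(\lambda,\mu)}\varphi e(\lambda)$ in $E_\BA$ gives
\[
\{\varphi,e(\lambda)\}=\overline{\frac{(1-q^{\ell(\lambda,\mu)})\varphi e(\lambda)}{\ell(q^\ell-q^{-\ell})}}=-\frac{(\lambda,\mu)}{2\ell}\,\varphi\,\theta_\lambda
\]
after specializing $q\mapsto\zeta$, which lies in $A_1\otimes\BC[H]\subset ZE_\zeta^{(\ell)}$.

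For the explicit formulas, the first three are inherited from Proposition \ref{prop:Poisson1}: the formula for $\delta(\overline{L}^*_\eta,\overline{L}^*_{\eta'})$ follows by restricting (i) to $N^-$-invariant functions, with $\Gk_{N^-g}$ being precisely the subspace of cotangent vectors that descend to $T^*(N^-\backslash G)$; the formulas for $\delta(R^*_\xi,R^*_{\xi'})$ and $\delta(\overline{L}^*_\eta,R^*_\xi)$ transfer verbatim from parts (ii) and (iii). The vanishing on $(T^*H)_t$ paired with $(T^*K)_k\oplus(T^*H)_t$ just records the commutativity noted above.

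The only remaining formula, $\delta(\overline{L}^*_\eta,L^*_\lambda)=-\tilde{\epsilon}(\Ad(g^{-1})(c_\lambda),\eta)/(2\ell)$, I would verify by pairing both sides with $(d\varphi)_{N^-g}\otimes(d\theta_\lambda)_t$ for a test $\varphi\in A_1(\mu)$. Using $(d\theta_\lambda)_t=\theta_\lambda(t)(L^*_\lambda)_t$ and characterizing $\eta=(\eta_1,\eta_2)$ by $\langle a,\eta\rangle=(a\varphi)(N^-g)$ for $a\in\Gg$, the desired identity reduces to $\epsilon(\Ad(g^{-1})(c_\lambda),\eta_1-\eta_2)=(\lambda,\mu)\varphi(N^-g)$. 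This follows from $g\exp(s\Ad(g^{-1})c_\lambda)=\exp(sc_\lambda)g$ together with the fact that $H$ normalizes $N^-$ and the weight rule $\varphi(\exp(sc_\lambda)x)=e^{s(\lambda,\mu)}\varphi(x)$; differentiating at $s=0$ gives the identity and matches the Poisson bracket computed above. The only real obstacle is the careful bookkeeping of conventions (left vs.\ right actions, the identifications $\Gg^*\cong\Gk$ and $\Gk^*\cong\Gg$ induced by $\tilde{\epsilon}$, signs, and the limit $q\to\zeta$); no conceptually new input beyond Proposition \ref{prop:Poisson1} is required.
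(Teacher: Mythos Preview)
Your approach is correct and is essentially the same as the paper's: the paper's entire proof reads ``We easily obtain the following from Proposition~\ref{prop:Poisson1},'' and you have spelled out exactly what that means---restricting (i), (ii), (iii) to $N^-\backslash G$ via Lemma~\ref{lem:Poisson3}, and handling the new $\BC[H]$ factor by the direct commutator computation $e(\lambda)\varphi=q^{\ell(\lambda,\mu)}\varphi\,e(\lambda)$. Your limit calculation and the verification of the $(\overline{L}^*_\eta,L^*_\lambda)$ formula via $\varphi(N^-\exp(sc_\lambda)g)=e^{s(\lambda,\mu)}\varphi(N^-g)$ are both correct.
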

\begin{proposition}
\label{prop:symplectic}
$\tilde{\CV}$ is a Poisson submanifold of $(N^-\backslash G)\times K\times H$ with non-degenerate Poisson tensor.
In particular $\tilde{\CV}$ is a symplectic manifold.
\end{proposition}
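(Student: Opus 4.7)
The plan is to split the statement into two parts: (a) $\tilde{\CV}$ is a Poisson submanifold of $Y:=(N^-\backslash G)\times K\times H$, and (b) the induced Poisson tensor on $\tilde{\CV}$ is non-degenerate. Combined with the smoothness and connectedness of $\tilde{\CV}$ furnished by Lemma~\ref{lem:tilde-V}, (a) and (b) together show that $\tilde{\CV}$ is a single connected symplectic leaf of $Y$, which is the asserted symplectic manifold.

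For (a), by Lemma~\ref{lem:defining-V2} the ideal sheaf of $\tilde{\CV}$ is generated by the functions $\psi_\varphi:=\varphi\circ F_1-\varphi\circ F_2$ for $\varphi\in A_1$, so it is enough to verify that $\{\psi_\varphi,f\}$ vanishes on $\tilde{\CV}$ for every $\varphi\in A_1$ and $f\in\CO_Y$. The natural route is to express the Hamiltonian vector field $X_{\psi_\varphi}$ in terms of the infinitesimal actions $\overline{L}_a$, $R_\xi$, $L_\lambda$ using the bivector of Proposition~\ref{prop:Poisson2}, then check tangency to $\tilde{\CV}$ by differentiating the defining equation $g\kappa(k)g^{-1}\in t^{2\ell}N^-$. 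Concretely I would compute the differential of $\psi_\varphi$ at $(N^-g,(n_1h,n_2h^{-1}),t)\in\tilde{\CV}$ from the explicit formulas in Lemma~\ref{lem:defining-V}, contract it with the Poisson bivector of Proposition~\ref{prop:Poisson2}, and verify that the result is tangent to the zero locus of $g\kappa(k)g^{-1}t^{-2\ell}\bmod N^-$. The expected cancellations between the $F_1$-contribution (which involves the $B^-$-component of $K$ and a $t^{\ell}$-shift) and the $F_2$-contribution (which involves the $B^+$-component and a $t^{-\ell}$-shift) reflect the symmetry of the Manin triple $(\Gg\oplus\Gg,\Gk,\Gs)$.

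For (b), it is enough to exhibit one point of $\tilde{\CV}$ at which the Poisson tensor restricted to $\tilde{\CV}$ has rank $\dim\tilde{\CV}=2\dim\tilde{\CB}$; by the connectedness of $\tilde{\CV}$ from Lemma~\ref{lem:tilde-V} together with the upper semicontinuity of the dimension of the kernel of a Poisson bivector, a connected Poisson submanifold of the expected dimension whose rank is maximal at one point is a symplectic leaf, so the rank is maximal everywhere. A convenient base point is $v_0=(N^-,(1,1),1)\in\tilde{\CV}$: here the decomposition $T_{v_0}Y=T_{N^-}\tilde{\CB}\oplus T_1 K\oplus T_1 H$ identifies naturally with $(\Gg/\Gn^-)\oplus\Gk\oplus\Gh$, and the bivector formulas of Proposition~\ref{prop:Poisson2} collapse essentially to the non-degenerate pairing $\tilde{\epsilon}|_{\Gk\times\Gs}$; taking the quotient by the conormal of $\tilde{\CV}$, which is spanned by the differentials of the $\psi_\varphi$, should preserve non-degeneracy and yield the required full rank.

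The main obstacle is part (a): the Poisson structure of Proposition~\ref{prop:Poisson2} mixes the three factors $\tilde{\CB}$, $K$, $H$ through the cross-terms $\tilde{\epsilon}(\xi,\eta)$ and $\tilde{\epsilon}(\Ad(g^{-1})c_\lambda,\eta)$, so neither $F_1$ nor $F_2$ is Poisson in any obvious sense and one cannot appeal to a naive preimage-of-diagonal argument. The verification of tangency will hinge on a careful unpacking of these cross-terms against the defining equation $g\kappa(k)g^{-1}\in t^{2\ell}N^-$, and the $t^{\pm\ell}$ asymmetry between $F_1$ and $F_2$ is essential for the cancellations to go through.
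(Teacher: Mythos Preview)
Your argument for (b) has a genuine gap. Lower semicontinuity of the rank of a bivector (equivalently, upper semicontinuity of $\dim\ker$) tells you only that the locus in $\tilde{\CV}$ where the induced Poisson tensor is non-degenerate is \emph{open}, not that it is all of $\tilde{\CV}$. A connected Poisson manifold whose bivector has full rank at one point need not be a single symplectic leaf: take $\BC^2$ with bivector $x\,\partial_x\wedge\partial_y$, which has rank $2$ off the line $x=0$ and rank $0$ on it. So checking non-degeneracy at the single point $v_0=(N^-,(1,1),1)$ is not sufficient; you must establish it at every point of $\tilde{\CV}$.

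The paper does not separate (a) and (b). It proves instead the single pointwise identity
\[
\rad(\delta_{(N^-g,k,t)})=\bigl((T\tilde{\CV})_{(N^-g,k,t)}\bigr)^\perp
\qquad\text{for every }(N^-g,k,t)\in\tilde{\CV},
\]
which yields both statements at once: the conormal space is annihilated by the Poisson map $T^*Y\to TY$ (so the ideal of $\tilde{\CV}$ is a Poisson ideal), and the induced map $T^*\tilde{\CV}\to T\tilde{\CV}$ is surjective, hence bijective. The verification is a direct computation at an arbitrary point: using the formulas of Proposition~\ref{prop:Poisson2} together with the defining condition $g\kappa(k)g^{-1}\in t^{2\ell}N^-$, one solves explicitly for $\rad(\delta_{(N^-g,k,t)})$ and finds it is parametrized by pairs $(\lambda,z)\in\Gh^*\times\Gn^-$, hence of dimension $\dim\tilde{\CB}$, and then checks that these elements pair to zero against an explicit spanning set for $T\tilde{\CV}$. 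Your outline for (a) is compatible with this computation but omits it; the substantive point is that the very same computation already delivers (b) at every point, so the semicontinuity shortcut is both invalid and unnecessary.
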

\begin{proof}
Denote by 
\[
\rad(\delta_{(N^-g,k,t)})
\subset
(T^*(N^-\backslash G))_{N^-g}\oplus
(T^*K)_k\oplus (T^*H)_t
\]
the radical of the Poisson tensor $\delta$ at $(N^-g,k,t)
\in(N^-\backslash G)\times K\times H$.
Then it is sufficient to show $((T\tilde{\CV})_{(N^-g,k,t)})^\perp
=\rad(\delta_{(N^-g,k,t)})$
for any $(N^-g,k,t)\in\tilde{\CV}$.
Here, $(T\tilde{\CV})_{(N^-g,k,t)}$ is identified with a subspace of $(T(N^-\backslash G))_{N^-g}\oplus
(TK)_k\oplus (TH)_t$, and $((T\tilde{\CV})_{(N^-g,k,t)})^\perp$ denotes the subspace of $(T^*(N^-\backslash G))_{N^-g}\oplus
(T^*K)_k\oplus (T^*H)_t$ which is orthogonal to $(T\tilde{\CV})_{(N^-g,k,t)}$ with respect to the canonical pairing between the tangent and the cotangent spaces.

Let us first compute $\rad(\delta_{(N^-g,k,t)})$ using Proposition \ref{prop:Poisson2}.
Assume $y=\overline{L}^*_\eta+R^*_\xi+L^*_\lambda\in
\rad(\delta_{(N^-g,k,t)})$ for
$\eta=(\eta_1,\eta_2)\in\Gk_{N^-g}$, $\xi\in \Gg$, $\lambda\in\Gh$.
Note that the condition for $(\eta_1,\eta_2)\in\Gk$ to be contained in $\Gk_{N^-g}$ is equivalent to 
\begin{equation}
\label{eq:prop:symplectic:1}
\epsilon(\Ad(g)(\eta_1-\eta_2),\Gn^-)=\{0\}.
\end{equation}
By $\delta_{(N^-g,k,t)}(y,R^*_{\xi'})=0$ for any $\xi'\in\Gg$
we have
\begin{equation}
\label{eq:prop:symplectic:2}
\widetilde{\Ad}(k^{-1})(\eta)-\widetilde{\Ad}(k^{-1})(\xi)\in\Gs.
\end{equation}
By $\delta_{(N^-g,k,t)}(y,L^*_\mu)=0$ for any $\mu\in\Gh^*$
we have
\begin{equation}
\label{eq:prop:symplectic:3}
\epsilon(\Ad(g)(\eta_1-\eta_2),\Gh)=\{0\}.
\end{equation}
By $\delta_{(N^-g,k,t)}(y,\overline{L}^*_{\eta'})=0$ for any $\eta'\in\Gk_{N^-g}$
we have
\begin{equation}
\label{eq:prop:symplectic:4}
p_\Gs(\widetilde{\Ad}(g)(\eta))-\Ad(g)(\xi)+\frac{1}{2\ell}c_\lambda
\in\Gn^-.
\end{equation}
By \eqref{eq:prop:symplectic:1} and \eqref{eq:prop:symplectic:3} we have
\begin{equation}
\label{eq:prop:symplectic:5}
\Ad(g)(\eta_1-\eta_2)\in\Gn^-.
\end{equation}
By
\[
\widetilde{\Ad}(g)(\eta)
=(\Ad(g)(\eta_1),\Ad(g)(\eta_1))+(0,-\Ad(g)(\eta_1-\eta_2))
\]
and \eqref{eq:prop:symplectic:5} we have
$p_\Gs(\tilde{\Ad}(g)(\eta))=\Ad(g)(\eta_1)$.
Hence \eqref{eq:prop:symplectic:4} is equivalent to 
\begin{equation}
\label{eq:prop:symplectic:6}
\xi=\eta_1+\Ad(g^{-1})(c_\lambda/2\ell+z)\qquad
(z\in\Gn^-).
\end{equation}
Substituting \eqref{eq:prop:symplectic:6} to \eqref{eq:prop:symplectic:2} we obtain
\begin{equation}
\label{eq:prop:symplectic:7}
\Ad(g\kappa(k)^{-1}g^{-1})(c_\lambda/2\ell+z)=
c_\lambda/2\ell+z+\Ad(g)(\eta_1-\eta_2).
\end{equation}
In the case $(N^-g,k,t)\in\tilde{\CV}$ we have $g\kappa(k)^{-1}g^{-1}\in t^{-2\ell}N^-$ and hence 
\[
\Ad(g\kappa(k)^{-1}g^{-1})(c_\lambda/2\ell+z)
-(c_\lambda/2\ell+z)\in\Gn^-.
\]
Therefore, for each $\lambda\in\Gh^*$ and $z\in\Gn^-$ there exists unique $\eta=(\eta_1,\eta_2)\in\Gk$ satisfying \eqref{eq:prop:symplectic:5}, \eqref{eq:prop:symplectic:7}.
We conclude that 
$\rad(\delta_{(N^-g,k,t)})$ 
for $(N^-g,k,t)\in\tilde{\CV}$ consists of 
\begin{equation}
\label{eq:prop:symplectic:7a}
y(\lambda,z)=\overline{L}^*_\eta+R^*_\xi+L^*_\lambda
\qquad(\lambda\in\Gh^*, z\in\Gn^-),
\end{equation}
where $\eta=(\eta_1,\eta_2)\in\Gk_{N^-g}$ and $\xi\in\Gg$ are uniquely determined by
\eqref{eq:prop:symplectic:7} and \eqref{eq:prop:symplectic:6}.
In particular we have $\dim \rad(\delta_{(N^-g,k,t)})=\dim\tilde{\CB}$.
Since the codimension of $\tilde{\CV}$ in $\tilde{\CB}\times K\times H$ is also $\dim\tilde{\CB}$, we have only to show 
\[
\langle
\rad(\delta_{(N^-g,k,t)}),
(T\tilde{\CV})_{(N^-g,k,t)}\rangle=\{0\}.
\]
By the description of $\tilde{\CV}$ 
as a covering of an open subset of $\CW$
(see proof of Lemma \ref{lem:tilde-V} for the notation)
we see easily that $(T\tilde{\CV})_{(N^-g,k,t)}$ is spanned by the tangent vectors
$\overline{L}_a+R_b\,\,
(a\in\Gg, b=(b_1,b_2)\in\Gk)$
with
\begin{equation}
\label{eq:prop:symplectic:8}
\Ad(\kappa(k))(a)-a-\Ad(\kappa(k))(b_2)+b_1=0,
\end{equation}
and 
$R_{b'}+L_c\,\,(b'=(b'_1,b'_2)\in\Gk, c\in\Gh)$ with
\begin{equation}
\label{eq:prop:symplectic:9}
\Ad(g)(b'_1-\Ad(\kappa(k))(b'_2))-2\ell c\in\Gn^-.
\end{equation}
Take $y(\lambda,z)$ as in \eqref{eq:prop:symplectic:7a},
and set $u=c_\lambda/2\ell+z$.
For $\overline{L}_a+R_b$ satisfying \eqref{eq:prop:symplectic:8} we have
\begin{align*}
&\langle y(\lambda,z),\overline{L}_a+R_b\rangle\\
=&\epsilon(a,\eta_1-\eta_2)+\epsilon(b_1-b_2,\xi)\\
=&\epsilon(a,\Ad(\kappa(k)^{-1}g^{-1})(u)-\Ad(g^{-1})(u))
+\epsilon(b_1-b_2,\eta_1+\Ad(g^{-1})(u))\\
=&\epsilon(\Ad(\kappa(k))(a)-a,\Ad(g^{-1})(u))
+\epsilon(b_1-b_2,\eta_1+\Ad(g^{-1})(u))\\
=&\epsilon(-b_1+\Ad(\kappa(k))(b_2),\Ad(g^{-1})(u))
+\epsilon(b_1-b_2,\eta_1+\Ad(g^{-1})(u))\\
=&\epsilon(\Ad(\kappa(k))(b_2)-b_2,\Ad(g^{-1})(u))
+\epsilon(b_1-b_2,\eta_1)\\
=&\epsilon(b_2,\Ad(\kappa(k)^{-1}g^{-1})(u)-\Ad(g^{-1})(u))
+\epsilon(b_1-b_2,\eta_1)\\
=&\epsilon(b_2,\eta_1-\eta_2)
+\epsilon(b_1-b_2,\eta_1)\\
=&-\epsilon(b_2,\eta_2)
+\epsilon(b_1,\eta_1)
\end{align*}
by \eqref{eq:prop:symplectic:6}, \eqref{eq:prop:symplectic:7}.
We have $b_1\in h+\Gn^+, b_2\in-h+\Gn^-, \eta_1\in h'+\Gn^+, b_2\in -h'+\Gn^-$ for some $h, h'\in\Gh$ and hence
\[
\langle y(\lambda,z),\overline{L}_a+R_b\rangle
=
-\epsilon(b_2,\eta_2)
+\epsilon(b_1,\eta_1)
=-\epsilon(h,h')+\epsilon(h,h')=0.
\]
For $R_{b'}+L_c$ satisfying \eqref{eq:prop:symplectic:9} we have
\begin{align*}
&
\langle y(\lambda,z),R_{b'}+L_c\rangle\\
=&\epsilon(\xi,b_1'-b_2')+\lambda(c)\\
=&\epsilon(\eta_1+\Ad(g^{-1})(u),b_1'-b_2')+\lambda(c)\\
=&\epsilon(u,\Ad(g)(b_1'-b_2'))+
\epsilon(\eta_1,b_1'-b_2')+\lambda(c)
\end{align*}
by \eqref{eq:prop:symplectic:6}.
By $u\in\Gb^-$ we have $\epsilon(u,\Gn^-)=\{0\}$.
Hence by \eqref{eq:prop:symplectic:9}, \eqref{eq:prop:symplectic:7} we have
\begin{align*}
&
\langle y(\lambda,z),R_{b'}+L_c\rangle\\
=&\epsilon(u,\Ad(g\kappa(k))(b_2')-\Ad(g)(b_2'))
-\epsilon(c_\lambda/2\ell,2\ell c)+
\epsilon(\eta_1,b_1'-b_2')+\lambda(c)\\
=&\epsilon(\Ad(\kappa(k^{-1})g^{-1})(u)-\Ad(g^{-1})(u),b_2')+
\epsilon(\eta_1,b_1'-b_2')\\
=&\epsilon(\eta_1-\eta_2,b_2')+
\epsilon(\eta_1,b_1'-b_2')\\
=&-\epsilon(\eta_2,b_2')+
\epsilon(\eta_1,b_1')\\
=&0.
\end{align*}
The proof is complete.
\end{proof}
 \begin{remark}
 {\rm 
 We can similarly show that $\CV$ is a connected symplectic manifold with
 $\dim\CV=2\dim\CB$ (see \cite{T3}).
 }
 \end{remark}

\subsection{}
Let us finish the proof of Theorem \ref{thm:center}. 
Set $J_1=\Ker(ZE_\zeta^{(\ell)}\to {ZD}_\zeta^{(\ell)})$.
Since $ZE_\zeta^{(\ell)}\to {ZD}_\zeta^{(\ell)}$ is a morphism of Poisson algebras, 
$J_1$ is a Poisson ideal of $ZE_\zeta^{(\ell)}$.
Hence $\sqrt{J_1}$ is also a Poisson ideal.
It follows that the support of $\tilde{\CZ}_\zeta$ is a Poisson subvariety of $\tilde{\CB}\times K\times H$ contained in $\tilde\CV$. 
By Lemma \ref{lem:tilde-V} and Proposition \ref{prop:symplectic}
$\tilde\CV$ contains no non-empty Poisson subvariety except for $\tilde\CV$ itself.
Therefore, we have only to show  $\tilde{\CZ}_\zeta\ne0$.
Since $A_1$ contains no non-trivial zero divisors, 
the composit of 
$A_1\to D_\zeta\to\End_\BC(A_\zeta)$ is injective, where $A_1\to D_\zeta$ is given by $\varphi\mapsto\ell_\varphi$.
Hence $A_1\to{ZD}_\zeta^{(\ell)}$ is also injective.
It follows that $\tilde{\CZ}_\zeta\supset\CO_{\tilde{\CB}}\ne0$.
The proof of Theorem \ref{thm:center} is now complete.

\section{Azumaya properties}
\label{sec:Azumaya}
\subsection{}
By Lemma \ref{lem:center} and Theorem \ref{thm:center} 
$Fr_*\DD'_{\CB_\zeta}$ and $Fr_*\DD_{\CB_\zeta}$
are sheaves of $\CO_{\CB}$-algebras containing 
$p_{\CV*}\CO_\CV$ as a central subalgebra.
Hence we can consider their localizations 
\[
\tilde{\DD}'_{\CB_\zeta}
=p_{\CV}^{-1}Fr_*\DD'_{\CB_\zeta}
\otimes_{p_{\CV}^{-1}p_{\CV*}\CO_\CV}
\CO_\CV,\qquad
\tilde{\DD}_{\CB_\zeta}
=p_{\CV}^{-1}Fr_*\DD_{\CB_\zeta}
\otimes_{p_{\CV}^{-1}p_{\CV*}\CO_\CV}
\CO_\CV.
\]
on $\CV$.
They are $\CO_\CV$-algebras, and we have a natural $\CO_{\CV}$-algebra homomorphism
$
\tilde{\DD}'_{\CB_\zeta}
\to
\tilde{\DD}_{\CB_\zeta}.
$
The first purpose of this section is to prove the following.
\begin{theorem}
\label{thm:Azumaya}
We have 
\[
\tilde{\DD}'_{\CB_\zeta}
\cong
\tilde{\DD}_{\CB_\zeta}.
\]
Moreover, $\tilde{\DD}_{\CB_\zeta}$ is an Azumaya algebra of 
rank $\ell^{2|\Delta^+|}$ on $\CV$.
Namely, $\tilde{\DD}_{\CB_\zeta}$ is locally free as an $\CO_\CV$-module, and for any $v\in\CV$ the fiber $\tilde{\DD}_{\CB_\zeta}(v)$ is isomorphic to the matrix algebra $M_{\ell^{|\Delta^+|}}(\BC)$ as a $\BC$-algebra.
\end{theorem}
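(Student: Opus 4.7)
The plan is to apply the Azumaya criterion of Brown-Gordon \cite{BG}: a sheaf of algebras finite over its (nice enough) center is Azumaya of rank $n^2$ once it is locally generated by $n^2$ sections as a module over the center and has one closed fiber that is a matrix algebra. Theorem \ref{thm:center} identifies $\CZ_\zeta\cong p_{\CV*}\CO_\CV$, so after localization on $\CV$ both $\tilde{\DD}'_{\CB_\zeta}$ and $\tilde{\DD}_{\CB_\zeta}$ are $\CO_\CV$-algebras connected by a surjection. Once both are shown to be Azumaya of rank $\ell^{2N}$, this surjection between Azumaya algebras of equal rank over the same base is automatically an isomorphism, giving the first claim of the theorem for free.

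The core step is to show that $\tilde{\DD}_{\CB_\zeta}$ admits a generating set of cardinality $\ell^{2N}$ over $\CO_\CV$ in a neighborhood of each point of $\CB$. The braid group action of Section 4 lets me transport this question between charts: by Lemma \ref{lem:braid-action-on-D} the automorphism $T_{w^{-1}}^{-1}\star(\bullet)$ carries the localization at $\Theta_e$ to that at $\Theta_w$, and it preserves the central subalgebra $ZD_\zeta^{(\ell)}$ because $\BB$ preserves $Z_{Fr}(U_\zeta)$ and commutes with each $\sigma_\lambda$ (Lemma \ref{lem:braid-D}). Hence it suffices to bound the rank on $\CB_e$. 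There, Proposition \ref{prop:AzetaA1} makes $(\Theta_e^{-1}A_\zeta)(0)$ a free module of rank $\ell^N$ over $(\Theta_e^{-1}A_1)(0)$, and the central subalgebra $\langle e_{\beta_j}^\ell\rangle$ of $U_\zeta^+$ provides an $\ell^N$-basis of $U_\zeta^+$ over its Frobenius center. Multiplying these two families produces $\ell^{2N}$ candidate generators, and the task is to show that modulo the image of $ZD_\zeta^{(\ell)}$ they already span $(\Theta_e^{-1}D_\zeta)(0)$. The $U_\zeta^-$-direction is absorbed by the two expressions for $r_\varphi$ in Lemma \ref{lem:rvarphi}, made quantitative by Proposition \ref{prop:local-formula1}; the $\BC[\Lambda]$-direction is absorbed by Proposition \ref{prop:local-formula2} together with the Harish-Chandra identity \eqref{eq:Har-center-in-D} that rewrites $\deru_z$ for $z\in Z_{Har}(U_\zeta)$ as a polynomial in the $\sigma_{2\lambda}$.

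For the matrix algebra fiber, I will take $v\in\CV$ lying above a point $(k,t)$ with $t$ in the unramified locus $H_{ur}\subset H$, where Brown-Gordon \cite{BG} already established that $Fr_*\DD_{\CB_\zeta,t}$ is Azumaya with fiber $M_{\ell^N}(\BC)$; pulling back through $p_\CV$ supplies the required fiber. The main obstacle I expect is the spanning assertion above: the naive size of a PBW generating set for $(\Theta_e^{-1}E_\zeta)(0)$ over $ZE_\zeta^{(\ell)}$ is much larger than $\ell^{2N}$, and the collapse must be driven by the $\Omega$-relations combined with the defining equations of $\CV$. The cleanest way to verify linear independence of the resulting $\ell^{2N}$ monomials is to specialize to a closed point in the unramified locus, where the known matrix algebra structure forces full rank; independence then propagates to a neighborhood by Nakayama, giving local freeness and completing the input needed for Brown-Gordon.
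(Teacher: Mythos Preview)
Your outline matches the paper's argument closely in two of its three pillars: the braid-group reduction to $\CB_e$ and the $\ell^{2N}$-generation count via Propositions \ref{prop:local-formula1}, \ref{prop:local-formula2}, and \ref{prop:AzetaA1} are exactly what the paper does (in fact the paper proves the generation bound for $\tilde{\DD}'_{\CB_\zeta}$ and then passes to the quotient). There is one terminological slip: the $\sigma_\lambda$'s are already in $ZE_\zeta^{(\ell)}$, so there is nothing to absorb in the ``$\BC[\Lambda]$-direction''; what Proposition \ref{prop:local-formula2} actually controls is $k_\mu\in U_\zeta^0$, and the Harish-Chandra identity plays no role there.

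The genuine gap is the Azumaya step. The criterion you attribute to Brown--Gordon---``locally generated by $n^2$ sections over the center plus one matrix-algebra fiber implies Azumaya everywhere''---is false as stated. A family such as the Clifford algebra $\BC[t]\langle x,y\rangle/(x^2-1,\,y^2-t,\,xy+yx)$ is free of rank $4$ over its center $\BC[t]$ and has fiber $M_2(\BC)$ for $t\neq0$, yet the fiber at $t=0$ is not semisimple. What Brown--Gordon actually gives here is that $U_\zeta\otimes_{Z(U_\zeta)}\CO_{\CV_{ur}}$ is Azumaya on the unramified locus $\CV_{ur}$; combined with your generation bound and the elementary Lemma \ref{lem:Azumaya-generality} (any algebra map from an Azumaya of rank $n^2$ to an algebra with $\leq n^2$ generators and nonzero fibers is an isomorphism), this yields only that $\tilde{\DD}_{\CB_\zeta}|_{\CV_{ur}}$ is Azumaya. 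Your Nakayama argument likewise propagates local freeness only on an open neighborhood of the chosen point, not across all of $\CV$.

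The paper closes this gap with an extra ingredient you are missing: a translation argument. For $\mu\in\Lambda$ one defines $\xi_\mu:\CV\to\CV$ by $(B^-g,k,t)\mapsto(B^-g,k,t_\mu t)$ with $t_\mu\in H$ the $\ell$-torsion point determined by $\theta_\lambda(t_\mu)=\zeta^{(\lambda,\mu)}$, and shows that $\xi_\mu^*\tilde{\DD}_{\CB_\zeta}$ and $\tilde{\DD}_{\CB_\zeta}$ are locally isomorphic over each $\CB_w$ (the isomorphism is conjugation by an element of $\tilde\Theta_w$). Since $\bigcup_{\mu\in\Lambda}\xi_\mu(\CV_{ur})=\CV$, this transports the Azumaya property from $\CV_{ur}$ to the whole of $\CV$. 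Only after this is established does the surjection $\tilde{\DD}'_{\CB_\zeta}\to\tilde{\DD}_{\CB_\zeta}$ become an isomorphism, by comparing the $\ell^{2N}$-element generating set with the known free rank. You need to add this translation step (or an equivalent device covering the non-unramified fibers) to make the argument complete.
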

We need some preliminaries.

\begin{lemma}
\label{lem:T-on-ZE}
For $w\in W$ regard $\Theta_w\subset A_1$ as a subset of $E_\zeta$.
Then with respect to the action of $\BB$ on $E_\zeta$ we have $T_{w^{-1}}^{-1}\star \Theta_{e}=\Theta_{w}$.
Moreover, we have $T\star ZE_\zeta^{(\ell)}=ZE_\zeta^{(\ell)}$ for any $T\in\BB$.
Hence we have also $T\star Z{D'_\zeta}^{(\ell)}=Z{D'_\zeta}^{(\ell)}$ and $T\star ZD_\zeta^{(\ell)}=ZD_\zeta^{(\ell)}$ for any $T\in\BB$.
\end{lemma}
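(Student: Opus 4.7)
The first assertion $T_{w^{-1}}^{-1}\star\Theta_e=\Theta_w$ is obtained by induction on the length of $w$: fixing a reduced expression and iterating Lemma \ref{lem:braid-action-on-D}, each step sends $\Theta_v$ to $\Theta_{vs_i}$ under $T_i^{-1}\star$, the required Bruhat monotonicity $vs_i>v$ being guaranteed by reducedness.

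For the main task $T\star ZE_\zeta^{(\ell)}=ZE_\zeta^{(\ell)}$ it suffices to treat $T=T_i^{\pm 1}$. Since $ZE_\zeta^{(\ell)}=A_1\otimes Z_{Fr}(U_\zeta)\otimes\BC[\Lambda]$, it is generated as a subalgebra by $A_1$, by $\{\deru_u\mid u\in Z_{Fr}(U_\zeta)\}$, and by $\{\sigma_\lambda\mid\lambda\in\Lambda\}$. The last two families are straightforward: Lemma \ref{lem:braid-D} gives $T_i\star\sigma_\lambda=\sigma_\lambda$ and $T_i\star\deru_u=\deru_{T_i(u)}$, while the $\BB$-stability of $Z_{Fr}(U_\zeta)$ (recorded in Section \ref{sec:QE}) keeps $T_i(u)$ in $Z_{Fr}(U_\zeta)$. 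Everything therefore reduces to the inclusion $T_i^{\pm 1}\star A_1\subset ZE_\zeta^{(\ell)}$.

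For this, apply the explicit formula of Lemma \ref{lem:TstarFormula} to $\varphi\in A_1$:
\[
T_i\star\varphi=\sum_n a_{i,n}\cdot T_i(\varphi)\otimes b_{i,n},
\]
with $\sum_n a_{i,n}\otimes b_{i,n}=\exp_{q_i}((q_i-q_i^{-1})k_i^{-1}e_i\otimes f_ik_i)$. Expanding the quantum exponential and collecting the $k_i$'s, the $n$-th summand has $a_{i,n}$ a scalar multiple of $e_i^{(n)}k_i^{-n}$ and $b_{i,n}$ a scalar multiple of $f_i^n k_i^n$. The element $T_i(\varphi)$ still lies in $A_1$ because the braid action on $A_1$ factors through the group-theoretic braid action on $G$ by Lemma \ref{lem:braid-Fr}. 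The action of $U_\zeta^L$ on $A_1$ factors through Lusztig's Frobenius $\pi$, and $\pi(e_i^{(n)})=0$ unless $\ell\mid n$; so only the terms with $\ell\mid n$ survive, and for those $a_{i,n}\cdot T_i(\varphi)$ lies in $A_1$. For those same indices, $f_i^n=(f_i^\ell)^{n/\ell}$ and $k_i^n=k_{\ell(n/\ell)\alpha_i}$ both belong to $Z_{Fr}(U_\zeta)$, so $b_{i,n}\in Z_{Fr}(U_\zeta)$. Hence $T_i\star\varphi\in A_1\otimes Z_{Fr}(U_\zeta)\subset ZE_\zeta^{(\ell)}$. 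The computation for $T_i^{-1}$ is symmetric: the $n$-th term has $a'_{i,n}$ proportional to $f_i^n$ acting through $\pi$, forcing $\ell\mid n$, and the companion $b'_{i,n}$, proportional to $e_i^n$, is then a power of the central element $e_i^\ell$.

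Combining these facts with the commutativity of $ZE_\zeta^{(\ell)}$ (central in $E_\zeta^{(\ell)}$ by Lemma \ref{lem:center}) yields $T_i^{\pm 1}\star ZE_\zeta^{(\ell)}\subset ZE_\zeta^{(\ell)}$; applying $T_i^{\mp 1}$ gives equality, and the full $\BB$-invariance follows by induction on braid word length. The corresponding invariance for ${ZD'}_\zeta^{(\ell)}$ and $ZD_\zeta^{(\ell)}$ is immediate because the canonical surjections $E_\zeta^{(\ell)}\twoheadrightarrow {D'_\zeta}^{(\ell)}\twoheadrightarrow D_\zeta^{(\ell)}$ are $\BB$-equivariant. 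The main technical hurdle is the bookkeeping inside the quantum exponential: one must use the mod-$\ell$ vanishing of $\pi$ on divided powers both to kill the potentially divergent scalar $1/[n]_{q_i}!$ and to guarantee that the surviving $b_{i,n}$'s are expressible in terms of the central elements $f_i^\ell$, $e_i^\ell$, and $k_{\ell\lambda}$.
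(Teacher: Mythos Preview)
Your argument is correct and follows the same route as the paper's proof: reduce to $T_i^{\pm1}\star A_1\subset ZE_\zeta^{(\ell)}$, use Lemma~\ref{lem:braid-Fr} to get $T_i^{\pm1}(\varphi)\in A_1$, and then read off from the explicit formula of Lemma~\ref{lem:TstarFormula} that the surviving terms land in $A_1\otimes Z_{Fr}(U_\zeta)$; the paper compresses this last step to ``we see easily'' while you spell out the mod-$\ell$ bookkeeping. Two small points of hygiene: in $E_\zeta$ the generators are $u$ and $e(\lambda)$ rather than $\partial_u$ and $\sigma_\lambda$, and in the $T_i^{-1}$ case $a'_{i,n}$ is a scalar in $\BA$ times the divided power $f_i^{(n)}$ (the $1/[n]_{q_i}!$ is absorbed by $f_i^n=[n]_{q_i}!\,f_i^{(n)}$, not by $\pi$), but your conclusion is unaffected.
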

\begin{proof}
The first half is already shown in Lemma \ref{lem:braid-action-on-D}.
In view of Lemma \ref{lem:braid-D}
the only non-trivial part is to show $T_i^{\pm1}\star A_1\subset ZE_\zeta^{(\ell)}$ for $i\in I$.
Let $\varphi\in A_1$.
By Lemma \ref{lem:braid-Fr} we have $T_i^{\pm1}(\varphi)\in A_1$.
Then we see easily that $T_i^{\pm1}\star\varphi\in A_1\otimes Z_{Fr}(U_\zeta)$ by Lemma \ref{lem:braid-D} (ii).
\end{proof}
\begin{lemma}
$\tilde{\DD}'_{\CB_\zeta}$ is locally generated by $\ell^{2|\Delta^+|}$ sections.
\end{lemma}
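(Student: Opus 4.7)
The plan is to reduce to the standard open cell $\CV_e = p_\CV^{-1}(\CB_e)$ via the braid group action, and then produce an explicit list of $\ell^{2N}$ local generators. By Lemma \ref{lem:T-on-ZE}, the braid group acts on $D'_\zeta$ preserving $ZD'^{(\ell)}_\zeta$ and satisfies $T_{w^{-1}}^{-1}\star \Theta_e = \Theta_w$. Hence for each $w \in W$ the localization isomorphism $\Theta_e^{-1}D'_\zeta \cong \Theta_w^{-1}D'_\zeta$ upgrades, via the induced automorphism of the center, to an isomorphism of $\tilde{\DD}'_{\CB_\zeta}|_{\CV_e}$ with $\tilde{\DD}'_{\CB_\zeta}|_{\CV_w}$ (after transport along the action on $\CV$). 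Since $\{\CV_w\}_{w\in W}$ covers $\CV$, it suffices to exhibit a local generating set on $\CV_e$.

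On $\CV_e$, I would use the following data. By Proposition \ref{prop:AzetaA1}, $(\Theta_e^{-1}A_\zeta)(0)$ is free over $(\Theta_e^{-1}A_1)(0)$ of rank $\ell^N$; fix such a basis $\{\varphi_i\}_{i=1}^{\ell^N}$. By Lemma \ref{lem:PBW-DK}, the PBW monomials $\{e_{\beta_N}^{m_N}\cdots e_{\beta_1}^{m_1} : 0\le m_k<\ell\}$ descend to a basis of $U_\zeta^+$ modulo $Z_{Fr}(U_\zeta)\cap U_\zeta^+$; fix these as $\{x_j\}_{j=1}^{\ell^N}$. I claim that the $\ell^{2N}$ products $\{\ell_{\varphi_i}\deru_{x_j}\}_{i,j}$ generate $\tilde{\DD}'_{\CB_\zeta}|_{\CV_e}$ over $\CO_{\CV_e}$. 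Any section of $(\Theta_e^{-1}D'_\zeta)(0)$ is, by the triangular decomposition of $U_\zeta$ and the commutation relations, an $A_1\otimes\BC[\Lambda]$-linear combination of elements of the form $\ell_\varphi\deru_y\deru_{k_\mu}\deru_x$ with $\varphi\in \Theta_e^{-1}A_\zeta$, $y\in U_\zeta^-$, $\mu\in\Lambda$, $x\in U_\zeta^+$, and I will rewrite each such product in the $\CO_{\CV_e}$-span of the $\ell_{\varphi_i}\deru_{x_j}$ in two elimination steps.

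First, Proposition \ref{prop:local-formula2} expresses $\jmath(k_{2\lambda})$ in the localization as $\sum_p\ell_{s^{-1}(y^L_p\cdot s)}\deru_{x_p}\sigma_{2\lambda}$; since $\ell$ is odd, $\lambda\mapsto 2\lambda$ is a bijection on $\Lambda/\ell\Lambda$, so every $k_\mu$ is replaced modulo $Z_{Fr}^0$ by elements involving only $\ell$-multiplications, $U_\zeta^+$-derivations, and $\BC[\Lambda]$. Second, Proposition \ref{prop:local-formula1} gives, for each $\lambda\in\Lambda^+$, $\gamma\in Q^+$, $\varphi\in A_\zeta(\lambda)_{\lambda-\gamma}$ and $s\in A_\zeta(\lambda)_\lambda\setminus\{0\}$, an identity
\[
\sum_p\ell_{(Sx^L_p)\cdot(\varphi s^{-1})}\deru_{y_pk_{\beta_p}}
= \zeta^{(\lambda,\gamma)}\sum_p\ell_{s^{-1}(y^L_p\cdot\varphi)}\deru_{x_p}\deru_{k_{-2(\lambda-\gamma)}}\sigma_{2\lambda},
\]
whose right-hand side lies in the $A_1\otimes\BC[\Lambda]$-span of products of the desired form (after applying the first step to the $k$-factor). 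By Lemma \ref{lem:Drinfeld paring}(v) the pairing $\tau|_{U^+_{\BF,\gamma}\times U^-_{\BF,-\gamma}}$ is non-degenerate, so as $\varphi$ ranges over a weight basis of $A_\zeta(\lambda)_{\lambda-\gamma}$ for $\lambda\gg 0$, the matrix of coefficients $(\ell_{(Sx^L_p)\cdot(\varphi s^{-1})})$ becomes invertible over a further localization of $A_1$, permitting one to solve for each $\deru_{y_pk_{\beta_p}}$ individually and hence, using the first step again, for each $\deru_{y_p}$ modulo $Z_{Fr}^-$.

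Combining, every element of $(\Theta_e^{-1}D'_\zeta)(0)$ lies in the $\CO_{\CV_e}$-span of $\{\ell_{\varphi_i}\deru_{x_j}\}$, yielding $\ell^{2N}$ local generators. The main obstacle will be the invertibility claim in the second step: one must verify that the matrix built from $(Sx^L_p)\cdot(\varphi s^{-1})$ is really invertible after the appropriate localization, which amounts to a PBW-duality computation relating the chosen basis $\{\varphi_i\}$ of $(\Theta_e^{-1}A_\zeta)(0)$ to the PBW basis of $U_\zeta^-$ dual (under the Drinfeld pairing) to the $\{x^L_p\}$, and to show this can be arranged uniformly over $\lambda\gg 0$.
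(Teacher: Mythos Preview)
Your overall strategy matches the paper's: reduce to $w=e$ via the braid group action, then show
\[
(\Theta_e^{-1}D'_\zeta)^{(\ell)}(0)=\jmath((\Theta_e^{-1}A_\zeta)(0))\,\jmath(U_\zeta^+)\,(\Theta_e^{-1}ZD'^{(\ell)}_\zeta)(0)
\]
using Propositions~\ref{prop:local-formula1} and~\ref{prop:local-formula2}, after which the two rank-$\ell^{N}$ statements (Proposition~\ref{prop:AzetaA1} and the PBW basis of $U_\zeta^+$ over $U_\zeta^+\cap Z_{Fr}(U_\zeta)$) finish the count. The paper also eliminates the $U_\zeta^-$ part first and then the $U_\zeta^0$ part, but either order works.

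Where you diverge is in the elimination of $\deru_{y_p}$. You propose to let $\varphi$ range over a weight basis of $A_\zeta(\lambda)_{\lambda-\gamma}$ and then invert the matrix of coefficients $((Sx^L_{p'})\cdot(\varphi s^{-1}))_{p',\varphi}$, worrying that this may require a further localization of $A_1$. The paper avoids this entirely by invoking Proposition~\ref{prop:local-verma}: since $(\tilde\Theta_e^{-1}A_\zeta)(0)\cong M^*_{-,\zeta}(0)$ as a $U_\zeta^L$-module, for each fixed $p$ one can choose $\varphi s^{-1}$ directly in the weight space $M^*_{-,\zeta}(0)_{-\beta_p}$ so that $(Sx^L_p)\cdot(\varphi s^{-1})=1$ and $(Sx^L_{p'})\cdot(\varphi s^{-1})=0$ for all $p'\ne p$ with $\beta_{p'}=\beta_p$. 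No matrix inversion and no extra localization are needed. The remaining terms on the left side of Proposition~\ref{prop:local-formula1} then involve only $y_{p'}k_{\beta_{p'}}$ with $\Ht(\beta_{p'})<\Ht(\beta_p)$ (since $(Sx^L_{p'})\cdot(\varphi s^{-1})$ has weight $\beta_{p'}-\beta_p$ and vanishes unless $\beta_{p'}\le\beta_p$), and these are handled by induction on $\Ht(\beta_p)$. You should add this inductive structure explicitly; without it the argument is incomplete, and your proposed ``invertibility over a further localization'' is both unnecessary and unjustified as stated.
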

\begin{proof}
It is sufficient to show that for any $w\in W$ the $(\Theta_{w}^{-1}Z{D'_\zeta}^{(\ell)})(0)$-module $(\Theta_{w}^{-1}{D'_\zeta}^{(\ell)})(0)$ is generated by $\ell^{2|\Delta^+|}$ elements.
By Lemma \ref{lem:T-on-ZE} we have $T_{w^{-1}}^{-1}\star \Theta_{e}=\Theta_{w}$ and $T_{w^{-1}}^{-1}\star Z{D'_\zeta}^{(\ell)}=Z{D'_\zeta}^{(\ell)}$.
Hence we may assume $w=e$ from the beginning.

Note that $(\Theta_{e}^{-1}{D'_\zeta}^{(\ell)})(0)$ is generated by the elements
\[
\jmath(u), \,\,\jmath(\Phi), \,\,\jmath(e(\lambda))\qquad
(u\in U_\zeta, \Phi\in(\Theta_{e}^{-1}A_\zeta)(0), \lambda\in\Lambda),
\]
while  $(\Theta_{e}^{-1}Z{D'_\zeta}^{(\ell)})(0)$ is generated by the elements
\[
\jmath(u), \,\,\jmath(\Phi), \,\,\jmath(e(\lambda))\qquad
(u\in Z_{Fr}(U_\zeta), \Phi\in(\Theta_{e}^{-1}A_1)(0),  \lambda\in\Lambda).
\]

We first show 
\[
\jmath(y_pk_{\beta_p})\in
\jmath((\Theta_e^{-1}A_\zeta)(0)
U_\zeta^{\geqq0}\BC[\Lambda])
\]
for any $p$ by induction on $\Ht(\beta_p)$.
By Proposition \ref{prop:local-verma} we can take $\lambda$, $\gamma$, $\varphi$, $s$ as in Proposition \ref{prop:local-formula1} satisfying $\gamma=\beta_p$, $(Sx^L_p)(\varphi s^{-1})=1$ and $(Sx^L_{p'})(\varphi s^{-1})=0$ for $p'\ne p$ with $\beta_{p'}=\beta_p$.
Then the assertion follows from Proposition \ref{prop:local-formula1}.

We next show
\[
\jmath(k_\mu)\in
\jmath((\Theta_e^{-1}A_\zeta)(0)
U_\zeta^{+})
(\Theta_{e}^{-1}Z{D'_\zeta}^{(\ell)})(0)
\]
for any $\mu\in\Lambda$.
We see easily that there exists some $\lambda\in\Lambda^+$ such that $\mu-2\lambda\in\ell\Lambda$.
Write
$\jmath(k_\mu)
=\jmath(k_{2\lambda})\jmath(k_{\mu-2\lambda})$.
Then we have $\jmath(k_{\mu-2\lambda})\in
(\Theta_{e}^{-1}Z{D'_\zeta}^{(\ell)})(0)$ by $k_{\mu-2\lambda}\in Z_{Fr}(U_\zeta)$.
Hence the assertion follows from Proposition \ref{prop:local-formula2}.

It follows that 
\[
(\Theta_{e}^{-1}{D'_\zeta}^{(\ell)})(0)
=
\jmath((\Theta_e^{-1}A_\zeta)(0))
\jmath(U_\zeta^{+})
(\Theta_{e}^{-1}Z{D'_\zeta}^{(\ell)})(0).
\]
By definition  $U_\zeta^+$ is a free $U_\zeta^+\cap Z_{Fr}(U_\zeta)$-module of rank $\ell^{|\Delta^+|}$.
Moreover, $(\Theta_e^{-1}A_\zeta)(0)$ is a free $(\Theta_e^{-1}A_1)(0)$-module of  rank $\ell^{|\Delta^+|}$ by Proposition \ref{prop:AzetaA1}.
We are done.
\end{proof}

By \eqref{eq:Har-center-in-D}
we have an $\CO_\CV$-algebra homomorphism
\[
U_\zeta\otimes_{Z(U_\zeta)}\CO_\CV
\to
\tilde{\DD}_{\CB_\zeta},
\]
where $Z(U_\zeta)\to\CO_\CV$ is given by 
\[
\CV\to K\times_{H/W}(H/{W\circ})
\,\,(\cong \Spec\,Z(U_\zeta))\qquad
(B^-g,k,t)\mapsto(k,[t^2]))
\]
(see Corollary \ref{cor:center}).

We set 
\begin{align*}
H_{ur}&=\{
t\in H\mid
\alpha\in\Delta,\,
{\theta_\alpha(t)}^{\ell}=1\,\Longrightarrow\,
{\theta_\alpha(t)}=\zeta^{-(2\rho,\alpha)}\},\\
\CV_{ur}&=
\{(B^-g, k,t)\in\CV\mid
t^2\in H_{ur}\}.
\end{align*}
Note that $W\circ H_{ur}=H_{ur}$.
By Brown-Gordon 
\cite[Theorem 2.5, Theorem 4.9]{BG} (see also Brown-Goodearl \cite[Theorem 4.3]{BG0})
we have the following.
\begin{proposition}
\label{prop:BG-Azumaya}
$U_\zeta\otimes_{Z(U_\zeta)}\CO_{K\times_{H/W}(H_{ur}/{W\circ})}$ is an Azumaya algebra of rank $\ell^{2|\Delta^+|}$ on $K\times_{H/W}(H_{ur}/{W\circ})$.
\end{proposition}
In particular, 
$U_\zeta\otimes_{Z(U_\zeta)}\CO_{\CV_{ur}}$ is an Azumaya algebra of rank $\ell^{2|\Delta^+|}$ on $\CV_{ur}$.
On the other hand, since $\tilde{\DD}_{\CB_\zeta}$ is a quotient of $\tilde{\DD}'_{\CB_\zeta}$, $\tilde{\DD}_{\CB_\zeta}$ is also locally generated by $\ell^{2|\Delta^+|}$ sections.
Hence we obtain
\[
U_\zeta\otimes_{Z(U_\zeta)}\CO_{\CV_{ur}}
\cong
\tilde{\DD}_{\CB_\zeta}|_{\CV_{ur}}
\]
by Lemma \ref{lem:Azumaya-generality} below.
In particular, $\tilde{\DD}_{\CB_\zeta}|_{\CV_{ur}}$ is an Azumaya algebra of rank $\ell^{2|\Delta^+|}$.

\begin{lemma}
\label{lem:Azumaya-generality}
Let $X$ be an algebraic variety over $\BC$, and let $f:\CA\to\CA'$ be a homomorphism of $\CO_X$-algebras.
Assume that
$\CA$ is an Azumaya algebra of rank $n^2$ on $X$
and that
$\CA'$ is coherent and locally generated by $n^2$ sections as an $\CO_X$-module.
Assume also that the fiber $\CA'(x)$ is not zero for any $x\in X$.
Then $f$ is an isomorphism.
\end{lemma}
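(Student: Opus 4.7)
The strategy is to reduce to a pointwise statement using the simplicity of Azumaya fibers, then upgrade the fiberwise isomorphism to a sheaf isomorphism via the constant-rank criterion for local freeness. The whole argument rests on the fact that $\CA(x) \cong M_n(\BC)$ is a simple $\BC$-algebra.

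First, I would check that $f(x):\CA(x)\to\CA'(x)$ is an isomorphism for every $x\in X$. The kernel of the unital algebra homomorphism $f(x)$ is a two-sided ideal in the simple algebra $\CA(x)\cong M_n(\BC)$, hence equals either $0$ or all of $\CA(x)$; the latter is excluded because $f(x)(1)=1$ is a nonzero element of $\CA'(x)$. Thus $f(x)$ is injective, which forces $\dim_\BC\CA'(x)\geq n^2$. Combined with the estimate $\dim_\BC\CA'(x)\leq n^2$ coming from the hypothesis that $\CA'$ is locally generated by $n^2$ sections, equality holds and $f(x)$ is an isomorphism.

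Next, I would globalize. Since $X$ is reduced, the coherent $\CO_X$-module $\CA'$ has constant fiber dimension $n^2$, hence is locally free of rank $n^2$ by the standard constant-rank criterion. The fiberwise surjectivity of $f$, combined with Nakayama's lemma applied stalkwise to the coherent cokernel, yields that $f$ is surjective as a map of sheaves. The resulting short exact sequence
\[
0\to\Ker f\to\CA\to\CA'\to 0
\]
locally splits because $\CA'$ is locally free (hence locally projective), so $\Ker f$ is a direct summand of the locally free $\CO_X$-module $\CA$ and is therefore itself locally free. Its rank is $n^2-n^2=0$, so $\Ker f=0$ and $f$ is an isomorphism.

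The argument is short and the main point is conceptual rather than technical: the simplicity of the Azumaya fibers is exactly what compels fiberwise injectivity from the single hypothesis $\CA'(x)\ne 0$, after which dimension counting and Nakayama's lemma do the rest. I do not anticipate any genuine obstacle.
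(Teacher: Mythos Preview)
Your proof is correct and follows the same overall strategy as the paper: establish that each fiber map $f(x)$ is an isomorphism using simplicity of $\CA(x)\cong M_n(\BC)$ and the dimension bound, then deduce surjectivity of $f$ from Nakayama applied to the cokernel. The only divergence is in the injectivity step. You first upgrade $\CA'$ to a locally free sheaf via the constant-rank criterion (using that $X$ is reduced), then split the short exact sequence to see $\Ker f$ is locally free of rank zero. The paper instead works directly: after localizing so that $\CA$ is free, it observes that $\Ker(F)\subset\bigcap_\Gm \Gm A\cong(\bigcap_\Gm \Gm R)^{n^2}=0$, using that a finitely generated $\BC$-algebra is Jacobson with trivial Jacobson radical when reduced. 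Both arguments ultimately rely on $X$ being reduced; yours is slightly more sheaf-theoretic and has the pleasant byproduct of identifying $\CA'$ as locally free, while the paper's is a direct ring-theoretic computation that avoids invoking the constant-rank criterion.
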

\begin{proof}
For each $x\in X$ 
consider the $\BC$-algebra homomorphism 
 $f_x:\CA(x)\to\CA'(x)$ for the fibers.
Then $f_x$ is a non-zero homomorphism since it sends $1_{\CA(x)}$ to $1_{\CA'(x)}$ which is non-zero by ${\CA'(x)}\ne\{0\}$.
Hence by the simplicity of $\CA(x)$ we conclude that $f_x$ is injective.
On the other hand we have $\dim\CA(x)=n^2$ and $\dim\CA'(x)\leqq n^2$ by our assumption.
It follows that $f_x$ is an isomorphism for any $x\in X$.

Define $\CA''$ by the exact sequence
\[
\CA \rightarrow \CA'\rightarrow \CA''\rightarrow 0.
\]
Then we have $\CA''(x)=\{0\}$ for any $x\in X$ by the surjectivity of $f_x$. 
Hence $\CA''=\{0\}$ by Nakayama's lemma.
It follows that $f$ is an epimorphism.

Let us show that $f$ is a monomorphism.
We may assume that $X$ is affine and $\CA$ is free.
Set $R=\Gamma(X,\CO_X)$, $A=\Gamma(X,\CA)$, $A'=\Gamma(X,\CA')$.
We need to show that the homomorphism $F:A\to A'$ of $R$-modules corresponding to $f$ is injective.
Note that $A$ is isomorphic to $R^{n^2}$.
By the injectivity of $f_x$ for $x\in X$ the homomorphism
$F_\Gm:R/\Gm\otimes_RA\to R/\Gm\otimes_RA'$ is injective for any maximal ideal $\Gm$ of $R$.
Hence by the commutative diagram
\[
\begin{CD}
A&@>{F}>>&A'\\
@VVV&&@VVV\\
R/\Gm\otimes_RA
&
@>>{F_\Gm}>
&
R/\Gm\otimes_RA'
\end{CD}
\]
and $\Ker(A\to R/\Gm\otimes_RA)=\Gm A$ 
we have 
\[
\Ker(F)\subset\bigcap_\Gm\Gm A
\cong\bigcap_\Gm\Gm R^{n^2}
=(\bigcap_\Gm\Gm R
)^{n^2}=\{0\}.
\]
\end{proof}

For $\mu\in\Lambda$ we define $t_\mu\in H$ by
\[
\theta_\lambda(t_\mu)=\zeta^{(\lambda,\mu)}\qquad(\lambda\in\Lambda).
\]
Note that we have $t_\mu^{\ell}=1$.
We consider the automorphism
\[
\xi_\mu:\CV\to\CV
\qquad
((B^-g, k,t)\mapsto (B^-g, k, t_\mu t))
\]
of the algebraic variety $\CV$.
\begin{lemma}
\label{lem:ur-translate}
We have
\[
\bigcup_{\mu\in\Lambda}\xi_\mu(\CV_{ur})=\CV.
\]
\end{lemma}
\begin{proof}
We need to show that
$\bigcup_{\mu\in\Lambda}t_{2\mu}H_{ur}=H$.
Let $t\in H$, and set
\[
\Delta'=\{\alpha\in\Delta\mid\theta_\alpha(t)^\ell=1\},\qquad
Q'=\sum_{\alpha\in\Delta'}\BZ\alpha.
\]
Then it is sufficient to show that 
there exists some $\mu\in\Lambda$ such that $\theta_\alpha(t)=\theta_\alpha(t_{2\mu})$ for any $\alpha\in\Delta'$.
Note that $t$ determines uniquely an element $\varphi\in\Hom_\BZ(Q',\BZ/\ell\BZ)$ such that $\theta_\alpha(t)=\zeta^{\varphi(\alpha)}\;(\alpha\in\Delta')$.
Hence our assertion is equivalent to the surjectivity of 
the map
\[
2\Lambda\to\Hom_\BZ(Q',\BZ/\ell\BZ)
\qquad(\mu\mapsto[\gamma\mapsto\overline{(\mu,\gamma)}]).
\]
By Dynkin \cite[Theorem 5.2]{Dy} there exist a root subsystem $\tilde{\Delta}$ of $\Delta$ satisfying
\[
\sum_{\beta\in\tilde{\Delta}}\BQ\beta=\sum_{\alpha\in\Delta}\BQ\alpha
\]
and 
a set of simple roots $\tilde{\Pi}$ of $\tilde{\Delta}$ such that 
$\Delta'\cap\tilde{\Pi}$ is a set of simple roots for $\Delta'$.
Set $\tilde{Q}=\sum_{\alpha\in\tilde{\Delta}}\BZ\alpha$.
Since the natural map
$\Hom_\BZ(\tilde{Q},\BZ/\ell\BZ)\to \Hom_\BZ(Q',\BZ/\ell\BZ)$ is surjective, it is sufficient to show that the map
\[
2\Lambda\to\Hom_\BZ(\tilde{Q},\BZ/\ell\BZ)
\qquad(\mu\mapsto[\gamma\mapsto\overline{(\mu,\gamma)}])
\]
is surjective.
Set $\tilde{M}=\Hom_\BZ(\tilde{Q},\BZ)$.
Then we have
\[
\Hom_\BZ(\tilde{Q},\BZ/\ell\BZ)\cong \tilde{M}\otimes_\BZ\BZ/\ell\BZ,
\]
and $2\Lambda\to \tilde{M}\;(\mu\mapsto[\gamma\mapsto{(\mu,\gamma)}])$ is an injective homomorphism of free $\BZ$-modules with the same rank.
Therefore, it is sufficient to show that $|\tilde{M}/2\Lambda|$ is relatively prime to $\ell$.
Set $M=\Hom_\BZ(Q,\BZ)$.
Then we have inclusions 
\[
2\Lambda\subset\Lambda\subset M\subset \tilde{M}
\]
of  free $\BZ$-modules with the same rank.
By
\[
|\Lambda/2\Lambda|=2^{|I|}, \qquad
|M/\Lambda|
=\prod_{i\in I}\frac{(\alpha_i,\alpha_i)}2
\]
and our assumption on $\ell$ it is sufficient to show that $|\tilde{M}/M|$ is relatively prime to $\ell$.
Note $|\tilde{M}/M|=|Q/\tilde{Q}|$.
By Dynkin \cite[Theorem 5.3]{Dy}
we have a sequence
\[
\tilde{\Delta}=\Delta_{r+1}\subset\cdots\subset \Delta_2\subset \Delta_1=\Delta
\]
of root subsystems of $\Delta$ and sets $\Pi_i$ of simple roots for $\Delta_i$ such that $\Pi_{i+1}=(\Pi_i\setminus\{\gamma_i\})\cup\{-\theta_i\}$
for each $i=1,\dots, r$, where $\gamma_i\in \Pi_i$ and $\theta_i$ is the highest root of the irreducible component of $\Delta_i$ containing $\gamma_i$.
Set $Q_i=\sum_{\alpha\in\Delta_i}\BZ\alpha$, and
write $\theta_i=\sum_{\gamma\in\Pi_i}m_\gamma\gamma$.
Then we see easily that $|Q_i/Q_{i+1}|=m_{\gamma_i}$.
By our assumption on $\ell$ we can easily check that $m_{\gamma_i}$ is relatively prime to $\ell$.
\end{proof}
\begin{lemma}
For $\mu\in\Lambda$ 
the $\CO_\CV$-algebras
$\xi_\mu^*\tilde{\DD}_{\CB_\zeta}$ and $\tilde{\DD}_{\CB_\zeta}$ 
are isomorphic locally on $\CB$.
If 
\end{lemma}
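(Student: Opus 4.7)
The plan is to realize the pullback $\xi_\mu^*$ on the central subsheaf $\CZ_\zeta\cong p_{\CV*}\CO_\CV$ via an inner automorphism of $Fr_*\DD_{\CB_\zeta}$ in a suitable localization. Since $\{p_\CV^{-1}(\CB_w)\}_{w\in W}$ covers $\CV$, it suffices to exhibit such an inner automorphism on each piece.

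Fix $w\in W$ and write $\mu=\lambda_2-\lambda_1$ with $\lambda_1,\lambda_2\in\Lambda^+$. Choose nonzero extremal weight vectors $s_i\in A_\zeta(\lambda_i)_{w^{-1}\lambda_i}\subset\tilde{\Theta}_w$. Since $s_i^\ell\in\Theta_w\subset A_1$ is central in $D_\zeta$, the localization $\Theta_w^{-1}D_\zeta=\tilde{\Theta}_w^{-1}D_\zeta$ contains $\ell_{s_1}$ and $\ell_{s_2}$ invertibly. Set
\[
\theta_\mu=\Ad(\ell_{s_1}\ell_{s_2}^{-1}):\Theta_w^{-1}D_\zeta\longrightarrow\Theta_w^{-1}D_\zeta.
\]
Inner conjugation preserves the $\Lambda$-grading; moreover $(\Theta_w^{-1}D_\zeta)(0)=(\Theta_w^{-1}D_\zeta^{(\ell)})(0)$ since $\Theta_w\subset A_1$ has gradings in $\ell\Lambda^+$. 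Hence $\theta_\mu$ restricts to a $\BC$-algebra automorphism of the sections of $\tilde{\DD}_{\CB_\zeta}$ over $p_\CV^{-1}(\CB_w)$.

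Next I verify that $\theta_\mu$ restricts to $\xi_\mu^*$ on each factor of $ZD_\zeta^{(\ell)}\cong A_1\otimes Z_{Fr}(U_\zeta)\otimes\BC[\Lambda]$. On $A_1$: trivial, since $A_1$ is central in $D_\zeta$ (Lemma \ref{lem:center}). On $Z_{Fr}(U_\zeta)$: the identity $j(z)\cdot\varphi=\varepsilon(z)\varphi$ for $z\in Z_{Fr}(U_\zeta)$ and $\varphi\in A_\zeta$ (used in the proof of Lemma \ref{lem:center}) gives $\deru_z\ell_{s_i}=\ell_{s_i}\deru_z$, so $\deru_z$ commutes with $\ell_{s_1}\ell_{s_2}^{-1}$; this matches the fact that $\xi_\mu$ acts trivially on the $K$-coordinate. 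On $\BC[\Lambda]$: the relation $\sigma_\nu\ell_\varphi=\zeta^{(\nu,\lambda)}\ell_\varphi\sigma_\nu$ for $\varphi\in A_\zeta(\lambda)$ yields
\[
\theta_\mu(\sigma_\nu)=\zeta^{(\nu,\lambda_2)-(\nu,\lambda_1)}\sigma_\nu=\zeta^{(\nu,\mu)}\sigma_\nu=\theta_\nu(t_\mu)\sigma_\nu=\xi_\mu^*(\sigma_\nu),
\]
which is precisely the pullback under the shift by $t_\mu$ on $H$.

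Consequently, $\theta_\mu$ is a $\BC$-algebra automorphism of $\tilde{\DD}_{\CB_\zeta}|_{p_\CV^{-1}(\CB_w)}$ whose restriction to $\CO_\CV$ coincides with $\xi_\mu^*$, and this is exactly an $\CO_\CV$-algebra isomorphism $\xi_\mu^*\tilde{\DD}_{\CB_\zeta}\cong\tilde{\DD}_{\CB_\zeta}$ over $p_\CV^{-1}(\CB_w)$. The main technical input is the triviality of the $Z_{Fr}(U_\zeta)$-action on $A_\zeta$, which is what forces the inner automorphism to act trivially on the $K$-direction; the remaining two verifications are direct consequences of the commutation relations in $D_\zeta$ recalled in Section 4.
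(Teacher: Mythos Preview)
Your proof is correct and follows essentially the same route as the paper: both arguments restrict to the open pieces $p_\CV^{-1}(\CB_w)$ and realize $\xi_\mu^*$ on the center as an inner automorphism of $(\Theta_w^{-1}D_\zeta^{(\ell)})(0)\cong(\tilde\Theta_w^{-1}D_\zeta)(0)$ given by conjugation with an element of $A_\zeta$-degree $\mu$. The paper writes this conjugating element as a single $c\in A_\zeta(\mu)_{w^{-1}\mu}$, whereas you explicitly decompose $\mu=\lambda_2-\lambda_1$ with $\lambda_i\in\Lambda^+$ and use $\ell_{s_1}\ell_{s_2}^{-1}$; your version is a bit more explicit (and arguably cleaner when $\mu\notin\Lambda^+$), and you spell out the three checks on $A_1$, $Z_{Fr}(U_\zeta)$, and $\BC[\Lambda]$ that the paper leaves implicit.
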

\begin{proof}
We will show 
\[
\xi_\mu^*\tilde{\DD}_{\CB_\zeta}|_{p_\CV^{-1}\CB_w}
\cong
\tilde{\DD}_{\CB_\zeta}|_{p_\CV^{-1}\CB_w}
\] 
for any $w\in W$.
It is sufficient to verify that there exists a $\BC$-algebra automorphism of  
$\Theta_{w}^{-1}D_\zeta^{(\ell)}(0)$
which induces
\[
\xi_\mu^*:\Theta_{w}^{-1}ZD_\zeta^{(\ell)}(0)
\to
\Theta_{w}^{-1}ZD_\zeta^{(\ell)}(0)
\]
under the identification $p_\CV^{-1}\CB_w
=\Spec\,\Theta_{w}^{-1}ZD_\zeta^{(\ell)}(0)$.
Note that 
\[
\Theta_{w}^{-1}D_\zeta^{(\ell)}(0)
\cong
\tilde\Theta_{w}^{-1}{D_\zeta}(0).
\]
Hence the problem is to construct an automorphism of the $\BC$-algebra $\tilde\Theta_{w}^{-1}{D_\zeta}(0)$ satisfying 
\[
\varphi\mapsto\varphi,\quad
\deru_z\mapsto\deru_z,\quad
\sigma_\lambda\mapsto\zeta^{(\lambda,\mu)}\sigma_\lambda
\quad(\varphi\in \Theta_{w}^{-1}A_1(0), z\in Z_{Fr}(U_\zeta), \lambda\in\Lambda).
\]
Take $c\in A_\zeta(\mu)_{w^{-1}\mu}\setminus\{0\}
\subset\tilde{\Theta}_w$
Then the automorphism
\[
\tilde\Theta_{w}^{-1}{D_\zeta}(0).
\to
\tilde\Theta_{w}^{-1}{D_\zeta}(0)\qquad
(P\mapsto c^{-1} P c)
\]
satisfies the desired property.
\end{proof}

By Lemma \ref{lem:ur-translate} we conclude that $\tilde{\DD}_{\CB_\zeta}$ is an Azumaya algebra of rank $\ell^{2|\Delta^+|}$ on $\CV$.
Recall that there exists a surjection 
$\tilde{\DD}'_{\CB_\zeta}\to\tilde{\DD}_{\CB_\zeta}$.
By the arguments above there exists locally a generator system of $\tilde{\DD}'_{\CB_\zeta}$ consisting of $\ell^{2|\Delta^+|}$ sections which gives a (local) free basis of $\tilde{\DD}_{\CB_\zeta}$.
Hence $\tilde{\DD}'_{\CB_\zeta}\to\tilde{\DD}_{\CB_\zeta}$ is an isomorphism.
The proof of Theorem \ref{thm:Azumaya} is now complete.

By Theorem \ref{thm:Azumaya}  and Lemma \ref{lem:Fr1} we have the following.
\begin{corollary}
\label{cor:Azumaya}
$\omega^*D'_\zeta\cong\omega^* D_\zeta$.
\end{corollary}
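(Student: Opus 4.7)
The plan is to deduce the corollary by pushing the isomorphism of Theorem \ref{thm:Azumaya} (which lives on $\CV$) back down to $\CB$ and then through the Frobenius equivalence of Lemma \ref{lem:Fr1}.

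First I would note that the canonical surjection $D'_\zeta\to D_\zeta$ of $\Lambda$-graded $A_\zeta$-algebras induces a surjection $\omega^*D'_\zeta\to\omega^*D_\zeta$ in $\Mod(\CO_{\CB_\zeta})$, and it is this natural map which I claim is an isomorphism. By Lemma \ref{lem:Fr1} (applied with $B=A_\zeta$) together with Lemma \ref{lem:Fr2}, the functor $Fr_*$ furnishes an equivalence $\Mod(\CO_{\CB_\zeta})\cong\Mod(Fr_*\CO_{\CB_\zeta})$ under which $\omega^*D'_\zeta$ corresponds to $Fr_*\DD'_{\CB_\zeta}=\omega_\CB^*(D'^{(\ell)}_\zeta)$ and $\omega^*D_\zeta$ corresponds to $Fr_*\DD_{\CB_\zeta}$. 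So it suffices to verify that the induced $\CO_\CB$-algebra homomorphism $Fr_*\DD'_{\CB_\zeta}\to Fr_*\DD_{\CB_\zeta}$ is an isomorphism.

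Next, I would transfer everything to $\CV$ using the affineness of $p_\CV:\CV\to\CB$ (which holds since $\CV$ is closed in $\CB\times K\times H$, an affine morphism over $\CB$). By Theorem \ref{thm:center} the central subalgebras $\CZ'_\zeta$ and $\CZ_\zeta$ both equal $p_{\CV*}\CO_\CV$, with the natural map $\CZ'_\zeta\to\CZ_\zeta$ an isomorphism. Hence both $Fr_*\DD'_{\CB_\zeta}$ and $Fr_*\DD_{\CB_\zeta}$ are quasi-coherent $p_{\CV*}\CO_\CV$-algebras on $\CB$. Since $p_\CV$ is affine, direct image gives an equivalence between quasi-coherent $\CO_\CV$-algebras on $\CV$ and quasi-coherent $p_{\CV*}\CO_\CV$-algebras on $\CB$; under this equivalence, $Fr_*\DD'_{\CB_\zeta}$ and $Fr_*\DD_{\CB_\zeta}$ correspond precisely to the localizations $\tilde{\DD}'_{\CB_\zeta}$ and $\tilde{\DD}_{\CB_\zeta}$ defined at the opening of Section \ref{sec:Azumaya}. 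In other words, $p_{\CV*}\tilde{\DD}'_{\CB_\zeta}=Fr_*\DD'_{\CB_\zeta}$ and $p_{\CV*}\tilde{\DD}_{\CB_\zeta}=Fr_*\DD_{\CB_\zeta}$, compatibly with the natural maps on both sides.

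Finally, Theorem \ref{thm:Azumaya} asserts that the natural map $\tilde{\DD}'_{\CB_\zeta}\to\tilde{\DD}_{\CB_\zeta}$ is an isomorphism of $\CO_\CV$-algebras. Applying $p_{\CV*}$ gives $Fr_*\DD'_{\CB_\zeta}\cong Fr_*\DD_{\CB_\zeta}$, and inverting the Frobenius equivalence delivers $\omega^*D'_\zeta\cong\omega^*D_\zeta$. With Theorems \ref{thm:center} and \ref{thm:Azumaya} in hand the corollary is entirely formal; the only point requiring attention is the bookkeeping ensuring that all identifications (the Frobenius equivalence, the affine-morphism correspondence, and the two central-subalgebra identifications) are compatible with the canonical maps induced by $D'_\zeta\to D_\zeta$, so that the constructed isomorphism really is the one in the statement.
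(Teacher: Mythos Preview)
Your proof is correct and follows exactly the route the paper intends: the paper's one-line justification ``By Theorem \ref{thm:Azumaya} and Lemma \ref{lem:Fr1}'' is precisely the argument you have spelled out, with Theorem \ref{thm:center} and the affineness of $p_\CV$ supplying the bookkeeping needed to move between $\CV$ and $\CB$. You have simply made explicit the identifications that the paper leaves implicit.
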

Theorem \ref{thm:Azumaya} also implies the following.
\begin{corollary}
\label{cor:Azumaya2}
$\tilde{\CZ}_\zeta$ coincides with the center of $\tilde{\DD}_{\CB_\zeta}$.
Hence ${\CZ}_\zeta$ coincides with the center of $Fr_*{\DD}_{\CB_\zeta}$.
\end{corollary}

\subsection{}
Define
\[
\delta:\CV\to K\times_{H/W} H
\]
by $\delta(B^-g,k,t)=(k,t)$, where $H\to H/W$ is given by $t\mapsto[t^{2\ell}]$.
In the rest of this section we will prove the following result.
\begin{theorem}
\label{thm:split-Azumaya} 
For any $(k, t)\in K\times_{H/W}H$,
the restriction of
$\tilde{\DD}_{\CB_\zeta}$ to ${\delta^{-1}(k,t)}$ is a split Azumaya algebra.
\end{theorem}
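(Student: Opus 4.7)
The plan is to prove the statement first when $t$ lies in the open dense subset $H_{ur}\subset H$, and then reduce the general case by a Brauer-theoretic argument exploiting the automorphisms $\xi_\mu$ from the proof of Theorem \ref{thm:Azumaya}.

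For $t\in H_{ur}$, the identification $\tilde{\DD}_{\CB_\zeta}|_{\CV_{ur}}\cong U_\zeta\otimes_{Z(U_\zeta)}\CO_{\CV_{ur}}$ established in the proof of Theorem \ref{thm:Azumaya} reduces the task to producing a locally free $\CO_{\delta^{-1}(k,t)}$-module $\CM$ of rank $\ell^N$ carrying a faithful action of $U_\zeta\otimes_{Z(U_\zeta)}\CO_{\delta^{-1}(k,t)}$. I would construct $\CM$ as a universal baby Verma bundle on $\delta^{-1}(k,t)$: at each point $(B^-g,k,t)$ the fiber is the irreducible $U_\zeta$-module of dimension $\ell^N$ with central character $\chi_{(k,[t^{2\ell}])}$, realized as a baby Verma attached to the Borel determined by $g$, whose underlying space is the standard $\ell^N$-dimensional quotient of $U_\zeta^-$ by the kernel of its Frobenius-central character. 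This family glues to a locally free $\CO_{\delta^{-1}(k,t)}$-module, and the resulting algebra map $\tilde{\DD}_{\CB_\zeta}|_{\delta^{-1}(k,t)}\to\CEnd_{\CO_{\delta^{-1}(k,t)}}(\CM)$ is a morphism of Azumaya algebras of equal rank $\ell^{2N}$; the irreducibility of each fiber module (guaranteed by $t\in H_{ur}$) together with Lemma \ref{lem:Azumaya-generality} then promotes it to an isomorphism, delivering the desired splitting.

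For general $(k,t)$, the covering $\bigcup_{\mu\in\Lambda}\xi_\mu(\CV_{ur})=\CV$ established in Theorem \ref{thm:Azumaya} provides $\mu\in\Lambda$ with $t':=t_{-\mu}t\in H_{ur}$, so that $\xi_\mu$ restricts to an isomorphism of subschemes $\delta^{-1}(k,t')\xrightarrow{\sim}\delta^{-1}(k,t)$. Since $\xi_\mu^*\tilde{\DD}_{\CB_\zeta}$ and $\tilde{\DD}_{\CB_\zeta}$ are isomorphic as $\CO_\CV$-algebras locally on $\CB$, they define the same class in the Brauer group of $\CV$; restricting to $\delta^{-1}(k,t')$ and using the previous step to trivialize the Brauer class of $\tilde{\DD}_{\CB_\zeta}|_{\delta^{-1}(k,t')}$, it follows that $\xi_\mu^*\bigl(\tilde{\DD}_{\CB_\zeta}|_{\delta^{-1}(k,t)}\bigr)$ also has trivial Brauer class. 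Since $\xi_\mu|_{\delta^{-1}(k,t')}$ is an isomorphism of schemes, pullback induces an isomorphism on Brauer groups, so the class of $\tilde{\DD}_{\CB_\zeta}|_{\delta^{-1}(k,t)}$ vanishes as well, which is exactly the splitting claim.

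The main obstacle will be the explicit construction of the splitting bundle $\CM$ in the unramified case. The delicate point is to assemble the family of baby Vermas into a single $\CO_{\delta^{-1}(k,t)}$-coherent sheaf with a compatible $U_\zeta$-action, and then to verify that the resulting map into $\CEnd_{\CO_{\delta^{-1}(k,t)}}(\CM)$ lands in the Azumaya algebra at the correct rank; the unramifiedness of $t$ is essential here to guarantee irreducibility of every fiber module, which is precisely what makes Lemma \ref{lem:Azumaya-generality} applicable. By comparison, the Brauer-theoretic reduction in the general case is essentially formal, relying only on the covering property and the local isomorphisms $\xi_\mu^*\tilde{\DD}_{\CB_\zeta}\cong\tilde{\DD}_{\CB_\zeta}$ already established.
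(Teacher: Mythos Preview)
Your overall strategy---handle $t\in H_{ur}$ first, then transfer the result along the automorphisms $\xi_\mu$---is exactly the paper's plan. The two treatments diverge in how each step is executed.

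For the unramified case the paper simply invokes Brown--Gordon \cite{BG}, who already establish the splitting of $U_\zeta\otimes_{Z(U_\zeta)}\CO$ along the fibers in question. Your proposal to build a universal baby Verma bundle is essentially a reconstruction of that argument; it is plausible, but the gluing and irreducibility checks you flag as ``the main obstacle'' are genuinely the whole content of \cite{BG} here, so you are not saving anything by redoing it.

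The more interesting divergence is in the reduction step. The paper does \emph{not} rely on the bare local isomorphism $\xi_\mu^*\tilde{\DD}_{\CB_\zeta}\cong\tilde{\DD}_{\CB_\zeta}$ over each $\CB_w$. Instead it upgrades that statement to a global Morita equivalence (Proposition~\ref{prop:Azumaya-translation}): there is a locally free rank-one right $\tilde{\DD}_{\CB_\zeta}$-module $\CL_\mu$, built from the grading-shifted bimodule $D_\zeta[\mu]^{(\ell)}$, with $\xi_\mu^*\tilde{\DD}_{\CB_\zeta}\cong\CEnd_{\tilde{\DD}_{\CB_\zeta}}(\CL_\mu)$. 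Combined with the elementary Lemma~\ref{lem:Azumaya-generality2} (if $\CA$ is split then $\CEnd_\CA(\CM)$ is split for any rank-one locally free $\CA$-module $\CM$), this transfers splitness from $\delta^{-1}(k,t')$ to $\delta^{-1}(k,t)$ directly and constructively. Your route---``Zariski-locally isomorphic Azumaya algebras on $\CV$ have the same Brauer class''---is correct, but it is not automatic: it rests on the vanishing $H^2_{\mathrm{Zar}}(\CV,\mathbb{G}_m)=0$, which holds because $\CV$ is smooth and integral, and you should say so. The paper's bimodule argument avoids this cohomological input entirely and, as a bonus, produces the splitting bundle explicitly rather than merely asserting that the Brauer class vanishes.
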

We need some preliminaries.
\begin{lemma}
\label{lem:Azumaya-generality2} 
Let $\CA$ be an Azumaya algebra on an algebraic variety $X$.
Assume that $\CM$ is a locally free right $\CA$-module of rank one.
Then $\CEnd_\CA(\CM)$ is an Azumaya algebra whose rank is the same as that of $\CA$.
Moreover, if $\CA$ is a split Azumaya algebra, then $\CEnd_\CA(\CM)$ is also a split Azumaya algebra.
\end{lemma}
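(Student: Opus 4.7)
The plan is to reduce both assertions to local calculations by using the hypothesis that $\CM$ is locally free of rank one as a right $\CA$-module, and then invoke elementary Morita theory on each open set.

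First, I would choose an open cover $\{U_\alpha\}$ of $X$ on which $\CM|_{U_\alpha}\cong \CA|_{U_\alpha}$ as right $\CA|_{U_\alpha}$-modules; under this trivialization the endomorphism ring $\CEnd_{\CA|_{U_\alpha}}(\CA|_{U_\alpha})$ is naturally identified with $\CA|_{U_\alpha}$ itself, acting by left multiplication. This yields a local isomorphism $\CEnd_\CA(\CM)|_{U_\alpha}\cong \CA|_{U_\alpha}$ of $\CO_X|_{U_\alpha}$-algebras, from which one reads off at once that $\CEnd_\CA(\CM)$ is locally free of the same rank as $\CA$ and that each of its fibers $\CEnd_\CA(\CM)(v)$ is isomorphic to the fiber $\CA(v)$, which is a matrix algebra. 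This establishes the first claim that $\CEnd_\CA(\CM)$ is Azumaya of the same rank as $\CA$.

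For the split part, assuming $\CA\cong \CEnd_{\CO_X}(\CN)$ for some locally free $\CO_X$-module $\CN$ of finite rank, regarded as a left $\CA$-module, I would form the $\CO_X$-module $\CM\otimes_\CA\CN$ and consider the canonical $\CO_X$-algebra homomorphism
\[
\Phi\colon \CEnd_\CA(\CM)\longrightarrow \CEnd_{\CO_X}(\CM\otimes_\CA\CN),\qquad \varphi\mapsto \varphi\otimes\id_\CN,
\]
which is well-defined precisely because $\varphi$ commutes with the right $\CA$-action. The plan is then to verify both that $\CM\otimes_\CA\CN$ is locally free of finite rank (using $\CM\otimes_\CA\CN|_{U_\alpha}\cong\CA\otimes_\CA\CN|_{U_\alpha}\cong\CN|_{U_\alpha}$) and that $\Phi$ is an isomorphism; both are local assertions, and on $U_\alpha$ the map $\Phi$ becomes the composite of the tautological identifications $\CA|_{U_\alpha}\cong \CEnd_{\CO_X}(\CN)|_{U_\alpha}\cong \CEnd_{\CO_X}(\CM\otimes_\CA\CN)|_{U_\alpha}$, hence is an isomorphism.

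Since the only input beyond the definition of an Azumaya algebra is the local triviality of $\CM$, which is hypothesized, the argument has no essential obstacle; the whole lemma is a routine Morita-theoretic observation. If anything subtle arises it is only the bookkeeping needed to check that the local identifications assemble into the globally defined map $\Phi$—but this is automatic since $\Phi$ is constructed before any choice of trivialization.
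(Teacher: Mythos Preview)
Your proposal is correct and follows essentially the same approach as the paper: both reduce the first assertion to the elementary fact that $\CEnd_\CA(\CA)\cong\CA$ via left multiplication (the paper phrases this fiberwise as $\End_{\End_k(V)}(\End_k(V))\cong\End_k(V)$), and for the split case both construct the same map $\CEnd_\CA(\CM)\to\CEnd_{\CO_X}(\CM\otimes_\CA\CN)$ and verify it is an isomorphism. Your write-up is somewhat more explicit about the local trivializations and the globality of $\Phi$, but the underlying argument is identical.
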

\begin{proof}
Let $V$ be a finite-dimensional vector space over a field $k$ and regard $M=\End_k(V)$ as a right $\End_k(V)$-module by the right multiplication.
Then the left multiplication of $\End_k(V)$ induces 
a canonical isomorphism 
\begin{equation}
\label{eq:End}
\End_{\End_k(V)}(M)\cong\End_k(V)
\end{equation}
of $k$-algebras.
Hence the first half of our theorem holds.
Let us show the second half.
Assume $\CA=\CEnd_{\CO_X}(\CW)$ for a locally free $\CO_X$-module $\CW$.
Then it is sufficient to show $\CEnd_{\CA}(\CM)\cong\CEnd_{\CO_X}(\CM\otimes_\CA\CW)$.
This also follows from \eqref{eq:End}.
\end{proof}

\begin{proposition}
\label{prop:Azumaya-translation}
For any $\mu\in\Lambda$
there exists a locally free right $\tilde{\DD}_{\CB_\zeta}$-module
$\CL_\mu$ 
of rank one such
\[
\xi_\mu^*\tilde{\DD}_{\CB_\zeta}
\cong
\CEnd_{\tilde{\DD}_{\CB_\zeta}}
(\CL_\mu).
\]
\end{proposition}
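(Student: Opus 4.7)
The plan is to take $\CL_\mu$ to be the central localization on $\CV$ of the graded-shifted regular $D_\zeta$-bimodule $D_\zeta\langle\mu\rangle^{(\ell)}$. Here $D_\zeta\langle\mu\rangle$ denotes $D_\zeta$ with the $\Lambda$-grading shifted so that $D_\zeta\langle\mu\rangle(\nu)=D_\zeta(\nu+\mu)$, viewed simultaneously as a right $D_\zeta$-module via right multiplication and as a left $D_\zeta$-module via left multiplication; the two actions commute by associativity. Applying $(\cdot)^{(\ell)}$, then $\omega_\CB^*$, and then the central localization on $\CV$ (the same recipe used to pass from $Fr_*\DD_{\CB_\zeta}$ to $\tilde{\DD}_{\CB_\zeta}$) yields $\CL_\mu$ as a sheaf on $\CV$ carrying compatible left and right $\tilde{\DD}_{\CB_\zeta}$-actions.

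The essential point is that the left and right central actions on $\CL_\mu$ differ by exactly $\xi_\mu^*$. For $\Phi\in D_\zeta(\ell\nu+\mu)=D_\zeta\langle\mu\rangle^{(\ell)}(\nu)$, the commutation $\sigma_\lambda\Phi=\zeta^{(\lambda,\ell\nu+\mu)}\Phi\sigma_\lambda$ simplifies via $\zeta^\ell=1$ to $\sigma_\lambda\Phi=\zeta^{(\lambda,\mu)}\Phi\sigma_\lambda$, so the left and right actions of $\sigma_\lambda$ differ by the scalar $\zeta^{(\lambda,\mu)}=\theta_\lambda(t_\mu)=\xi_\mu^*(\theta_\lambda)$. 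Meanwhile, $A_1$ and $\deru_z$ for $z\in Z_{Fr}(U_\zeta)$ are in fact central in the full ring $D_\zeta$: for $A_1$, by its centrality in $A_\zeta$ combined with $\pi\circ j(e_i)=\pi\circ j(f_i)=0$ and $\pi\circ j(k_\lambda)=1$; for $\deru_z$, by the commutation $[\ell_\varphi,\deru_z]=0$ established in the proof of Lemma \ref{lem:center}. Hence the only central twist is the desired one on $\BC[\Lambda]\cong\BC[H]$, matching $\xi_\mu^*$ on $\CO_\CV$ precisely. For local freeness of rank one I would invoke Lemma \ref{lem:T-on-ZE}: the braid automorphism $T_{w^{-1}}^{-1}\star(-)$ carries $\tilde\Theta_e$ to $\tilde\Theta_w$ and preserves $ZD_\zeta^{(\ell)}$, reducing matters to $p_{\CV}^{-1}(\CB_e)\subset\CV$; for $\mu\in\Lambda^+$ any nonzero $c\in A_\zeta(\mu)_\mu$ lies in $\tilde\Theta_e$ and becomes invertible in the Ore localization $\tilde\Theta_e^{-1}D_\zeta$, and since $c\in D_\zeta(\mu)=D_\zeta\langle\mu\rangle^{(\ell)}(0)$, left multiplication by $c$ is an isomorphism of graded right modules $\tilde\Theta_e^{-1}D_\zeta^{(\ell)}\xrightarrow{\sim}\tilde\Theta_e^{-1}D_\zeta\langle\mu\rangle^{(\ell)}$ with inverse $\Psi\mapsto c^{-1}\Psi$. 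General $\mu=\mu_1-\mu_2\in\Lambda$ is handled by $c=c_1c_2^{-1}$ in the localization.

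Finally, the left multiplication of $D_\zeta$ on $D_\zeta\langle\mu\rangle$ descends after localization to an $\CO_\CV$-algebra homomorphism $\alpha:\xi_\mu^*\tilde{\DD}_{\CB_\zeta}\to\CEnd_{\tilde{\DD}_{\CB_\zeta}}(\CL_\mu)$, the twist $\xi_\mu^*$ on the source being forced by the central compatibility above. By Lemma \ref{lem:Azumaya-generality2} together with Theorem \ref{thm:Azumaya}, the target is Azumaya of rank $\ell^{2|\Delta^+|}$, equal to that of the source; since $\alpha$ is nonzero on every fibre, Lemma \ref{lem:Azumaya-generality} forces it to be an isomorphism. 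The main obstacle will be the careful bookkeeping of the two $\CO_\CV$-structures on $\CL_\mu$ (coming from the left versus right action of the center) to confirm that $\alpha$ genuinely lands on $\xi_\mu^*\tilde{\DD}_{\CB_\zeta}$ with the twisted center rather than on the untwisted $\tilde{\DD}_{\CB_\zeta}$, together with verifying the Ore localization hypotheses for non-dominant $\mu$.
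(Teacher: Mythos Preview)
Your proposal is correct and follows essentially the same route as the paper: define $\CL_\mu$ as the localization on $\CV$ of the shifted bimodule $D_\zeta[\mu]^{(\ell)}$ (your $D_\zeta\langle\mu\rangle^{(\ell)}$), observe that the left and right central actions differ by $\tilde\xi_\mu$, and conclude via Lemma~\ref{lem:Azumaya-generality2} and Lemma~\ref{lem:Azumaya-generality}. Your argument is in fact more detailed than the paper's in two places: you compute the twist explicitly on generators (the paper just states $z\cdot P=P\cdot\tilde\xi_\mu(z)$), and you supply a proof of local freeness of rank one via the braid reduction to $\CB_e$ and the invertible element $\ell_c$, whereas the paper simply asserts this. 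One minor citation slip: Lemma~\ref{lem:T-on-ZE} is stated for $\Theta_w$ rather than $\tilde\Theta_w$, but the proof of Lemma~\ref{lem:braid-action-on-D} gives what you need, and in any case $\Theta_w^{-1}=\tilde\Theta_w^{-1}$ on the rings in question.
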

\begin{proof}
Note that $D_\zeta$ is a $\Lambda$-graded $D_\zeta$-bimodule by the left and the right multiplications.
Define $D_\zeta[\mu]$ to be the $\Lambda$-graded $D_\zeta$-bimodule which coincides with $D_\zeta$ as a $D_\zeta$-bimodule and the grading is given by $(D_\zeta[\mu])(\lambda)=D_\zeta(\lambda+\mu)$.
Then we obtain a $\Lambda$-graded $D_\zeta^{(\ell)}$-bimodule 
\[
D_\zeta[\mu]^{(\ell)}
=\bigoplus_{\lambda\in\Lambda}
(D_\zeta[\mu])(\ell\lambda)
\qquad
((D_\zeta[\mu]^{(\ell)})(\lambda)
=D_\zeta(\mu+\ell\lambda)).
\]
Note that the left and the right actions of $ZD_\zeta^{(\ell)}$ on $D_\zeta[\mu]^{(\ell)}$ are different.
In fact we have 
\[
z\cdot P=P\cdot\tilde{\xi}_\mu(z)
\qquad(z\in ZD_\zeta^{(\ell)}),
\]
where $\tilde{\xi}_\mu:ZD_\zeta^{(\ell)}\to ZD_\zeta^{(\ell)}$ is the algebra automorphism corresponding to $\xi_\mu$.
Regard $D_\zeta[\mu]^{(\ell)}$ as a $ZD_\zeta^{(\ell)}$-module by the right action and consider its localization $\CL_\mu$ on $\CV$.
Then the right action of $D_\zeta^{(\ell)}$ on $D_\zeta[\mu]^{(\ell)}$ induces 
a right $\tilde{\DD}_{\CB_\zeta}$-module structure of $\CL_\mu$.

Let us show that
$\CL_\mu$ is a locally free right $\tilde{\DD}_{\CB_\zeta}$-module
of rank one.
For that it is sufficient to show that $j_w^*\omega_\CB^*(D_\zeta[\mu]^{(\ell)})$ is a free right $j_w^*\omega_\CB^*(D_\zeta^{(\ell)})$-module of rank one for any $w\in W$.
Note 
\begin{align*}
j_w^*\omega_\CB^*(D_\zeta^{(\ell)})
&\cong
(D_\zeta^{(\ell)}\otimes_{A_1}\Theta_w^{-1}A_1)(0)
\cong
(\tilde{\Theta}_w^{-1}D_\zeta)(0),\\
j_w^*\omega_\CB^*(D_\zeta[\mu]^{(\ell)})
&\cong
(D_\zeta[\mu]^{(\ell)}\otimes_{A_1}\Theta_w^{-1}A_1)(0)
\cong
(\tilde{\Theta}_w^{-1}D_\zeta)(\mu).
\end{align*}
Take $s\in A_\zeta(\mu)_{w^{-1}\mu}\setminus\{0\}$.
Then we have $s\in \tilde{\Theta}_w$, and the map
\[
(\tilde{\Theta}_w^{-1}D_\zeta)(0)\to
(\tilde{\Theta}_w^{-1}D_\zeta)(\mu)
\qquad
(P\mapsto sP)
\]
gives an isomorphism of right $(\tilde{\Theta}_w^{-1}D_\zeta)(0)$- modules.
We have shown that 
$\CL_\mu$ is a locally free right $\tilde{\DD}_{\CB_\zeta}$-module
of rank one.

On the other hand 
regard $D_\zeta^{(\ell)}$ as a $\Lambda$-graded $ZD_\zeta^{(\ell)}$-algebra by the modified $ZD_\zeta^{(\ell)}$-module structure given by 
\[
z\circ P=\tilde{\xi}_\mu(z)P\quad(z\in ZD_\zeta^{(\ell)},\,\,P\in D_\zeta^{(\ell)}).
\]
Then the localization of $D_\zeta^{(\ell)}$ on $\CV$ with respect to the modified $\Lambda$-graded $ZD_\zeta^{(\ell)}$-algebra structure coincides with $\xi_\mu^*\tilde{\DD}_{\CB_\zeta}$.
Hence we obtain an $\CO_\CV$-algebra homomorphism
\[
\xi_\mu^*\tilde{\DD}_{\CB_\zeta}
\to
\CEnd_{\tilde{\DD}_{\CB_\zeta}}
(\CL_\mu)
\]
induced by the left multiplication of $D_\zeta^{(\ell)}$ on $(D_\zeta[\mu])^{(\ell)}$.
By Theorem \ref{thm:Azumaya}, Lemma \ref{lem:Azumaya-generality2} and Lemma \ref{lem:Azumaya-generality} this is an isomorphism.
\end{proof}

Let us finish the proof of Theorem \ref{thm:split-Azumaya}.
By Propositon \ref{prop:BG-Azumaya} the assertion holds when $t\in H_{ur}$. 
The general case is reduced to this case by Proposition \ref{prop:Azumaya-translation} and Lemma \ref{lem:Azumaya-generality2}. 
The proof of Theorem \ref{thm:split-Azumaya} is now complete.

\begin{remark}{\rm
Similarly to \cite[Thoerem 5.1.1 (a)]{BMR} we can show that
for any $(k, t)\in K\times_{H/W}H$,
the Azumaya algebra $\tilde{\DD}_{\CB_\zeta}$ splits on the formal neighborhood in $\CV$ of ${\delta^{-1}(k,t)}$.
}
\end{remark}

\end{document}